\theoremstyle{plain}
\newtheorem*{theorem*}{Theorem}
\newtheorem{theorem}{Theorem}[section]
\newtheorem{cor}[theorem]{Corollary}
\newtheorem{prop}[theorem]{Proposition}
\theoremstyle{definition}
\newtheorem{definition}[theorem]{Definition}
\newtheorem{example}[theorem]{Example}
\theoremstyle{remark}
\newtheorem{rem}[theorem]{Remark}
\numberwithin{equation}{section}
\renewcommand{\Re}{{\rm Re}\,}
\renewcommand{\Im}{{\rm Im}\,}
\newcommand{\R}{\mathbb{ R}}
\newcommand{\C}{\mathbb{ C}}
\renewcommand{\d}{\mathrm{d}}
\renewcommand{\H}{\mathbb{ H}}
\newcommand{\N}{\mathbb{ N}}
\renewcommand{\P}{\mathbb{ P}}
\newcommand{\HP}{\H\P}
\newcommand{\CP}{\C\P}
\newcommand{\trivial}[1]{\underline{\H}^{#1}}
\newcommand{\ttrivial}[1]{\underline{\widetilde{\H}}^{#1}}
\newcommand{\invers}{^{-1}}
\DeclareMathOperator{\End}{End}
\DeclareMathOperator{\Hom}{Hom}
\DeclareMathOperator{\Gl}{GL}
\DeclareMathOperator{\im}{im}
\DeclareMathOperator{\Span}{span}
\DeclareMathOperator{\Gr}{Gr}
\DeclareMathOperator{\Ad}{Ad}
\newcommand{\Hh}{\mathcal{H}}
\newcommand{\zo}{^{(0,1)}}
\newcommand{\oz}{^{(1,0)}}
\begin{document}
\title[CMC, isothermic and constrained Willmore surfaces]{Links
  between the integrable systems of CMC surfaces, isothermic surfaces
  and constrained Willmore surfaces.}

\author{K. Leschke}
\date{\today}
\address{K. Leschke, School of Computing and Mathematical Sciences,
  University of Leicester, University Road, Leicester LE1 7RH, United
  Kingdom}
 \email{k.leschke@leicester.ac.uk }
 \thanks{}

\maketitle

\begin{abstract}
  Since constant mean curvature surfaces in 3--space are special cases
  of isothermic and constrained Willmore surfaces, they give rise to
  three, apriori distinct, integrable systems. We provide a
  comprehensive and unified view of these integrable systems in terms
  of the associated families of flat connections and their parallel
  sections: in case of a CMC surface, parallel sections of all three
  associated families of flat connections are given algebraically by
  parallel sections of either one of the families. As a consequence,
  we provide a complete description of the links between the simple
  factor dressing given by the conformal Gauss map, the simple factor
  dressing given by isothermicity, the simple factor dressing given by
  the harmonic Gauss map, as well as the relationship to the
  classical, the $\mu$-- and the $\varrho$--Darboux transforms of a
  CMC surface. Moreover, we establish the associated family of the CMC
  surfaces as limits of the associated family of isothermic surfaces
  and constrained Willmore surfaces.
\end{abstract}

\section{Introduction}

Surfaces of non--zero constant mean curvature in Euclidean $3$--space,
so--called \emph{CMC surfaces}, are critical with respect to the
variation of area with prescribed fixed volume. CMC surfaces arise
widely in nature but also in man--made structures. For example, soap
bubbles, equilibrium capillary surfaces (in the absence of gravity),
Fermi surfaces in condensed matter physics (and the related constant
potential surfaces in crystals), block copolymers, as well as freeform
structures or pneumatic architectures such as inflatable domes and
enclosures, can be modeled on CMC surfaces. The geometry of CMC
surfaces has profound consequences for the physical properties of the
materials in which they occur, and thus have a large relevance in
applications in shape optimisation. Due to these applications, finding
computational models of CMC surfaces and understanding their
properties is a very active research area in surface theory.

The study of CMC surfaces has been approached from a variety of
directions.  For example the Euler--Lagrange equation of a CMC surface
is an important example of a geometric non--linear elliptic partial
differential equation and variational principles and analytic gluing
techniques can been used to construct and investigate shapes of CMC
surfaces, e.g., \cite{Wente, kap2, mp, gks3}.  On the other hand, the
theory of integrable systems via the Gauss--Codazzi equation of a CMC
surface, the sinh--Gordon equation, has led to a classification of
constant mean curvature tori by their spectral data, \cite{PS,
  hitchin-harmonic, Bob, EKT, Jaggy}.

Both of these approaches have laid the foundation for finding discrete
approximations of CMC surfaces: variational and integrable system
methods have been successfully used to optimise the energy functional
on a triangulated surface or to find geometric discrete CMC nets by
using the structure of special coordinates and transformations, see
e.g. \cite{brakke, discretecmc-index, finiteelement-cmc,
  discretelawson, discretecmc}.

Here we follow the integrable system approach for various surface
classes: for the purposes of this paper, the core indicator of
integrability is that the underlying Gauss--Codazzi equations of
integrable surface classes give a Lax pair and allow to introduce a
spectral parameter to reformulate the corresponding Lax equation as
the flatness of the associated family of connections. This way, the
non--linear compatibility equations become a system of linear
differential equations.  In the case of a CMC surface, we have three
apriori distinct integrable structures associated to it: CMC surfaces
are special cases of both isothermic surfaces and constrained Willmore
surfaces.

Recall that isothermic surfaces are surfaces which allow (away from
umbilic points) a conformal curvature line parametrisation, the
so--called \emph{isothermal} coordinates.  The Calapso equation, the
fourth order integrability condition for isothermic surfaces
\cite{calapso}, is linked to the zoomeron equation in physics
\cite{boomeron}, and solitonic solutions have been studied in physics
in the context of membrane deformations.

On the other hand, the elastic bending energy of an immersion, the
so--called \emph{Willmore functional}, is an important invariant in
surface theory. Its applications range from the biophysics of
membranes to string theory. \emph{Constrained Willmore surfaces} are
the critical points of the Willmore functional when restricting to
conformal immersions of a fixed Riemann surface. In case of tori,
isothermic surfaces which are constrained Willmore are exactly the CMC
surfaces in space forms \cite{richter}.

The purpose of this paper is to provide a unified view on the
integrable systems of CMC surfaces, isothermic surfaces and
constrained Willmore surfaces.  To this end, we first recall the
Ruh--Vilms theorem \cite{ruh_vilms}: given a conformal immersion
$f: M \to\R^3$ from a Riemann surface $M$ into 3--space with
non--vanishing constant mean curvature the Gauss map of $f$ is a
harmonic map $N: M \to S^2$ into the 2--sphere.  Theoretical
physicists observed that a harmonic map equation is an integrable
system, \cite{pohlmeyer}, \cite{zakharov_mikhailov},
\cite{zakharov_shabat}: by introducing the spectral parameter
$\lambda\in\C_*=\C\setminus\{0\}$, one can associate a $\C_*$--family
of flat connections $d^N_\lambda$ with simple poles at $0,\infty$ to a
CMC surface $f: M \to\R^3$, which is given by the harmonic Gauss map
$N$.  The \emph{associated family of flat connections}
$d_\lambda^N, \lambda\in\C_*,$ on the trivial $\C^2$--bundle uniquely
determines the CMC surface.  This fact can be used to obtain new CMC
surfaces in various ways, and we will now recall those transformations
which are relevant to this paper. We will assume in the following
without loss of generality that the mean curvature is $H=1$.

For spectral parameter $\lambda$ on the unit circle, the connections
$d_\lambda^N$ are quaternionic on $\C^2=\C\oplus j \C = \H$ and give
via the Sym--Bobenko formula \cite{sym_bob} the \emph{associated
  family of CMC surfaces} on the universal cover: gauging the family
of flat connections
$d_{\lambda}^{N^\alpha} = \alpha\invers\cdot d_{\lambda\mu}^N$ by a
$d_\mu^N$--parallel section $\alpha$, $\mu\in S^1$, gives locally a
new associated family of flat connections with harmonic Gauss map
$N^\alpha=\alpha\invers N\alpha$ and thus CMC surfaces $f^\alpha$, see
e.g. \cite{simple_factor_dressing} for details on this in our
setting. Another way to obtain new harmonic maps, and thus CMC
surfaces, was introduced by Uhlenbeck and Terng, \cite{uhlenbeck},
\cite{terng_uhlenbeck}: \emph{dressing} in the case of a harmonic map
$N: M \to S^2$ is given by a gauge
$\hat d_\lambda^N = r_\lambda\cdot d_\lambda^N$ of $d_\lambda^N$ by a
$\lambda$--dependent dressing matrix $r_\lambda$, see
e.g. \cite{simple_factor_dressing}. The \emph{dressing} of $N$ is then
the harmonic map $\hat N$ that has $\hat d_\lambda$ as its associated
family of flat connections. In general, it is difficult to find
explicit dressing matrices preserving the shape of the associated
family of flat connections. However, if $r_\lambda$ has a simple pole
at the \emph{(CMC) spectral parameter} $\mu\in\C_*$ and is given by a
$d_\mu^N$--parallel bundle (possibly on the universal cover $\tilde M$
of the Riemann surface $M$), then a CMC surface $\check f$, the
so--called \emph{simple factor dressing} of $f$, can be computed
explicitly, e.g., \cite{terng_uhlenbeck}, \cite{dorfmeister_kilian},
\cite{simple_factor_dressing}.

Finally, a $d_\mu^N$--parallel bundle can also be used to discuss the
Darboux transformation on CMC surfaces.  Recall first that in the
general case of a conformal immersions the classical Darboux
transformation \cite{darboux} can be generalised
\cite{conformal_tori}: generalised Darboux transforms are given by
prolongations of holomorphic sections of an associated quaternionic
holomorphic line bundle of the conformal immersion.  In the case of a
CMC surface, a $d_\mu^N$--parallel section $\alpha$ gives such a
holomorphic section, and its prolongation, a so--called
\emph{$\mu$--Darboux transform}, is CMC in 3--space for
$\mu\in\C\setminus\{0,1\}$, up to translation in 4--space,
\cite{cmc}. The classical Darboux transforms of a CMC surface which
are CMC too are a special case of the $\mu$--Darboux transformation
and are exactly given by real or unit length spectral parameters
$\mu\in S^1\cup \R\setminus\{0,1\}$. In general, $\mu$--Darboux
transforms are defined on the universal cover and closing condition
can be expressed via the monodromy matrix of the connection $d_\mu^N$.

The relevance of the generalised Darboux transform arises from its
link to the spectral data of conformal torus. Recall that in the case
of a harmonic torus the monodromy representation of the family
$d_\lambda^N$ with respect to a chosen base point on the torus is
abelian and hence has simultaneous eigenlines,
\cite{hitchin-harmonic}. From the corresponding eigenvalues one can
define the \emph{spectral curve} $\Sigma$, a hyperelliptic curve over
$\CP^1$ together with the holomorphic eigenline bundle over $\Sigma$.
Geometrically, in case of a CMC torus, the eigenvalues correspond to
so--called multipliers of $d_\mu^N$--parallel sections and the
spectral curve is essentially the set of all closed $\mu$--Darboux
transforms, \cite{cmc}.  Conversely, the spectral data can be used to
construct all harmonic tori in terms of theta functions on the
spectral curve $\Sigma$.  This approach has been used to classify CMC
tori by spectral data, \cite{PS,Bob}.

The classical Darboux transform was defined for \emph{isothermic
  surfaces}, that is, surfaces which allow a conformal curvature line
parametrisation: a Darboux pair $(f, \hat f)$ is a pair of isothermic
surfaces conformally enveloping a sphere congruence. Darboux
transforms are obtained as solutions to a Riccati equation
\cite{darboux_isothermic} with real parameter
$r\in\R_*=\R\setminus\{0\}$, the \emph{(isothermic) spectral
  parameter}.  This indicates that the class of isothermic surfaces
constitutes an integrable system, as shown in
\cite{cieslinski1995isothermic}.  In particular, again a family of
flat connections $d_r$, $r\in\R$, can be introduced,
\cite{ferus_curved_1996, KamPedPin, burstall_conformal_2010}, which
completely determines the isothermic surface.  Note that in contrast
to the associated family of a harmonic Gauss map the family of
connections of an isothermic surface in our setup is defined on a
trivial $\H^2$--bundle with real spectral parameter $r$. In this
scenario, one can view classical Darboux transforms as the parallel
sections of the connection $d_r$ in the associated family of flat
connections \cite{hertrich-jeromin_mobius_2001, udo_habil}. The
resulting classical Darboux transforms of a CMC surface are only CMC
surfaces if an additional initial condition is satisfied.

Since CMC surfaces are isothermic we investigate the link between the
Darboux transforms given by parallel sections of $d_\mu^N$ and those
given by parallel sections of $d_r$.  To this end we expand in Section
\ref{sec:isothermic} the family $d_r, r\in\R$, to a family $d_\varrho$
with complex parameter $\varrho\in\C$ on the trivial $\C^4$--bundle,
see also \cite{bohle-diss}.  We call the isothermic surface arising
from a $d_\varrho$--parallel section, $\varrho\in\C_*$, a
\emph{$\varrho$--Darboux transform}. A $\varrho$--Darboux transform is
a generalised Darboux transform, and a classical Darboux transform if
and only if $\varrho\in\R_*$. As an example we show that all closed
$\varrho$--Darboux transforms, including all closed classical Darboux
transforms, of a surface of revolution are rotation surfaces in
4--space for $\varrho\not\in \C\setminus\{0, 1\}$ or isothermic
bubbletons, which occur for special spectral parameter, the so--called
\emph{resonance points}.

Since $\mu$--Darboux transforms are CMC but only classical Darboux
transforms for $\mu\in\R\cup S^1\setminus\{0,1\}$ and, on the other
hand, $\varrho$--Darboux transforms of isothermic surfaces are CMC for
special initial conditions, we expect to see a link between the CMC
spectral parameter $\mu$ and the isothermic spectral parameter
$\varrho$. Indeed, we show in Section \ref{sec:cmc} that each
$\mu$--Darboux transform is a $\varrho$--Darboux transform with
isothermic spectral parameter
$\varrho=-\frac{(\mu-1)^2}{4\mu}\in\C_*$. Moreover, this indicates a
2:1 relationship between the corresponding spaces of parallel sections
because the family $d_\lambda^N$ gives flat connections on the trivial
$\C^2$--bundle, whereas $d_\lambda$ are defined on the trivial
$\C^4$--bundle. More precisely, we show that every
$d_\varrho$--parallel section is obtained algebraically from
$d_{\mu_\pm}^N$--parallel sections $\alpha_\pm$ (and the Gauss map $N$
of the CMC surface), where
$\mu_\pm = 1-2\varrho \pm 2i \sqrt{\varrho(1-\varrho)}$. In
particular, the $\varrho$--Darboux transforms which are CMC are
exactly the $\mu_+$-- and $\mu_-$--Darboux transforms. Conversely, all
$d_\mu^N$--parallel sections are algebraically determined from
$d_\varrho$--parallel sections where $\varrho$ is the isothermic
spectral parameter given by $\mu$.

Additionally, the explicit form of the parallel sections of the
associated family $d_\lambda$ in terms of sections of $d_\lambda^N$
allows in Section \ref{sec:cmc} to give a geometric interpretation of
the Sym–Bobenko formula for CMC surfaces as a limit of surfaces in the
isothermic associated family. Here, the associated family in the
isothermic integrable system is analogously to the CMC case given,
\cite{hertrich-jeromin_mobius_2001}, \cite{fran_epos}, by the gauge
$d^\Phi_\lambda=\Phi\invers\cdot d_{\lambda+\varrho}$, where $\Phi$ is
a $d_\varrho$--parallel endomorphism, $\varrho\in\R_*$.

To investigate the simple factor dressing of a CMC surface in terms of
the integrable system of the isothermic surface, we first define in
Section \ref{sec:isothermic} a simple factor dressing by the
$\lambda$--dependent gauge matrix $r_\varrho^E(\lambda)$ for complex
spectral parameter $\varrho\in\C_*$ in terms of a
$d_\varrho$--parallel complex line subbundle
$E\subset \tilde M \times \C^4 = \underline{\tilde{\C}}^4$. If
$\varrho\in\R_*$ our gauge matrix $r_\varrho^E(\lambda)$ with simple
pole at $\lambda=\bar\varrho$ coincides with the dressing matrix given
in \cite{sym-darboux}. Moreover, the family of flat connections
$\hat d_\lambda= r_\varrho^E(\lambda)\cdot d_\lambda$ is the
associated family of the $\varrho$--Darboux transform given by the
twistor projection of $E$, that is, the quaternionic line bundle
$E\H$. In particular, simple factor dressing and $\varrho$--Darboux
transformation coincide in this case.

Furthermore, given two $d_\varrho$--stable complex line bundles
$E_1, E_2$ so that $W \oplus Wj = \tilde M\times\H^2$ for
$W= E_1\oplus E_2$, we define a dressing matrix $r_\lambda^W$ with
simple pole in $\lambda=\bar\varrho$ and a \emph{2--step simple factor
  dressing}. Note that for $\varrho\not\in\R$ the 2--step simple
factor dressing gives surfaces which are not the original isothermic
surface $f$.  Indeed, the 2--step factor dressing is the common
Darboux transform of the $\varrho$--Darboux transforms given by $E_1$
and $E_2$ via Bianchi permutability: in particular, the 2--step
Darboux transform is non--trivial for $\varrho\not\in\R_*$.

To compare to the simple factor dressing of a CMC surface, we recall
\cite{simple_factor_dressing} that a $\mu$--Darboux transform of a CMC
surface $f: M \to\R^3$ is the simple factor dressing of its parallel
surface $g=f+N$ with parameter $\mu$.  Here we show that this aligns
with the 2--step Darboux transformation given via Bianchi
permutability by a $\mu$--Darboux transform of $f$ and the parallel
CMC surface which is a $\mu$--Darboux transform for $\mu=-1$. In
particular, the simple factor dressing via the CMC integrable system
is a 2--step simple factor dressing via the isothermic integrable
system.  This duality in the dressing operation appears as well when
considering the third integrable system of a CMC surface.

A CMC surface is \emph{constrained Willmore}, that is, it is a
critical point of the Willmore energy under variations which preserve
the conformal structure.  As before, one can associate a family of
flat connections $d_\lambda^S$ on the trivial $\C^4$--bundle given by
the conformal Gauss map $S$ for $\lambda\in\C_*$, see
\cite{constrainedWillmore, bohle-diss}. In the case of a CMC surface,
we show in Section \ref{sec:cw} that the family can be expressed in
terms of the associated family $d^{-N}_\lambda$ of the parallel
surface $g= f+N$ and that hence parallel sections of $d^S_\lambda$ are
given exactly by prolongations of parallel sections of
$d^{-N}_\lambda$, up to constant sections. In particular, the
$\mu$--Darboux transforms of a CMC surface are exactly the Darboux
transforms given by parallel sections of $d_\lambda^S$ which are CMC.

Moreover, for suitable complex rank 2--bundles, the simple factor
dressing given by the constrained Willmore property in
\cite{burstall_quintino} gives surfaces with harmonic right normal.
In particular, amongst these simple factor dressings those with
constant real part are indeed the simple factor dressings given by the
CMC integrable system. This sheds a little light on the duality
appearing in the simple factor dressing: the associated family
$d_\lambda^S$ of flat connections in case of a CMC surface is defined
in terms of data of the dual surface. Finally, we show that a surface
in the CMC associated family is obtained as a limit of the associated
family of constrained Willmore surfaces as defined in
\cite{burstall_quintino} up to M\"obius transformation.

Since parallel sections of all three associated families of flat
connections can be algebraically recovered from parallel sections of
either one of the families, we thus have demonstrated a unified view
of on the three distinct integrable systems associated to a CMC
surface and their transformations.

The author would like to thank Fran Burstall, Joseph Cho, Yuta Ogata
and Mason Pember for many fruitful and enjoyable discussions on
isothermic surfaces. This work was partially supported by the Research
Institute for Mathematical Sciences, an International Joint
Usage/Research Center located at Kyoto University, and the author
would like to thank the members of the institute for the warm
reception and support. Additionally, the author gratefully
acknowledges that the work was finalised during a study leave granted
by the University of Leicester.

\section{Preliminaries}

We first give a short summary of results on conformal immersions
$f: M \to\R^4$ from a Riemann surface $M$ into 4--space which are
needed in the following, for details and more results see \cite{icm,
  coimbra, klassiker, nara}.

\subsection{Conformal immersions}
We model $\R^4$ by the quaternions $\H=\Span_\R\{1, i, j, k\}$ where
$i^2=j^2=k^2=-1$ and $ij=k$. The imaginary quaternions
$\Im\H =\Span_\R\{ i, j, k\}$ can be identified with $\R^3$ and for
imaginary quaternions $a, b\in\Im\H$ we have
\[
ab = -<a, b> + a \times b
\]
where we consider $\H = \R\oplus\R^3$, and $<,>$ denotes the standard
inner product and $\times$ the cross product in $\R^3$. In particular,
the 2--sphere in $\R^3$ is given by
$S^2 = \{ n\in\Im\H \mid n^2=-1\}$.

An immersion $f: M \to\R^4=\H$ is conformal if and only if there
exists a pair $(N, R): M \to S^2\times S^2$ with
\[
*df = Ndf = -df R
\]
where $*\omega(X) = \omega(JX)$ for $X\in TM$ and $\omega$ a
1--form. Here $J$ is the complex structure on the tangent space of the
Riemann surface $M$. Identifying the Grassmannian of 2--planes in
$\R^4$ with $\Gr_2(\R^4) = S^2\times S^2$ the pair $(N, R)$ is the
Gauss map of the immersion $f$. In particular, the tangent and normal
space of $f$ are uniquely determined by $(N, R)$ as
\begin{equation}
\label{eq:tangent space}
df_p(T_pM) = \{ v\in\H \mid Nv + vR=0\}, \quad \perp_f = \{ v\in\H
\mid Nv -vR=0\}\,.
\end{equation}
We call $N$ and $R$ the \emph{left} and \emph{right normal} of
$f$. Note that $f: M\to\R^3$ if and only if $N =R$.

The \emph{mean curvature vector} $\Hh$ of $f: M \to\R^4$ satisfies
\[
\overline{\Hh} df = \frac 12(*dR + RdR), \quad df \overline{\Hh} =-
\frac 12(*dN + NdN)
\]
Denoting $H = -\overline{\Hh} N = -R\overline{\Hh}$, the above
equations are equivalent to
\begin{equation}
\label{eq:Hdf}
-Hdf= (dR)', \quad -df H = (dN)'
\end{equation}
where $(dR)' = \frac 12(dR -R*dR), (dN)' = \frac 12(dN-N*dN)$ denote
the $(1,0)$ parts of the 1--forms $dR$ and $dN$ with respect to the
complex structures given by left multiplication by $R$ and $N$
respectively. If $f: M\to\R^3$ then $H: M\to\R$ is the mean curvature
of $f$.

The Willmore energy of a conformal immersion $f: M \to\R^4$ is given
by
\[
W(f) = \int_M(|H|^2 -K-K^\perp)|df|^2
\]
where $K$ is the Gauss curvature of $f$ and $K^\perp$ is the normal
curvature of $f$. If $f: M \to\R^3$ then $K^\perp=0$ and we obtain the
classical Willmore energy of a conformal immersion $f$. In terms of
the Gauss map of $f$, the integrand of the Willmore energy is given as
\[
 (|H|^2 -K-K^\perp)|df|^2= \frac 14|*dR-RdR|^2\,.
\]

Since the classes of isothermic surfaces and of constrained Willmore
surfaces are M\"obius invariant, we can also consider surfaces into
$S^4=\HP^1$. The orientation perserving M\"obius transformations are
given by $\Gl(2,\H)$: $v\H\in\HP^1 \mapsto (Av)\H, A\in \Gl(2,\H)$.
In the following we identify a map $f: M \to\HP^1$ into quaternionic
projective space $\HP^1$ with the line subbundle of the trivial $\H^2$
bundle $\trivial 2= M\times\H^2$ over $M$ which is given by
\[
f(p) = L_p\,.
\]
The derivative of $f$ is given by
$\delta= \pi_L d$ where $\pi_L: \trivial 2 \to \trivial 2/L$ denotes
the canonical projection. 

If $f: M\to\R^4$ then we will denote by $L=\psi\H$ the associated line
bundle of $f$ and by $\infty = e\H$ the point at infinity where
\[
\psi =\begin{pmatrix} f \\ 1
\end{pmatrix}, \quad e=\begin{pmatrix} 1\\ 0
\end{pmatrix}
\]
so that $\trivial 2 = L \oplus \infty$.  Moreover, the left normal
$N: M \to S^2$ induces a complex structure on the line bundle $L$ by
setting $J \psi = -\psi N$ and $f$ is conformal if and only if
\[
*\delta \psi = \delta J\psi
\]
since $\delta\psi = \begin{pmatrix} df \\ 0 
\end{pmatrix}$ when identifying $e\H =\trivial {2}/L$ via the
isomorphism $\pi_L|_{e\H}: e\H \to \trivial{2}/L$.

The conformal Gauss map $S$ of a conformal immersion $f: M \to\R^4$ is
\cite{coimbra} the unique complex structure $S\in\End_\H(\H^2)$ such
that
\[
SL = L, S|_L = J,  dSL\in \Omega^1(L), A\trivial 2 \subset L
\]
where $A$ is the \emph{Hopf field} of $S$, that is, 
\[
A = \frac 14(SdS +*dS)\,.
\]

Then $f$ is a Willmore surface, that is, a critical point of the
Willmore energy, if and only if $S$ is harmonic, that is, $d*A =0$,
see \cite{bryant, coimbra}. The conformal immersion $f$ is called
\emph{constrained Willmore} if there exists
$\eta\in\Omega^1(\Hom(\trivial 2/L, L))$ with $d(*A +\eta)=0$ and
$*\eta = S\eta = \eta S$, see \cite{constrainedWillmore}: a
constrained Willmore surface is a critical point of the Willmore
energy under variations that preserve the conformal structure.

\subsection{Isothermic surfaces}

We first give a short review of known results on isothermic surfaces
which are relevant for our paper. For more historical background and
details see for example \cite{udo_habil, fran_epos, bjpp,
  darboux_isothermic, darboux}.

For the purposes of the paper, it is useful to define isothermic
surfaces in terms of the Christoffel
transformation. 

\begin{definition}[\cite{christoffel, hertrich-jeromin_supplement_1997}]
  Let $f: M \to\R^4$ be a conformal immersion. Then $f$ is called
  \emph{isothermic} if there exists a conformal, branched immersion
  $f^d: \tilde M\to\R^4$ on the universal cover $\tilde M$ of $M$ such
  that
\begin{equation}
\label{eq:dual}
df \wedge df^d = df^d \wedge df =0\,.
\end{equation}
In this case, $f^d$ is called a \emph{dual surface} (or
\emph{Christoffel transform}) of $f$\,.
\end{definition}
Note that (\ref{eq:dual}) implies by type considerations that
$*df^d= -R df^d = df^d N$ where $(N, R)$ is the Gauss map of
$f$. Moreover, since (\ref{eq:dual}) is symmetric in $f$ and $f^d$, we
see that if $f$ is isothermic so is its dual surface $f^d$. The dual
surface is unique up to homothety and translation, except in the case
when the surface is part of a round sphere, see
e.g. \cite{darboux_isothermic}. We will exclude these case in the
following discussions.

Let $z=x+i y$ be a conformal curvature line parametrisation of an
isothermic surface $f$, that is, $z$ is a conformal coordinate
\[
f_y = Nf_x = - f_x R
\]
with $f_{xy}$ tangential, that is
\[
Nf_{xy} + f_{xy} R =0\,.
\]
Then 
\begin{equation}
\label{eq:dual in isothermic coordinates}
df^d = f_x\invers dx - f_y\invers dy
\end{equation}
satisfies
\[
df^d \wedge df = (f_x\invers f_y + f_y\invers f_x) dx\wedge dy = 
(f_x\invers f_y + f_y\invers f_x) dx\wedge dy =0\,.
\]
Here we used that $f_x = f_yR$ and thus $f_x\invers =
-Rf_y\invers$. Similarly, one can show $df\wedge df^d=0$ and, using
the condition that $f_{xy}$ is tangential that $df^d$ is a closed
1--form.  Therefore, we see that there exists locally a dual surface
$f^d$ of $f$: surfaces which admit a (local) conformal curvature line
parametrisation are isothermic.

\begin{example}
\label{ex:isothermic}
We will give some examples of isothermic surfaces which will be
relevant in the following.

\begin{enumerate}
\item 
If $f:  M\to\R^3$ is a  conformal surface of revolution then we can
parametrise 
\[
f(x,y)
=i p(x) + jq(x) e^{-iy}
\] where $(p, q)$ is a unit speed curve in the hyperbolic plane, that
is, $(p')^2+(q')^2= q^2$.  It is a straight--forward computation, see
e.g. \cite{udo_habil}, to show that the 1--form
\[
\omega = - \frac{ip'(x) + jq'(x)e^{-iy}}{q^2(x)}dx - j\frac i{q(x)} e^{-iy}dy
\]
is closed and thus there exists locally a surface of revolution $f^d$
given by $df^d=\omega$.  In particular, $f$ and $f^d$ are isothermic
since $df^d \wedge df = df \wedge df^d=0$.

\begin{figure}[H]
\includegraphics[height=.4\linewidth ]{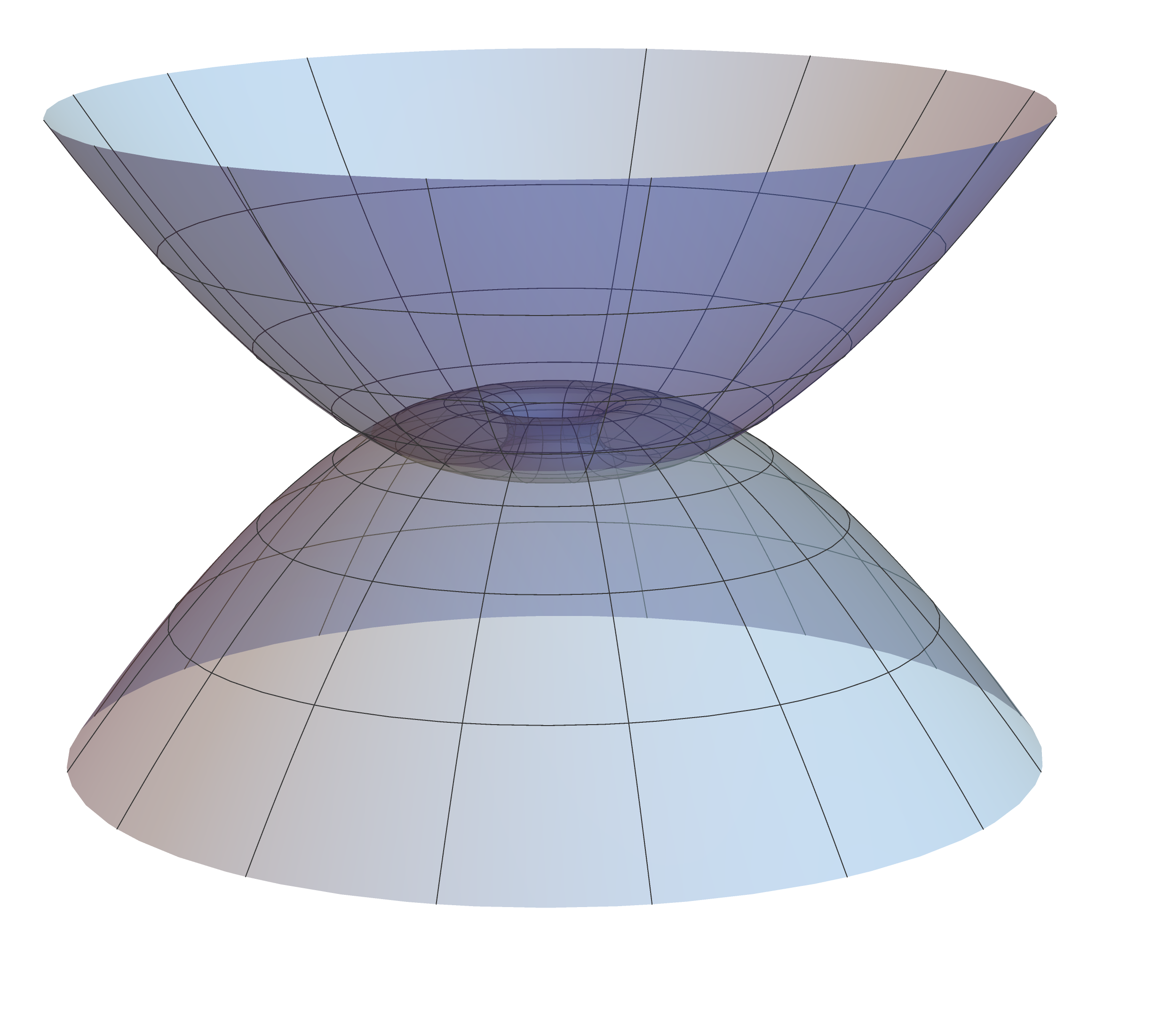}
\includegraphics[height=.4\linewidth]{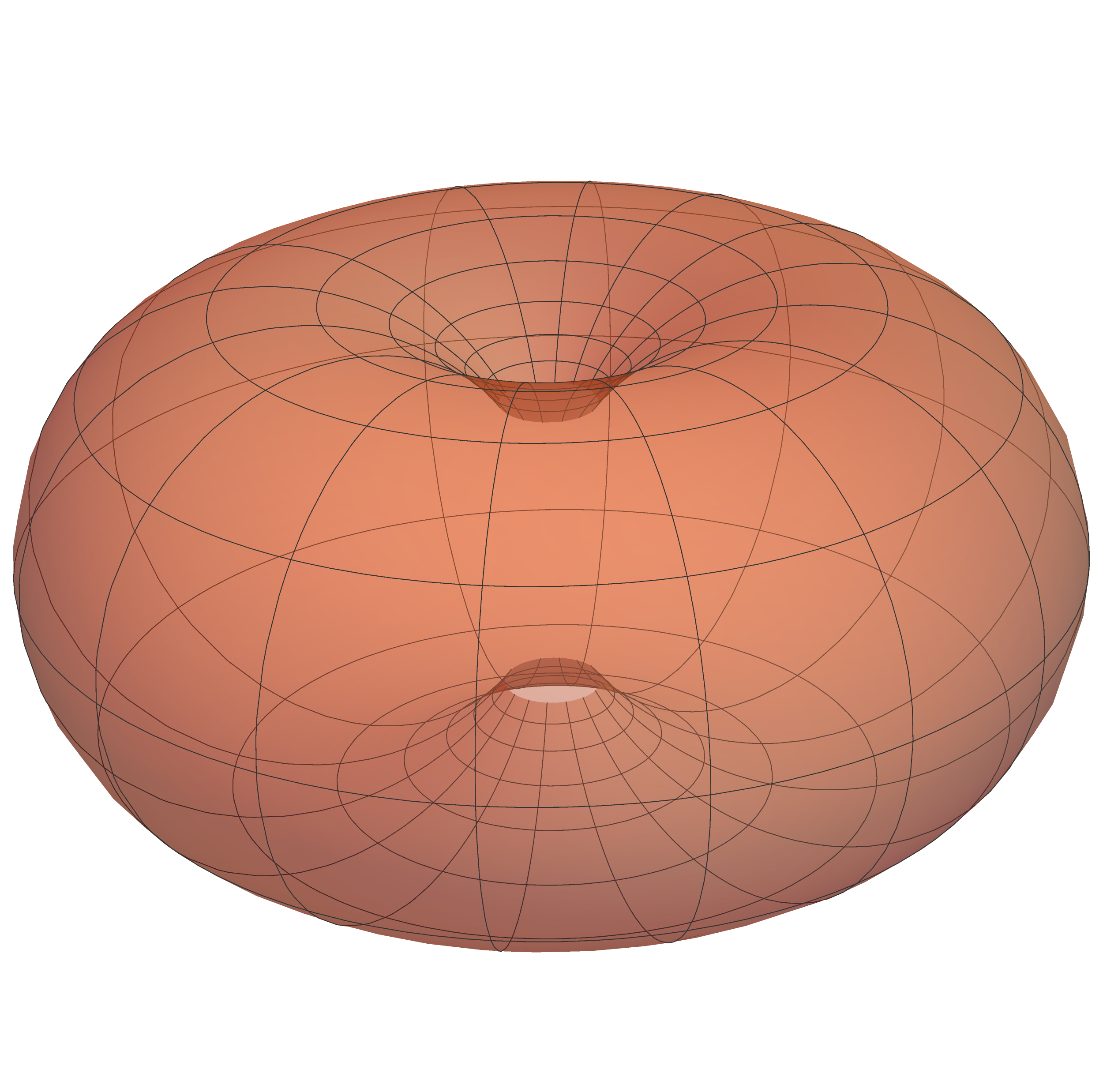}
\caption{Surface of revolution $f(x,y)= i( -x +
  \frac{x^3}3) + j(1+x^2)e^{-iy}$  and its dual surface $f^d(x,y)  =
  \frac {ix}{1+x^2} + j \frac 1{1+x^2} e^{-iy}$.}
\label{fig: surface of revolution}
\end{figure}

\item Another example of isothermic surfaces are CMC surfaces. If
  $f: M \to\R^3$ has non--vanishing constant mean curvature $H$, we
  will always assume without loss of generality that $H=1$.  Then
  (\ref{eq:Hdf}) shows that the parallel CMC surface $g= f+ N$ is a
  dual surface of $f$ (if $f$ is not the round sphere): we have
  $dg = df + dN = \frac 12(dN + N*dN)$ so that $*dg = -Ndg = dg N$.
  Again, this shows that $dg \wedge df = df \wedge dg=0$ and both $f$
  and its parallel surface $g$ are isothermic. Note that in general in
  isothermic coordinates
  $dg = \varrho_0(f_x\invers dx - f_y\invers dy)$ with
  $\varrho_0\in\R_*=\R\setminus\{0\}$ and $g$ is only up to scale (and
  translation) the isothermic surface defined by (\ref{eq:dual in
    isothermic coordinates}). \\

  \noindent In the following we will use the parallel CMC surface
  $g=f+N$ as dual surface, and hence implicitly exclude the round
  sphere by assuming that $g$ is a branched conformal immersion.
\end{enumerate}
\end{example}

\subsection{The classical Darboux transformation}

An alternative way to characterise iso\-thermic surfaces is by the
classical Darboux transformation given by its geometric
properties. For us however it is convenient to describe Darboux
transforms by solutions to a Riccati equation in terms of the dual
surface:
\begin{definition}[\cite{darboux, darboux_isothermic}]
Let $f: M \to\R^4$ be isothermic with dual surface $f^d: \tilde M
\to\R^4$.  If $T: \tilde M \to\R^4\setminus\{0\}$ satisfies the Riccati equation
\begin{equation}
\label{eq:riccati}
dT = -df + T df^d r T 
\end{equation}
for some $r\in\R_*$ then $\hat f = f + T$ is called a
\emph{classical Darboux transform} of $f$.   

\end{definition}
\begin{rem}
If $T$ is not vanishing, but $T(p)=0$ for isolated $p\in M$, we call
$\hat f = f+ T$ a \emph{singular Darboux transform}, see
\cite{conformal_tori}. In this case, $\hat f$ is branched at the
zeros of $T$ since $d\hat f = Tdf^dr T$.

\end{rem}
\begin{figure}[H]

 \includegraphics[height=.3\linewidth]{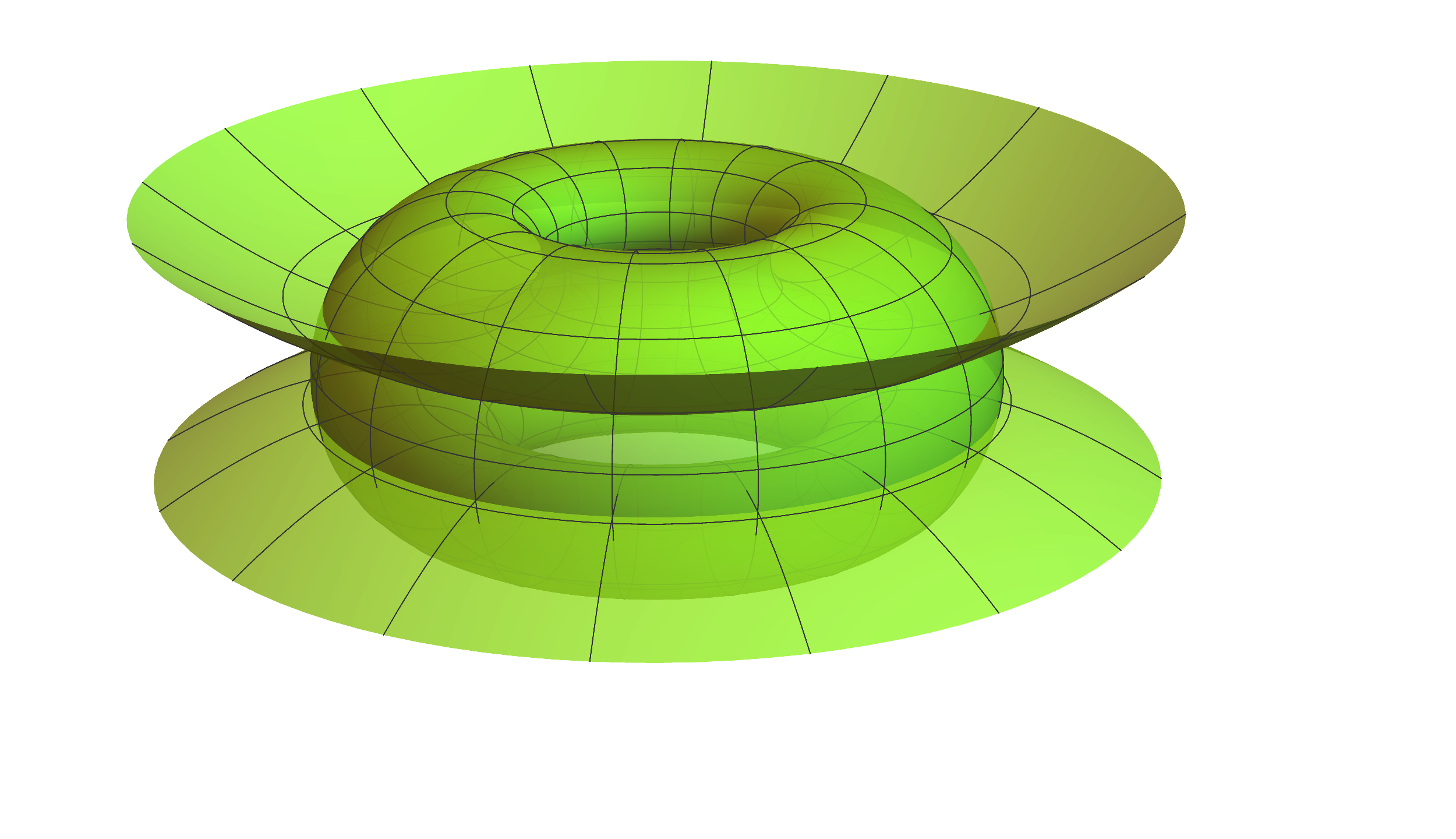}
 \includegraphics[height=.3\linewidth]{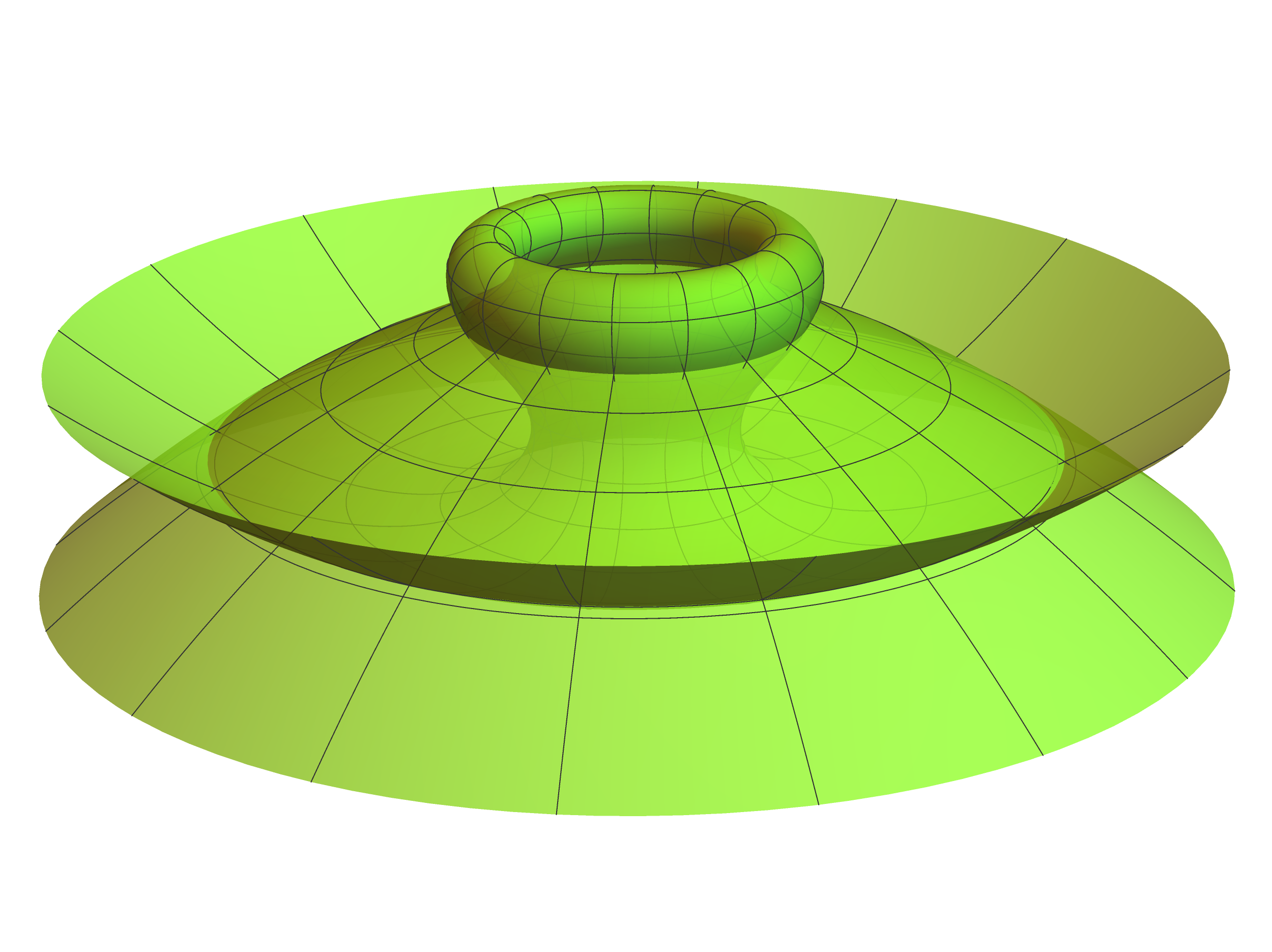}

\caption{Classical Darboux
  transforms of the surface of revolution $f(x,y)= i( -x +
  \frac{x^3}3) + j(1+x^2)e^{-iy}$ at spectral parameter $r=\frac 34$, with different
  initial conditions. We give a complete description of all Darboux
 transforms of a surface of revolution in Example  \ref{ex:rhodt of sor}.} 
\label{fig:cdt}
\end{figure}

Note that a classical Darboux transform $\hat f$ is conformal with
left normal $\hat N = -TRT\invers$ and right normal
$\hat R = - TNT\invers$ since $d\hat f = Tdf ^dr T$ and
$*df^d = -Rdf^d = df^d N$.

Since $dT\invers = -T\invers dT T\invers$ we see that if $T$ satisfies
(\ref{eq:riccati}) then $T^d =\frac 1r T\invers$ satisfies the Riccati
equation $dT^d= -df^d + T^d df r T^d$:

\begin{theorem}[\cite{darboux, darboux_isothermic}]
\label{thm: dual CDT}
If $f^d$ is a fixed dual surface of the isothermic $f$, and
$\hat f =f +T$ is a classical Darboux transform of $f$ with parameter
$r\in\R_*$ then $\hat f^d = f^d + \frac 1r T\invers$ is a classical
Darboux transform of $f^d$ with parameter $r$.  In particular, a
classical Darboux transform $\hat f$ of an isothermic surface is
isothermic with dual surface $\hat f^d$.
\end{theorem}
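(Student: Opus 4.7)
The plan is to verify the Riccati equation for $T^d=\tfrac{1}{r}T^{-1}$, which immediately gives the classical Darboux transform claim, and then to check the duality relation $d\hat f \wedge d\hat f^d = d\hat f^d \wedge d\hat f = 0$ directly by conjugating $df \wedge df^d = 0$ by $T$. The remark preceding the statement already flags the key identity $dT^{-1} = -T^{-1}\, dT\, T^{-1}$, so the proof is largely a calculation.

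First I would compute
\[
dT^d = \frac{1}{r}\, d(T^{-1}) = -\frac{1}{r}\, T^{-1}\, dT\, T^{-1}
\]
and substitute the Riccati equation $dT = -df + T\, df^d\, r\, T$ from (\ref{eq:riccati}). The two resulting terms simplify to
\[
dT^d = -df^d + \frac{1}{r}\, T^{-1}\, df\, T^{-1},
\]
and the right-hand side is exactly $-df^d + T^d\, df\, r\, T^d$. This verifies that $T^d$ satisfies the Riccati equation with parameter $r$ and dual roles swapped, so $\hat f^d = f^d + T^d$ is a classical Darboux transform of $f^d$ with parameter $r$; in particular $\hat f^d$ is conformal.

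Next I would check that $\hat f^d$ is in fact dual to $\hat f$. Using the two Riccati equations I get the closed forms $d\hat f = T\, df^d\, r\, T$ and $d\hat f^d = \tfrac{1}{r}\, T^{-1}\, df\, T^{-1}$. Then for tangent vectors $X, Y$,
\[
(d\hat f \wedge d\hat f^d)(X,Y) = T\bigl(df^d(X)\, df(Y) - df^d(Y)\, df(X)\bigr) T^{-1} = T\,(df^d \wedge df)(X,Y)\, T^{-1},
\]
which vanishes because $f^d$ is dual to $f$. The symmetric computation yields $d\hat f^d \wedge d\hat f = 0$, so $\hat f^d$ is a dual surface of $\hat f$, and in particular $\hat f$ is isothermic.

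There is no serious obstacle: the only point to watch is to keep the quaternionic multiplication strictly ordered (both the Riccati equation and the wedge pairing are non-commutative), and to remember that the $r$ factors and the $T$/$T^{-1}$ factors must sandwich the differentials in the correct sense so that the $r^{\pm 1}$ cancel in the conjugation argument. Once that is done, the duality statement and the isothermicity of $\hat f$ follow at once.
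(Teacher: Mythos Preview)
Your proof is correct and follows essentially the same line as the paper. The Riccati computation for $T^d$ is already done in the sentence preceding the theorem, and the paper's proof of duality also computes $d\hat f = T\,df^d\,r\,T$ and $d\hat f^d = \tfrac{1}{r}T^{-1}\,df\,T^{-1}$; the only cosmetic difference is that the paper reads off the left and right normals of $\hat f^d$ and invokes a type argument, whereas you expand the wedge directly as a conjugation of $df^d\wedge df$ by $T$ --- these are the same computation phrased two ways.
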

\begin{proof}
  Since $d\hat f^d = \frac1r T\invers df T\invers$ we see that
  $\hat f^d$ has left normal $\hat N^d =-\hat R$ and right normal
  $\hat R^d =-\hat N$ so that
  $d\hat f^d \wedge d\hat f = d\hat f \wedge d\hat f^d =0$ by type
  arguments.
\end{proof}

Geometrically, a classical Darboux pair $(f, \hat f)$ in $\R^3$ is
determined by a sphere congruence which envelopes both surfaces,
\cite{darboux, udo_habil}: the radius and centres of the spheres are
given by $r(T)=\frac{|T|^2}{2<T,N>}$ and $m=f+ r(T) N$ respectively
where $T = \hat f-f$.  In the case when the Darboux transform $\hat f$
is singular, the enveloping sphere congruence degenerates to a point
at $p\in M$ with $T(p)=0$.

\subsection{The associated family of flat connections}

It is known that the classical Darboux transformation is given by parallel sections of an
associated $\R$--family of flat connections, see
e.g. \cite{udo_habil, burstall_conformal_2010,
burstall_isothermic_2011}: this indicates that isothermic surfaces form  an integrable system as shown in \cite{cieslinski1995isothermic}.  

 For $t\in\R$ the family
\[
d_t = d+ \omega_t, \quad \omega_t = \begin{pmatrix} 0 & df \\ df^d t &0
\end{pmatrix}, 
\]
of connections on $M\times\H^2$ is flat: since
$df\wedge df^d =df^d\wedge df =0$ we see that $d\omega_t=0$ and
$\omega_t \wedge \omega_t=0$.  If for fixed $r\in\R_*$ the section
$\phi=\begin{pmatrix}\alpha \\ \beta
\end{pmatrix}\in\Gamma(\ttrivial{2})$  is a non--trivial $d_r$--parallel
section of the trivial $\H^2$ bundle
$\ttrivial{2}=\tilde M\times \H^2$ over the universal cover $\tilde M$
of $M$, that is,
\[
d\alpha =-df\beta, d\beta =-df^d\alpha r,
\]
then $T=\alpha\beta\invers:\tilde M\to \HP^1$ is a solution to the
Riccati equation (\ref{eq:riccati}) with parameter $r$ and
$\hat f = f+ T$ is a (possibly singular) classical Darboux transform
of $f$. Note that a Darboux transform is closed if and only if
$\varphi$ is a \emph{section with multiplier}, \cite{conformal_tori},
that is, $\gamma^*\varphi = \varphi h_\gamma$ for $\gamma\in\pi_1(M)$,
which is equivalent to $\gamma^*\alpha = \alpha h_\gamma$ since
$d\alpha =-df\beta$ and $\varphi =e\alpha + \psi\beta$.

Conversely, given a solution $T$ to the Riccati equation with
parameter $r$ the connection $d^T = d+T\invers d\hat f$,
$\hat f= f+ T$, is flat. Then any $d^T$--parallel section $\beta$ on
$\tilde M$ gives a $d_r$--parallel section by putting
$\alpha = T\beta$ and $\phi=\begin{pmatrix}\alpha \\ \beta
\end{pmatrix}\in\Gamma(\ttrivial{2})$:
since $T\invers d\hat f \beta = df^dr \alpha$ we have $d\beta
=-df^dr\alpha$ and 
\[
d\alpha = dT \beta + Td\beta
=-df\beta\,.
\]

If the isothermic surface $f: M \to\R^3$ is in 3--space then the
classical Darboux transform is in 3--space with the correct choice of
initial data for the Riccati equation:

\begin{theorem}[see e.g. \cite{udo_habil}]
\label{thm: reality}
Let $f: M \to \R^3$ be isothermic and $r\in\R$.  Then there exists a
3--parameter family of classical Darboux transforms with parameter $r$
so that $\hat f\in\R^3$.
\end{theorem}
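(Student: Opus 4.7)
The plan is to exhibit the involution $T\mapsto -\bar T$ as a symmetry of the Riccati equation \eqref{eq:riccati} when $f$ takes values in $\R^3=\Im\H$, and then to apply uniqueness of the solution in order to cut the full $4$--parameter family of classical Darboux transforms down to the real $3$--parameter subfamily.

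First I would show that $df^d$ is $\Im\H$--valued. In a local conformal curvature line parametrisation $z=x+iy$, formula \eqref{eq:dual in isothermic coordinates} gives $df^d=f_x^{-1}\,dx-f_y^{-1}\,dy$, and since $f_x,f_y\in\Im\H$ and every nonzero imaginary quaternion $v$ satisfies $v^{-1}=-v/|v|^2\in\Im\H$, the claim is immediate.

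Next I would verify the conjugation symmetry of the Riccati equation. Using $\overline{abcd}=\bar d\bar c\bar b\bar a$ together with $\overline{df}=-df$, $\overline{df^d}=-df^d$ and $\bar r=r$, a direct computation yields $\overline{T\,df^d\,r\,T}=-r\,\bar T\,df^d\,\bar T$. Conjugating \eqref{eq:riccati} and multiplying through by $-1$ then gives
\[
d(-\bar T)=-df+(-\bar T)\,df^d\,r\,(-\bar T),
\]
so that $-\bar T$ is again a solution of \eqref{eq:riccati}.

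Finally I would apply uniqueness. The Riccati equation for $T$ is equivalent to the linear system for a $d_r$--parallel section of $\ttrivial 2$, which admits a unique parallel extension from any prescribed initial value at a base point $p_0\in\tilde M$. Hence for any choice $T(p_0)\in\Im\H\setminus\{0\}$ one has $T(p_0)=-\overline{T(p_0)}$, and by uniqueness the solutions $T$ and $-\bar T$ must agree on all of $\tilde M$. Consequently $T$, and hence $\hat f=f+T$, takes values in $\R^3$, and since $\Im\H\setminus\{0\}$ is three--dimensional this supplies the claimed $3$--parameter family. The main obstacle I anticipate is the careful sign and order--reversal bookkeeping in the quaternionic conjugation identity of the second step; the rest is a routine use of uniqueness for the $d_r$--parallel sections together with a parameter count.
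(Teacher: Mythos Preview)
Your argument is correct. The paper does not actually supply a proof of this theorem---it merely states it with a reference to \cite{udo_habil}---so there is no in--paper proof to compare against. Your approach via the conjugation symmetry $T\mapsto -\bar T$ of the Riccati equation, combined with uniqueness of solutions through the linear $d_r$--parallel system, is the standard one.

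One small refinement you might consider: the fact that $df^d$ is $\Im\H$--valued can be seen without choosing isothermal coordinates. Since $f:M\to\R^3$ has $N=R$, the dual surface satisfies $*df^d=-Ndf^d=df^dN$, hence $N\,df^d+df^d\,N=0$. For $v\in\H$ and $N\in S^2$ one has $Nv+vN=2(\Re v)N-2\langle N,\Im v\rangle$, so this vanishes only when $\Re v=0$; thus $df^d$ takes values in $\Im\H$ globally. This sidesteps the (admittedly minor) issue of extending your local--coordinate argument across umbilic points by continuity.
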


Conversely, every family of flat connections of the correct form
determines an isothermic surface. To see this, we will consider a
second family of flat connections, given by the gauge
\[
\d_t= F\cdot d_t = d+ FdF\invers + F\omega_t F\invers, \quad
F = \begin{pmatrix} 1& f\\ 0&1
\end{pmatrix}\,, \quad t\in\R\,,
\] of $d_t$. We see  that $\d_t = d+ t\eta$ with \emph{retraction form}
\[
 \eta= \begin{pmatrix} fdf^d & -fdf^df
  \\ df^d & -df^d f
\end{pmatrix}\,.
\]
  
Note that $\im \eta \subset L \subset \ker\eta$ where $L=\psi\H
= \begin{pmatrix} f\\ 1
\end{pmatrix}
\H$ is the line bundle of $f$, and thus the flatness of $\d_t$
  implies that $d\eta=0$ since $\eta\wedge\eta=0$.

\begin{theorem}[\cite{udo_habil, burstall_conformal_2010,
    burstall_isothermic_2011, sym-darboux}]
\label{thm:retraction-real}
Let $\eta$ be a non--trivial retraction form, that is, 
$\eta\in\Omega^1(\End(\trivial 2))$ such that $\eta^2=0$ and $d\eta=0$. Then
\[
\d_t = d + t \eta\,, t\in\R,
\]
is a $\R$--family of flat connections. Moreover, if the associated line
bundle $L$ of an immersion
$f: M \to \H=\R^4$ satisfies $\im \eta \subset L
\subset \ker\eta$  then $f$ is  isothermic. Conversely, every isothermic
surface $f: M \to\R^4$ arises this way.   
\end{theorem}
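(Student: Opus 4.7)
The plan is to split the argument into three parts: flatness of $\d_t$ for every $t\in\R$, the extraction of a dual surface $f^d$ from the bundle data under the incidence condition on $L$, and the reverse construction.

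Flatness is essentially automatic. The curvature of $\d_t=d+t\eta$ equals $t\,d\eta + t^2\,\eta\wedge\eta$, and both terms vanish by hypothesis: the first by $d\eta=0$, the second because $\eta^2=0$ forces $\eta(X)\eta(Y)=0$ pointwise and hence $\eta\wedge\eta=0$.

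For the forward (isothermic) direction, the idea is to gauge $\d_t$ back to the standard shape $d+\omega_t$ appearing just before the theorem. I would use the frame change $F=\begin{pmatrix}1&f\\ 0&1\end{pmatrix}$, whose columns are $e\in\Gamma(\infty)$ and $\psi\in\Gamma(L)$. The condition $\im\eta\subset L\subset \ker\eta$ then pins $\eta$ down to the form
\[
\eta=\begin{pmatrix} f\omega & -f\omega f\\ \omega & -\omega f\end{pmatrix}
\]
for some $\H$-valued 1-form $\omega$ uniquely determined by $\eta e=\psi\omega$. A short computation gives $F^{-1}dF=\begin{pmatrix}0&df\\ 0&0\end{pmatrix}$ and $F^{-1}\eta F=\begin{pmatrix}0&0\\ \omega&0\end{pmatrix}$, so the gauge $F^{-1}\cdot\d_t$ takes the desired shape
\[
d_t=d+\begin{pmatrix} 0& df\\ t\omega & 0\end{pmatrix}.
\]
The lower-left block of $d\eta=0$ reads $d\omega=0$, so on the universal cover $\tilde M$ we have $\omega=df^d$ for some map $f^d$. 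Flatness of $d_t$ then collapses, after $d\omega_t=0$, to $\omega_t\wedge\omega_t=0$, i.e., $df\wedge df^d=df^d\wedge df=0$, which is exactly the Christoffel duality. The reverse direction mirrors this: given isothermic $f$ with dual $f^d$, I would define $\eta$ by the explicit formula above with $\omega=df^d$; then $\eta\psi=0$ and $\eta(e)=\psi\,df^d\in L$ give the incidence conditions, $\eta^2=0$ follows since $\eta$ factors through $L\subset\ker\eta$, and $d\eta=0$ reduces block-by-block to the isothermic wedge identities together with $d^2f=d^2f^d=0$.

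The main technical step I expect to be delicate is the passage from the abstract closed 1-form $\omega$ to a genuine branched conformal immersion $f^d$ on $\tilde M$. Closedness only furnishes a primitive; to upgrade $f^d$ to an immersion with the expected left and right normals $(-R,-N)$, I would combine $df\wedge df^d=0$ with the type decomposition of $df$ with respect to $(N,R)$ to conclude $*df^d=-R\,df^d=df^d N$, which is both conformality and the identification of the Gauss map. Zeros of $df^d$ correspond to branch points and are treated as in the discussion of singular Darboux transforms.
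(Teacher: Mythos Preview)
Your argument is correct and is exactly the natural proof one extracts from the paper's setup. The paper itself does not give a self-contained proof of this statement, citing the references in the theorem header instead; what it does provide is the construction immediately preceding the theorem, where starting from an isothermic $f$ with dual $f^d$ one defines $d_t$ and then gauges by $F$ to obtain $\d_t=d+t\eta$ with the explicit retraction form, noting $\im\eta\subset L\subset\ker\eta$ and $\eta\wedge\eta=0$. Your converse direction is literally this discussion, and your forward direction is its inversion: gauge back by $F^{-1}$, read off $\omega$ from the lower-left block, integrate, and recover the Christoffel pair from $\omega_t\wedge\omega_t=0$. This is the standard route in the cited literature as well.

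One small remark on presentation: your sentence ``$\eta^2=0$ forces $\eta(X)\eta(Y)=0$ pointwise'' is stronger than what $\eta\wedge\eta=0$ alone gives (that only yields the antisymmetrised identity). In the paper's usage the pointwise vanishing is really a consequence of $\im\eta\subset L\subset\ker\eta$, which is available once you are in the ``moreover'' clause; for the bare flatness statement it is cleaner to simply read $\eta^2$ as $\eta\wedge\eta$ and be done. This does not affect correctness.
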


\begin{rem} The above theorem holds more generally for surfaces in the
  4--sphere: an isothermic surface in the 4--sphere is the line bundle
  given by the kernel of a non--trivial retraction form. This way we
  obtain a conformal theory. However, since we want to compare the
  integrable systems of a CMC surface, which has an Euclidean theory,
  we stated the theorem here for surfaces in 4--space.
\end{rem}

The $\R$--family of flat connections $d_t$ allows to define a variety
of transforms including the associated family of isothermic
surfaces:

\begin{theorem}[see \cite{hertrich_musso_nicolodi,
  fran_epos}]
\label{thm:iso_ass_fam}
Let $f: M \to\R^4$ be isothermic and $\d_t=d+t\eta$, $t\in\R$, be its
associated family of flat connections. Let $r\in\R_*$ and
$\Phi\in\Gamma(\End_\H(\ttrivial 2))$ with
$(d\Phi)\Phi\invers = -r\eta$. Then
\[
\eta^\Phi =  \Phi\invers\eta\Phi
\]
 is a retraction form.  In particular, if 
\[
\Phi\invers L = \begin{pmatrix} f^\Phi \\ 1
 \end{pmatrix} \H
\] 
then the isothermic surface $A_{\Phi,r}(f) = f^\Phi: \tilde M \to\H$
with associated family $d^\Phi_t = d+ t \eta^\Phi$ is called a
\emph{Calapso transform} of $f$.

Moreover, if $f: M \to\R^3$ then $f^\Phi$ is, up to M\"obius
transformation,  in the 3--sphere.
\end{theorem}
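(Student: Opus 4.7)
The statement contains three assertions which I plan to treat in order.

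\textbf{Step 1: $\eta^\Phi$ is a retraction form.} Conjugation gives nilpotency immediately: $(\eta^\Phi)^2 = \Phi\invers\eta^2\Phi = 0$. For closedness I rewrite the hypothesis as $d\Phi = -r\eta\Phi$, from which $d(\Phi\invers) = -\Phi\invers(d\Phi)\Phi\invers = r\Phi\invers\eta$, and apply the graded Leibniz rule:
\begin{align*}
  d(\Phi\invers\eta\Phi)
  &= d(\Phi\invers)\wedge \eta\Phi + \Phi\invers\,d\eta\,\Phi - \Phi\invers\eta\wedge d\Phi\\
  &= r\Phi\invers\eta\wedge\eta\Phi + 0 + r\Phi\invers\eta\wedge\eta\Phi
  = 2r\,\Phi\invers(\eta\wedge\eta)\Phi.
\end{align*}
Since $\im\eta\subset L\subset\ker\eta$ forces $\eta(X)\eta(Y)=0$ for all tangent $X,Y$, we have $\eta\wedge\eta=0$ and hence $d\eta^\Phi=0$.

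\textbf{Step 2: associated family of $f^\Phi$.} Setting $L^\Phi:=\Phi\invers L$, I verify the sandwich condition: $\im\eta^\Phi = \Phi\invers(\im\eta)\subset \Phi\invers L = L^\Phi$, and for $v\in L^\Phi$ we have $\eta^\Phi v = \Phi\invers\eta(\Phi v)=0$ since $\Phi v\in L\subset\ker\eta$. Theorem \ref{thm:retraction-real} then certifies that the line bundle $L^\Phi = \begin{pmatrix} f^\Phi\\1\end{pmatrix}\H$ is the line bundle of an isothermic surface $f^\Phi$, whose associated family of flat connections is $d_t^\Phi = d+t\eta^\Phi$ by the construction of that family from the retraction form.

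\textbf{Step 3: reality in the $\R^3$ case.} The hypothesis $f: M \to \R^3$ means $N=R$, and one checks that the dual $f^d$ can be taken in $\R^3$ as well (e.g.\ the parallel CMC surface of Example \ref{ex:isothermic}(2), or in general from the isothermic coordinate formula \eqref{eq:dual in isothermic coordinates}). Consequently both $L$ and the entries of
\[
  \eta= \begin{pmatrix} fdf^d & -fdf^df \\ df^d & -df^d f\end{pmatrix}
\]
are compatible with the M\"obius--geometric structure selecting $\R^3\cup\{\infty\}\subset \HP^1$; concretely, $\eta$ takes values in the Lie subalgebra of $\gl(2,\H)$ stabilising this 3--sphere. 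Integrating $d\Phi=-r\eta\Phi$ thus yields $\Phi$ in a coset of the corresponding subgroup of $\Gl(2,\H)$, so $L^\Phi=\Phi\invers L$ lies in a 3--sphere which is M\"obius--equivalent to $\R^3\cup\{\infty\}$; after applying the appropriate M\"obius transformation, $f^\Phi$ takes values in the standard 3--sphere.

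The first two steps are direct algebra. The main obstacle is Step 3: correctly identifying the Lie subalgebra of $\gl(2,\H)$ cutting out the 3--sphere in $\HP^1$, verifying that $\eta$ lies in it whenever $N=R$ and $f^d\in\R^3$, and tracking the initial--condition ambiguity in $\Phi$ that produces the ``up to M\"obius transformation'' qualifier. Once the infinitesimal invariance is established, preservation of the 3--sphere under the flow of $d\Phi=-r\eta\Phi$ is automatic.
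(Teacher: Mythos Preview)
The paper does not actually prove this theorem; it is stated with references to the literature, and the only justification offered is the one--line remark immediately following the statement: $\d^\Phi_t = \Phi\invers\cdot \d_{t+r}$, which holds because $\Phi\invers\cdot\d_r = d$ (as $\Phi$ is a $\d_r$--parallel frame) and $\d_{t+r}=\d_r+t\eta$. That gauge identity is in fact a more economical route to your Steps~1--2: flatness of $\d_{t+r}$ for all $t$ is transported by the gauge to flatness of $d+t\eta^\Phi$ for all $t$, which forces $d\eta^\Phi=0$ and $\eta^\Phi\wedge\eta^\Phi=0$ simultaneously, with no Leibniz computation needed. Your direct calculation is correct, just slightly longer.

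For Step~3 the paper says nothing beyond the citation. Your strategy is the right one and can be completed: the $3$--sphere $\R^3\cup\{\infty\}\subset\HP^1$ is the null set of the $\H$--Hermitian form $h(v,w)=\bar v_1 w_2+\bar v_2 w_1$, and a short check (using $\bar f=-f$, $\overline{df^d}=-df^d$ when $f,f^d\in\Im\H$) shows $\bar\eta^T J + J\eta=0$ for $J=\begin{pmatrix}0&1\\1&0\end{pmatrix}$, so $\eta$ takes values in the Lie algebra preserving $h$. Integrating $d\Phi=-r\eta\Phi$ then keeps $\Phi$ in a fixed coset of the isotropy group, and $\Phi\invers L$ lands in a M\"obius image of the null quadric. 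The ``up to M\"obius transformation'' caveat is precisely the freedom in the initial value $\Phi(p_0)$, as you noted.
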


Note that the associated family of $f^\Phi$ is given by
 $\d^\Phi_t = \Phi\invers\cdot \d_{t + r}$ since
$\Phi\invers \cdot \d_r = d$ and  $\d_{t+ r} =
\d_r + t \eta$.  

\begin{figure}[H]
  \includegraphics[height=.3\linewidth]{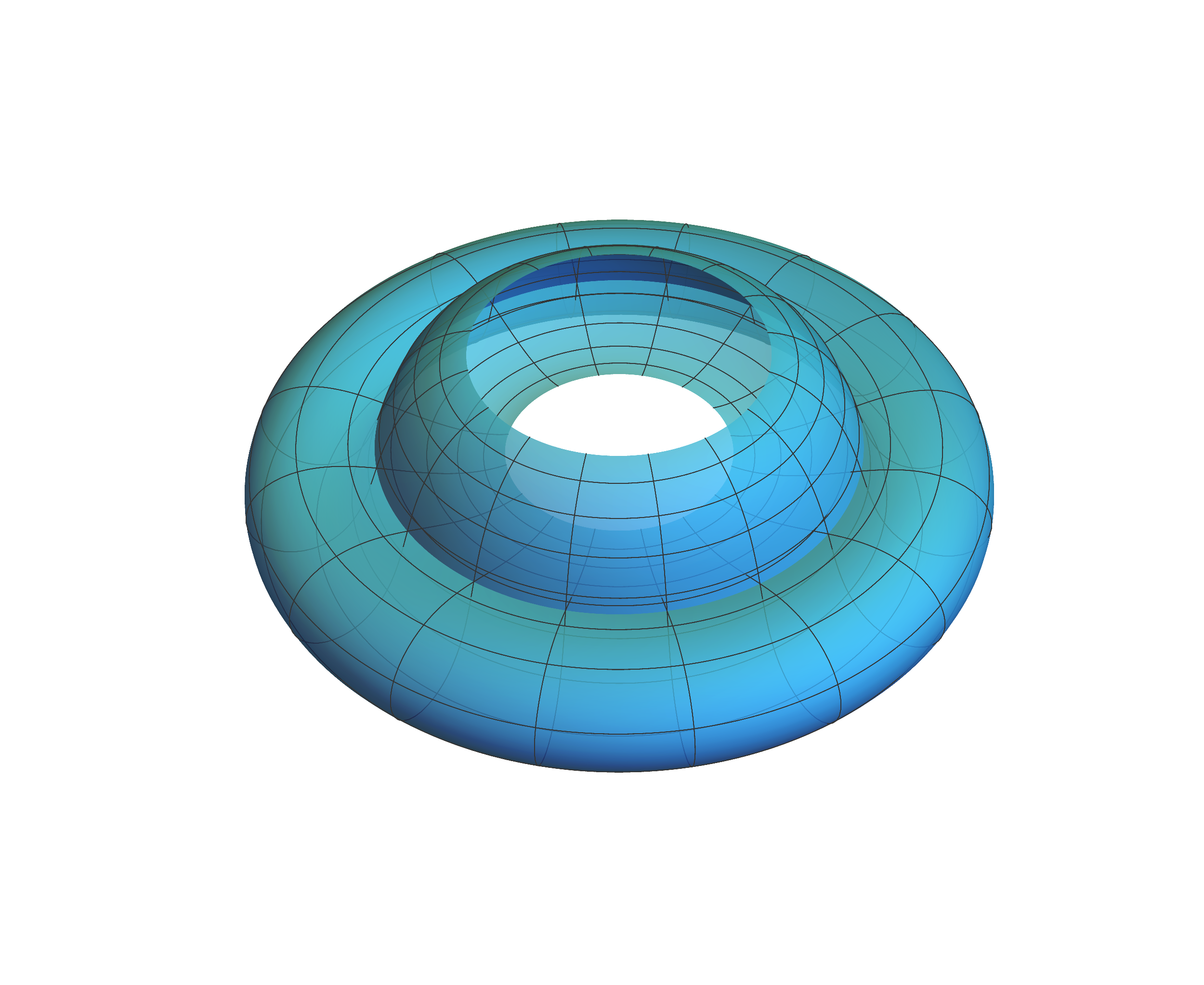}
  \includegraphics[height=.3\linewidth]{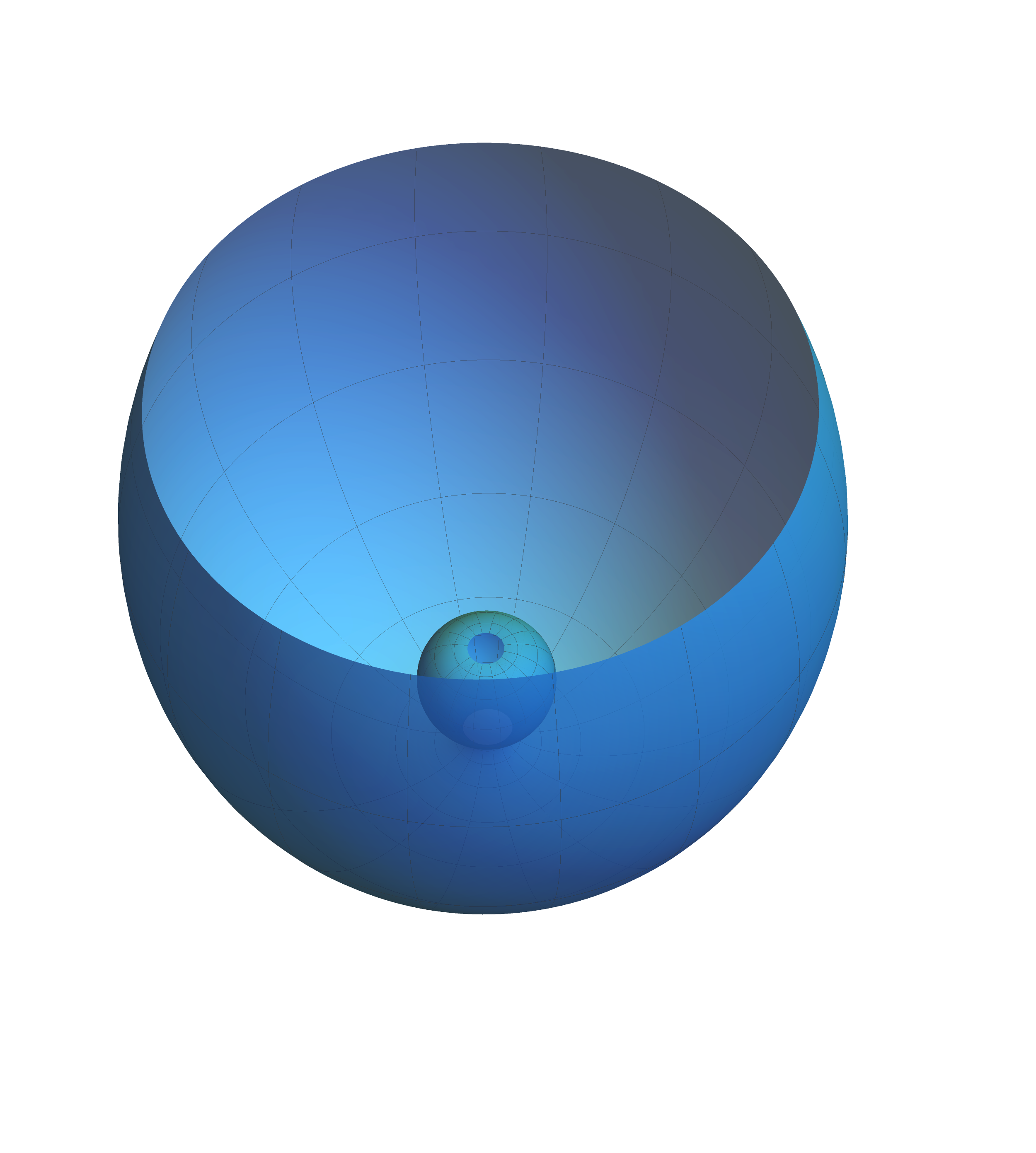}
  \includegraphics[height=.3\linewidth]{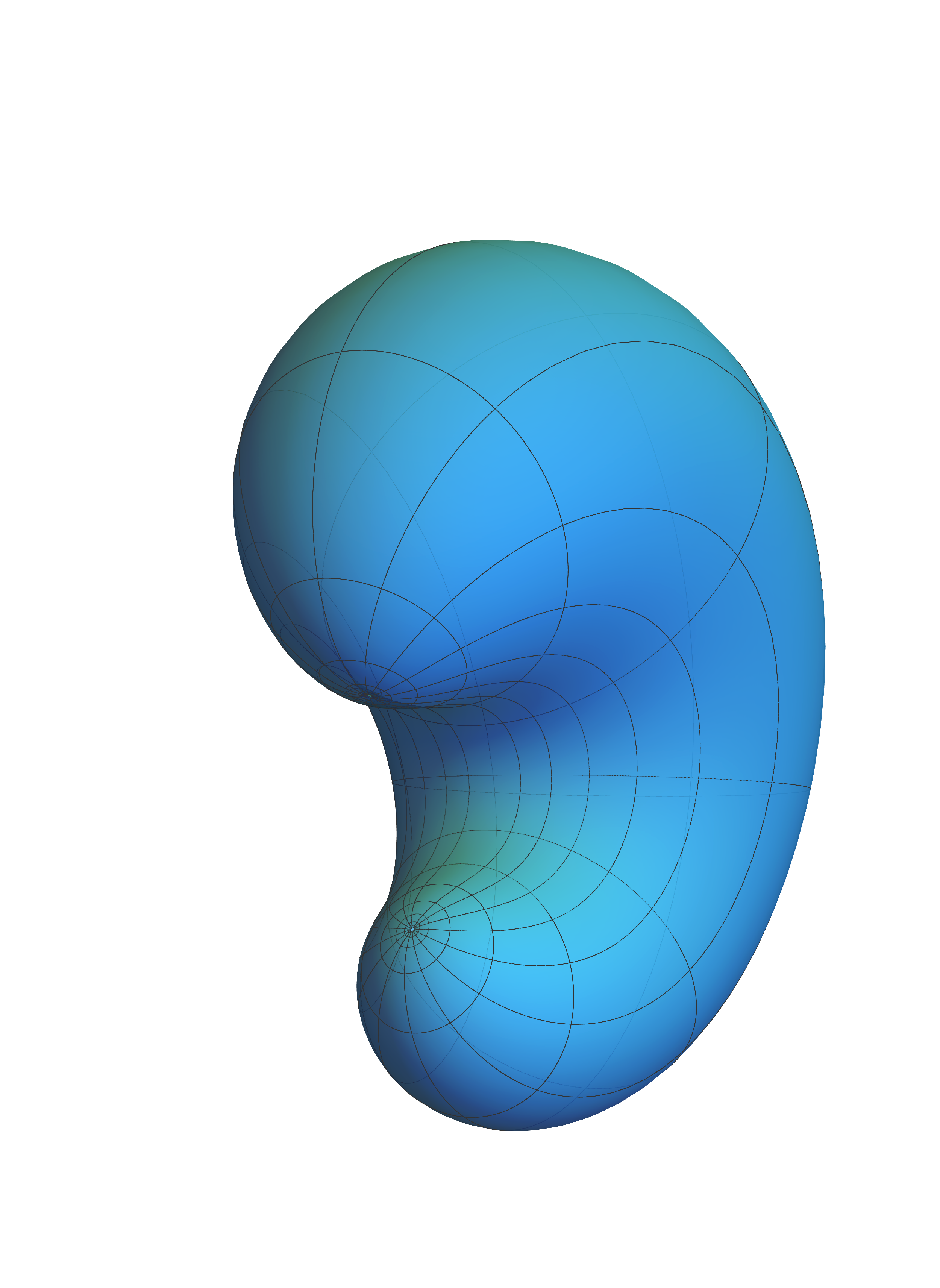}
\caption{Elements of the associated family of the surface of
  revolution $f(x,y)= i( -x +
  \frac{x^3}3) + j(1+x^2)e^{-iy}$, for $r=-\frac 15$, $r=\frac 14$ and
  $r=\frac 34$.}
\end{figure}

\subsection{Holomorphic sections and (generalised) Darboux transformation}

In \cite{conformal_tori} the notion of a Darboux transformation was
extended to give a geometric interpretation of the spectral curve of a
conformal torus as, essentially, the set of its closed Darboux
transforms. This generalised Darboux transformation can geometrically
be understood as a weakening of the enveloping condition but for our
purposes we will use the following definition:

\begin{definition}[\cite{conformal_tori}] Let $f: M \to\H$ be a
  conformal immersion with associated line bundle $L$.  We denote by
  $\tilde L\subset \ttrivial 2$ the induced line bundle on the
  universal cover $\tilde M$ of $M$.

  If $\varphi\in\Gamma(\ttrivial 2)$ satisfies
  $d\varphi\in\Omega^1(\tilde L)$ and
  $\H^2=\varphi_p\H\oplus\tilde L_p$ for all $p\in \tilde M$, then
  $\hat L =\varphi\H$ is called a \emph{Darboux transform} of $f$.
\end{definition}
\begin{rem}
  As before, if $\varphi(p)\in \tilde L_p$ for some $p\in \tilde M$
  then $\hat f$ is called a \emph{singular Darboux transform}.  In the
  case when $f: \tilde M \to\R^3$ this means that the enveloping
  sphere congruence degenerates to a point, and $\hat f$ is branched.
\end{rem}

Darboux transforms are obtained by prolongation of holomorphic
sections: the left normal of an immersion $f: M \to\H$ defines a
quaternionic holomorphic structure $D$, see \cite{klassiker}, on
$\trivial 2/L$ by setting
\[
D(e\alpha) = \frac 12 e(d\alpha+ N*d\alpha)\,.
\]
Here we identify $\infty=\trivial 2/L$ via the isomorphism
$\pi|_{e\H}$ where $\pi: \trivial 2 \to \trivial 2/L$ is the canonical
projection. For any $e\alpha\in H^0(\tilde M\times e\H)=\ker D$ there
exists a unique \emph{prolongation} $\varphi\in\Gamma(\ttrivial 2)$,
that is, $\varphi$ is a lift of $e \alpha$ such that
$\pi d\varphi = 0$. Then $\hat L =\varphi\H$ is a (possibly singular)
Darboux transform of $f$, and every Darboux transform of $f$ arises
this way, \cite{conformal_tori}.

As before, closed Darboux transforms are given by holomorphic sections
with multipliers, i.e., $\gamma^*\varphi = \varphi h_\gamma$, for all
$\gamma\in\pi_1(M)$.

If $f: M \to\R^4$ is isothermic with dual surface $f^d$ then any
$\d_r$--parallel section
$\varphi=e\alpha + \psi\beta \in\Gamma(\ttrivial 2)$, $r\in\R_*$,
defines a holomorphic section: Since $d\alpha =-df\beta$ we see that
$*d\alpha =Nd\alpha$ and $e\alpha$ is a holomorphic section. Moreover,
$\varphi $ is a prolongation of $e\alpha$ since $\pi \varphi= e\alpha$
and $d\varphi = -r\eta\varphi \in\Omega^1(\tilde L)$. But then the
line bundle $\hat L =\varphi\H$ has affine coordinate
$\hat f = f+ \alpha\beta\invers$, and the classical Darboux
transformation is indeed a Darboux transformation in the more general
sense.

The classical Bianchi permutability theorem
\cite{bianchi_ricerche_1905} extends to the more general
transformation:
\begin{theorem}[Bianchi permutability \cite{conformal_tori}]
\label{thm:bianchi}
Let $f: M \to\R^4$ be conformal and $e\alpha_1, e\alpha_2$ be
holomorphic sections with $\alpha_1(p)\not=\alpha_2(p)$ for all
$p\in \tilde M$ and prolongations $\varphi_1$ and $\varphi_2$
respectively. If the Darboux transforms
$L_1=\varphi_1\H, L_2=\varphi_2\H$ are immersed, then there is a
common (possibly singular) Darboux transform $\hat L =\hat\varphi\H$
of $L_1$ and $L_2$ given by
\[
\varphi = \varphi_1 - \varphi_2\chi
\]
where $\chi$ is given by $d\varphi_1 = (d\varphi_2)\chi$. 
\end{theorem}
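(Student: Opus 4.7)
\emph{Proof plan.} The plan is to first construct $\chi$ as a smooth $\H$-valued function on $\tilde M$ using the integrability built into the prolongation, and then to verify by short differential computations that $\hat L = \hat\varphi\H$ is a (possibly singular) Darboux transform of each of $L_1$ and $L_2$ by exhibiting the appropriate spanning section in each case.

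I would begin by decomposing $\varphi_i = \psi a_i + e\alpha_i$ with respect to the splitting $\ttrivial 2 = \tilde L \oplus e\H$. The prolongation condition $d\varphi_i \in \Omega^1(\tilde L)$ forces $d\alpha_i = -df\cdot a_i$ (consistent with the holomorphicity $*d\alpha_i = N\, d\alpha_i$) and collapses $d\varphi_i$ to $\psi\cdot da_i$, so the relation $d\varphi_1 = (d\varphi_2)\chi$ is equivalent to the pointwise quaternionic equation $da_1 = da_2\cdot \chi$. Differentiating $d\alpha_i = -df\cdot a_i$ then supplies the integrability identity $df\wedge da_i = 0$, and in a basis $\{e_1, Je_1\}$ of $T_p\tilde M$ this is equivalent to $(da_i)(Je_1) = df(e_1)\invers N\,df(e_1)\,(da_i)(e_1)$. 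Hence the space of $\H$-valued 1-forms $\omega$ with $df\wedge\omega = 0$ is a free right $\H$-module of rank one at every point where $df$ is nondegenerate. Immersedness of $L_2$ makes $da_2$ pointwise nonzero, so there is a unique quaternion $\chi(p)$ with $da_1 = da_2\cdot\chi$, depending smoothly on $p$.

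Once $\chi$ is in hand the verification is mechanical. Differentiating $\hat\varphi = \varphi_1 - \varphi_2\chi$ and using $d\varphi_1 = (d\varphi_2)\chi$ gives
\[
d\hat\varphi = d\varphi_1 - (d\varphi_2)\chi - \varphi_2\,d\chi = -\varphi_2\,d\chi \in \Omega^1(\tilde L_2),
\]
while $\hat\varphi(p) \in \tilde L_2(p)$ forces $\varphi_1(p) \in \varphi_2(p)\H$, i.e.\ $L_1(p) = L_2(p)$, so transversality fails only at the isolated singular points where the two Darboux transforms coincide. For the dual statement with $L_1$, I would use the alternative spanning section $\tilde\varphi := \varphi_2 - \varphi_1\chi\invers = -\hat\varphi\,\chi\invers$ of $\hat L$ (well defined since immersedness of $L_1$ forces $\chi \neq 0$ pointwise) and obtain $d\tilde\varphi = -\varphi_1\,d(\chi\invers) \in \Omega^1(\tilde L_1)$ by the same calculation with the roles of $\varphi_1$ and $\varphi_2$ interchanged.

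The principal obstacle is producing $\chi$ globally: this requires both the integrability identity $df\wedge da_i = 0$ coming from $d^2\alpha_i = 0$ (without which $da_1$ and $da_2$ would only be related pointwise tangent vector by tangent vector, and not by a single quaternion at each point) and the immersedness of $L_2$ to guarantee nonvanishing of the denominator. The remaining differential computations and the transversality discussion are then essentially symbolic.
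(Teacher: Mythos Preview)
Your proof is correct and follows essentially the same approach as the paper's. The paper's proof is very terse: it asserts in one line that $\chi$ exists because $d\varphi_1, d\varphi_2 \in \Omega^1(\tilde L)$ are both valued in the line bundle $\tilde L$, then computes $d\varphi = -\varphi_2\,d\chi \in \Omega^1(L_2)$ and says ``similarly'' for $L_1$. You fill in precisely the step the paper glosses over---namely, that $\Omega^1_p(\tilde L_p)$ is a priori a rank-two right $\H$-module, and it is the integrability constraint $df \wedge da_i = 0$ (equivalently, the type condition $*da_i = -R\,da_i$) coming from $d^2\alpha_i = 0$ that cuts this down to rank one and makes $\chi$ well defined; your use of the immersedness of $L_1, L_2$ to guarantee $da_2 \neq 0$ and $\chi \neq 0$ is exactly what is needed. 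The subsequent differential computations and the symmetric treatment via $\tilde\varphi = -\hat\varphi\,\chi\invers$ match the paper's argument.
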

\begin{proof}
  Since $d\varphi_1, d\varphi_2\in\Omega^1(\tilde L)$ are 1--forms in
  the line bundle $\tilde L$ of $f$ there is $\chi: \tilde M \to\H$
  with $d\varphi_1 =(d\varphi_2)\chi$. Then
  $d\varphi= -\varphi_2d\chi\in\Omega^1(L_2)$ and $\hat L$ is a
  Darboux transform of $L_2$. Similarly, $\hat L$ is a Darboux
  transform of $L_1$.
\end{proof} 
\begin{figure}[H]
\includegraphics[height=.24\linewidth]{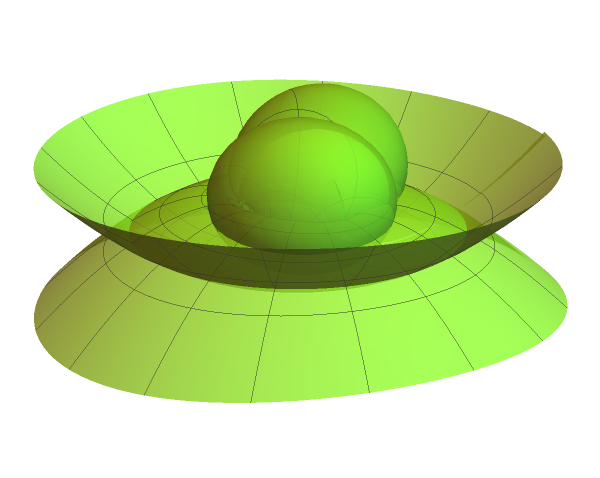}
\includegraphics[height=.24\linewidth]{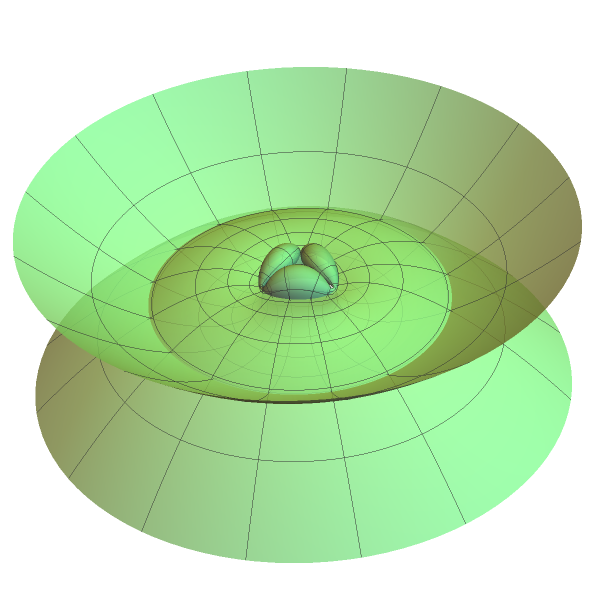}
\includegraphics[height=.24\linewidth]{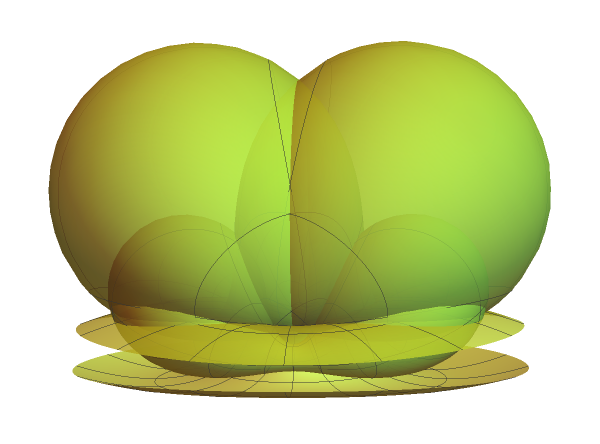}
\caption{The surface on the right is the common Darboux transform of
  the two Darboux transforms on
  the left (with spectral parameter $\frac 34$ and $2$ respectively) of the surface of
  revolution $f(x,y)= i( -x +
  \frac{x^3}3) + j(1+x^2)e^{-iy}$.}
\end{figure}

\section{Isothermic surfaces}
\label{sec:isothermic}

In this paper, we discuss the link between the various integrable
systems of a CMC surface. Since the Gauss map of a CMC surface is
harmonic, there exists a $\C_* $--family of flat connections on the
trivial $\C^2$--bundle, \cite{hitchin-harmonic}, where we denote
$\C_*=\C\setminus\{0\}$. On the other hand a CMC surface is isothermic
and we have seen that every isothermic surfaces gives rise to an
associated family of flat connections $d_t$ with real parameter
$t\in\R$ on the trivial $\H^2$--bundle. To compare these families, we
first extend the family of quaternionic connections $d_t$ to allow for
a complex parameter. We then discuss the link between generalisations
of the classical Darboux transformation and the simple factor dressing
for complex parameters.

\subsection{A $\C$--family of flat connections}

Choosing the complex structure $I$ which operates by right
multiplication by the quaternion $i$ on $\H^2$ we can identify
$(\H^2, I) =\C^4$. In particular, the associated family $d_t$,
$t\in\R$, of the isothermic surface $f: M \to\R^4$ can be seen as a
family of flat connections on the trivial $\C^4$--bundle
$(M\times\H, I)=M\times \C^4$.

This allows to use a complex parameter $\lambda$ instead of the real
parameter $t$ to obtain a $\C$--family of connections on
$\underline{\C}^4=M\times\C^4$ by setting
\[
d_\lambda = d+ \omega_\lambda
\]
where
\[
\omega_\lambda = \begin{pmatrix} 0 & df \\ 0 &0
\end{pmatrix} + \lambda\cdot\begin{pmatrix} 0 & 0 \\ df^d &0
\end{pmatrix}
\]
and $\lambda $ acts by right multiplication by
$\lambda\in\C=\Span_\R\{1, i\}$. Then $d_\lambda$ is flat since
$d\omega_\lambda=\omega_\lambda\wedge \omega_\lambda=0$ where we used
that $df\wedge df^d = df^d\wedge df=0$.  Note that $d_\lambda$ is a
complex connection which is only quaternionic when $\lambda\in\R$.  A
section $\phi =\begin{pmatrix} \alpha\\ \beta
\end{pmatrix}
$ is $d_\varrho$--parallel, $\varrho\in\C_*$, if and only

\begin{align}
\label{eq:iso diff}
\begin{split}
d\alpha &= -df \beta\\
d\beta &= -df^d \alpha\varrho\,.
\end{split}
\end{align}

As before, we consider the gauge by $F$ of $d_\lambda$ to obtain the
family of flat connections as given in \cite{bohle-diss}
\[
\d_\lambda= F\cdot d_\lambda = d+ FdF\invers + F\omega_\lambda F\invers, \quad
F = \begin{pmatrix} 1& f\\ 0&1
\end{pmatrix}\,,
\] so  that $\d_\lambda = d+ \lambda\eta$ with retraction form 
\[
 \eta= \begin{pmatrix} fdf^d & -fdf^df
  \\ df^d & -df^d f
\end{pmatrix}\,.
\]
Note that we again use that $\lambda$ operates by right
multiplication.  As before $\im \eta \subset L \subset \ker\eta$ and
thus the flatness of $\d_\lambda$ implies that $d\eta=0$ since
$\eta\wedge\eta=0$.  Thus, the result in Theorem
\ref{thm:retraction-real} extends from the case $t\in\R$ to the case
$\lambda\in\C$:
\begin{theorem}
\label{thm:retraction}
Let $\eta$ be a non--trivial retraction form, that is, 
$\eta\in\Omega^1(\End(\trivial 2))$ such that $\eta^2=0$ and $d\eta=0$. Then
\[
\d_\lambda = d + \lambda \eta
\]
is a $\C$--family of flat connections. Moreover, if the associated
line bundle $L$ of an immersion $f: M \to \H=\R^4$ satisfies
$\im \eta \subset L \subset \ker\eta$ then $f$ is
isothermic. Conversely, every isothermic surface $f: M \to\R^4$ arises
this way.
\end{theorem}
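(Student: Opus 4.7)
My plan is to prove flatness by a direct curvature computation, and to reduce the isothermic characterization (both directions) to the real-parameter version already established as Theorem~\ref{thm:retraction-real}.

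First I would verify flatness. The curvature of $\d_\lambda = d+\lambda\eta$ is
\[
F_{\d_\lambda} = d(\lambda\eta) + (\lambda\eta)\wedge(\lambda\eta).
\]
Since $\lambda\in\C$ acts on values in $\H^2$ by right multiplication while $\eta$ acts by the left $\End_\H$-action, the two actions commute, and the expression collapses to $\lambda\,d\eta + \lambda^2\,\eta\wedge\eta$. The hypothesis $d\eta=0$ disposes of the first summand, and $\eta^2=0$ together with $\im\eta\subset\ker\eta$ gives $\eta(X)\circ\eta(Y)=0$ for all tangent $X,Y$, whence $\eta\wedge\eta=0$. Thus $\d_\lambda$ is flat for every $\lambda\in\C$.

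For the isothermic characterization, I would observe that none of the hypotheses on $\eta$ and $L$ depends on the spectral parameter: $\eta^2=0$, $d\eta=0$, and $\im\eta\subset L\subset\ker\eta$ are pointwise statements about $\eta$. Restricting to any real $t\in\R_*$ yields precisely the real family $\d_t = d+t\eta$ appearing in Theorem~\ref{thm:retraction-real}, which already gives the existence of a dual surface $f^d$ and hence the isothermic structure on $f$. Conversely, starting from an isothermic $f$ with dual $f^d$, the retraction form
\[
\eta = \begin{pmatrix} f\,df^d & -f\,df^d f \\ df^d & -df^d f \end{pmatrix}
\]
produced by Theorem~\ref{thm:retraction-real} satisfies the required conditions, and substituting into $\d_\lambda = d+\lambda\eta$ delivers the desired complex family.

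The main (and essentially the only) subtlety is the bookkeeping in the curvature computation: one must check that right-multiplication by $\lambda\in\C$ commutes with the left $\End_\H$-action of $\eta$, so that $(\lambda\eta)\wedge(\lambda\eta)=\lambda^2\,\eta\wedge\eta$, and that $\d_\lambda$ is then a genuine (complex, and quaternionic only when $\lambda\in\R$) connection on the underlying bundle $(\trivial{2},I)=\underline{\C}^4$. Once the action conventions are made precise, the complex family behaves formally as in the real case, and all geometric content transfers verbatim from Theorem~\ref{thm:retraction-real}.
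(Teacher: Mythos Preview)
Your proposal is correct and follows essentially the same route as the paper: the paper does not give a separate proof of Theorem~\ref{thm:retraction} but simply remarks, just before the statement, that since $\im\eta\subset L\subset\ker\eta$ gives $\eta\wedge\eta=0$ and hence flatness of $\d_\lambda$, the result in Theorem~\ref{thm:retraction-real} extends verbatim from $t\in\R$ to $\lambda\in\C$. Your only addition is making explicit that right multiplication by $\lambda$ commutes with the left $\End_\H$-action of $\eta$, which is exactly the point the paper alludes to with ``we again use that $\lambda$ operates by right multiplication''; note also that in the paper's convention $\eta^2=0$ already encodes $\eta(X)\circ\eta(Y)=0$ (equivalently $\im\eta\subset\ker\eta$), so invoking both is slightly redundant but harmless.
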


We conclude this section by giving the affine coordinate of a surface
in the associated family of an isothermic surface.

\begin{theorem}
\label{thm:Ttransform}
Let $f: M \to\R^3$ be an isothermic surface.
For $r\in\R_*$ every Calapso transform $A_{\Phi, r}(f) = f^\Phi:
\tilde M \to\R^3$ of $f$  is given by 
\[
  f^\Phi= -\alpha_1\invers \alpha_2
\]
where $\Phi=(\varphi_1, \varphi_2)$ and
$\varphi_i = e\alpha_i + \psi \beta_i$ are $\H$--linearly independent
$\d_r$--parallel sections.
\end{theorem}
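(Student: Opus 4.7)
The plan is to identify $\Phi=(\varphi_1,\varphi_2)$ as a valid endomorphism for the Calapso transform of Theorem \ref{thm:iso_ass_fam} and then to read off the affine coordinate of $\Phi\invers L$ by direct linear algebra.

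First I would exploit the gauge identity $\d_\lambda = F\cdot d_\lambda$, where $F = \begin{pmatrix} 1 & f \\ 0 & 1 \end{pmatrix}$: a section $\phi_i = \begin{pmatrix} \alpha_i \\ \beta_i \end{pmatrix}$ is $d_r$--parallel if and only if $F\phi_i = e\alpha_i + \psi\beta_i = \varphi_i$ is $\d_r$--parallel. Setting $A = (\phi_1,\phi_2) = \begin{pmatrix} \alpha_1 & \alpha_2 \\ \beta_1 & \beta_2 \end{pmatrix}$ and $\Phi = FA$, the $\H$--linear independence of $\varphi_1,\varphi_2$ makes $\Phi$ an invertible section of $\End_\H(\ttrivial 2)$, and the parallelness $\d_r\Phi = 0$ rearranges to $(d\Phi)\Phi\invers = -r\eta$. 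Hence $\Phi$ satisfies the hypothesis of Theorem \ref{thm:iso_ass_fam}, and the Calapso transform $f^\Phi$ is characterised by $\Phi\invers L = \begin{pmatrix} f^\Phi \\ 1 \end{pmatrix}\H$.

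Next I would compute $\Phi\invers L$ explicitly. Since $L = \psi\H = F\begin{pmatrix} 0 \\ 1 \end{pmatrix}\H$, we have $\Phi\invers L = A\invers\begin{pmatrix} 0 \\ 1 \end{pmatrix}\H$, which is the quaternionic line of vectors $v = \begin{pmatrix} x \\ y \end{pmatrix}$ whose image $Av$ has vanishing first component, i.e.\ $\alpha_1 x + \alpha_2 y = 0$. On the locus where $\alpha_1$ is invertible, this line is spanned by $\begin{pmatrix} -\alpha_1\invers \alpha_2 \\ 1 \end{pmatrix}$; comparison with $\begin{pmatrix} f^\Phi \\ 1 \end{pmatrix}\H$ gives $f^\Phi = -\alpha_1\invers \alpha_2$ as claimed. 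The only (mild) subtlety is the discrete zero locus of $\alpha_1$, where the affine formula develops a pole corresponding to $f^\Phi$ passing through $\infty\in\HP^1$; this is harmless since $f^\Phi$ is in general only a branched conformal immersion on $\tilde M$, and on the complement where $\alpha_2$ is invertible the same line is represented by $\begin{pmatrix} 1 \\ -\alpha_2\invers\alpha_1 \end{pmatrix}$, confirming the projective identity.
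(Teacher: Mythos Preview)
Your approach is essentially the paper's: both write $\Phi = FA$ with $A=(\phi_1,\phi_2)$ and reduce $\Phi\invers L$ to $A\invers\begin{pmatrix}0\\1\end{pmatrix}\H$; you then solve the linear condition $\alpha_1 x + \alpha_2 y = 0$ directly, whereas the paper computes $A\invers$ explicitly via the auxiliary quantity $m=\beta_1-\beta_2\alpha_2\invers\alpha_1$ (and checks $m\not\equiv 0$ from independence). Your route is a little cleaner and avoids that detour.

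Your endgame needs a small correction, however. The hypothesis of the theorem is that $f^\Phi:\tilde M\to\R^3$, so $f^\Phi$ does \emph{not} pass through $\infty$; the formula $-\alpha_1\invers\alpha_2$ must therefore extend smoothly across the zeros of $\alpha_1$ rather than develop poles there. You also assert without proof that the zero locus of $\alpha_1$ is discrete. The paper supplies both points: since $d\alpha_i=-df\beta_i$ gives $*d\alpha_i=Nd\alpha_i$, the sections $e\alpha_i\in H^0(\widetilde{\trivial 2/L})$ are holomorphic and hence have isolated zeros; then the standing assumption $f^\Phi:\tilde M\to\R^3$ (equivalently $\Phi\invers L\neq\infty$) forces the affine expression $-\alpha_1\invers\alpha_2$, valid off those isolated points, to extend into them.
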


\begin{proof}
  Let $f^\Phi: \tilde M \to \R^4$ be a Calapso transform of $f$ given
  by an invertible $\Phi$ satisfying $(d\Phi)\Phi\invers=-r\eta$, that
  is, $f^\Phi: \tilde M \to \R^4$ is the affine coordinate of
  $\Phi\invers L$. Since $\varphi_i = e\alpha_i+ \psi\beta_i$ are
  $\d_r$--parallel and the sections
  $e\alpha_i\in H^0(\widetilde{\trivial 2/L})$ are holomorphic. Thus,
  see \cite{klassiker}, the holomorphic sections $e\alpha_i$, and thus
  also $\alpha_i$, have isolated zeros.

We write $\Phi =FM$ where
\[
M =(\phi_1, \phi_2) =\begin{pmatrix} \alpha_1 & \alpha_2\\ \beta_1
&\beta_2\end{pmatrix}\,, \quad F = \begin{pmatrix} 1 & f \\ 0&1
\end{pmatrix}\,.
\] 
Then $M$ is invertible and $\phi_1, \phi_2$ are $\H$--independent
$d_r$--parallel sections. Thus, $f^\Phi$ is given by the affine
coordinate of
\[
\Phi\invers L = M\invers \begin{pmatrix} 0 \\ 1
\end{pmatrix}\H\,.
\] 
We now consider $m=\beta_1 - \beta_2\alpha_2\invers \alpha_1$ away
from the isolated zeros of $\alpha_2$. If $m=0$ then
$\beta_1=\beta_2\alpha_2\invers \alpha_1$ shows
$\phi_1 = \phi_2(\alpha_2\invers\alpha_1)$. Since both sections
$\phi_1$ and $\phi_2$ are $d_r$--parallel, this implies that
$\alpha_2\invers\alpha_1$ is constant, contradicting the fact that $M$
is invertible. Thus, $m\not=0$ and since
$d\alpha_i =-df\beta_i, d\beta_i =-df^d \alpha_i r$ we have
$dm = (\beta_2\alpha_2\invers) df m$ so that $m$ is (branched)
conformal with isolated zeros.  Away from the isolated zeros of
$\alpha_2$ and $m$ we have
\[
M\invers = \begin{pmatrix} -m\invers \beta_2\alpha_2\invers & m\invers
  \\ \alpha_2\invers \alpha_1m\invers\beta_1\alpha_1\invers &
  -\alpha_2\invers\alpha_1 m\invers
\end{pmatrix}
\]
and thus, away from the isolated zeros of $\alpha_1, \alpha_2$ and
$m$,
\[
f^\Phi = -\alpha_1\invers\alpha_2\,.
\]
Since by assumption $f^\Phi: \tilde M \to\R^4$ this extends into the
isolated zeros of $\alpha_1, \alpha_2$ and $m$.
\end{proof}
\begin{rem}
  Note that the closing conditions for $f^\Phi$ are now given by left
  multipliers of the holomorphic sections: $f^\Phi$ is closed if and
  only if $\alpha_1, \alpha_2$ have the same left multiplier $h$, that
  is, $\gamma^*\alpha_i = h_\gamma\alpha_i$ for all
  $\gamma\in\pi_1(M)$, $i=1,2$.
\end{rem}

\subsection{The $\varrho$--Darboux transformation}

We will now consider Darboux transforms which arise from parallel
sections of the extended associated family $d_\lambda$,
$\lambda\in\C$. These transforms are again isothermic, see
\cite{bohle-diss}, but still extend the notion of the classical
Darboux transformation:

\begin{definition}
  Let $f: M \to\R^3$ be an isothermic surface with dual surface $f^d$,
  and $\varrho\in \C_*$ fixed. Let
  $\phi=\begin{pmatrix} \alpha \\ \beta
\end{pmatrix}
\in\Gamma(\ttrivial 2)$ be a $d_\varrho$--parallel section such that
\[T=\alpha\beta\invers \colon \tilde M \to\R^4\setminus\{0\}\,.
\]
Then $D_{\phi,\varrho}(f) = f+ T$ is called a \emph{
  $\varrho$--Darboux transform} of $f$.
\end{definition}
\begin{rem}
  We also note that as before, we obtain a \emph{singular Darboux
    transform} if $f(p) =\hat f(p)$ for isolated $p\in \tilde M$.
\end{rem}
We give an independent proof that $\varrho$--Darboux transforms are
isothermic by determining the dual surface of the Darboux transform as
in Theorem \ref{thm: dual CDT}:

\begin{theorem}
\label{thm: rho dt}
Let $f: M \to\R^3$ be an isothermic surface with dual surface $f^d$,
and $\varrho\in \C_*$ fixed. Let $\phi=\begin{pmatrix} \alpha \\ \beta
\end{pmatrix}
\in\Gamma(\ttrivial 2)$
be a $d_\varrho$--parallel section such that  
\[T=\alpha\beta\invers
\colon \tilde M \to\R^4\setminus\{0\}\,.
\]
Then the $ \varrho$--Darboux transform $\hat f =f+ T$ is
isothermic and its dual surface $(\hat f)^d = f^d+ T^d$ is a
$\varrho$--Darboux transform of $f^d$ where 
$T^d = \beta \varrho\invers \alpha\invers$, that is
\[ (D_{\phi, \varrho}(f))^d = D_{\phi^d,\varrho}(f^d)
\]
where $\phi^d = \begin{pmatrix} \beta\varrho\invers \\ \alpha
\end{pmatrix}
$. Moreover, $T$ satisfies
\begin{equation}
\label{eq:gen riccati}
dT = -df+ T df^d\hat \varrho T
\end{equation}
where $\hat \varrho = \alpha\varrho\alpha\invers$.

\end{theorem}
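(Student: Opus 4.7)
The plan is to verify the Riccati equation \eqref{eq:gen riccati} by direct differentiation of $T=\alpha\beta\invers$ using \eqref{eq:iso diff}, and then exploit the resulting formula for $d\hat f$ together with a ``dual'' parallel section to identify $\hat f^d$ as a dual surface of $\hat f$. Using $d(\beta\invers)=-\beta\invers(d\beta)\beta\invers$ and substituting $d\alpha=-df\beta$, $d\beta=-df^d\alpha\varrho$ gives
\[
dT \;=\; -df\,\beta\beta\invers + \alpha\beta\invers(df^d\alpha\varrho)\beta\invers \;=\; -df + T\,df^d\,\alpha\varrho\beta\invers.
\]
Recognising $\alpha\varrho\beta\invers=(\alpha\varrho\alpha\invers)(\alpha\beta\invers)=\hat\varrho T$ yields \eqref{eq:gen riccati}, and hence $d\hat f = df+dT = T\,df^d\,\alpha\varrho\beta\invers$.

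For the dual statement, I would show that $\phi^d=\begin{pmatrix}\beta\varrho\invers\\ \alpha\end{pmatrix}$ is parallel for the connection associated with the isothermic pair $(f^d,f)$, so that the Riccati derivation above, applied with the roles of $f$ and $f^d$ interchanged, automatically produces a $\varrho$--Darboux transform of $f^d$ of the claimed form. The verification is immediate: $d(\beta\varrho\invers)=(d\beta)\varrho\invers=-df^d\,\alpha$, and $d\alpha=-df\beta=-df(\beta\varrho\invers)\varrho$, where we use that the complex scalars $\varrho$ and $\varrho\invers$ commute. This identifies $T^d=\beta\varrho\invers\alpha\invers$ as the resulting transform of $f^d$, with $d\hat f^d=T^d\,df\,\beta\alpha\invers$ and twisted parameter $\beta\varrho\beta\invers$.

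It then remains to check that $\hat f^d=f^d+T^d$ is actually a dual surface of $\hat f$. Applying $*df^d=-R\,df^d=df^d N$ to $d\hat f=T\,df^d\alpha\varrho\beta\invers$ and $*df=N\,df=-df\,R$ to $d\hat f^d=T^d\,df\,\beta\alpha\invers$, I would read off
\[
\hat N = -T R T\invers, \qquad \hat R = -\beta\varrho\invers\alpha\invers N\alpha\varrho\beta\invers,
\]
\[
\hat N^d = \beta\varrho\invers\alpha\invers N\alpha\varrho\beta\invers, \qquad \hat R^d = T R T\invers.
\]
Thus $\hat N^d=-\hat R$ and $\hat R^d=-\hat N$, and the same type argument as in the proof of Theorem \ref{thm: dual CDT} yields $d\hat f\wedge d\hat f^d = d\hat f^d\wedge d\hat f=0$, so $\hat f^d$ is dual to $\hat f$ and $\hat f$ is isothermic.

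The main subtlety is bookkeeping: because $\varrho\in\C_*$ is genuinely complex, the quaternionic conjugates $\hat\varrho=\alpha\varrho\alpha\invers$ and $\hat\varrho^d=\beta\varrho\beta\invers$ no longer reduce to scalars, and one must keep $\varrho$ and $\varrho\invers$ in exactly the correct positions throughout. This is precisely what forces the appearance of $\hat\varrho$ in \eqref{eq:gen riccati} in place of the real $r$ of \eqref{eq:riccati}; once the ordering of factors is tracked carefully, the symmetry between $\phi$ and $\phi^d$ makes the dual claim essentially automatic and no further difficulty arises.
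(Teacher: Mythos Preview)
Your proof is correct and follows essentially the same route as the paper: differentiate $T=\alpha\beta\invers$ to get the generalised Riccati equation, verify that $\phi^d$ is parallel for the dual family, and then check that $\widehat{f^d}$ is dual to $\hat f$. The only cosmetic difference is in this last step: the paper substitutes the explicit formulae $d\hat f = Tdf^d\hat\varrho T$ and $d\widehat{f^d} = T^d df(\beta\varrho\beta\invers)T^d$ directly into the wedge products and uses the algebraic identities $\hat\varrho\,T\,T^d = 1$ and $(\beta\varrho\beta\invers)\,T^d\,T = 1$ together with $df^d\wedge df = df\wedge df^d = 0$, whereas you extract the left and right normals first and then appeal to the type argument of Theorem~\ref{thm: dual CDT}. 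Both amount to the same computation.
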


\begin{rem}
  The equation (\ref{eq:gen riccati}) generalises the Riccati type
  equation (\ref{eq:riccati}). Note however, that $\hat \varrho$ is
  not constant and depends on the parallel section $\phi$, that is, we
  cannot find $\varrho$--Darboux transforms by solving this Riccati
  type equation. However, its structure allows to carry over results
  from the theory of classical Darboux transforms to the
  $\varrho$--Darboux transformation.

\end{rem}
\begin{proof}
Note that
a section $\phi =\begin{pmatrix} \alpha \\ \beta
\end{pmatrix}
$ is $d_\varrho$--parallel if and only if
$\varphi = e\alpha + \psi \beta$ is $\d_\varrho$--parallel.  In this
case, $\varphi$ is the prolongation of the holomorphic section
$e\alpha$.  In particular, $\hat L=\varphi\H$ is a Darboux transform
of $f$.

  Next, since $d_\varrho\phi=0$  we have 
\[
d\alpha =-df\beta, \qquad d\beta = -df^d \alpha \varrho
\]
so that
\[
dT = -df+ T df^d\hat \varrho T
\]
where $\hat \varrho = \alpha\varrho\alpha\invers$, that is,
\begin{equation}
\label{eq:dhatf}
d\hat f = Tdf^d\hat\varrho T\,.
\end{equation}

 Putting $\alpha^d = \beta\varrho\invers$ and $\beta^d=\alpha$ 
we obtain a solution of  the equations (\ref{eq:iso diff}) for the
dual surface $f^d$, that is,  
\[
  d\alpha^d = -df^d\beta^d, \quad d\beta^d =-df \alpha^d \varrho\,.
\]
Therefore, $\phi^d = \begin{pmatrix} \alpha^d\\ \beta^d
\end{pmatrix}
$ is parallel with respect to the flat connection $d^d_\varrho$ in the
family of flat connections of the dual surface $f^d$. In particular,
$\widehat{ f^d} = f^d+ T^d$ is a $\varrho$--Darboux transform of $f^d$
with $T^d = \beta \varrho\invers \alpha\invers$ satisfying the Riccati
type equation
\begin{equation}
\label{eq:riccatidual}
dT^d =
-df^d + T^d df \beta \varrho \beta\invers T^d\,,
\end{equation}
and thus
\begin{equation}
\label{eq:dhatfd}
d\widehat {f^d} = T^ddf \beta\varrho\beta\invers T^d\,.
\end{equation}

Now
$ d\hat f\wedge d\widehat{f^d} = T df^d\hat \varrho T \wedge
T^ddf\beta\varrho\beta\invers T^d =0 $ since
$\hat\varrho T T^d =\alpha\varrho\alpha\invers \alpha\beta\invers
\beta \varrho\invers\alpha\invers = 1$ and $df^d\wedge df
=0$. Similarly, $\beta\varrho\beta\invers T^d T=1$ and
$df\wedge df^d=0$ give $ d\widehat{ f^d} \wedge d\hat f = 0.  $ This
shows that $\widehat{ f^d}$ is a dual surface of $\hat f$ and thus
both $\hat f$ and $\widehat{f^d}$ are isothermic.
\end{proof}
\begin{example}
\label{ex:rhodt of sor}
At this point it might be instructive to compute the
$\varrho$--Darboux transforms of a surface of revolution
$f(x,y) = i p(x) + j q(x) e^{-iy}$, $(p')^2+(q')^2=q^2$, for
$\varrho\in\C_*$. We assume that the dual surface is given by
$df^d = f_x\invers dx- f_y \invers dy$.  We observe that
$d\alpha=-df\beta, d\beta =-df^d\alpha \varrho$ give the differential
equation
\[
\alpha_{yy} -i\alpha_y + \alpha \varrho =0
\]
which has solutions  
\[
\alpha^\pm =  e^{\frac {iy}2}c^\pm e^{\pm\frac{isy}2} 
\]
where $s=\sqrt{1+4\varrho}$ and $c^\pm$ are $\H$--valued functions,
independent of $y$.  We observe that $\alpha^\pm$ have multiplier
$h^\pm = -e^{\pm i\pi s}$.

Note that all solutions for $\varrho\not=-\frac 14$ arise as
$\alpha = \alpha_++\alpha_-$. For $\varrho=-\frac 14$ the general
solution is of the form $\alpha = e^{\frac{iy}2}(c_1 + y c_2)$.  Since
we are interested in closed Darboux transforms, that is,
$2\pi$--periodic $\alpha$ in the parameter $y$, it is therefore enough
to consider again $\alpha=\alpha^\pm$ since in this case $s=0$ and
thus $\alpha^+=\alpha^- = e^{\frac{iy}2}c$ for some $\H$--valued,
$y$--independent function $c$.

Since $d\alpha =-df \beta$ we have $*d\alpha = Nd\alpha$ where the
Gauss map $N$ of $f$ is given by
 \[
N(x,y) = \frac 1{q(x)}(iq'(x) - jp'(x)e^{-iy})\,.
\]
To find explicit solutions for $\alpha$, one has to find the complex
functions $c_0^\pm, c_1^\pm$, $c^\pm = c_0^\pm + j c_1^\pm$, by
solving the differential equation $*d\alpha=Nd\alpha$ which is given
for the complex--valued functions $c_0^\pm, c_1^\pm$ as
\[
\begin{pmatrix}c_0^\pm \\ c_1^\pm
\end{pmatrix} ' =\frac 1{2q} \begin{pmatrix} (1\pm s)q'  &
  i(1\mp s) p' \\\ i (1\pm s)p' &(1\mp s)q'
\end{pmatrix} \begin{pmatrix}c_0^\pm \\ c_1^\pm
\end{pmatrix}\,. 
\]
In particular, we obtain a $\C^2$--worth of parallel sections with
multiplier $h^\pm$.

In general, it might not be possible to solve the above differential
equation explicitly but nevertheless we can conclude that the
corresponding closed Darboux transforms are rotation surfaces: Since
$d\alpha=-df\beta$ we compute
\[
\beta ^\pm= \frac 1{2q} e^{\frac{iy}2} \left( c_1^\pm(\pm s -1) + j
  c_0^\pm(1\pm s)\right)e^{\pm\frac{isy}2}\,.  
\]
and hence
\begin{align*}
T^\pm &=\alpha^\pm (\beta^\pm)\invers \\
&= \frac{2 q}{|c_1^\pm|^2|\pm
  s-1|^2+ |c_0^\pm|^2|1\pm s|^2}
\left( \pm 2 c_0^\pm \bar{c_1}^\pm \Re(s) + j \big(|c_1^\pm|^2(\pm \bar s-1)-
|c_0^\pm|^2(1\pm s)\big)e^{-iy}\right)\,.
\end{align*}
Therefore the $\varrho$--Darboux transforms 
\[
f^\pm = f+ T^\pm =
p^\pm + j q^\pm e^{i(\theta_\pm -y)}\,,
\]
where  we define $\theta_\pm$ and $R_\pm$ by  $(\pm s-1)(\pm \bar s+1)=R_\pm
e^{i\theta_\pm}$, 
are rotation surfaces in 4--space since $p^\pm$ is a
complex--valued function in $x$, and 
\[
q^\pm = \frac{R_\pm q(|c_1|^2+|c_0|^2)}{|c_1^\pm|^2|\pm
  s-1|^2+ |c_0^\pm|^2|1\pm s|^2} 
\]
is a real--valued function, independent of $y$.

Since for $\varrho\in\C\setminus\R$ the multipliers $h^\pm$ are not
complex conjugates of each other, we obtain two $\CP^1$--families of
closed Darboux transforms $f^\pm$ by $d_\varrho$--parallel sections
$\varphi^+$ and $\varphi^-$ respectively, where
$\varphi^\pm = e\alpha^\pm + \psi \beta^\pm$. In particular, the only
closed $\varrho$--Darboux transforms for $\varrho\not\in\R$ are
rotation surfaces in 4--space.
\begin{figure}[H]
\includegraphics[height=4.5cm]{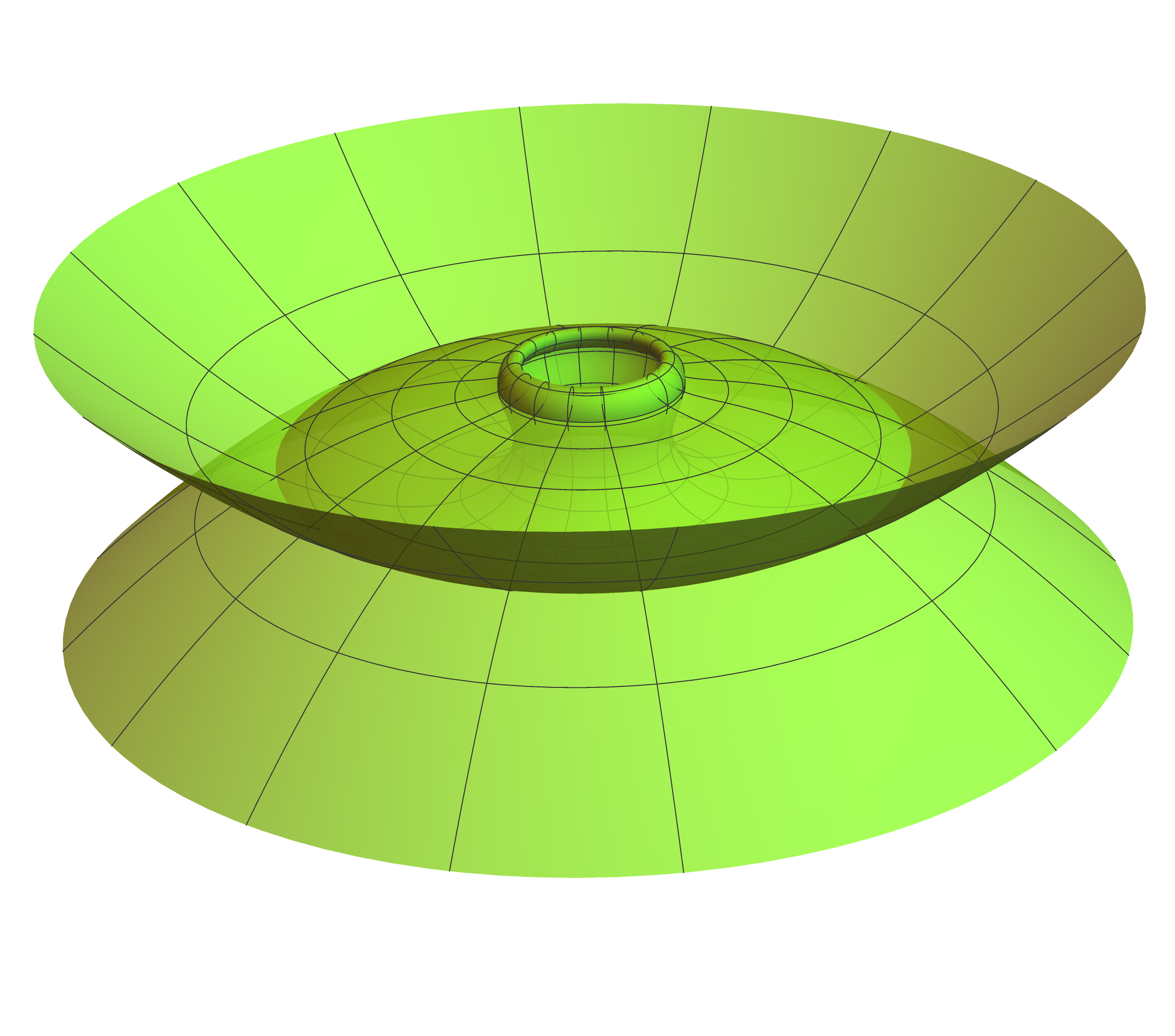} \quad
\includegraphics[height=4.5cm]{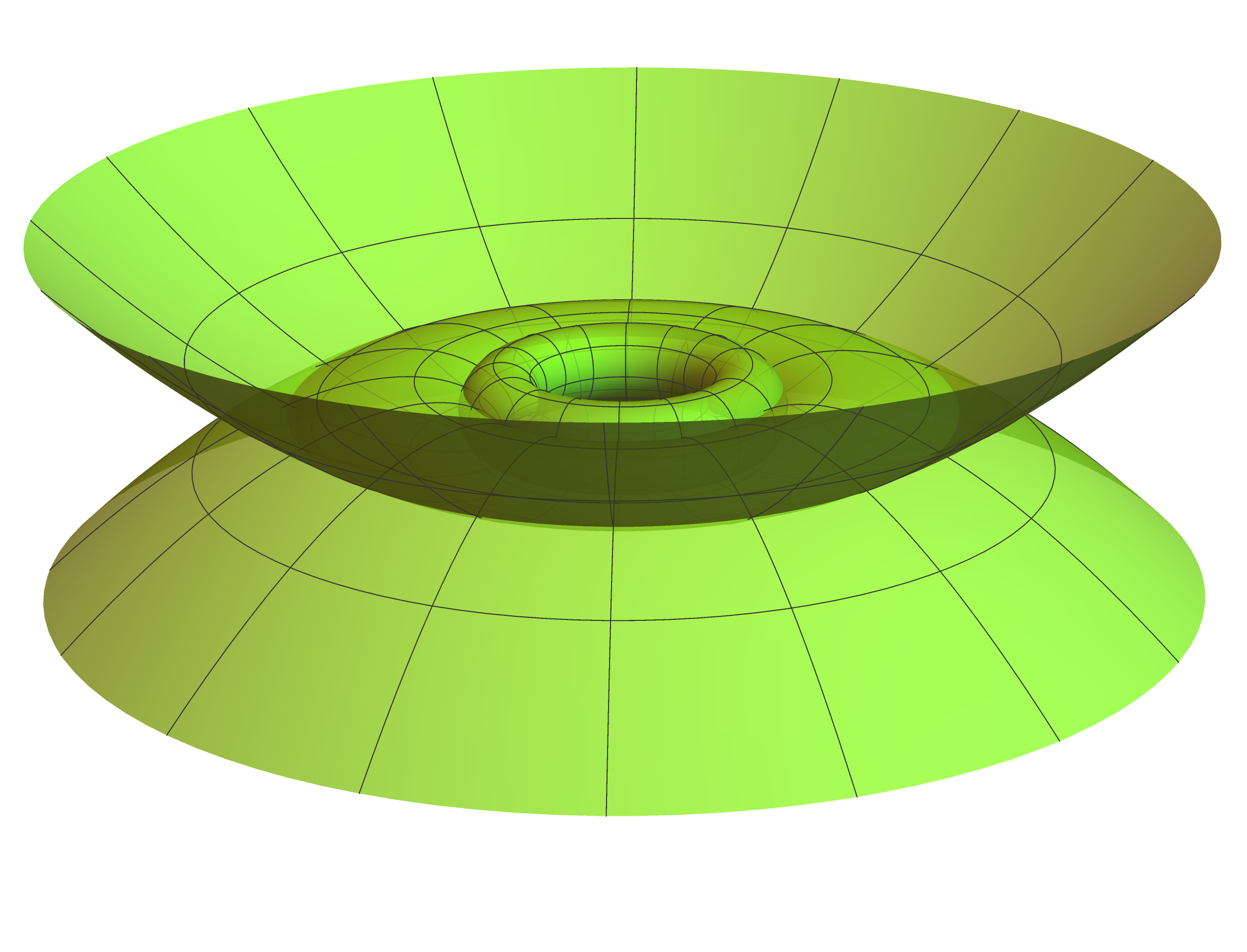}
\caption{$\varrho$--Darboux transforms $f_\pm$ of the surface of revolution
  $f(x,y)= i( -x +
  \frac{x^3}3) + j(1+x^2)e^{-iy}$ for $\varrho=1+i$, both
  orthogonal projections to 3--space of rotation surfaces in 4--space.}
\label{fig: generalised dt}
 \end{figure}

 In the case when $r=\varrho\in\R_*$ we have a similar situation as in
 the case of a cylinder, see e.g. \cite{sym-darboux,cmc}: If
 $r < -\frac 14$ we have $s\in i\R$ so that
 $h^+ = -e^{i\pi s}= \frac 1{h^-}\in\R$, $h^+\not=h^-$, and thus
 $\gamma^*(\alpha^\pm j) =( \alpha^\pm j) h^\pm$. In particular, we
 have exactly two $\H$--independent sections with multiplier and two
 closed Darboux transforms
\[
  f^\pm = f+ \alpha^\pm (\beta^\pm)\invers = ip + jq\frac{\pm
    s-1}{1\pm s} e^{-iy}
\]
which are rotations of $f$ by the angles $\pm \theta$ where
$e^{i\theta} = \frac{s-1}{1+ s}\in S^1$.  In the case when
$r=-\frac 14$ we have exactly one closed Darboux transform which is
$f^+=f^- =ip -jq e^{-iy}$.

For $r>-\frac 14, r\not=\frac{k^2-1}4, k\ge 1,$ the multipliers
satisfy $h^+=\overline{h^-}\in S^1\setminus\{\pm 1\}$. Since
$\varphi^+ j$ has multiplier $h^-$ and $\varphi^+\H = \varphi^+j\H$ we
obtain only one $\CP^1$--family of closed Darboux transforms in this
case which is given by
\[
f^+ = f +  p^+ + j q^+ e^{-iy} 
\]
where $p^+$ and $q^+$ are, as before, complex and real valued
functions in $x$ respectively. Thus, the corresponding Darboux
transforms are again rotation surfaces, see Figure \ref{fig:cdt}.

The final case $r_k=\frac{k^2-1}4, k>1$, gives real multiplier
$h^+=h^-= (-1)^{k+1}$. This means that every parallel section is a
section with multiplier, that is, $r$ is a \emph{resonance point}. The
corresponding Darboux transforms are rotation surfaces or isothermic
bubbletons.

\begin{figure}[H]
 
\includegraphics[height=4cm]{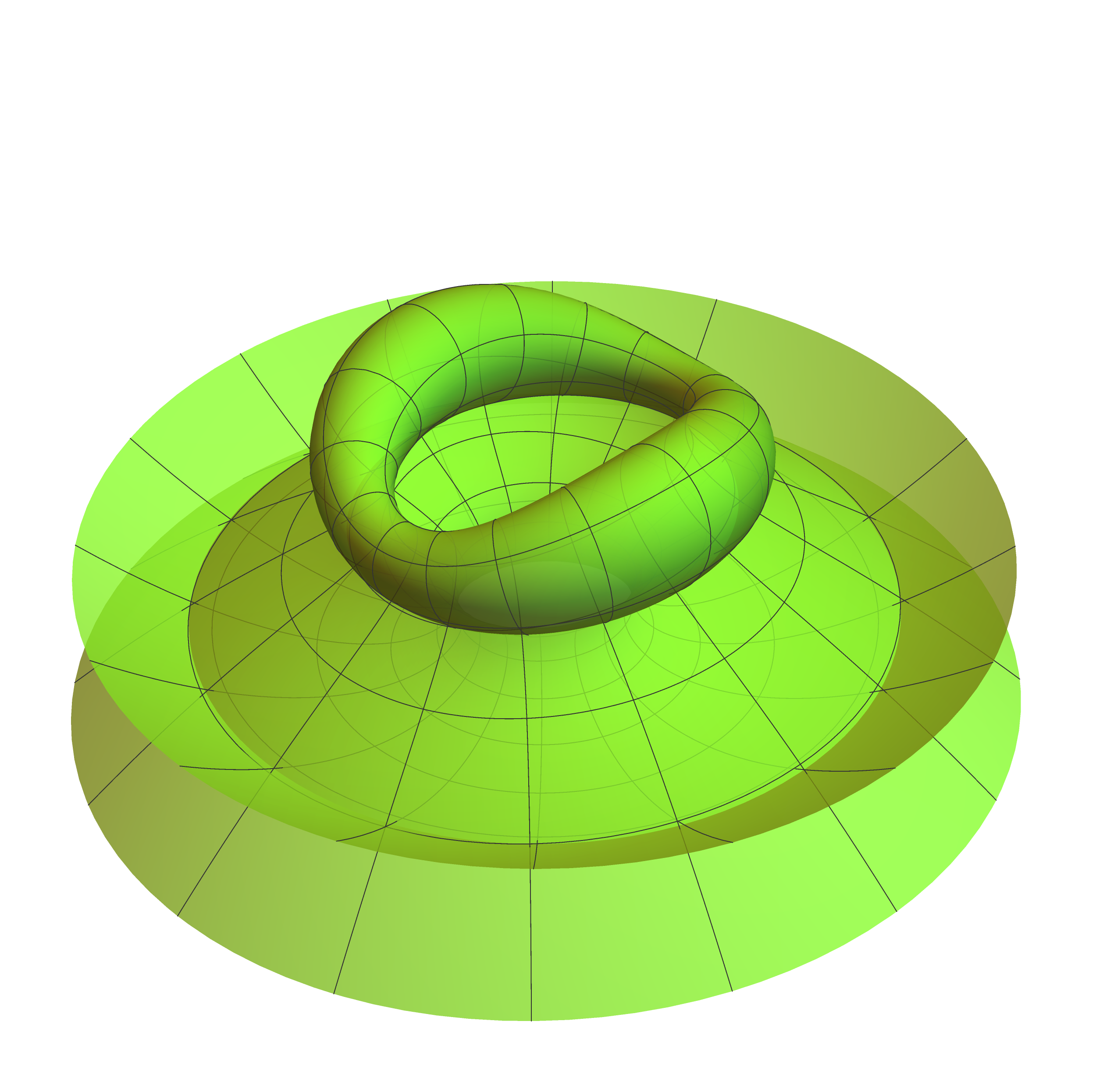}
\includegraphics[height=3.5cm]{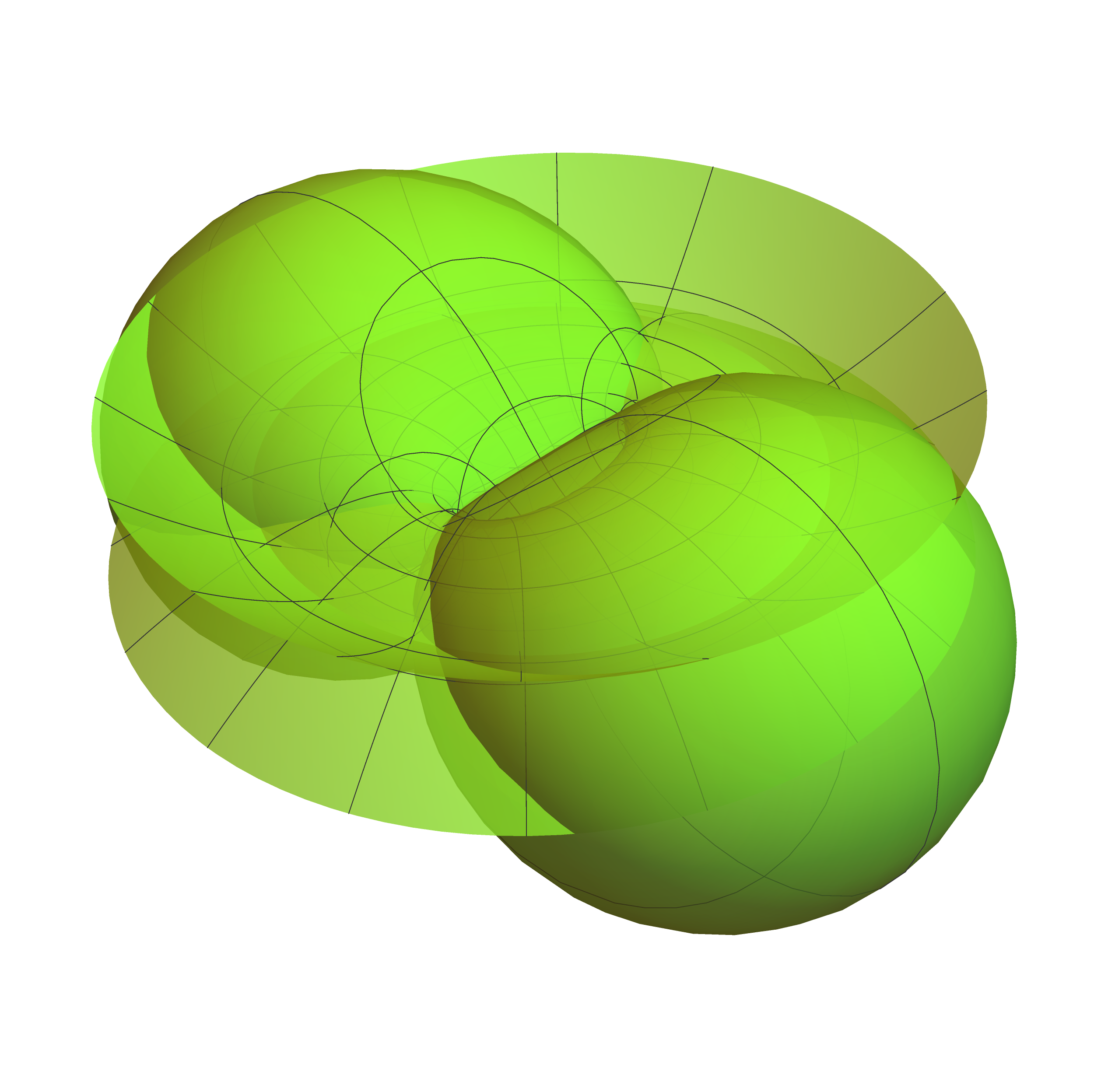}
\includegraphics[height=3.5cm]{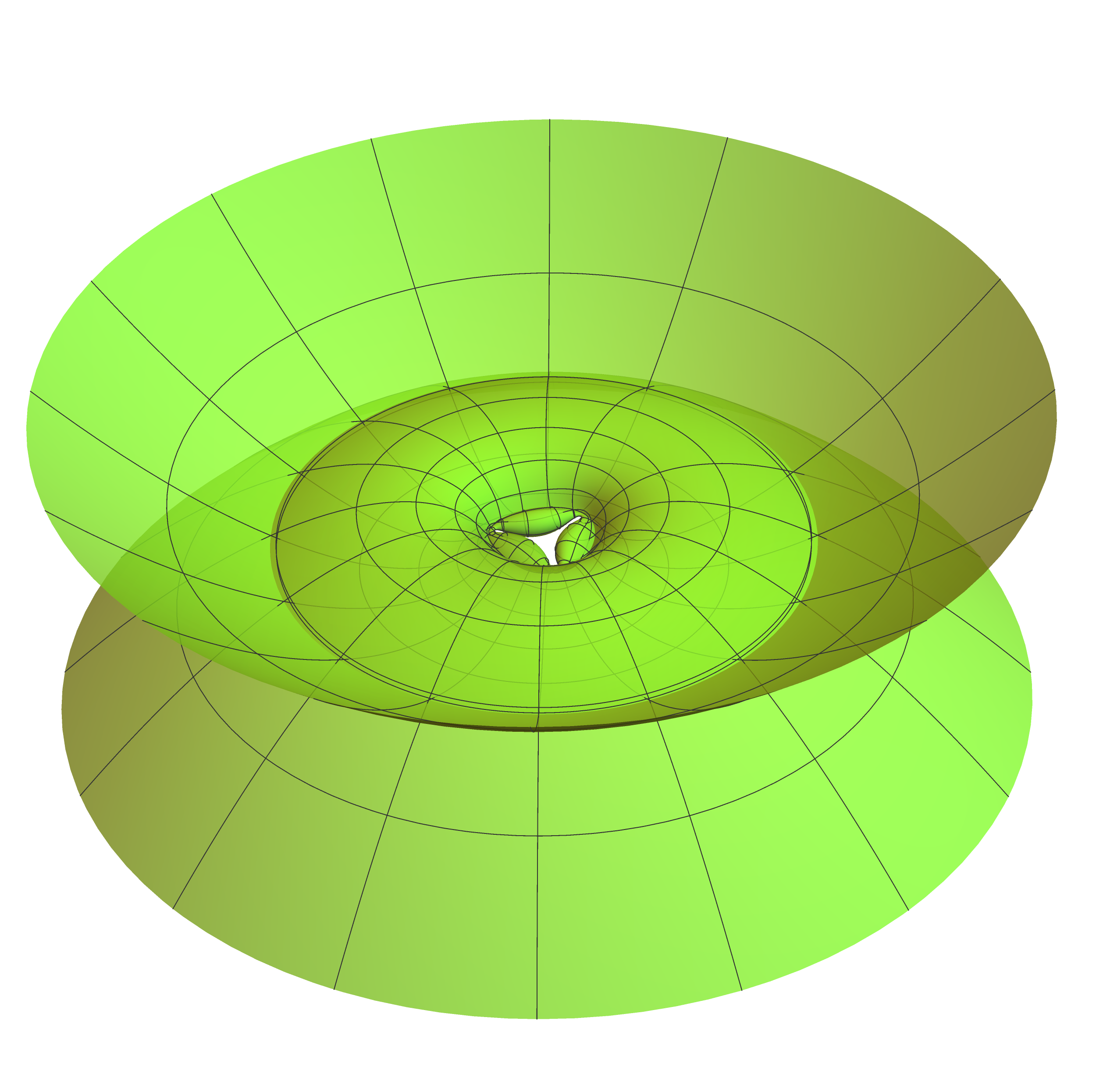}
\includegraphics[height=3.5cm]{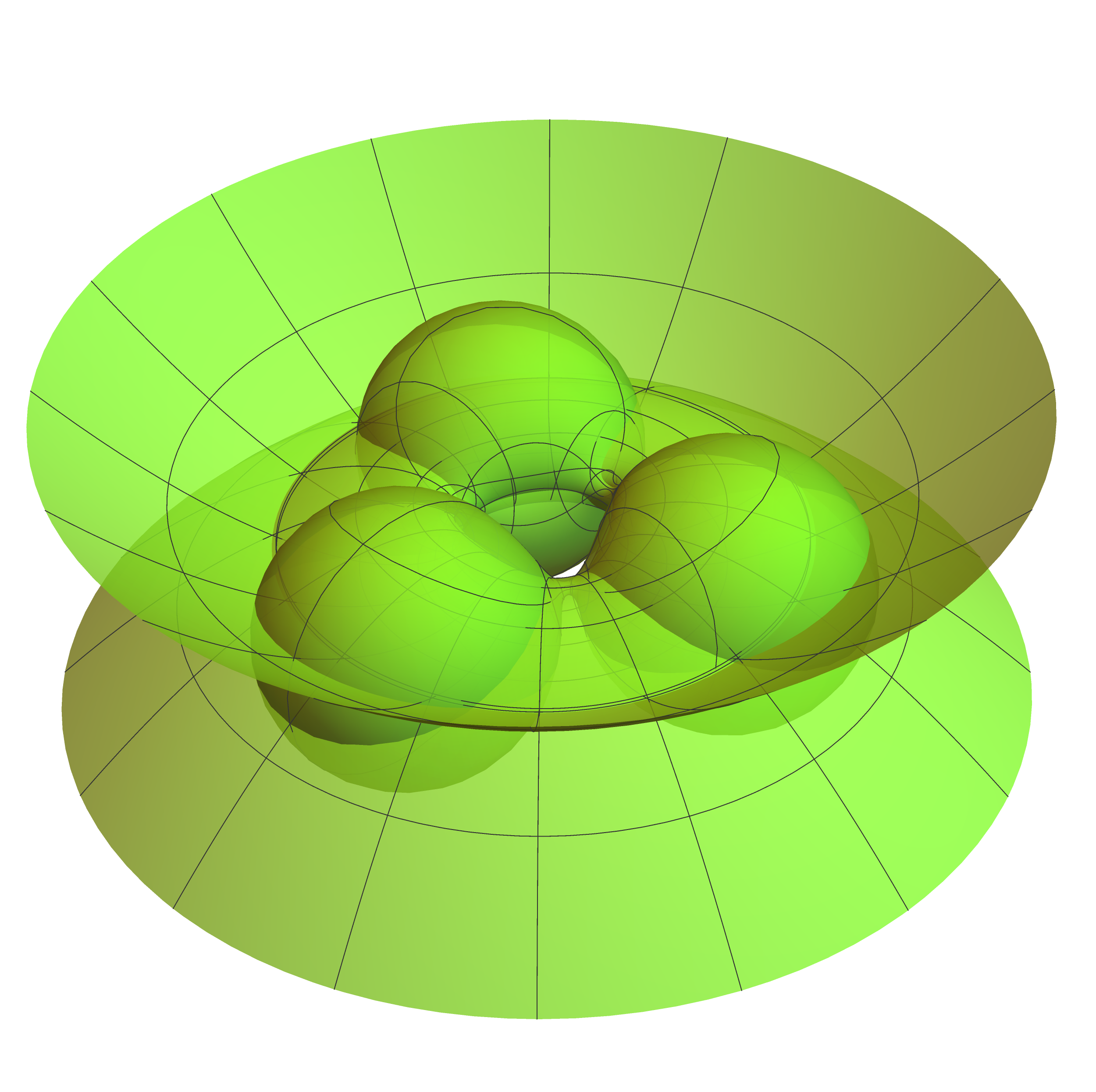}
\caption{Isothermic Bubbletons: classical Darboux transforms of the
  surface of revolution
  $f(x,y)= i( -x + \frac{x^3}3) + j(1+x^2)e^{-iy}$ at the resonance
  points $r_k=\frac{k^2-1}4$, $k=2,3$ with various initial
  conditions.}
\end{figure}

\end{example}

We can now conclude that the $\varrho$--Darboux transformation is a
genuine generalisation of the classical Darboux transformation. In
particular, the generalised Darboux transformation is a generalisation
of the classical Darboux transformation even when applied to
isothermic surfaces in 3--space.
\begin{cor}
  Let $f: M \to\R^3$ be an isothermic surface which is not contained
  in a sphere.  Let $D_{\phi,\varrho}(f) = f+ T$ be a
  $\varrho$--Darboux transform given by a $d_\varrho$--parallel
  section $\phi=\begin{pmatrix} \alpha\\ \beta
\end{pmatrix}$ with $\varrho\in\C_*$, that is,
$T=\alpha\beta\invers$. Then $\hat f$ is a classical Darboux transform
of $f$ if and only if $\varrho\in\R_*$.
\end{cor}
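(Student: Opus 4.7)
The plan is to deduce the corollary directly from the generalised Riccati equation (\ref{eq:gen riccati}) established in Theorem \ref{thm: rho dt}. Recall that the $\varrho$--Darboux transform $\hat f = f + T$ satisfies
\[
dT = -df + T df^d \hat\varrho\, T, \qquad \hat\varrho = \alpha\varrho\alpha\invers,
\]
while $\hat f$ is a classical Darboux transform precisely if $T$ satisfies the Riccati equation (\ref{eq:riccati}) with some constant real parameter $r\in\R_*$. So the task reduces to showing that a real parameter $r\in\R_*$ with $\hat\varrho = r$ exists if and only if $\varrho\in\R_*$.

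For the easy direction, assume $\varrho\in\R_*$. Then $\varrho$ is central in $\H$, hence $\hat\varrho = \alpha\varrho\alpha\invers = \varrho$, and (\ref{eq:gen riccati}) is literally the classical Riccati equation (\ref{eq:riccati}) with $r=\varrho$; thus $\hat f$ is a classical Darboux transform.

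For the converse, suppose $\hat f$ is a classical Darboux transform, so there exists $r\in\R_*$ with $dT = -df + Tdf^d r T$. Subtracting this from (\ref{eq:gen riccati}) yields
\[
T\, df^d (\hat\varrho - r)\, T = 0.
\]
Since $T$ takes values in $\H\setminus\{0\}$ it is invertible, so $df^d\,(\hat\varrho - r) = 0$ as an $\H$--valued $1$--form. Here I use that $\H$ is a division algebra: if $\hat\varrho - r$ were non-zero at some point, it would be invertible as a quaternion, forcing $df^d=0$ at that point. Since $f$ is not contained in a round sphere, its dual $f^d$ is a non-constant branched conformal immersion, so $df^d$ is non-vanishing on a dense open subset of $\tilde M$. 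Hence $\hat\varrho = r$ on this dense open set, and by continuity everywhere.

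It remains to conclude $\varrho = r\in\R_*$ from $\alpha\varrho\alpha\invers = r$. Multiplying on the right by $\alpha$ gives $\alpha\varrho = r\alpha$, and since $r\in\R$ commutes with $\alpha$, this reads $\alpha\varrho = \alpha r$. Away from the isolated zeros of the holomorphic section $e\alpha$, the function $\alpha$ is invertible, so $\varrho = r \in \R_*$, as required. The only point that requires any thought is the division-algebra argument ruling out $\hat\varrho\neq r$; everything else is algebraic manipulation of the two Riccati equations.
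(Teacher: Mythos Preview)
Your proof is correct and follows essentially the same route as the paper: both start from the generalised Riccati equation $dT=-df+Tdf^d\hat\varrho T$ and reduce the question to showing $df^d\hat\varrho=df^d r$ for some real constant $r$, then conclude $\varrho=r$. The only cosmetic difference is that the paper phrases the key step as ``$df^d\hat\varrho$ must be the differential of a dual surface, hence by uniqueness of the dual (up to scaling and translation) equals $df^d r$'', whereas you subtract the two Riccati equations directly and invoke the division-algebra property of $\H$ together with the density of points where $df^d\neq 0$; these are the same argument unpacked to different levels of detail.
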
 
\begin{proof}  
  Since $\hat f =f+T$ is a $\varrho$--Darboux transform we have by the
  generalised Riccati equation (\ref{eq:gen riccati})
\[
  dT = -df + Tdf^d \hat\varrho T
\]
with $\hat \varrho = \alpha\varrho\alpha\invers$, $\varrho\in\C$. Then
$\hat f$ is a classical Darboux transform of $f$ if and only if
$df^d\hat \varrho$ is the differential of a dual surface of $f$. Under
our assumptions, the uniqueness of the dual surface up to scaling and
translation, see e.g., \cite{darboux_isothermic}, shows that
$df^d\hat \varrho =df^d r$ for some $r\in\R$ but then
$\varrho=r\in\R$.
\end{proof}

For $\d_{\varrho_i}$--parallel sections $\varphi_i$ with
$d\varphi_2 = d\varphi_1\chi$ we denote as before by $f_i$ the Darboux
transforms given by $\varphi_i$.  Since $\varphi_i$ are the
prolongations of the holomorphic sections $e\alpha_i$ we have with
Theorem \ref{thm:bianchi} that
\[
\varphi = \varphi_2-\varphi_1\chi
\] gives a common Darboux transform $\hat L = \varphi\H$ of the two
Darboux transforms $f_i$ if $\varphi_1, \varphi_2$ are $\H$--linearly
independent. Indeed, we show that the common Darboux transform
$\hat L$ is isothermic:

\begin{theorem}[Bianchi permutability for the $\varrho$--Darboux
  transformation]
  \label{thm:
  gen bianchi}
Let $f_i= D_{\varphi_i, \varrho_i}(f): \tilde M \to \H$ be Darboux
transforms given by $\d_{\varrho_i}$--parallel sections $\varphi_i$,
$\varrho_i\in\C_*$. Let $\chi$ be defined by
$d\varphi_2 = d\varphi_1\chi$. Then the common Darboux transform
$\hat f$ given by $\hat L =\varphi\H$ where
\[
  \varphi = \varphi_2-\varphi_1\chi
\]
is a ${\varrho_1}$--Darboux transform of $L_2$ and a
${\varrho_2}$--Darboux transform of $L_1$ if $\varphi_1$ and
$\varphi_2$ are $\H$--linearly independent:
\[\hat f = D_{\varphi, \varrho_2}
 ( D_{\varphi_1, \varrho_1}(f)) = D_{-\varphi \chi\invers,
   \varrho_1}(D_{\varphi_2,\varrho_2}(f))\,.
\]
In
particular, $\hat f$ is isothermic.
\end{theorem}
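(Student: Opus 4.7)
The plan is to refine the purely Darboux-theoretic conclusion of Theorem~\ref{thm:bianchi} with the extra isothermic data. The algebraic starting point is immediate: differentiating $\varphi = \varphi_2 - \varphi_1\chi$ and using $d\varphi_2 = (d\varphi_1)\chi$ gives $d\varphi = -\varphi_1 d\chi \in \Omega^1(L_1)$, so $\hat L = \varphi\H$ is a (possibly singular) Darboux transform of $L_1$, as in Theorem~\ref{thm:bianchi}. Reorganising the same identity as $\varphi_1\chi = \varphi_2 - \varphi$ and differentiating gives the symmetric statement for $L_2$. This step fixes $\varphi$ as the candidate section inside the $\H^2$-bundle that should represent $\hat L$ as a $\varrho_j$--Darboux transform of $L_i$.

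The heart of the proof is upgrading ``Darboux transform'' to ``$\varrho_j$--Darboux transform''. For this I would apply Theorem~\ref{thm: rho dt} to get the dual surface $f_1^d$ of $f_1 = D_{\varphi_1,\varrho_1}(f)$ explicitly, assemble its retraction form
\[
\eta_1 = \begin{pmatrix} f_1\,df_1^d & -f_1\,df_1^d\, f_1 \\ df_1^d & -df_1^d\, f_1\end{pmatrix},
\]
and verify
\[
\d^{f_1}_{\varrho_2}\varphi \;=\; d\varphi + \varrho_2\,\eta_1\varphi \;=\; 0.
\]
Since $\im\eta_1\subset L_1 = \varphi_1\H$, both sides lie in $L_1$, so the check reduces to a single quaternionic identity between the $\varphi_1$-coefficients. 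That identity is then unpacked using the parallelism equations $d\varphi_i = -\varrho_i\eta\varphi_i$ inherited from $\d_{\varrho_i}\varphi_i = 0$ for the original $f$, the definition of $\chi$, and the explicit form of $df_1^d$ from Theorem~\ref{thm: rho dt} in terms of $\alpha_1,\beta_1, df, df^d$. A symmetric computation exhibits $\hat L$ as a $\varrho_1$--Darboux transform of $L_2$.

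Once $\hat L$ has been realised as a $\varrho_j$--Darboux transform of the isothermic surface $L_i$, isothermicity of $\hat f$ follows for free from Theorem~\ref{thm: rho dt}. The main obstacle sits entirely in the middle step: computing $\eta_1$ from the formulas of Theorem~\ref{thm: rho dt} and reducing $d\varphi + \varrho_2\eta_1\varphi = 0$ to an algebraic identity that already follows from parallelism of $\varphi_1, \varphi_2$ and from the defining relation $d\varphi_2 = (d\varphi_1)\chi$. Everything else — Bianchi's Darboux-level conclusion, the identification of $L_1, L_2$ as isothermic, and the final isothermicity of $\hat f$ — is either immediate or supplied by results already in the excerpt.
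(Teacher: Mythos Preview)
Your proposal is correct and follows essentially the same route as the paper. The paper carries out the verification in the affine ``ungauged'' picture: it computes $\chi = \varrho_1^{-1}\alpha_1^{-1}\alpha_2\varrho_2$ explicitly, writes $\varphi = e\alpha + \psi_1\beta$ with $\alpha = \alpha_2 - T_1\beta_2$ and $\beta = \beta_2 - T_1^d\alpha_2\varrho_2$, and then checks the two equations $d\alpha = -df_1\beta$ and $d\beta = -df_1^d\alpha\varrho_2$ directly using the Riccati-type identities \eqref{eq:gen riccati} and \eqref{eq:riccatidual} for $T_1$ and $T_1^d$; your plan is the gauge-equivalent statement $\d^{f_1}_{\varrho_2}\varphi = 0$ via the retraction form, and your observation that one of the two scalar equations is already forced by $d\varphi\in\Omega^1(L_1)$ is a mild streamlining of the paper's argument rather than a different idea.
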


\begin{proof}
  For $\d_{\varrho_i}$--parallel sections
  $\varphi_i = e\alpha_i + \psi \beta_i$ we see that
  $d\varphi_2 = d\varphi_1\chi$ with
\[
\chi = \varrho_1\invers\alpha_1\invers \alpha_2\varrho_2\,.
\]
Therefore, we write $\varphi = e\alpha + \psi_1 \beta$ with
$\psi_1 = e\alpha_1\beta_1\invers+ \psi$ and
\[
\alpha =
\alpha_2-\alpha_1\beta_1\invers \beta_2, \quad \beta=
\beta_2-\beta_1\varrho_1\invers\alpha_1\invers\alpha_2\varrho_2\,.
\]

Note that by assumption $\alpha_i(p), \beta_i(p)\not=0$ for all
$p\in M$.  We want to show that $\varphi$ is a
$\d^1_{\varrho_2}$--parallel section, or equivalently,
$\phi =\begin{pmatrix} \alpha\\ \beta
\end{pmatrix}
$ is a $d^1_{\varrho_2}$--parallel section of the associated family of
flat connections $d^1_\lambda$ of $f_1$ for
$\lambda=\varrho_2$. Therefore, we need to show that
$d\alpha=-df_1\beta$ and $d\beta=-df_1^d\alpha\varrho_2$. To do so we
recall (\ref{eq:dhatf}) that $f_1=f+ \alpha_1\beta_1\invers$ has
differential
\[
df_1 = T_1df^d\alpha_1\varrho_1\beta_1\invers
\]
and $f_1^d= f^d+ \beta_1\varrho_1\invers \alpha_1\invers$ has
(\ref{eq:dhatfd}) differential
\[
df_1^d = T_1^ddf\beta_1\alpha_1\invers\,.
\]
We also note that
\[
\alpha = \alpha_2 - T_1\beta_2, \quad \beta=\beta_2-T_1^d
\alpha_2\varrho_2\,.
\]
Thus, using the Riccati type equation (\ref{eq:gen riccati}) for
$T_1=\alpha_1\beta_1\invers$ and the fact that $\phi_i$ are
$d_{\varrho_i}$--parallel:
\begin{align*}
d\alpha
& = -T_1df^d\alpha_1\varrho_1\beta_1\invers(\beta_2
  -\beta_1\varrho_1\invers\alpha_1\invers\alpha_2\varrho_2) =-df_1\beta
\end{align*}
and similarly, using $\beta=\beta_2-T_1^d \alpha_2\varrho_2$ and the
Riccati type equation (\ref{eq:riccatidual}) for $T^d$, one obtains
$d\beta=-df_1^d\alpha \varrho_2$.

Since $\chi\invers = \varrho_2\invers\alpha_2\invers\alpha_1\varrho_1$
and $\hat L = \varphi\H =-\varphi\chi\invers\H$, we can apply the same
arguments to show that $\hat f$ is a $\varrho_1$--Darboux tranform of
$f_2$ by swapping the indices.
\end{proof}

Note that we did not exclude the case $\varrho_1 =\varrho_2$. Under
our assumption $f_1\not=f_2$ we thus obtain even in this case a common
Darboux transform of $f_i$.  In the classical situation, i.e., when
$\varrho\in\R_*$, this common transform is the original isothermic
surface $f$ since in this case
$\beta= \beta_2-\beta_1\alpha_1\invers\alpha_2$ and thus
$\alpha\beta\invers =-\alpha_1\beta_1\invers$. However, in the case
when $\varrho\not\in\R$ Bianchi permutability in general gives new
isothermic surfaces.

Even in the case when $\varrho\in\R$, there are double Darboux
transforms which are not $f$ but they do not arise from Bianchi
permutability.  They can be computed by a Sym--type formula,
\cite{sym-darboux}.

\begin{figure}[H]
  \includegraphics[width=.6\linewidth]{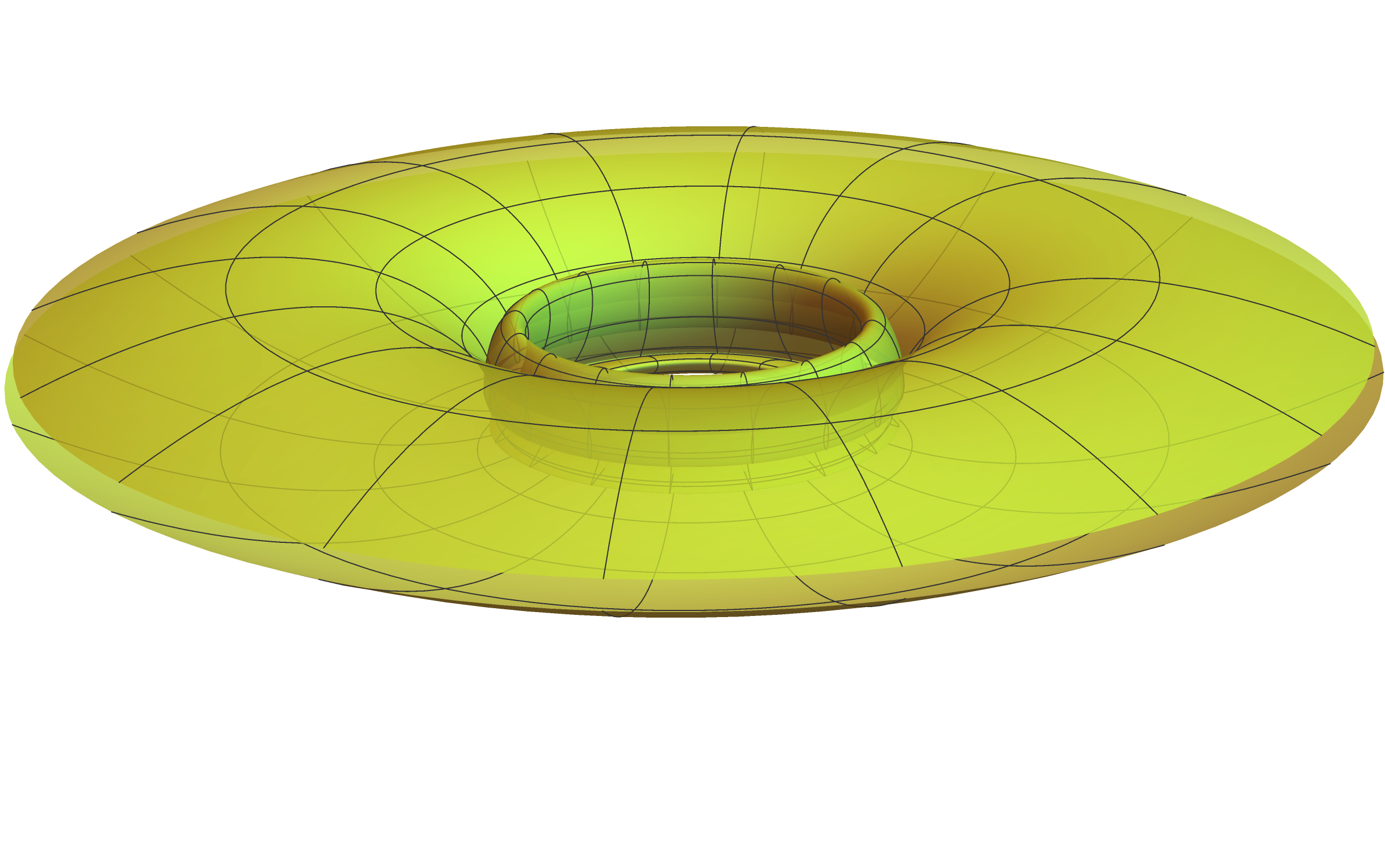}
\caption{Common $\rho$--Darboux transforms of the two Darboux
  transforms given in Figure \ref{fig: generalised dt}, with spectral paramater
  $\varrho=\varrho_1=\varrho_2 = 1+i$, orthogonally projected to
  $\R^3$.}
\end{figure} 

\subsection{Simple factor dressing of isothermic surfaces}

We will now discuss the dressing operation introduced by Terng and
Uhlenbeck for harmonic maps \cite{terng_uhlenbeck} in the case of
isothermic surfaces.  As we have seen, isothermic surfaces are
determined by their associated family of flat connections. To
construct new isothermic surfaces, one can gauge the associated family
with a suitable gauge matrix, see \cite{fran_epos}, which depends on
the spectral parameter, to create a new associated family of flat
connections of the correct form as in Theorem \ref{thm:retraction}.  A
special case of dressing is the simple factor dressing which is, as we
will show, the $\varrho$--Darboux transformation: we assume that the
map $\lambda \to r(\lambda)$ has only a simple pole at some fixed
complex value $\lambda=\bar\varrho\in\C$.

\begin{theorem}[Simple factor dressing]
  \label{thm:sfd} Let $f: M \to\R^4$ be isothermic with associated
  family $\d_\lambda = d+ \lambda\eta$, $\lambda\in\C$. Let
  $\varrho\in\C_*$ and $\varphi$ a $\d_\varrho$--parallel
  section. Assume that
  $\tilde{\underline{\C}}^4= E\oplus E j\oplus \tilde L$, where
  $E= \varphi\C$ and $\tilde L$ is the line bundle of $f$ over the
  universal cover $\tilde M$, and denote
  by $\pi_{E}, \pi_{E j}$ and $\pi_L$ the projections onto
  $E= \varphi\C$, $Ej$ and $\tilde L$ respectively along the splitting.  Let
\[
r_\varrho^E(\lambda)= \pi_E \gamma_\varrho(\lambda) + \pi_{Ej}+ \pi_L \sigma_\varrho(\lambda)
\]
where
\[
\gamma_\varrho(\lambda)= \frac{\bar \varrho
  (\varrho-\lambda)}{\varrho(\bar\varrho -\lambda)}, \quad \sigma_\varrho(\lambda)=\frac{\bar
  \varrho}{\bar\varrho - \lambda
}
\]
act on $\C^4$ by right multiplication.

Then the Darboux transform $\hat f = D_{\varphi, \varrho}(f)$ is given
by a \emph{simple factor dressing} with dressing matrix $r_\lambda$,
that is, the line bundle of $\hat f$ is given by
$r_\varrho^E(\infty)\tilde{\C}^4$ and its associated family is given
by $\hat d_\lambda = r_\lambda\cdot d_\lambda$. We denote the simple
factor dressing with parameter $\varrho$ and complex line bundle $E$
by $\hat f = r_{E,\varrho}(f)$.

 Moreover,  $\hat \d_\lambda= d+\lambda\hat \eta$
where
\[
\hat \eta = -(\pi_E\circ d \circ \pi_L \frac 1 \varrho + \pi_{Ej} \circ d \circ \pi_L
\frac 1{\bar \varrho})\,.
\]

\end{theorem}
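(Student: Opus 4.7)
The proof divides into three tasks: (a) identify $r_\varrho^E(\infty)\tilde{\underline{\C}}^4 = \varphi\H$ with the line bundle of the Darboux transform $D_{\varphi,\varrho}(f)$; (b) show that the gauge $\hat\d_\lambda := r_\varrho^E(\lambda)\cdot\d_\lambda$ is polynomial of degree one in $\lambda$, i.e., of the form $d+\lambda\hat\eta$; and (c) read off the stated formula for $\hat\eta$. Claim (a) is immediate from $\gamma_\varrho(\infty) = \bar\varrho/\varrho$ and $\sigma_\varrho(\infty) = 0$: the image of $r_\varrho^E(\infty) = \pi_E\cdot\bar\varrho/\varrho + \pi_{Ej}$ is $E\oplus Ej=\varphi\H$, which is the line bundle $\hat L$ of $D_{\varphi,\varrho}(f)$ by definition.

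For (b) and (c), the plan is to verify the gauge identity $r_\varrho^E(\lambda)\,\d_\lambda(r_\varrho^E(\lambda)^{-1}\phi) = (d + \lambda\hat\eta)\phi$ separately on sections $\phi$ of each summand $E$, $Ej$, $\tilde L$, since together these span $\tilde{\underline{\C}}^4$ over $\C$ and both $d$ and $\eta$ are $\C$-linear. On $\phi=\varphi\in\Gamma(E)$, parallelism gives $d\varphi = -\eta\varphi\cdot\varrho$, and $\im\eta\subset\tilde L$ combined with the scalar identity $\sigma_\varrho(\lambda)/\gamma_\varrho(\lambda) = \varrho/(\varrho-\lambda)$ reduces the left-hand side to $d\varphi$; this matches the right-hand side because $\hat\eta\varphi=0$ as $\pi_L\varphi=0$. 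On $\phi=\varphi j\in\Gamma(Ej)$, the crucial observation is that right multiplication by $j$ intertwines the complex scalars $\varrho$ and $\bar\varrho$, so that $\varphi j$ is $\d_{\bar\varrho}$-parallel; the analogous computation closes using $\sigma_\varrho(\lambda)(\lambda-\bar\varrho)=-\bar\varrho$. On $\phi\in\Gamma(\tilde L)$, the containment $\tilde L\subset\ker\eta$ kills the $\lambda\eta$-term, so the gauged connection becomes $\sigma_\varrho(\lambda)^{-1}r_\varrho^E(\lambda)\,d\phi$; decomposing $d\phi$ along the splitting and using $\sigma_\varrho(\lambda)^{-1}\gamma_\varrho(\lambda) = (\varrho-\lambda)/\varrho$ together with $\sigma_\varrho(\lambda)^{-1} = (\bar\varrho-\lambda)/\bar\varrho$ identifies the coefficient of $\lambda$ in each component with the corresponding term in the claimed $\hat\eta$, while the $\pi_L d\phi$ part supplies the constant $d$-contribution (recall $r_\varrho^E(0)=\Id$).

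The main obstacle is the careful bookkeeping of left- versus right-multiplication: the endomorphism $\eta$ and the projectors $\pi_E, \pi_{Ej}, \pi_L$ act by left multiplication, while $\lambda, \varrho, \bar\varrho$ and the quaternion $j$ act by right multiplication, and the intertwining $j\varrho = \bar\varrho j$ is precisely what forces $\varphi j$ to be $\d_{\bar\varrho}$-parallel. This is the very mechanism cancelling the apparent pole of $r_\varrho^E(\lambda)$ at $\lambda=\bar\varrho$: the gauge blows up on $Ej$, but $\d_\lambda$ vanishes to first order on $\varphi j$ at that value, producing the polynomial family $d+\lambda\hat\eta$. Once (b) and (c) are established, the formula for $\hat\eta$ shows $\im\hat\eta\subset\varphi\H\subset\ker\hat\eta$, so by Theorem~\ref{thm:retraction} the retraction form $\hat\eta$ is the associated family of the isothermic surface whose line bundle equals $\varphi\H = \hat L$, confirming that $\hat\d_\lambda$ is indeed the associated family of the Darboux transform $\hat f = D_{\varphi,\varrho}(f)$.
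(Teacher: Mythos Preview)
Your proof is correct but proceeds quite differently from the paper's. You verify the identity $r_\varrho^E(\lambda)\cdot\d_\lambda = d+\lambda\hat\eta$ by direct computation on each summand $E$, $Ej$, $\tilde L$ of the splitting, exploiting that $\varphi$ is $\d_\varrho$--parallel, that $\varphi j$ is $\d_{\bar\varrho}$--parallel (via $j\varrho=\bar\varrho j$), and that $\tilde L\subset\ker\eta$. The paper instead argues structurally: it first establishes the reality condition $\hat\d_\lambda(\xi j)=(\hat\d_{\bar\lambda}\xi)j$, then shows that $\lambda\mapsto\hat\d_\lambda$ is holomorphic on $\C$ by checking that the apparent pole at $\lambda=\varrho$ cancels (because $E$ is $\d_\varrho$--stable) and using reality to handle $\lambda=\bar\varrho$, and finally shows that $\hat\d_\lambda$ has only a simple pole at $\infty$, so that Liouville forces $\hat\d_\lambda=d+\lambda\hat\eta$ with $\hat\eta$ read off as $\lim_{\lambda\to\infty}\lambda^{-1}\hat\d_\lambda$. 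Your approach is more elementary and self--contained; the paper's is the standard loop--group/dressing paradigm, which generalises more readily and makes the role of the pole structure transparent. One minor imprecision: on $\tilde L$ you say ``the $\pi_L d\phi$ part supplies the constant $d$--contribution'', but in fact all three components of $d\phi$ contribute $1$ at $\lambda=0$; your parenthetical $r_\varrho^E(0)=\Id$ is the cleaner justification that the constant term is $d$.
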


In the following, we will abbreviate $r_\lambda=r_\varrho^E(\lambda)$,
$\gamma_\lambda=\gamma_\varrho(\lambda),$ and
$ \sigma_\lambda=\sigma_\varrho(\lambda)$ to simplify notations if the
dependence of the parameters is clear from the context.

\begin{rem} If $\varrho\in\R$ this simplifies to the simple factor
  dressing as discussed in \cite{sym-darboux}.
\end{rem}
\begin{proof}
We first observe that
\[
\gamma_\lambda\invers = \bar \gamma_{\bar\lambda}\,, \quad \sigma_{\bar \lambda} = \overline{\sigma_\lambda}\gamma_{\bar\lambda}
\]
and
\[
\pi_E(\xi j) = (\pi_{Ej}\xi)j
\]
 for $\xi\in\Gamma(\ttrivial 2)$  so that
\[
r_{\bar\lambda}(\xi j) =
r_\lambda(\xi)j\gamma_{\bar\lambda}
\]
and hence
\[
r_\lambda\invers(\xi j) = r_{\bar\lambda}\invers(\xi)\gamma_{\bar\lambda} j\,.
\]
Since $r_\lambda\cdot d_\lambda= r_\lambda \cdot d + \varrho
\Ad(r_\lambda) \eta$ we consider
\[
r_\lambda \cdot d(\xi j) = (r_{\bar\lambda} \cdot d(\xi))j\,, \quad 
\Ad(r_\lambda)\eta(\xi j) = (\Ad(r_{\bar\lambda}) \eta \xi) j
\] 
which gives
\begin{equation}
\label{eq:dhatreality}
\hat \d_\lambda(\xi j) = (\hat \d_{\bar\lambda} \xi)j\,.
\end{equation}

In particular, $\hat \d_\lambda$ is a quaternionic connection for
$\lambda\in\R$. 

We now claim that the map
$\lambda \mapsto \hat \d_\lambda, \lambda\in\C$, is holomorphic. Since
$r_\lambda$ and $r_\lambda\invers$ are holomorphic away from
$\lambda=\varrho, \bar\varrho$ and $\lambda\mapsto \d_\lambda$ is
holomorphic on $\C$ we only have to investigate holomorphicity at
$\lambda=\varrho, \bar\varrho$. By the reality condition
(\ref{eq:dhatreality}) it is enough to show holomorphicity at
$\lambda=\varrho$.

Writing 
\[
\d_\lambda = \d_\varrho+ (\lambda-\varrho)\eta
\]
we have
\[
\hat \d_\lambda = r_\lambda\cdot \d_\varrho +
(\lambda-\varrho)\Ad(r_\lambda) \eta\,.
\]

Since
$r_\lambda\invers= \pi_{E} \gamma_\lambda\invers+ \pi_{Ej} +
\pi_L\sigma_\lambda\invers$ and
$L \subset\ker \eta, \im \eta \subset L$ we see that
\begin{equation}
\label{eq:Adreta}
\Ad(r_\lambda)\eta = \pi_L \eta(\pi_E\frac{\varrho}{\varrho-\lambda} +
\pi_{Ej} \frac{\bar \varrho}{\bar\varrho-\lambda})
\end{equation}
and thus $(\lambda-\varrho) \Ad(r_\lambda)\eta$ is holomorphic at
$\lambda=\varrho$. We now investigate $r_\lambda\cdot
d_\varrho$. Since $r_\lambda$ and
$(\pi_{Ej} + \pi_L \sigma_\lambda\invers)$ are holomorphic at
$\lambda =\varrho$ and
\[
  r_\lambda\cdot d_\varrho = r_\lambda \circ d_\varrho \circ (\pi_E
  \gamma_\lambda\invers + \pi_{Ej} + \pi_L \sigma_\lambda\invers)
\]
we only have to consider the term
\[
r_\lambda \circ d_\varrho\circ (\pi_E\gamma_\lambda\invers)\,.
\]
Since $\varphi$ is $\d_\varrho$--parallel, the bundle $E=\varphi\C$ is
$\d_\varrho$--stable. Therefore,
\[
r_\lambda \circ \d_\varrho \circ(\pi_E\gamma_\lambda\invers)= 
\pi_E \circ \d_\varrho \circ \pi_E
\]
which is independent of $\lambda$. This shows that the map
$\lambda\mapsto \hat \d_\lambda =r_\lambda\cdot \d_\lambda$ is
holomorphic at $\lambda=\varrho$, and by the reality condition it is
holomorphic on $\C$.  We now investigate the behaviour of
$\hat \d_\lambda$ at $\infty$.

We write again
$\hat \d_\lambda = r_\lambda \cdot d + \lambda \Ad(r_\lambda) \eta $.
By (\ref{eq:Adreta}) we have
\[
\lim_{\lambda\to \infty} \lambda \Ad(r_\lambda)\eta
= -\pi_L \eta(\pi_E \varrho + \pi_{Ej} \bar\varrho)\,.
\]
On the other hand,
\begin{align*}
r_\lambda\cdot d & = (\pi_E\gamma_\lambda + \pi_{Ej} + \pi_L
                   \sigma_\lambda) \circ d \circ  (\pi_E\gamma_\lambda\invers + \pi_{Ej} + \pi_L
                   \sigma_\lambda\invers) \\
&= \pi_E\circ d \circ \pi_E+\pi_{E j} \circ d \circ \pi_{Ej} +
  \pi_L\circ d \circ \pi_L  + 
\pi_E\circ d \circ \pi_{Ej} \gamma_\lambda + \pi_E\circ d \circ \pi_L
  \frac{\varrho-\lambda}{\varrho}\\
&\quad +
\pi_{Ej}\circ d \circ \pi_E \gamma_\lambda\invers + \pi_{Ej}\circ d
  \circ \pi_L \frac{\bar\varrho-\lambda}{\bar\varrho} 
+ \pi_L\circ d \circ \pi_{E} \frac \varrho
  {\varrho-\lambda} + \pi_L\circ d \circ \pi_{Ej}
 \frac{\bar\varrho}{\bar\varrho-\lambda}
\end{align*}
gives with
$\lim_{\lambda\to\infty} \gamma_\lambda= \frac{\bar\varrho}\varrho$
and
$ \lim_{\lambda\to\infty} \frac \varrho {\varrho-\lambda}
=\lim_{\lambda\to\infty} \frac {\bar\varrho} {\bar\varrho-\lambda}=0$
that $r_\lambda \cdot d$ has a simple pole at infinity. Since
$\hat \d_{\lambda=0} = d$ we thus can write
\[
\hat \d_\lambda = d + \lambda \hat \eta_\lambda
\]
where $\hat \eta_\lambda$ is holomorphic on the compact
$\C\cup\{\infty\}$ and thus constant in $\lambda$.  Moreover,
combining the previous computations we have
\begin{align*}
\hat \eta = \hat \eta_\lambda &= \lim_{\lambda\to \infty} \frac 1 \lambda \hat d_\lambda = 
 \lim_{\lambda\to \infty}\pi_E\circ d \circ \pi_L
  \frac{\varrho-\lambda}{\lambda\varrho} +  \pi_{Ej}\circ d
  \circ \pi_L \frac{\bar\varrho-\lambda}{\lambda\bar\varrho} \\
&= 
-(\pi_E\circ d \circ \pi_L \frac 1 \varrho + \pi_{Ej} \circ d \circ \pi_L
\frac 1{\bar \varrho})\,.
\end{align*}

This shows that $\hat \d_\lambda$ is a family of flat connections with
$\hat\d_\lambda = d + \lambda\hat\eta$ and $\hat\eta^2=0$.  Therefore,
$\im \hat \eta =\ker \hat \eta = \varphi\H$ defines an isothermic
surface $\hat L$ with associated family $\hat \d_\lambda$ by Theorem
\ref{thm:retraction}. Indeed, $\hat L =\varphi\H$ is the
$\varrho$--Darboux transform $D_{\varphi, \varrho}(f)$ given by the
$\d_\varrho$--parallel section $\varphi$. Finally, since
$r_\varrho^E(\infty) = \pi_E\frac{\bar \rho}{\rho} + \pi_{E j}$ we see
that $\hat L = r_\varrho^E(\infty) \tilde{\C}^4$.
\end{proof}

We can now investigate the common $\varrho$--Darboux transform of two
$\varrho$--Darboux transforms, which is given by a similar dressing
matrix. We omit the proof of the following theorem as it is analogous
to the proof of Theorem \ref{thm:sfd}.

\begin{theorem} Let $f: M \to\R^4$ be isothermic with associated
  family $\d_\lambda = d + \lambda\eta$, $\lambda\in\C$.  Let
  $\varrho\in\C_*$ and $W$ a $\d_\varrho$--parallel, 2 dimensional
  complex bundle such that $W \oplus W j =
  \underline{\tilde\C}^4$. Define
\[
r_\varrho^W(\lambda) = \pi_W \gamma_\varrho(\lambda) + \pi_{Wj}
\]
where $\pi_W$ and $\pi_{Wj}$ are the canonical projections onto $W$
and $W j$ along the splitting $\underline{\tilde\C}^4= W\oplus
Wj$. Moreover,
\[
\gamma_\varrho(\lambda) = \frac{\bar\varrho(\varrho-\lambda)}{\varrho(\bar \varrho-\lambda)}
\]
acts by right multiplication on $\C^4$.  Then the surface
$\hat f = r_{W,\varrho}(f)$ given by the line bundle
$\hat L = r_\varrho^W(\infty) \tilde L$ is an isothermic surface, the
\emph{(2--step) simple factor dressing} of $f$. Its associated family
$\d_\lambda = r_\varrho^W(\lambda)\cdot \d_\lambda$ is given by the
gauge of $\d_\lambda$ by the gauge matrix $r_\varrho^W(\lambda)$.

\end{theorem}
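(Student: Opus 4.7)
My plan is to mirror the proof of Theorem \ref{thm:sfd}, replacing the three--fold splitting $E\oplus Ej\oplus \tilde L$ by the two--fold splitting $\underline{\tilde\C}^4 = W\oplus Wj$. Using $\pi_W(\xi j) = (\pi_{Wj}\xi)j$, $\pi_{Wj}(\xi j) = (\pi_W\xi)j$ and $\bar\gamma_\lambda\gamma_{\bar\lambda}=1$, a direct computation analogous to the single--step case gives
\[
r_{\bar\lambda}(\xi j) = r_\lambda(\xi)\, j\, \gamma_{\bar\lambda}.
\]
Since $\eta$ is a quaternionic $\End$--valued $1$--form and $\lambda$ acts by right multiplication, one also has $\d_\lambda(\xi j) = (\d_{\bar\lambda}\xi)j$. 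Combining these yields $\hat\d_\lambda(\xi j) = (\hat\d_{\bar\lambda}\xi)j$; in particular $\hat\d_\lambda$ is quaternionic for $\lambda\in\R$.

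Next I would establish holomorphicity of $\lambda\mapsto\hat\d_\lambda$ on $\C$. By the reality condition it suffices to work at $\lambda = \varrho$. Writing $\d_\lambda = \d_\varrho + (\lambda-\varrho)\eta$ gives
\[
\hat\d_\lambda = r_\lambda\cdot\d_\varrho + (\lambda-\varrho)\Ad(r_\lambda)\eta.
\]
Since $r_\lambda\invers = \pi_W\gamma_\lambda\invers + \pi_{Wj}$ has a simple pole at $\varrho$, the factor $(\lambda-\varrho)$ cancels the pole of $\Ad(r_\lambda)\eta$. For the first term the only potentially singular contribution is $r_\lambda\circ\d_\varrho\circ(\pi_W\gamma_\lambda\invers)$; the $\d_\varrho$--stability of $W$ forces $\d_\varrho(\pi_W\zeta)\in\Omega^1(W)$, so $r_\lambda$ multiplies by $\gamma_\lambda$ and cancels the $\gamma_\lambda\invers$, producing the $\lambda$--independent $\d_\varrho\circ\pi_W$. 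Thus $\hat\d_\lambda$ extends holomorphically across $\varrho$, and by reality across $\bar\varrho$.

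Since $\gamma_\varrho(0)=1$ forces $r_0=\Id$ and hence $\hat\d_0=d$, we may write $\hat\d_\lambda = d + \lambda\hat\eta_\lambda$. A block decomposition of $d$ with respect to $W\oplus Wj$, combined with $\lim_{\lambda\to\infty}\gamma_\lambda = \bar\varrho/\varrho$ and $\lim_{\lambda\to\infty}\gamma_\lambda\invers = \varrho/\bar\varrho$, shows that $r_\lambda\cdot d + \lambda\Ad(r_\lambda)\eta$ grows at most linearly at $\infty$, so $\hat\eta_\lambda$ is bounded there. Hence $\hat\eta_\lambda$ is a holomorphic endomorphism--valued $1$--form on $\C\cup\{\infty\}$, and therefore a constant $\hat\eta$. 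Flatness of each $\hat\d_\lambda$ then yields $\hat\eta\wedge\hat\eta=0$ and $d\hat\eta=0$. A direct check at $\lambda=\infty$ identifies $\hat L = r_\varrho^W(\infty)\tilde L$ as a quaternionic line bundle with $\im\hat\eta\subset\hat L\subset\ker\hat\eta$, and Theorem \ref{thm:retraction} concludes that $\hat f$ is isothermic with associated family $\hat\d_\lambda$.

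The main obstacle will be the analysis at infinity. In the single--step case, the projection $\pi_L$ in $r_\varrho^E(\lambda)$ provides an extra cancellation that makes $\lambda\Ad(r_\lambda)\eta$ converge to an explicit limit, and $r_\lambda \cdot d$ has an equally transparent simple pole at $\infty$. Without $\pi_L$ in the 2--step gauge matrix, the terms $r_\lambda\cdot d$ and $\lambda\Ad(r_\lambda)\eta$ each contain contributions that would naively diverge, and one must check that cancellations across the two terms keep $\hat\d_\lambda$ linear in $\lambda$. Non--triviality of the resulting $\hat\eta$ (so that $\hat L$ is a genuine line bundle) finally uses the splitting hypothesis $W\oplus Wj = \underline{\tilde\C}^4$.
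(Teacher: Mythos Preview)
Your approach is exactly what the paper intends: it explicitly omits the proof, stating only that it is analogous to that of Theorem~\ref{thm:sfd}. The reality condition, the holomorphicity at $\lambda=\varrho$ via $\d_\varrho$--stability of $W$, and the identification $\hat L = r_\varrho^W(\infty)\tilde L$ all go through verbatim with the simpler two--fold splitting.

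Your stated ``main obstacle'' at $\lambda=\infty$ is, however, a non--issue. In the single--step case the difficulty arises from $\sigma_\lambda\invers = (\bar\varrho-\lambda)/\bar\varrho$, which diverges at $\infty$; that is why $r_\lambda\cdot d$ has a genuine simple pole there. In the two--step case only $\gamma_\lambda$ and $\gamma_\lambda\invers$ appear, and both have \emph{finite} limits $\bar\varrho/\varrho$ and $\varrho/\bar\varrho$ at $\infty$. Hence $r_\lambda\cdot d$ is bounded at $\infty$, while $\Ad(r_\lambda)\eta = \pi_W\eta\pi_W + \pi_W\eta\pi_{Wj}\gamma_\lambda + \pi_{Wj}\eta\pi_W\gamma_\lambda\invers + \pi_{Wj}\eta\pi_{Wj}$ converges to $r_\infty\eta r_\infty\invers$ with $r_\infty = r_\varrho^W(\infty)$. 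No cross--cancellations between the two terms are required; the linear growth of $\hat\d_\lambda$ is immediate and $\hat\eta = r_\infty\eta r_\infty\invers$, from which $\im\hat\eta\subset r_\infty\tilde L\subset\ker\hat\eta$ follows directly. So the analysis at infinity is in fact \emph{easier} here than in Theorem~\ref{thm:sfd}, not harder.
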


\begin{rem}
  We see that $r_\varrho^W$ is the identity if $\varrho\in\R$, that
  is, an isothermic surface $f$ is a 2--step simple factor dressing
  $f=r_{W,\varrho}(f)$ of itself if $\varrho\in\R$.
\end{rem}

The 2--step simple factor dressing of an isothermic surface $f$ with
pole $\bar\varrho$ is linked to the $\varrho$--Darboux transformation
of $f$ via Bianchi permutability:

\begin{theorem}
  Let $f: M \to \R^4$ be isothermic and $\varphi_1, \varphi_2$ be two
  $\d_\varrho$--parallel sections such that $W \oplus W j=\ttrivial 2$
  where $W = \Span_\C\{\varphi_1, \varphi_2\}$.  Let
  $f_1=r_{E_1,\varrho}(f)$ and $f_2=r_{E_2,\varrho}(f)$ be the
  $\varrho$--Darboux transforms given by $E_1=\varphi_1\C$ and
  $E_2=\varphi_2\C$ respectively.

  Then the 2--step simple factor dressing $\hat f =r_{W,\varrho}(f)$
  of $f$ is a common $\varrho$--Darboux transform of $f_1$ and $f_2$.
\end{theorem}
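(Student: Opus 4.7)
The plan is to combine Bianchi permutability for the $\varrho$--Darboux transformation (Theorem \ref{thm: gen bianchi}) with the identification of $\varrho$--Darboux transforms and simple factor dressings (Theorem \ref{thm:sfd}). The hypothesis $W\oplus Wj=\ttrivial 2$ forces the $\d_\varrho$--parallel sections $\varphi_1,\varphi_2$ to be $\H$--linearly independent, so Theorem \ref{thm: gen bianchi} (specialised to $\varrho_1=\varrho_2=\varrho$) produces a common $\varrho$--Darboux transform $\check f$ of $f_1=D_{\varphi_1,\varrho}(f)$ and $f_2=D_{\varphi_2,\varrho}(f)$ whose line bundle is $\check L=\check\varphi\H$, with $\check\varphi=\varphi_2-\varphi_1\chi$ and $\chi=\varrho\invers\alpha_1\invers\alpha_2\varrho$. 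The proof of Theorem \ref{thm: gen bianchi} shows moreover that $\check\varphi$ is parallel with respect to the connection $\d^{f_1}_\varrho$ in the associated family of $f_1$ (and symmetrically with respect to $\d^{f_2}_\varrho$), so Theorem \ref{thm:sfd} realises $\check f$ as the simple factor dressing $r_{\check E_1,\varrho}(f_1)$ with $\check E_1=\check\varphi\C$, whose associated family is
\[
\check\d_\lambda = r_\varrho^{\check E_1}(\lambda)\cdot r_\varrho^{E_1}(\lambda)\cdot\d_\lambda.
\]

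It then remains to identify $\check f$ with $\hat f=r_{W,\varrho}(f)$, whose associated family is $\hat\d_\lambda=r_\varrho^W(\lambda)\cdot\d_\lambda$ by the definition of the 2--step simple factor dressing. The most direct route is to compare line bundles: writing the quaternionic lift $\psi$ of $f$ in the $\C$--basis $\{\varphi_1,\varphi_2,\varphi_1 j,\varphi_2 j\}$ of $\ttrivial 2$ adapted to the splitting $W\oplus Wj$, one applies $r_\varrho^W(\infty)=\pi_W(\bar\varrho/\varrho)+\pi_{Wj}$ and verifies, using the explicit form of $\chi=\varrho\invers(\alpha_1\invers\alpha_2)\varrho$ which conjugates the $j$--part of $\alpha_1\invers\alpha_2$ by the factor $\bar\varrho/\varrho$, that the resulting subbundle equals $\check\varphi\H$. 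Alternatively one may compare the associated families and check that $r_\varrho^{\check E_1}(\lambda)\,r_\varrho^{E_1}(\lambda)$ and $r_\varrho^W(\lambda)$ yield the same retraction form when gauging $\d_\lambda$, so that Theorem \ref{thm:retraction} forces $\check\d_\lambda=\hat\d_\lambda$ and hence $\check f=\hat f$.

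The main obstacle is precisely this final identification. The three dressing matrices are defined with respect to the distinct splittings $E_1\oplus E_1 j\oplus\tilde L$, $\check E_1\oplus\check E_1 j\oplus L_1$ and $W\oplus Wj$ of $\ttrivial 2$, and $r_\varrho^W(\infty)$ is only $\C$--linear and not $\H$--linear. One must therefore verify by explicit calculation that $r_\varrho^W(\infty)\tilde L$ really defines an $\H$--subbundle coinciding with $\check\varphi\H$, and on the associated family side that the \emph{a priori} double pole at $\bar\varrho$ in the composite $r_\varrho^{\check E_1}(\lambda)\,r_\varrho^{E_1}(\lambda)$ collapses, through a cancellation of residues that is precisely the Bianchi relation between $\check\varphi$ and $\varphi_1$, to the single pole of $r_\varrho^W(\lambda)$. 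This bookkeeping, though direct, is the technically delicate step of the proof, and once carried out it yields $\check f=\hat f$ and therefore the claim.
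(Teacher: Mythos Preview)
Your plan is correct and your approach (a)---computing $r_\varrho^W(\infty)\tilde L$ directly and identifying it with $(\varphi_2-\varphi_1\chi)\H$---is exactly what the paper does. The paper's execution, however, is much shorter than you suggest: rather than decomposing $\psi$ in the $\C$--basis $\{\varphi_1,\varphi_2,\varphi_1 j,\varphi_2 j\}$, it takes the specific section $\tilde\psi=\varphi_2-\varphi_1\alpha_1\invers\alpha_2\in\Gamma(\tilde L)$, observes the clean identity $r_\varrho^W(\infty)(\varphi\, m\,\tfrac{1}{\bar\varrho})=\varphi\,\tfrac{1}{\varrho}\,m$ for $\varphi\in\Gamma(W)$ and $m\colon M\to\H$ (which encodes precisely the $\bar\varrho/\varrho$--twist of the $j$--part you noticed), and reads off $r_\varrho^W(\infty)(\tilde\psi\,\tfrac{1}{\bar\varrho})=(\varphi_2-\varphi_1\chi)\varrho\invers$ in one line. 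Your alternative route (b) via composing $r_\varrho^{\check E_1}(\lambda)\,r_\varrho^{E_1}(\lambda)$ and cancelling a double pole at $\bar\varrho$ is not needed and is not what the paper does; the line--bundle comparison already settles everything without touching the associated families.
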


\begin{proof}
Writing $\varphi_i = e\alpha_i + \psi \beta_i$
 we consider
\[
\tilde \psi=\psi(\beta_2-\beta_1\alpha_1\invers \alpha_2) =\varphi_2 -\varphi_1
\alpha_1\invers\alpha_2\in\Gamma(\tilde L)\,.
\]
Since the 2--step simple factor dressing is given by
$r_\varrho^W(\infty) \tilde L$ we will now compute
$r_\varrho^W(\infty) (\tilde\psi \frac 1{ \bar \varrho})$. Recall that
$r_\varrho^W (\infty)= \pi_{W}\frac{\bar \varrho}{\varrho} + \pi_{Wj}$
so that
$r_\varrho^W(\infty)(\varphi m \frac 1{\bar \varrho}) = \varphi \frac
1\varrho m$ for $\varphi\in\Gamma(W)$ and $m: M \to \H$ and
\begin{align*}
r_\varrho^W(\infty) (\tilde\psi \frac 1{
\bar \varrho}) &= r_\varrho^W(\infty)((\varphi_2-\varphi_1\alpha_1\invers\alpha_2)\frac
  1{\bar \varrho}) = (\varphi_2 - \varphi_1\varrho\invers
  \alpha_1\invers \alpha_2 \varrho)\varrho\invers =(\varphi_2-\varphi_1\chi)\varrho\invers
\end{align*}
with $\chi= \varrho\invers\alpha_1\invers\alpha_2\varrho$.  Thus, the
2--step simple factor dressing $\hat f$ is given by the line bundle
$r_\varrho^W(\infty)\tilde L= (\varphi_2-\varphi_1\chi)\H$ which by
Theorem \ref{thm: gen bianchi} is the line bundle of the common
$\varrho$--Darboux transform of $f_1$ and $f_2$.
\end{proof}

\begin{rem} Note that by Theorem \ref{thm: gen bianchi} the complex
  line bundles $E_{12} =\varphi\C$ and $E_{21}=-\varphi\chi\invers \C$
  respectively are given by the parallel section
  $\varphi = \varphi_2-\varphi_1\chi$ with
  $d\varphi_2 = d\varphi_1\chi$. In general, $E_{12}\not=E_{21}$ since
  $\chi$ is quaternionic.
\end{rem}

\section{CMC surfaces}
\label{sec:cmc}

In this section we will now compare results for general isothermic
surfaces to ones given by the integrable system structure of a CMC
surface which is given by the harmonic Gauss map: a CMC surface
$f: M\to\R^3$ from a Riemann surface into 3--space with constant mean
curvature $H=1$ is isothermic since the parallel CMC surface is a dual
surface of $f$ (if $f$ is not a round sphere). We will link the
associated family of flat connections of a CMC surface, which is given
by its harmonic Gauss map, to the isothermic family $\d_\rho$ of flat
connections. This will enable us to show that $\mu$--Darboux
transforms of a CMC surface $f$ as defined in \cite{cmc} are exactly
those $\rho$--Darboux transforms of $f$ which have constant mean
curvature.

\subsection{The associated family of flat connections of a harmonic map}

If $f: M \to\R^3$ is a conformal immersion then the Ruh--Vilms
theorem, \cite{ruh_vilms}, follows from \eqref{eq:Hdf}: $H$ is
constant if and only
\[ d*dN + 2|dN|^2N =0\,,
\]
that is, if $N: M\to S^2$ is harmonic. Here we identify a 2--form with
its quadratic form $\omega\wedge \eta = \omega *\eta -*\omega \eta$
for $\omega,\eta \in\Omega^1(M)$.

In this case, there is a $\C_*$--associated family of flat connections
and, in case of a torus, one can describe harmonic maps via their
spectral data, \cite{hitchin-harmonic}. If $f$ has constant mean
curvature $H\not=0$ then we consider the following $\C_*$--family of
flat connections on the trivial $\C^2$--bundle which is gauge
equivalent to the Hitchin family, see \cite{cmc}: identifying
$(\H, I) = \C^2$, where $I$ is the right multiplication by the
quaternion $i$, the family of connections
 \[
d^N_\lambda= d+ (\lambda-1) A\oz + (\lambda\invers-1)A\zo, \lambda\in\C_*\,,
\]
is flat if and only if $N$ is harmonic. Here $A =\frac 14(*dN + NdN)$
and $A\oz = \frac 12(A-I*A)$ and $A\zo=\frac 12(A+I*A)$ denote the
$(1,0)$ and $(0,1)$--parts of $A$ with respect to the complex
structure $I$.  Without loss of generality we will from now on assume
that our CMC surfaces have mean curvature $H=1$ and additionally
exclude the case of a round sphere. We recall from Example
\ref{ex:isothermic} that in this case the parallel surface $g=f+N$ is
a dual surface of $f$ where $N$ is the Gauss map of $f$.  Moreover, we
have $2*A = df$ by (\ref{eq:Hdf}) and for $\lambda\in\C_*$ we can
write for $\alpha\in\Gamma(\trivial {})$, see \cite{cmc}:
\begin{equation}
\label{eq: dalpha}
d^N_\lambda\alpha = d\alpha+ \frac 12df(N\alpha(a-1) + \alpha b)
\end{equation}
where
$a=\frac{\lambda+\lambda\invers}2, b=
i\frac{\lambda\invers-\lambda}2$.  In particular, $a^2 + b^2=1$ and
$\lambda =a+ i b$. Moreover, $d^N_\lambda$ is a quaternionic
connection on $\trivial{}$ if and only if $\lambda\in S^1$ and in this
case $a= \Re(\lambda), b=\Im(\lambda)$.

\begin{theorem}
  \label{thm: both parallel}
  Let $f: M\to\R^3$ be a CMC surfaces with $H=1$, and dual surface
  $g=f+N$, where $N$ is the Gauss map of $f$. Denote by $d_\mu^N $ the
  associated family of flat connections of $f$ and by $d_\mu^{N_g}$
  the associated family of flat connections of $g$.

  Let $\alpha\in\Gamma(\tilde\H)$ and for
  $\mu\in\C\setminus\{0,1\}\,, a=\frac{\mu+\mu\invers} 2$ and
  $b = i\frac{\mu\invers-\mu}2$ define
  \[
    \beta=\frac 12(N\alpha(a-1) + \alpha b)\,.
  \]

  Then $\alpha$ is $d_\mu^N$--parallel if and only if $\beta$ is
  $d_\mu^{N_g}$-- parallel.
\end{theorem}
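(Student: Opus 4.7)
The plan is to translate both parallelism conditions into the explicit form of \eqref{eq: dalpha}, verify the implication ``$\alpha$ parallel $\Rightarrow$ $\beta$ parallel'' by a direct computation, and derive the converse from the symmetry between $f$ and $g$.

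I would first record the data of the parallel surface $g = f+N$: by Example~\ref{ex:isothermic}(2) together with \eqref{eq:Hdf}, $g\colon M\to\R^3$ is CMC with mean curvature $H_g = 1$ and harmonic Gauss map $N_g = -N$, and a short computation gives $2*A_g = dg$. Hence the explicit form of its associated family reads
\[
d_\mu^{N_g}\beta = d\beta + \tfrac{1}{2}\,dg\bigl(-N\beta(a-1) + \beta b\bigr),
\]
where $a,b$ are the same scalars as for $f$ since they depend only on $\mu$.

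Assuming next that $d_\mu^N\alpha = 0$, i.e.\ $d\alpha = -df\beta$, I would differentiate $\beta = \tfrac{1}{2}(N\alpha(a-1) + \alpha b)$, substitute $d\alpha = -df\beta$, and apply the tangency relation $Ndf + dfN = 0$ to obtain
\[
d\beta = \tfrac{1}{2}\,dN\,\alpha(a-1) + \tfrac{1}{2}\,df\bigl(N\beta(a-1) - \beta b\bigr).
\]
The essential algebraic step is then the identity
\[
N\beta(a-1) - \beta b = \alpha(a-1),
\]
which uses $N^2=-1$, the commutativity of $a,b\in\C\subset\H$ (so $b(a-1)=(a-1)b$), and the crucial relation $a^2+b^2=1$, which implies $(a-1)^2+b^2 = -2(a-1)$. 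Substituting this identity back collapses the expression to $d\beta = \tfrac{1}{2}(df+dN)\alpha(a-1) = \tfrac{1}{2}\,dg\,\alpha(a-1)$, which is precisely $-\tfrac{1}{2}dg(-N\beta(a-1)+\beta b)$, so $d_\mu^{N_g}\beta = 0$.

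For the converse I would invoke the symmetry of the setup: $g$ is CMC with parallel surface $g + N_g = g - N = f$ and Gauss map $N_g = -N$, so the forward direction applied with $(f,N)$ replaced by $(g,-N)$ shows that whenever $\beta$ is $d_\mu^{N_g}$--parallel the section $\gamma := \tfrac{1}{2}\bigl(-N\beta(a-1) + \beta b\bigr)$ is $d_\mu^{N}$--parallel. The same algebraic identity rewritten as $-N\beta(a-1)+\beta b = -\alpha(a-1)$ yields $\gamma = -\tfrac{1}{2}\alpha(a-1)$; since $\mu\neq 1$ forces $a\neq 1$, and $d_\mu^N$ is $\C$--linear in the right action, the section $\alpha = -\tfrac{2}{a-1}\gamma$ is also $d_\mu^N$--parallel. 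The only real obstacle is spotting the algebraic identity $N\beta(a-1)-\beta b = \alpha(a-1)$; once this is in hand, both directions reduce to bookkeeping with $N^2=-1$, the tangency $Ndf+dfN=0$, and the decomposition $dg = df + dN$.
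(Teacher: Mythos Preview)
Your proof is correct and follows essentially the same route as the paper: both isolate the algebraic identity $-N\beta(a-1)+\beta b=\alpha(1-a)$ (which hinges on $a^2+b^2=1$), compute $d\beta$ directly to establish the forward implication, and then appeal to the symmetry $(f,N)\leftrightarrow(g,-N)$ together with $a\neq 1$ for the converse. The only cosmetic difference is that the paper uses $dN=dg-df$ from the outset while you pass through $Ndf+dfN=0$ first, but the content is the same.
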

\begin{proof}
  Recall that $N_g=-N$ is the Gauss map of $g$. Using $a^2+b^2=1$ we
  have
  \[
    \frac 12(-N\beta(a-1) + \beta b) = \alpha \frac{1-a}2
  \]
  so that the flat connection $d_\mu^{-N}$  of $g$ is given by
  \[
    d_\mu^{-N} \beta= d\beta + \frac 12 dg(N_g\beta (a-1) + \beta b)
    = d\beta + dg \alpha \frac{a-1}2\,.
  \]
  If $d_\mu^N\alpha=0$, that is $d\alpha=-df\beta$ then 
  \[
    d\beta = \frac 12((dg-df)\alpha (a-1) -Ndf\beta(a-1) - df\beta b)
    = dg \alpha\frac{a-1}2
  \]
  shows that $\beta$ is $d_\mu^{-N}$--parallel.

  Conversely, writing $\alpha_g= \beta$ we have $d_\mu^{N_g} \alpha_g
  = d\alpha_g - dg \alpha \frac{1-a}{2}$ so that
  \[
    \beta_g = \alpha \frac {1-a}2\,.
  \]
  Reversing now the roles of $f$ and $g$, we see that if
  $\alpha_g=\beta$ is $d^{N_g}_\mu $--parallel then
  $d^{N}_\mu \beta_g=0$ by symmetry. Since $\mu\not=1$, this shows
  that $\alpha$ is $d_\mu^N$--parallel.

\end{proof}

From this discussion we immediately see that a $d^N_\mu$--parallel
section gives, without further integration, a parallel section of the
associated family $d_\varrho$ where we now view $f$ as an isothermic
surface.

\begin{theorem}
\label{thm: drho parallel}
Let $f: M \to\R^3$ be a CMC surface with $H=1$ and 
\[
d_\varrho = d + \begin{pmatrix} 0 &df \\ 0 & 0
\end{pmatrix} + \varrho \begin{pmatrix} 0 & 0\\ dg &0
\end{pmatrix}\,,
\] 
where $g=f+N$ is the parallel CMC surface of $f$.  Then all
$d_\varrho$--parallel sections, $\varrho\in\C\setminus\{0,1\}$, are
given algebraically by
$d_\mu^N$--parallel sections. \\

More precisely, any $d_\varrho$--parallel section is given as
 \[
\phi = \phi_+ + \phi_-\,, \qquad\phi_\pm = \begin{pmatrix} \alpha_\pm \\ \beta_\pm
\end{pmatrix}\,,
\]
where  
\begin{align*}
d_{\mu_\pm}^N\alpha_\pm &=0\,,  \qquad\beta_\pm = \frac 12(N\alpha_\pm
(a-1) \pm \alpha_\pm b)
\end{align*}
with $a=1-2\varrho, b=2\sqrt{\varrho(1-\varrho)}$ and
$\mu_\pm = a\pm ib$. We call $\mu_\pm= a\pm ib$ the \emph{CMC spectral
  parameter} associated to the isothermic spectral parameter
$\varrho$.
\end{theorem}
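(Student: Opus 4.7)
The plan is to treat the two directions separately, exploiting the symmetry between $f$ and its parallel surface $g = f + N$ provided by Theorem~\ref{thm: both parallel}. Throughout I will use the identities $a - 1 = -2\varrho$, $a^2 + b^2 = 1$ (which forces $\mu_+\mu_- = 1$ so that $(\mu_\pm + \mu_\pm\invers)/2 = a$ and $i(\mu_\pm\invers - \mu_\pm)/2 = \pm b$), and consequently $(a-1)^2 + b^2 = 2(1-a)$, together with the tangent space condition $Ndf = -dfN$ from~\eqref{eq:tangent space}, which holds because $N = R$ for $f: M \to \R^3$.

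For the easier direction, assume each $\alpha_\pm$ is $d^N_{\mu_\pm}$--parallel and set $\beta_\pm = \tfrac12(N\alpha_\pm(a-1) \pm \alpha_\pm b)$. Then~\eqref{eq: dalpha} directly gives $d\alpha_\pm = -df\beta_\pm$, which is the first $d_\varrho$--parallelism equation. For the second, Theorem~\ref{thm: both parallel} shows that $\beta_\pm$ is $d^{-N}_{\mu_\pm}$--parallel, so applying~\eqref{eq: dalpha} to $g$ with Gauss map $-N$ yields $d\beta_\pm = -dg\,\alpha_\pm\,\tfrac{1-a}{2} = -dg\,\alpha_\pm\,\varrho$. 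Summing, $\phi = \phi_+ + \phi_-$ is $d_\varrho$--parallel.

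For the converse, given a $d_\varrho$--parallel section $\phi = \begin{pmatrix}\alpha\\ \beta\end{pmatrix}$, I invert the linear conditions $\alpha_+ + \alpha_- = \alpha$ and $\beta_+ + \beta_- = \beta$, where the $\beta_\pm$ are expressed in terms of $\alpha_\pm$ by the stated formulas. This forces
\begin{equation*}
\alpha_\pm = \tfrac12 \bigl(\alpha \pm (2\beta - N\alpha(a-1))\, b\invers\bigr),
\end{equation*}
which is well-defined since $b \neq 0$ on $\C\setminus\{0,1\}$. It then remains to verify $d\alpha_\pm = -df\beta_\pm$, equivalently $d^N_{\mu_\pm}\alpha_\pm = 0$ via~\eqref{eq: dalpha}. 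The sum $d(\alpha_+ + \alpha_-) = d\alpha = -df\beta = -df(\beta_+ + \beta_-)$ is automatic; for the difference, I compute $d(\alpha_+ - \alpha_-)$ directly from the $d_\varrho$--parallelism relations together with $dg = df + dN$, arriving at $(df\alpha + Ndf\beta)(a-1)b\invers$. On the other hand, $\beta_+ - \beta_- = (N\beta - \alpha)(a-1)b\invers$ by the identity $(a-1)^2 + b^2 = 2(1-a)$, so $-df(\beta_+ - \beta_-) = (df\alpha - dfN\beta)(a-1)b\invers$. The two expressions agree precisely when $Ndf = -dfN$, which is the $\R^3$--tangency condition. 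The main technical difficulty is careful bookkeeping of the right--$\C$ multiplications by $a, b, \varrho$, which commute among themselves but only commute past $N$ and $df$ through the tangent space relation; all remaining cancellations collapse to the same algebraic identity that underlies Theorem~\ref{thm: both parallel}, so no genuinely new structural ingredient is required beyond the duality $f \leftrightarrow g$.
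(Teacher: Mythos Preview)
Your proof is correct. The forward direction matches the paper exactly, invoking Theorem~\ref{thm: both parallel} to show that each $\phi_\pm$ is $d_\varrho$--parallel.

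For the converse, however, you take a genuinely different route. The paper does not invert the linear system at all: instead it argues that for $\varrho\notin\{0,1\}$ any non--trivial $d_{\mu_+}^N$--parallel $\alpha_+$ and $d_{\mu_-}^N$--parallel $\alpha_-$ are complex linearly independent (else $d_{\mu_+}^N\alpha_+ = d_{\mu_-}^N\alpha_+ = 0$ would force $b=0$), and then concludes by a dimension count, since two complex $2$--dimensional spaces of parallel sections combine to fill the complex $4$--dimensional space of $d_\varrho$--parallel sections. Your approach instead writes down the explicit decomposition $\alpha_\pm = \tfrac12\bigl(\alpha \pm (2\beta - N\alpha(a-1))b^{-1}\bigr)$ and verifies directly that it works, reducing everything to the single identity $Ndf = -dfN$. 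Your argument is more constructive and yields the decomposition formula for free; the paper's is shorter and, as a by--product, records the linear independence of $\alpha_+,\alpha_-$, which is used elsewhere (Remark~\ref{rem:quaternionicindependence}).
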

\begin{rem} We note that there is an ambiguity of choice of
  $\sqrt{\varrho(1-\varrho)}$ however, either choice gives rise to the
  same pair $(\mu_+, \mu_-)$.
  \end{rem}
\begin{proof} 
  Since $a^2+b^2=1$ we see that $(a+ i b)\invers = a-ib$. Therefore,
  $\frac{\mu_\pm+\mu_\pm\invers} 2 = a$ and $
  i\frac{\mu_\pm\invers-\mu_\pm}2 = \pm b$
  and hence
  \[
\frac{2-\mu_\pm- \mu_\pm\invers}4 = \frac{1-a}2 =\varrho\,.
\]
  Since
  $d^N_{\mu_\pm}\alpha_\pm =0$ we see by Theorem \ref{thm: both
    parallel}
   that $d\alpha_\pm = -df
  \beta_\pm$ and
  $\beta_\pm = \frac 12(N \alpha_\pm(a-1) \pm \alpha_\pm b)$ satisfies
\[
d\beta_\pm  =
-dg\alpha_\pm \varrho\,,
\] 
 so that
$\phi_\pm=\begin{pmatrix}\alpha_\pm\\ \beta_\pm
\end{pmatrix}
$ is $d_\varrho$--parallel.   

Now, if $\varrho\not =0, 1$ then non--trivial  $\alpha_+, \alpha_-$ are complex
linearly independent: if $\alpha_+ = \alpha_- m$, $m\in\C$, then
$d_{\mu_+}^N \alpha_+ = d_{\mu_-}^N \alpha_+=0$ implies
\[
df(N\alpha_+(a-1)+\alpha_+b) = df(N\alpha_+(a-1) -\alpha_+b)\,.
\]
Since $f$ is an immersion this shows $b=0$ and $\mu_+=a+ib =\pm 1$
since $a^2+b^2=1$, contradicting our assumption
$\varrho=\frac{2-\mu_+-\mu_+\invers}4\not=0,1$.  Since the space of
$d_\mu^N$--parallel sections is complex 2--dimensional, and
$\alpha_+, \alpha_-$ are complex linearly independent if
$\varrho=\frac{1-a}2\not\in\{0,1\}$, we obtain a 4--dimensional space
of $d_\varrho$--parallel sections, that is, all $d_\varrho$--parallel
sections arise this way if $\varrho\in \C\setminus\{0,1\}$.
\end{proof}

\quad

\begin{rem}
\label{rem:quaternionicindependence}
If $\varrho\in (0,1)$ then $\mu_\pm\in S^1$. In this case, the flat
connections $d_{\mu_\pm}^N$ are quaternionic. In particular, the same
argument as above shows that in this case $d_{\mu_\pm}^N$--parallel
sections are $\H$--linearly independent.

Moreover, for $\varrho=0$, the $d_\varrho$--parallel sections are the
constant sections, whereas in the case $\varrho=1$ the two spectral
parameter $\mu_\pm=-1$ coincide and $d_\varrho$--parallel sections
occur which are not given by $d_{\mu=-1}^N$--parallel sections, see
the example of a cylinder below. Indeed for any
$d_{\mu=-1}^N$--parallel section $\alpha$ we obtain
$\beta = \frac 12(N\alpha(a-1)+\alpha b) = -N\alpha$.

\end{rem}

Theorem \ref{thm: drho parallel} shows that all $d_\rho$--parallel
sections are given algebraically by surface data and
$d_{\mu_\pm}^N$--parallel sections where $N$ is the Gauss map of $f$
and $\mu_\pm$ are the associated CMC spectral parameters of
$\rho$. Conversely, given all $d_\rho$--parallel sections we can
determine algebraically the $d_\mu^N$--parallel sections if $\rho$ is
the isothermic spectral parameter given by $\mu$.

\begin{theorem}
  Let $f: M\to\R^3$ be a CMC surface with $H=1$ and dual surface
  $g=f+N$. For spectral parameter $\mu\in \C\setminus\{0, \pm 1\}$ let
  $\varrho = -\frac{(\mu-1)^2}{4\mu}$ be the associated isothermic
  spectral parameter and
  $a= \frac{\mu+\mu\invers}2, b= i\frac{\mu\invers-\mu}2$. If a
  $\d_\varrho$--parallel section $\varphi = e\alpha+ \psi \beta$
  satisfies at some fixed point $p_0\in M$
  \[
    \beta(p_0) = \frac 12(N(p_0)
    \alpha(p_0)(a-1) + \alpha(p_0) b)
  \]
  then $\alpha$ is $d_\mu^N$--parallel. All $d_\mu^N$--parallel
  sections arise this way.

  Here $\d_\varrho$ is the isothermic family of flat connectionqs
  given by the choice of dual surface $g =f +N$, and $d_\mu^N$ is the
  CMC associated family given by the Gauss map $N$.
\end{theorem}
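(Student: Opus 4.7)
The plan is to apply Theorem \ref{thm: drho parallel}, which already expresses every $d_\varrho$--parallel section as a sum $\phi = \phi_+ + \phi_-$ with $\alpha_\pm$ being $d_{\mu_\pm}^N$--parallel, and then use the pointwise initial condition at $p_0$ to kill the $\alpha_-$ summand.

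First I would match the spectral parameters. The $\d_\varrho$--parallel section $\varphi = e\alpha + \psi\beta$ corresponds to the $d_\varrho$--parallel section $\phi = (\alpha,\beta)^t$. A direct substitution using $\varrho = -\frac{(\mu-1)^2}{4\mu}$ shows
\[
1 - 2\varrho = \tfrac{\mu+\mu\invers}{2} = a, \qquad \bigl(2\sqrt{\varrho(1-\varrho)}\bigr)^2 = -\tfrac{(\mu-\mu\invers)^2}{4},
\]
so for the appropriate branch of the square root the $b$ of Theorem \ref{thm: drho parallel} equals $i(\mu\invers-\mu)/2$ (the sign ambiguity is irrelevant by the remark following that theorem). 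Hence the CMC spectral parameters are $\mu_+ = a + ib = \mu$ and $\mu_- = a - ib = \mu\invers$, and the hypothesis $\mu \in \C\setminus\{0,\pm 1\}$ translates to $\varrho \in \C\setminus\{0,1\}$, exactly the range where Theorem \ref{thm: drho parallel} applies. The decomposition it provides is
\[
\alpha = \alpha_+ + \alpha_-, \qquad \beta = \beta_+ + \beta_- = \tfrac{1}{2}\bigl(N\alpha(a-1) + (\alpha_+ - \alpha_-)b\bigr).
\]

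Next, I would compare this expression for $\beta$ with the hypothesis $\beta(p_0) = \tfrac{1}{2}\bigl(N\alpha(a-1) + \alpha b\bigr)(p_0)$. Since $\alpha = \alpha_+ + \alpha_-$, the $N\alpha(a-1)$ terms match exactly, and we are left with $(\alpha_+ - \alpha_-)b(p_0) = (\alpha_+ + \alpha_-)b(p_0)$, i.e., $\alpha_-(p_0)\,b = 0$. Because $\mu \neq \pm 1$ gives $b = i(\mu\invers - \mu)/2 \neq 0$, this forces $\alpha_-(p_0) = 0$. Since $d_{\mu\invers}^N$ is flat on the simply connected cover $\tilde M$, any $d_{\mu\invers}^N$--parallel section vanishing at one point vanishes identically, so $\alpha_- \equiv 0$ and $\alpha = \alpha_+$ is $d_\mu^N$--parallel.

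For the converse, given a $d_\mu^N$--parallel $\alpha$ I would define $\beta$ by $\beta = \frac{1}{2}(N\alpha(a-1) + \alpha b)$ \emph{globally} on $\tilde M$: Theorem \ref{thm: drho parallel} (with $\alpha_- = 0$) then shows $(\alpha,\beta)^t$ is $d_\varrho$--parallel, and the initial condition at $p_0$ is tautologically satisfied. The only nontrivial step is the flatness argument promoting $\alpha_-(p_0) = 0$ to $\alpha_- \equiv 0$; the remaining identifications are routine algebra, and the hypotheses $\mu \neq 0, \pm 1$ line up precisely with those of the preceding theorem.
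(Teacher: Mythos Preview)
Your argument is correct, but it takes a different route from the paper's. You invoke the decomposition of Theorem \ref{thm: drho parallel} to write $\alpha = \alpha_+ + \alpha_-$ with $\alpha_\pm$ parallel for $d^N_{\mu_\pm}$, and then the initial condition at $p_0$ forces $\alpha_-(p_0)b = 0$, hence $\alpha_- \equiv 0$ by parallel transport. The paper instead works directly with the defect
\[
X = \beta - \tfrac12\bigl(N\alpha(a-1) + \alpha b\bigr),
\]
computes $dX = \tfrac12\,df\bigl(NX(1-a) + Xb\bigr)$ from the $\d_\varrho$--parallel equations together with $dN = dg - df$ and $a^2+b^2=1$, and concludes $X\equiv 0$ by uniqueness of solutions with $X(p_0)=0$; the $d_\mu^N$--parallelism of $\alpha$ is then immediate since $d_\mu^N\alpha = d\alpha + df\beta = 0$. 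For the converse the paper uses a dimension count (the condition $X(p_0)=0$ cuts the complex $4$--dimensional space of $\d_\varrho$--parallel sections down to a complex $2$--dimensional subspace), whereas you construct the section explicitly. Your approach is slicker once Theorem \ref{thm: drho parallel} is in hand; the paper's is self-contained and makes the underlying first-order linear ODE for $X$ explicit, which is arguably more transparent about \emph{why} a single pointwise condition suffices.
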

\begin{proof}
  Assume that $\varphi=e\alpha+\psi \beta$ is $d_\varrho$--parallel
  with $\varrho = -\frac{(\mu-1)^2}{4\mu} \in \C\setminus\{0, 1\}$ so
  that
  \[
    d\alpha=-df\beta, \quad d\beta = -dg\alpha\varrho\,.
  \]
  By assumption $X(p_0) =0$ for
  \[
    X = \beta - \frac 12(N\alpha(a-1) + \alpha b)\,.
  \]
  Now with $dN = dg-df$ and $a^2+b^2=1$ we have
  \[
    dX = d\beta - \frac 12(dN \alpha(a-1) + d\alpha(a-1)+ d \alpha b) =
    \frac 12 df(NX(1-a) + Xb)\,.
  \]
  Hence uniqueness of solutions gives $X(p) =0$ for all $p\in M$, that
  is, since $\varphi$ is $d_\varrho$--parallel
  \[
    d^N_\mu\alpha =d\alpha + \frac 12 df(N\alpha(a-1) + \alpha b) =
    d\alpha+ df\beta =0
  \]
  which shows the first claim. The second one follows from the fact
  that the space of $\d_\varrho$--parallel sections is complex
  4--dimensional, so the space of solution of $X(p_0)= 0$ is complex
  2--dimensional, that is, all $d_\mu^N$--parallel sections arise this
  way.

\end{proof}

\begin{example} 
  In the case of a cylinder $f(x,y) = \frac 12(ix + j e^{-iy})$ with
  Gauss map $N=-je^{-iy}$ we can use Example \ref{ex:rhodt of sor} to
  find all parallel sessions of $d_\varrho$, $\varrho\in\C_*$. Note
  that since we choose here the parallel surface $g=f+N$ as dual
  surface we have to scale the spectral parameter by $-\frac 14$
  because $dg = -\frac 14(f_x\invers dx - f_y\invers dy)$. Then all
  parallel sections $\phi$ are given for $\varrho\not=1$, see also
  \cite{sym-darboux, cmc}, by the (complex) basis
\[
(\phi^1_+, \phi^1_-, \phi^2_+, \phi^2_-)
\]
where $\phi^i_\pm =\begin{pmatrix} \alpha^i_\pm \\ \beta^i_\pm
\end{pmatrix}
$ with
\begin{align*}
\alpha^1_\pm &= e^{\frac {iy}2}(\sqrt \varrho +j (1\pm
         \sqrt{1-\varrho}))e^{\frac i2(\sqrt \varrho x\pm \sqrt{1-\varrho}y) } 
         ) \\
\alpha^2_\pm &= e^{\frac {iy}2}(-\sqrt \varrho +j (1\mp \sqrt{1-\varrho}))e^{-\frac{i}2(\sqrt \varrho x\pm \sqrt{1-\varrho}y) }\,,
\end{align*}
and $\beta^l_\pm$ given by $d\alpha^l_\pm =-df \beta^l_\pm$. It is now
a lengthy but straight forward computation to show that
$\alpha^l_\pm$, $l=1,2$, are $d_{\mu_\pm}^N$--parallel, where
$\mu_\pm =1-2\varrho \pm 2i\sqrt{\varrho(1-\varrho)}$. Here the square
roots are chosen so that
$\sqrt{\varrho(1-\varrho)} = \sqrt{\varrho}{\sqrt{1-\varrho}}$. In
particular,
\[\beta^l_\pm =
  \frac 12(N\alpha^l_\pm(a-1)\pm \alpha^l_\pm b)\,,
\]
with $a= 1-2\varrho, b= 2\sqrt{\varrho(1-\varrho)}$. We note that
$\phi^i_\pm$ are sections with multiplier
$h =- e^{\pm \pi i \sqrt{1-\varrho}}$ since
\begin{align*}
  \phi_+^1(x,y+2\pi) &= -\phi_+^1(x,y) e^{\pi i \sqrt{1-\varrho}}\,,
   \qquad & \phi_-^1(x,y+2\pi) = -\phi_-^1(x,y) e^{-\pi i
     \sqrt{1-\varrho}} \,, \\
    \phi_+^2(x,y+2\pi) &= -\phi_+^2(x,y) e^{-\pi i
      \sqrt{1-\varrho}}\,, 
    \qquad  & \phi_-^2(x,y+2\pi) = -\phi_-^2(x,y) e^{\pi i
       \sqrt{1-\varrho}}\,.
\end{align*}
Therefore the resonance points are at
$\rho_k = 1-k^2, k\in\N, k\ge 2$.

In the case when $\varrho=1$ we have $\mu_+ =\mu_-=-1$ and we only
obtain a quaternionic 1--dimensional space of parallel sections given
by $d_{\mu_\pm}^N$--parallel sections
\[
  \alpha=e^{\frac{iy}2} (1+j) e^{\frac{ ix}2} c, \qquad c\in\H\,,
\]
and $d\alpha =-df \beta$  since $\alpha_\pm^2 = \alpha_\pm^1j$
and $\alpha_+^l = \alpha_-^l, l=1,2$. The remaining quaternionic
1--dimensional space of $d_{\varrho=1}$--parallel sections is given by
\[
  \alpha = e^{\frac{iy}2}\left((1+j)e^{\frac{ i x}2} -
   iy (1-j) e^{-\frac{ix}2}\right)c\,, \qquad c \in\H\,,
\]
and $d\alpha =-df \beta$ as before. 
Note that these latter parallel sections are not sections with multiplier. \\
\end{example}

For $r\in\R_*$ and invertible $d_r$--parallel $\Phi$ the Calapso
transform $A_{\Phi, r}(f)$ is isothermic, see Theorem
\ref{thm:iso_ass_fam}, and it is well--known that this gives the
Lawson correspondence if the surface $f$ has constant mean
curvature. We recall:

\begin{theorem}[Lawson correspondence \cite{lawson}] 
  Let $f: M \to\R^3$ be a CMC surface. Then surfaces $A_{\Phi,r}(f)$
  in the associated family of $f$ for $r \in (0,1)$ are, up to
  M\"obius transformation, CMC surfaces in $S^3(\mathfrak r)$ for some
  $\mathfrak r\in\R$. In particular, if the initial condition is
  chosen so that $\mathfrak r= \frac 1{2\sqrt{r(1-r)}}$ then the mean
  curvature of $A_{\Phi,r}(f)$ in $S^3(\mathfrak r)$ is given by
\[
H^\Phi_{S^3(\mathfrak r)}  = 1- 2 r\,.
\]
\end{theorem}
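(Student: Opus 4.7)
The plan is to reduce this to a Sym--Bobenko-type construction by exploiting Theorem \ref{thm: drho parallel}. Since $r\in(0,1)$, the associated CMC spectral parameters $\mu_\pm = a\pm ib$ with $a=1-2r$ and $b=2\sqrt{r(1-r)}$ lie on $S^1$, so by Remark \ref{rem:quaternionicindependence} we can choose a $d_{\mu_+}^N$-parallel section $\alpha_+$ and a $d_{\mu_-}^N$-parallel section $\alpha_-$ which are $\H$-linearly independent. Setting
\[
\beta_\pm = \tfrac{1}{2}\bigl(N\alpha_\pm(a-1)\pm \alpha_\pm b\bigr),\qquad \varphi_\pm = e\alpha_\pm+\psi\beta_\pm,
\]
Theorem \ref{thm: drho parallel} provides two $\H$-linearly independent $\d_r$-parallel sections, whence $\Phi=(\varphi_+,\varphi_-)$ is invertible and Theorem \ref{thm:Ttransform} gives $f^\Phi = -\alpha_+^{-1}\alpha_-$.

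The next step is a direct calculation. Using $d\alpha_\pm = -df\beta_\pm$ and the fact that $a,b$ are \emph{real} (so they commute with quaternions), one verifies the identity $\beta_--\beta_+\alpha_+^{-1}\alpha_- = -b\alpha_-$, which yields
\[
df^\Phi = -b\,\alpha_+^{-1}\,df\,\alpha_-.
\]
Combining with $*df = Ndf = -dfN$ then gives the left and right normals
\[
N^\Phi = \alpha_+^{-1}N\alpha_+,\qquad R^\Phi = \alpha_-^{-1}N\alpha_-.
\]
In particular $N^\Phi\neq R^\Phi$ in general, so $f^\Phi$ lives in $\R^4$ rather than $\R^3$.

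The core geometric step is to identify the sphere condition: for $\mu\in S^1$ the connections $d_\mu^N$ are quaternionic and, for unitary initial data, are realized by the classical $SU(2)$-valued extended frame of the harmonic Gauss map. Choosing such initial data for $\alpha_\pm$ at some base point $p_0$ produces a non-degenerate indefinite Hermitian form $\sigma$ on $\H^2$ with $\Phi^{-1}L$ pointwise $\sigma$-isotropic; equivalently, $f^\Phi$ has constant modulus. Normalizing $\sigma$ so that this constant modulus is $\mathfrak r = 1/b$ places $f^\Phi$, up to M\"obius transformation, into $S^3(\mathfrak r)$. With the sphere condition in place, the intrinsic mean curvature in $S^3(\mathfrak r)$ is extracted from the Euclidean formula \eqref{eq:Hdf}: the $(1,0)$-parts of $dN^\Phi$ and $dR^\Phi$ with respect to the new complex structures split into a tangential piece (giving the intrinsic $H^\Phi_{S^3(\mathfrak r)}$) and a piece normal to $S^3(\mathfrak r)$ (the extrinsic curvature of the sphere in $\R^4$, of magnitude $1/\mathfrak r = b$). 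Substituting the conjugation formulas for $N^\Phi,R^\Phi$ and using the harmonic-map identities for $N$ together with $a^2+b^2=1$ cleanly produces $H^\Phi_{S^3(\mathfrak r)} = a = 1-2r$.

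The main obstacle is the sphere condition itself: one must exhibit the correct Hermitian form $\sigma$ preserved by the pair $(d^N_{\mu_+},d^N_{\mu_-})$ and verify that $\Phi^{-1}L\subset\ker\sigma$. The quaternionic symmetry $\overline{\mu_+}=\mu_-$ is what makes such a form exist, so the obstacle is essentially bookkeeping: tracking how the Hermitian conjugation exchanging $\mu_+\leftrightarrow\mu_-$ relates $\alpha_+$ to $\alpha_-$ under the chosen initial data. Once this is set up, the final computation of the mean curvature is a routine application of \eqref{eq:Hdf} with the spherical curvature correction.
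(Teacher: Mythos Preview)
Your setup and the computation of $df^\Phi$, $N^\Phi$, $R^\Phi$ match the paper exactly, and the overall structure is right. The divergence is in how you establish the sphere condition, and here you are working much harder than necessary.

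The paper does not construct any Hermitian form. Having computed $N^\Phi=\alpha_+^{-1}N\alpha_+$ and $R^\Phi=\alpha_-^{-1}N\alpha_-$, it simply observes
\[
N^\Phi f^\Phi=-\alpha_+^{-1}N\alpha_-=f^\Phi R^\Phi,
\]
which by the tangent/normal characterization \eqref{eq:tangent space} says $f^\Phi\in\perp_{f^\Phi}$, i.e.\ $f^\Phi$ is everywhere normal to itself and hence has constant modulus $\mathfrak r=|\alpha_-|/|\alpha_+|$. Your ``main obstacle'' thus evaporates: no $SU(2)$-frame, no indefinite form, no isotropy check --- the conjugation formulas you already derived give the sphere condition in one line.

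The mean curvature computation in the paper is likewise more concrete than your tangential/normal splitting sketch. It differentiates $N^\Phi=\alpha_+^{-1}N\alpha_+$ directly (using $d\alpha_+=-df\beta_+$) to obtain $(dN^\Phi)'=\alpha_+^{-1}df(-a+Nb)\alpha_+$, reads off $H^\Phi=\alpha_-^{-1}(N-\tfrac{a}{b})\alpha_+$ from $df^\Phi H^\Phi=-(dN^\Phi)'$, and then invokes the known formula $H_{S^3(\mathfrak r)}=\tfrac{1}{\mathfrak r}\Re(fH)$ to get $H^\Phi_{S^3(\mathfrak r)}=\tfrac{a}{b\mathfrak r}$. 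Choosing initial data so that $\mathfrak r=1/b$ yields $a=1-2r$. Your proposed route would reach the same endpoint, but the paper's direct quaternionic calculation avoids the bookkeeping you flagged as the obstacle.
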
 
\begin{proof} For $r\in(0,1)$ let
  $\mu_\pm = 1-2r \pm 2i \sqrt{r(1-r)}$ be the associated CMC spectral
  parameter. Then we know that $\mu_\pm =a\pm i b\in S^1$ with real
  $a=1-2r, b= 2\sqrt{r(1-r)}.$ Choose two non--trivial parallel
  sections $d_{\mu_\pm}^N\alpha_\pm=0$, that is, see (\ref{eq:
    dalpha}),
\begin{equation}
\label{eq: parallel}
d\alpha_\pm =-\frac 12df(N \alpha_\pm(a-1) \pm
\alpha_\pm b) =-\frac 12df(N (a-1) \pm
b)\alpha_\pm\,. 
\end{equation}
Note that the sections $\alpha_\pm$ are nowhere vanishing since they
are $d_{\mu_\pm}^N$--parallel, and are quaternionic independent by
Remark \ref{rem:quaternionicindependence}.  Denote by
\[
f^\Phi= A_{\Phi,r} = -\alpha_+\invers\alpha_-
\]
the surface in the associated family given by, see Theorem
\ref{thm:Ttransform} and Theorem \ref{thm: drho parallel},
\[
\phi_\pm = \begin{pmatrix} 1 \\ \frac 12(N  (a-1)\pm
  b)
\end{pmatrix}\alpha_\pm 
\,.
\]

Since  $a, b\in\R$ we have
\[
df^\Phi= -\alpha_+\invers df b\alpha_-
\]
and the left and right normals are given by
$N^\Phi = \alpha_+\invers N\alpha_+$ and
$ R^\Phi= \alpha_-\invers N \alpha_-$. Thus
\[
N^\Phi f^\Phi= f^\Phi R^\Phi
\]
which shows by (\ref{eq:tangent space}) that
$f^\Phi \in \perp_{f^\Phi}$ is in the normal bundle of
$f^\Phi$. Therefore, $f^\Phi \in S^3(\mathfrak r)$ with
$\mathfrak r= \frac{|\alpha_-|}{|\alpha_+|}\in\R$.  Since $\alpha_\pm$
are $d_{\mu_\pm}^N$--parallel, see (\ref{eq: parallel}), we have
\[
dN^\Phi = \alpha_+\invers(dN + df(1-a + bN))\alpha_+
\]
and thus with (\ref{eq:Hdf})
\[
(dN^\Phi)'=\frac 12(dN^\Phi- N^\Phi *dN^\Phi)=  \alpha_+\invers df(-a+ N b) \alpha_+\,.
\]
Since  $df^\Phi H^\Phi=-(dN^\Phi)'$, see again (\ref{eq:Hdf}),  this shows
\[
H^\Phi= \alpha_-\invers(N-\frac ab)\alpha_+\,.
\]
The mean curvature of a surface $f$ in the 3--sphere is given by
$H_{S^3(\mathfrak r)} = \frac 1{\mathfrak r}\Re(fH)$, see
e.g. \cite{tori_revolution}, so that
\[
H^\Phi_{S^3(\mathfrak r)} =
\frac 1{\mathfrak r}\frac a {b}\,,
\]
and $f^\Phi$ has constant mean curvature in $S^3(\mathfrak r)$.  In
particular, if the initial conditions for $\alpha_\pm$ are chosen so
that $\mathfrak r= \frac 1{2\sqrt{r(1-r)}}=\frac 1b$ then
$H^\Phi_{S^3(\mathfrak r)} = a=1-2r$, which gives the usual
correspondence between the constant sectional curvature and the mean
curvature of Calapso transforms.
\end{proof}

\begin{figure}[H]
\includegraphics[height=3cm]{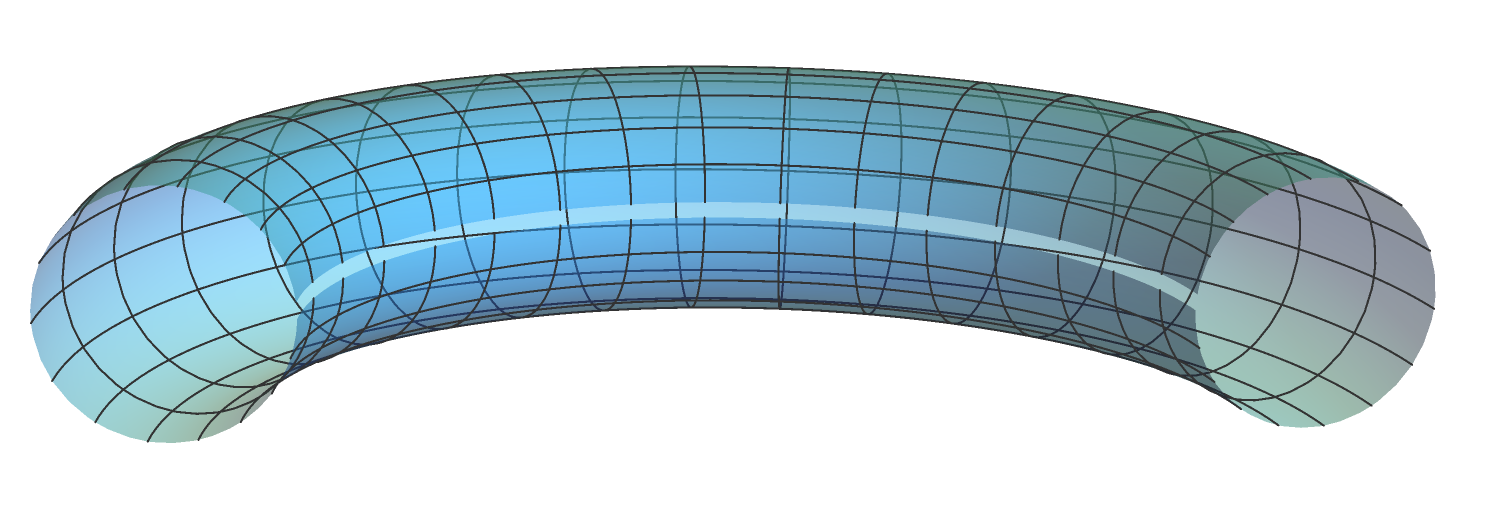}
\includegraphics[height=3cm]{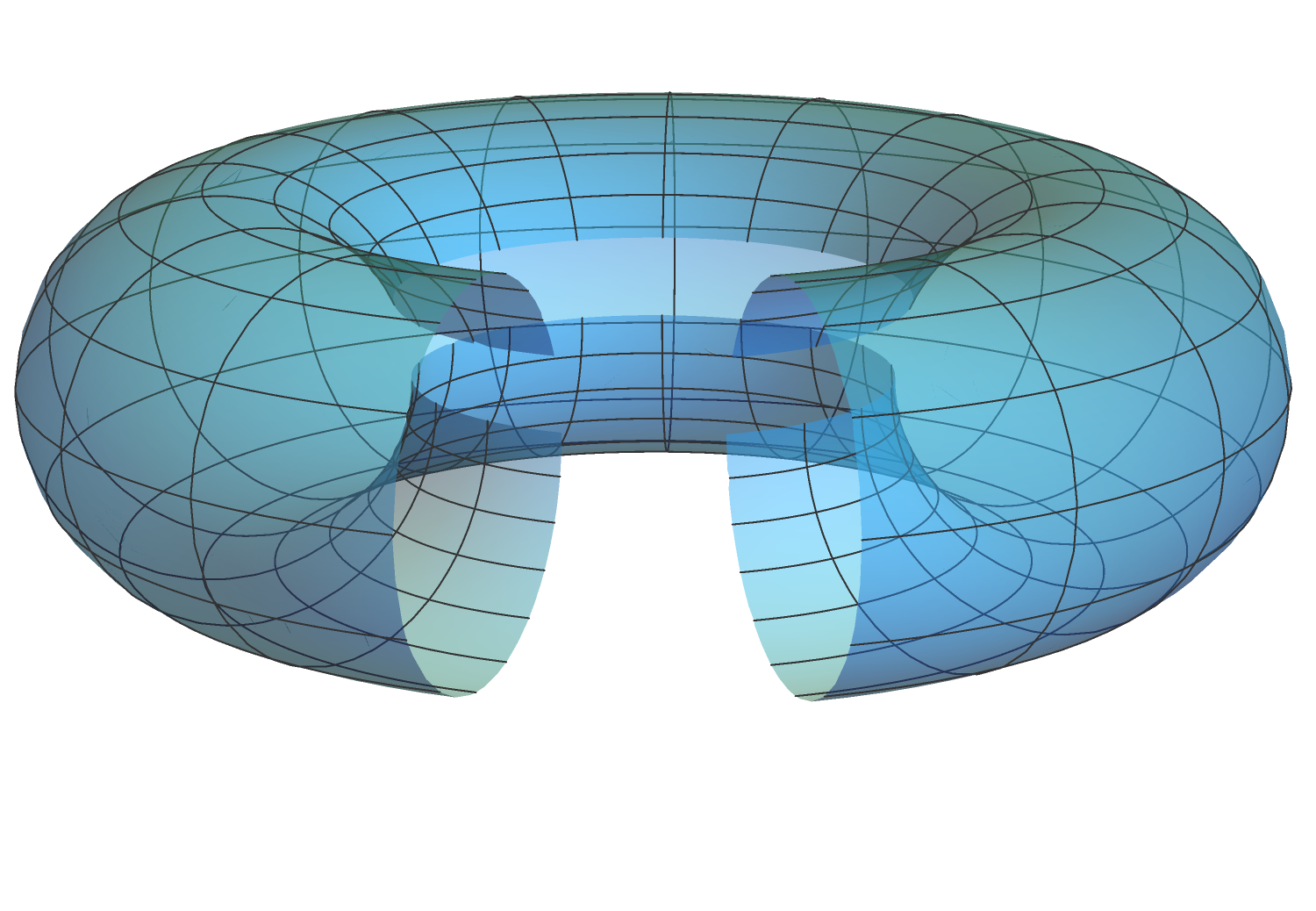}
\includegraphics[width=4.5cm]{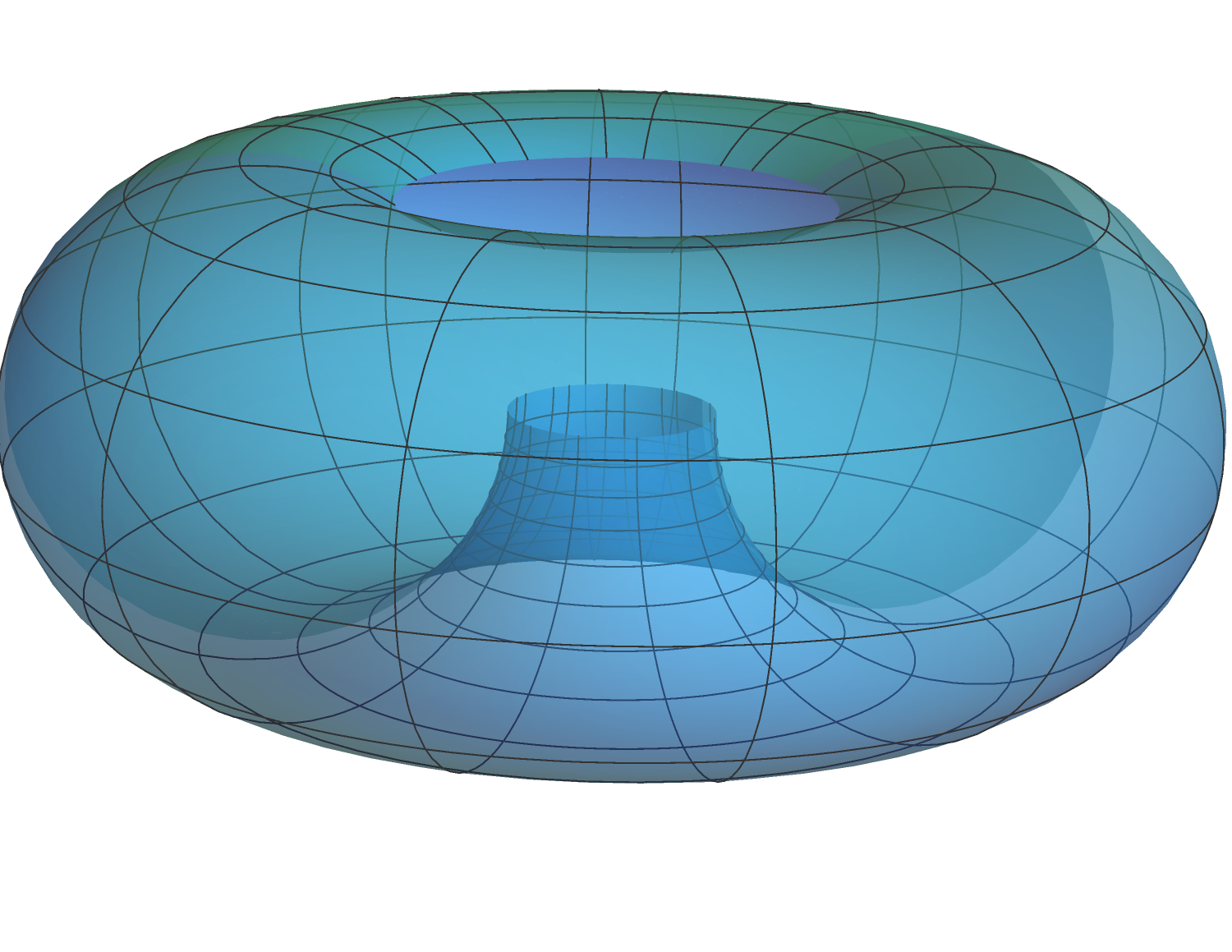}
\caption{Stereographic projections into 3--space of CMC surfaces
  $f^\Phi(x,y) = -\frac{e^{-iy \sqrt{1-r}}}{2 \sqrt{1-r}} + j
  \frac{e^{ix\sqrt r}}{2\sqrt r}$  in the
  isothermic associated family of the cylinder for $r= \frac 1{20}, r=\frac 15$
  and $ r=\frac 12$. Observe that the period of the homogenous tori
  differ from the period of the cylinder and indeed depend on the
  spectral parameter. }\end{figure}

We will now give a geometric interpretation of the Sym--Bobenko
formula for CMC surfaces as a limit of surfaces in the isothermic
associated family.  First we recall:

\begin{theorem}[Sym--Bobenko formula, \cite{sym_soliton_1985, sym_bob,
    simple_factor_dressing}]
  \label{thm:sym-bob}
  For fixed $\mu =e^{is}\in S^1$ and non--trivial $d^N_\mu$--parallel
  section $\alpha\in\Gamma(\ttrivial {})$ the map
  $N^\alpha = \alpha\invers N\alpha: M \to S^2$ is harmonic. Moreover,
  if $\alpha(t)$ is a $d_\lambda$--parallel, smooth extension in
  $\lambda=e^{it}$ of $\alpha = \alpha(s)$ near $\mu$, the CMC surface
  given by
\[
  A^N_{\alpha, s}(f) = -2\alpha\invers \frac{d}{dt}\alpha|_{t=s}
\]
has Gauss map $N^\alpha$. A surface in the family
$A^{N}_{\alpha, s}(f)$ is called an \emph{associated (CMC) surface} of
$f$.  Its family of flat connections $d_{\lambda}^{N^\alpha}$ is given
by
\begin{equation}
\label{eq:assoasso}
\alpha\cdot d_{\lambda}^{N^\alpha} = d^N_{\lambda\mu}\,.
\end{equation}
\end{theorem}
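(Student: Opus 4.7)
The plan is to verify the assertions in order: that $\tilde f := A^N_{\alpha,s}(f) = -2\alpha\invers \dot\alpha|_{t=s}$ is a conformal immersion into $\R^3$ with Gauss map $N^\alpha$ and constant mean curvature $H=1$ (so that by Ruh--Vilms the Gauss map $N^\alpha$ is automatically harmonic), and that the associated family of $N^\alpha$ is the gauge $\alpha\invers\cdot d^N_{\lambda\mu}$. The input is the parallel section equation $d\alpha(t) = -\omega^N_{\lambda(t)}\alpha(t)$ with $\omega_\lambda^N\eta = \tfrac12 df(N\eta(a-1) + \eta b)$ for $a = (\lambda+\lambda\invers)/2$, $b = i(\lambda\invers-\lambda)/2$, its $t$--derivative at $t=s$, and the standard CMC identities $A = \tfrac12 dfN$, $*df = Ndf = -dfN$, and $(dN)' = -df$ (the $H=1$ condition for $f$).

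First I would compute $d\tilde f$. Differentiating the parallel section equation in $t$ at $t = s$ gives
\[
  d\dot\alpha = -\omega_\mu^N \dot\alpha - \dot\omega_\mu^N \alpha, \qquad \dot\omega_\mu^N = i\mu\, A\oz - i\mu\invers A\zo\,.
\]
At $\mu = e^{is}\in S^1$ one has $a = \cos s$ and $b = \sin s$ real, so $\omega_\mu^N(\eta\xi) = \omega_\mu^N(\eta)\xi$ for every $\xi\in\H$. This right-$\H$-linearity makes the $\omega_\mu^N$--terms cancel in
\[
  d\tilde f = d(-2\alpha\invers\dot\alpha) = 2\alpha\invers\bigl(d\alpha\cdot\alpha\invers\dot\alpha - d\dot\alpha\bigr) = 2\alpha\invers\dot\omega_\mu^N\alpha\,,
\]
and expanding $\dot\omega_\mu^N\alpha$ via $\mu\pm \mu\invers = 2\cos s, 2i\sin s$ yields the clean formula
\[
  d\tilde f = \alpha\invers df\,(\alpha\cos s - N\alpha \sin s)\,.
\]
Combining $*df = Ndf$, $\alpha\invers N = N^\alpha \alpha\invers$, $Ndf = -dfN$, and $N^2 = -1$ gives both $*d\tilde f = N^\alpha d\tilde f$ and $*d\tilde f = -d\tilde f N^\alpha$, so $\tilde f\colon M \to \R^3$ is a conformal immersion with both left and right normal $N^\alpha$.

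Next I would verify $H = 1$ by showing $(dN^\alpha)' = -d\tilde f$. Substituting $d\alpha = -\omega_\mu^N \alpha$ into $dN^\alpha = d\alpha\invers\cdot N\alpha + \alpha\invers dN\cdot\alpha + \alpha\invers N\cdot d\alpha$ and using $N^2 = -1$ together with $Ndf = -df N$ produces
\[
  dN^\alpha = \alpha\invers(dN)\alpha + \alpha\invers df\bigl((1-\cos s)\alpha + \sin s\, N\alpha\bigr)\,,
\]
and a parallel calculation for $*dN^\alpha$ using $*df = Ndf$ gives
\[
  dN^\alpha - N^\alpha * dN^\alpha = \alpha\invers(dN - N*dN)\alpha + 2\alpha\invers df\bigl((1-\cos s)\alpha + \sin s\, N\alpha\bigr)\,.
\]
Substituting $(dN)' = -df$ and collecting reproduces $-2\alpha\invers df(\alpha\cos s - N\alpha\sin s) = -2d\tilde f$, so $(dN^\alpha)' = -d\tilde f$ and Ruh--Vilms yields harmonicity of $N^\alpha$. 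For the gauge identity, define $\tilde d_\lambda := \alpha\invers\cdot d^N_{\lambda\mu}$, flat as a gauge of a flat connection. Using $d\alpha = -\omega_\mu^N\alpha$ and right-$\H$-linearity of $\omega_\mu^N$, the gauge reduces to $\tilde d_\lambda = d + \alpha\invers(\omega^N_{\lambda\mu} - \omega^N_\mu)\alpha$; since $\omega^N_{\lambda\mu} - \omega^N_\mu = (\lambda-1)\mu A\oz + (\lambda\invers-1)\mu\invers A\zo$, this becomes
\[
  \tilde d_\lambda = d + (\lambda-1)\tilde A\oz + (\lambda\invers-1)\tilde A\zo
\]
with $\tilde A\oz = \alpha\invers\mu A\oz \alpha$ and $\tilde A\zo = \alpha\invers\mu\invers A\zo \alpha$. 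These remain of type $(1,0)$ and $(0,1)$ because right multiplication by $\mu\in\C$ commutes with $I$, and a short calculation analogous to the derivation of $d\tilde f$ exhibits $\tilde A = \tilde A\oz + \tilde A\zo$ as left multiplication by $\tfrac12 d\tilde f\cdot N^\alpha$; by $H = 1$ for $\tilde f$ this equals $\tfrac14(*dN^\alpha + N^\alpha dN^\alpha)$, identifying $\tilde d_\lambda$ with the associated family $d^{N^\alpha}_\lambda$.

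The main obstacle is the quaternion bookkeeping in the second step: the $(1,0)/(0,1)$ splitting uses right multiplication by $i$, while the Hodge star interacts with $df$ and $N$ via left multiplication and the anticommutation $Ndf = -dfN$, so the cancellations between the $(1-\cos s)$ and $\sin s$ terms on both sides of the Hodge star must combine precisely with $(dN)' = -df$ to reproduce the expression $\alpha\cos s - N\alpha\sin s$ appearing in $d\tilde f$.
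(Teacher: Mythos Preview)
The paper does not give its own proof of this theorem: it is stated under ``First we recall'' with citations to \cite{sym_soliton_1985, sym_bob, simple_factor_dressing} and used as a black box thereafter. So there is no paper proof to compare against; your task was effectively to supply one.

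Your direct verification is correct. The key cancellation in the computation of $d\tilde f$---that the $\omega_\mu^N$--terms drop out because for $\mu\in S^1$ the connection form $\omega_\mu^N$ is genuinely left multiplication by $\tfrac12 df(N(\cos s-1)+\sin s)$ and hence right-$\H$-linear---is exactly the point, and you identify it. The resulting formula $d\tilde f=\alpha\invers df(\alpha\cos s-N\alpha\sin s)$ then makes the conformality and the Gauss map immediate, and your check of $(dN^\alpha)'=-d\tilde f$ is a clean way to get $H=1$ and hence harmonicity of $N^\alpha$ via Ruh--Vilms. One small omission: you should remark that $\tilde f$ takes values in $\R^3$ only up to a real additive constant (equality of left and right normal forces $\Re(d\tilde f)=0$, not $\Re(\tilde f)=0$); this is harmless and consistent with the paper's standing convention of working up to translation. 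For the gauge identity, your argument is the natural one; the only subtlety is that for $\lambda\notin S^1$ the form $\omega^N_{\lambda\mu}$ is only $\C$-linear (for $I$ = right multiplication by $i$), not $\H$-linear, so the conjugation $\alpha\invers(\cdot)\alpha$ must be read as conjugation by the $\C$-linear endomorphism ``left multiplication by $\alpha$'' on $(\H,I)=\C^2$. With that understood, the identification $\tilde A=\tfrac12\,d\tilde f\,N^\alpha=\tfrac14(*dN^\alpha+N^\alpha dN^\alpha)$ goes through and yields $\tilde d_\lambda=d_\lambda^{N^\alpha}$.
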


When considering non--trivial parallel sections $\alpha_{\mu_\pm}$ of
$d_{\mu_\pm}^N$ where $\mu_\pm =e^{\pm it}$ are near $\mu= 1$ then the
associated isothermic spectral parameter is $r = \frac{1-\cos t}2$,
and the associated $d_\varrho$--parallel sections
\[
\phi_\pm = \begin{pmatrix} \alpha_\pm \\ \frac 12(N \alpha_\pm(\cos
  t-1) \pm \alpha_\pm \sin t)
\end{pmatrix}
\]
give rise to an invertible matrix $M_t=(\phi_+, \phi_-)$, away from
$t=0$, and thereby to an isothermic surface $A_{\Phi, r}(f)$ in the
(isothermic) associated family of $f$. Although $M_t$ degenerates at
$t=0$ to a singular matrix, we will see that after a suitable M\"obius
transform the limit of $A_{\Phi,r}(f)$ for $r$ at zero still exists
and gives the CMC surface $f$:

\begin{theorem}
 \label{thm: limit1}
 Let $f: M \to \R^3$ be a CMC surface.  Let $\mu=e^{is}\in S^1$ be
 fixed, $\alpha$ a $d_\mu^N$--parallel section and
 $f^\alpha =A_{\alpha, s}^N(f)$ the corresponding CMC surface with
 Gauss map $N_\alpha=\alpha\invers N \alpha$. Denote by
 $A_{\Phi,r}(f)$ the associated family of isothermic surfaces for
 $r\in (0,1)$ of $f$ and by $r(\mu) = -\frac{(\mu-1)^2}{4\mu}$ the
 isothermic spectral parameter given by $\mu$.  Then up to M\"obius
 transformation
\[
\lim_{r\to r(\mu)}  A_{\Phi,r}(f)  =f^\alpha\,.
\]
\end{theorem}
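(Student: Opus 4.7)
By Theorem \ref{thm:Ttransform} the surface in the isothermic associated family is $A_{\Phi,r}(f) = -\alpha_+\invers\alpha_-$ for any pair $\varphi_\pm = e\alpha_\pm + \psi\beta_\pm$ of $\H$--linearly independent $\d_r$--parallel sections. By Theorem \ref{thm: drho parallel} we may take $\alpha_\pm$ to be $d^N_{\mu_\pm(r)}$--parallel, where $\mu_\pm(r) = (1-2r) \pm 2i\sqrt{r(1-r)} = e^{\pm it(r)}$ with $t(r) \to s$ as $r \to r(\mu)$, so that $\mu_+(r(\mu)) = \mu$ and $\mu_-(r(\mu)) = \bar\mu$. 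Choose for $\alpha_+(r)$ the section $\alpha(t(r))$ coming from the smooth extension $t\mapsto \alpha(t)$ of $\alpha$ provided by the Sym--Bobenko formula (Theorem \ref{thm:sym-bob}), so that $\alpha_+(r(\mu)) = \alpha$; take $\alpha_-(r)$ to be any smooth $d^N_{\mu_-(r)}$--parallel extension chosen so that $\varphi_+$ and $\varphi_-$ remain $\H$--linearly independent on a punctured neighbourhood of $r(\mu)$.

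The case $\mu = 1$ (highlighted in the paragraph before the statement) is the one that genuinely needs a M\"obius adjustment: as $r\to 0$ one has $\mu_\pm(r)\to 1$, both $\alpha_\pm(r)$ converge to the same constant $d^N_1$--parallel section, and $\Phi$ degenerates to a singular matrix. The $r$--dependent M\"obius transformation $M_r\colon y\mapsto (y - y_0(r))/(t(r) - s)$, a translation composed with a scaling and therefore a M\"obius, converts the $0/0$ degeneracy into a derivative via l'Hospital's rule:
\[
\lim_{r\to r(\mu)} M_r\big(A_{\Phi,r}(f)\big) = \tfrac{d}{dt}\big[-\alpha(t)\invers \alpha_-(r(t))\big]\big|_{t=s},
\]
once $y_0(r)$ is chosen so that the numerator vanishes at $r=r(\mu)$. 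Expanding the right--hand side using $d^N_{\mu_\pm}\alpha_\pm = 0$ and absorbing the $d\alpha_-/dt|_{t=s}$ contribution by a further constant M\"obius transformation reduces it to $-2\alpha\invers\dot\alpha(s) = f^\alpha$, which is the Sym--Bobenko formula. For $\mu \in S^1\setminus\{\pm 1\}$, $\Phi(r(\mu))$ is already invertible, and $A_{\Phi,r(\mu)}(f)$ is the CMC surface in $S^3(1/|\sin s|)$ of mean curvature $\cos s$ furnished by the Lawson correspondence proved above; a stereographic projection to $\R^3$—itself a M\"obius transformation—then identifies it with the $\R^3$ CMC surface $f^\alpha$.

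The principal obstacle is the quaternionic bookkeeping of the derivative and the canonical choice of the second extension $\alpha_-$: it must be selected so that the quaternionic reality $\alpha \mapsto \alpha j$ relating $d^N_\lambda$-- and $d^N_{\lambda\invers}$--parallel sections pairs $\alpha_+$ with $\alpha_-$ in the way that makes the leading degenerate constant removable by $M_r$ while isolating $\dot\alpha(s)$ as the first--order nontrivial contribution. Equivalently, one must match the normalisation canonical to the Sym--Bobenko formula with the normalisation canonical to the Calapso transform; once this is done in the case $\mu = 1$, the general $\mu\in S^1\setminus\{\pm 1\}$ case follows from the uniqueness of CMC surfaces with prescribed harmonic Gauss map up to M\"obius transformation.
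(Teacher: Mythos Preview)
Your proposal has the right skeleton for the $\mu=1$ case but a genuine gap for $\mu\neq 1$, and the treatment of $\alpha_-$ is where both issues originate.

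For $\mu=1$: your idea of applying an $r$--dependent M\"obius and then l'H\^opital is exactly what the paper does, but the paper makes a specific choice that you leave open. It takes a \emph{single} smooth extension $\lambda\mapsto\alpha(\lambda)$ with $d^N_\lambda\alpha(\lambda)=0$, $\alpha(1)=1$, and sets $\alpha_\pm=\alpha(\mu_\pm)$ with $\mu_\pm=e^{\pm it}$. Then the replacement $\phi_2=\tfrac1t(\phi_+-\phi_-)$ (equivalently, your M\"obius $M_r$ with the right $y_0$) gives $A_{\Phi,r}(f)=-\alpha_+^{-1}\cdot\tfrac1t(\alpha_+-\alpha_-)$, and l'H\^opital produces $-\tfrac{d}{dt}(\alpha_+-\alpha_-)|_{t=0}=-2i\alpha'(1)=f$ directly from the Sym--Bobenko formula. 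Your sentence ``absorbing the $d\alpha_-/dt|_{t=s}$ contribution by a further constant M\"obius'' does not work: $\dot\alpha_-(s)$ is not constant, so no translation removes it. The point is not to absorb it but to arrange that $\dot\alpha_-(0)=-\dot\alpha_+(0)$ by using the same extension at $\mu$ and $\mu^{-1}$; the $j$--reality you invoke is not the relevant mechanism here.

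For $\mu\in S^1\setminus\{\pm1\}$: your argument is incorrect. Taking the non--degenerate $\Phi=(\varphi_+,\varphi_-)$ and evaluating at $r=r(\mu)$ gives the Lawson correspondent $-\alpha_+^{-1}\alpha_-$ in $S^3$, and there is no stereographic projection, nor any fixed M\"obius transformation, that carries this to $f^\alpha\in\R^3$; they have different right normals ($\alpha_-^{-1}N\alpha_-$ versus $\alpha^{-1}N\alpha$) and are simply different surfaces. The ``uniqueness of CMC surfaces with prescribed harmonic Gauss map up to M\"obius'' you appeal to is not available and not true in this form. The paper handles the general case by an entirely different reduction: apply the already--proved $\mu=1$ statement to the CMC surface $f^\alpha$ itself, obtaining $\lim_{r\to 0}A_{\Phi^\alpha,r}(f^\alpha)=f^\alpha$, and then use the gauge relation $\alpha\cdot d^{N^\alpha}_\lambda=d^N_{\lambda\mu}$ from Theorem~\ref{thm:sym-bob} to identify $A_{\Phi^\alpha,r(\lambda)}(f^\alpha)=A_{\Phi,r(\lambda\mu)}(f)$, so that the limit $r\to 0$ for $f^\alpha$ becomes the limit $r\to r(\mu)$ for $f$. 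This reduction is the missing idea in your proposal.
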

\begin{figure}[H]
    \includegraphics[width=2cm]{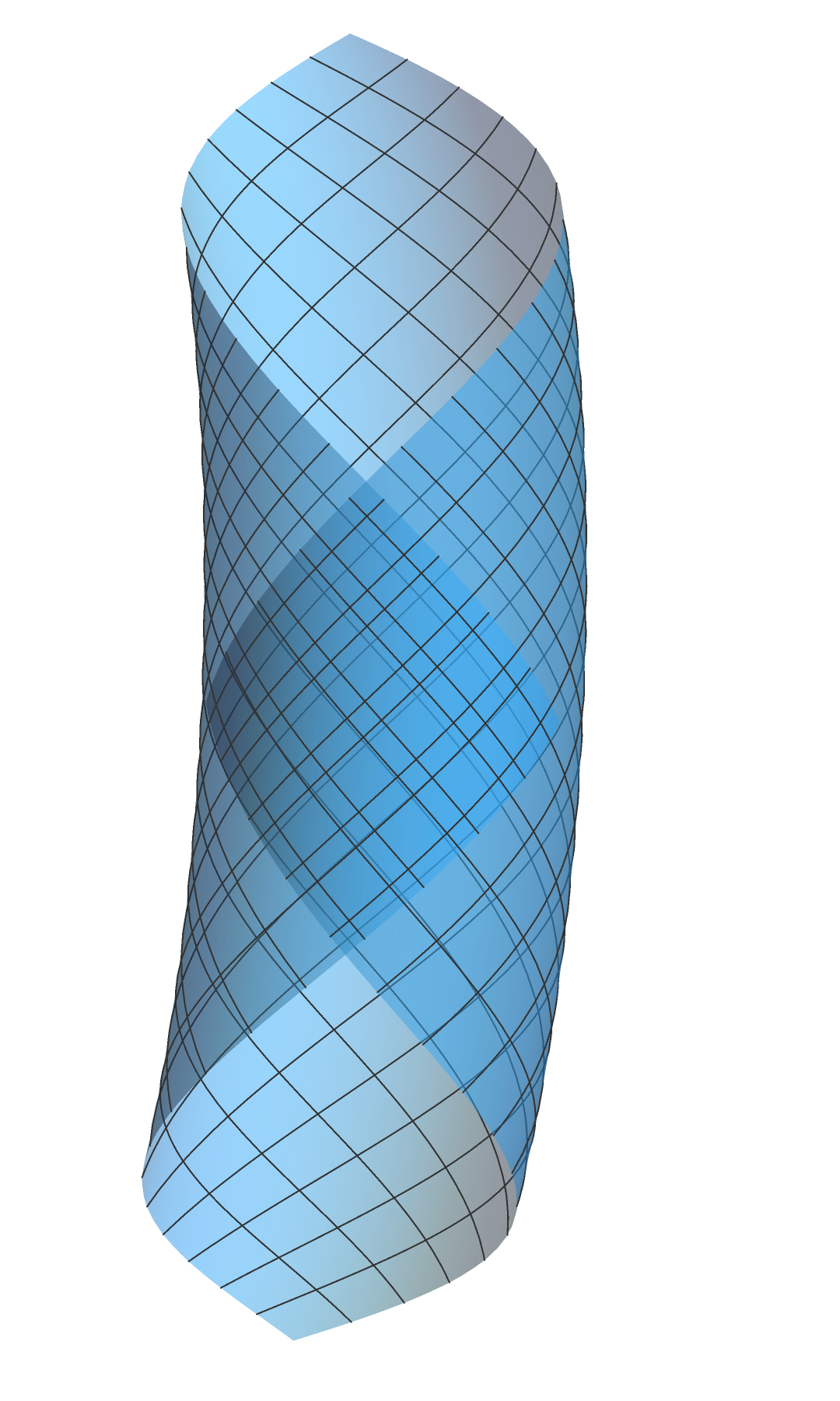}
    \includegraphics[width=2cm]{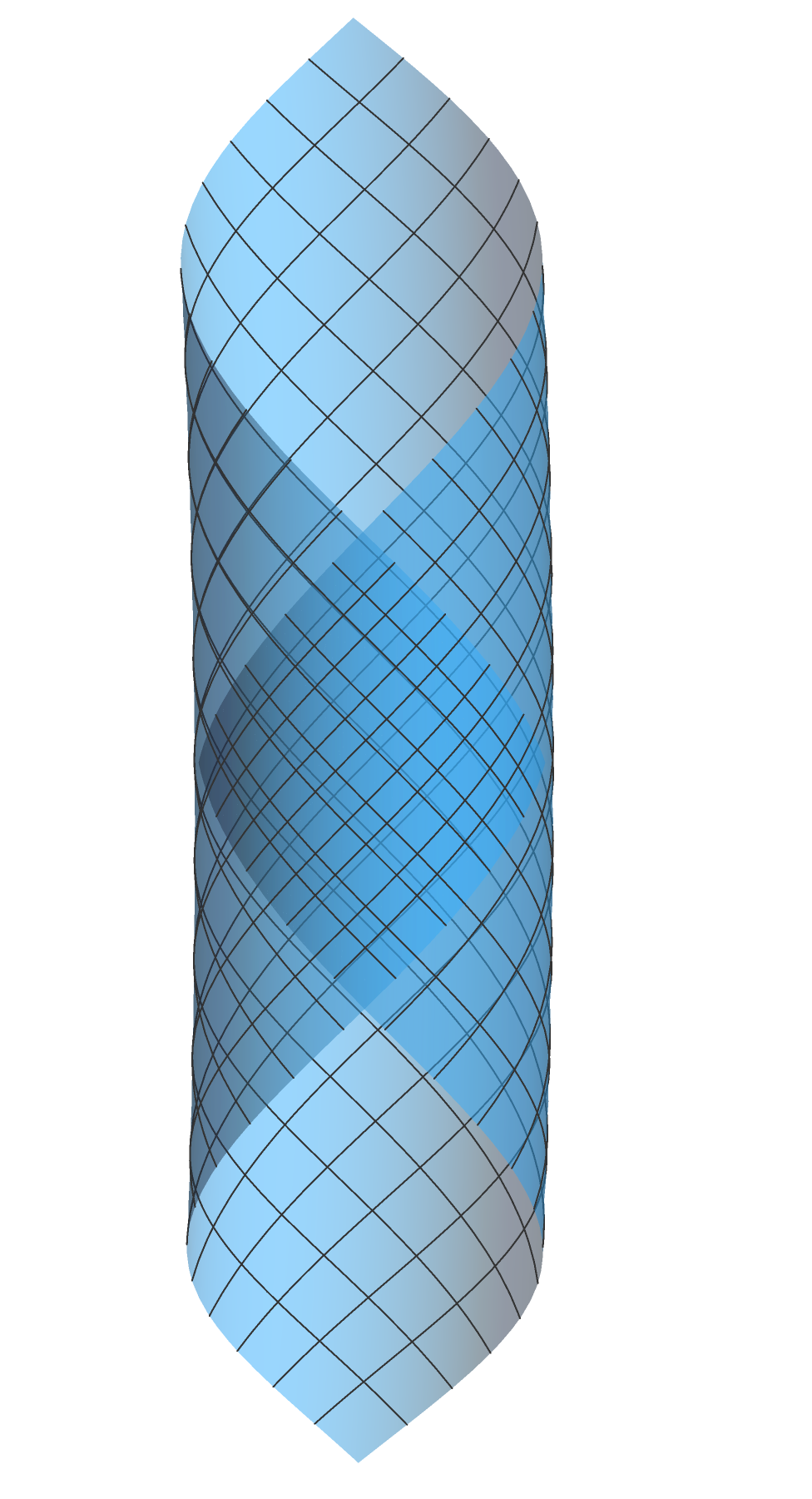}
    \includegraphics[width=2cm]{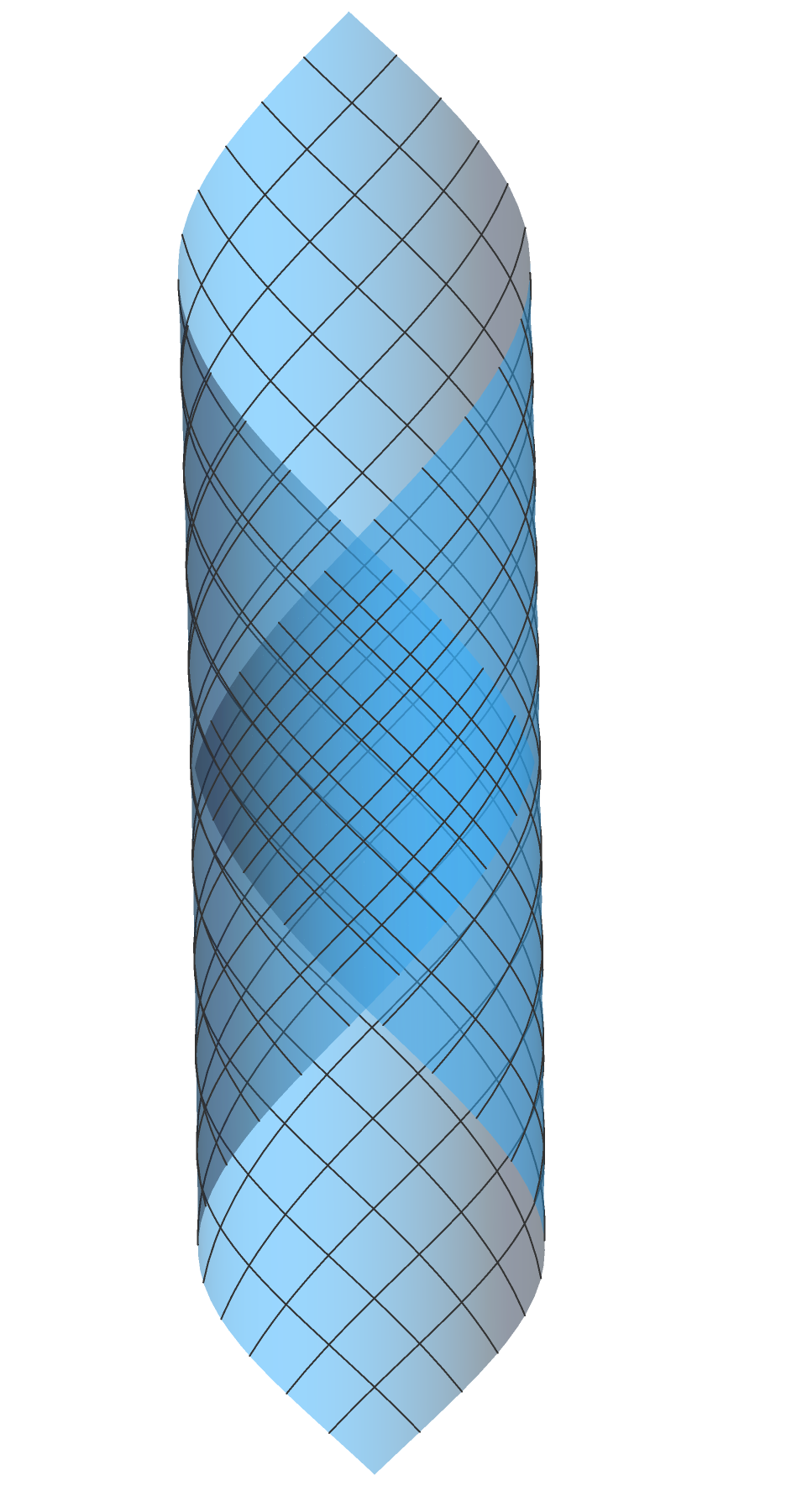}
    \includegraphics[width=2cm]{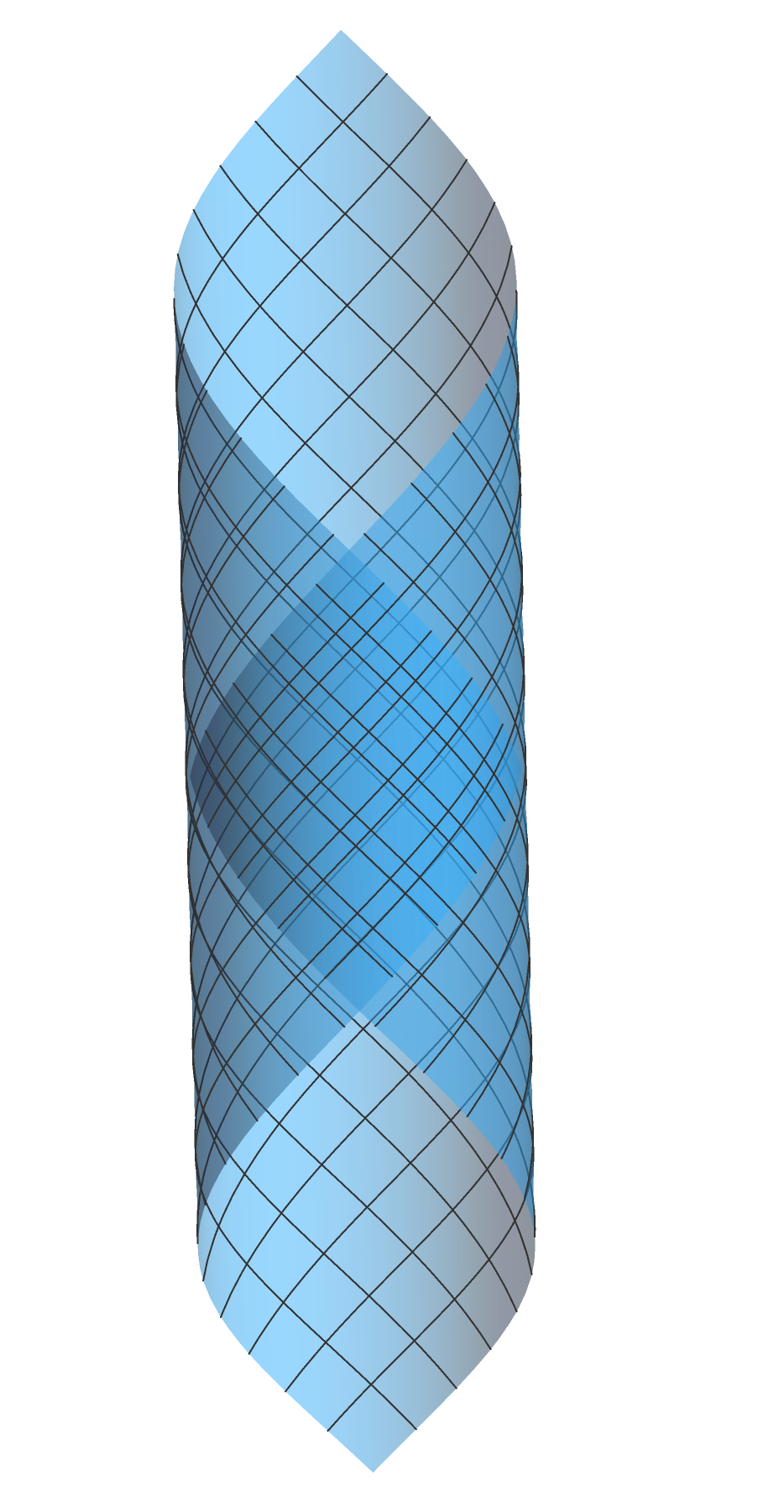}
      \includegraphics[width=2cm]{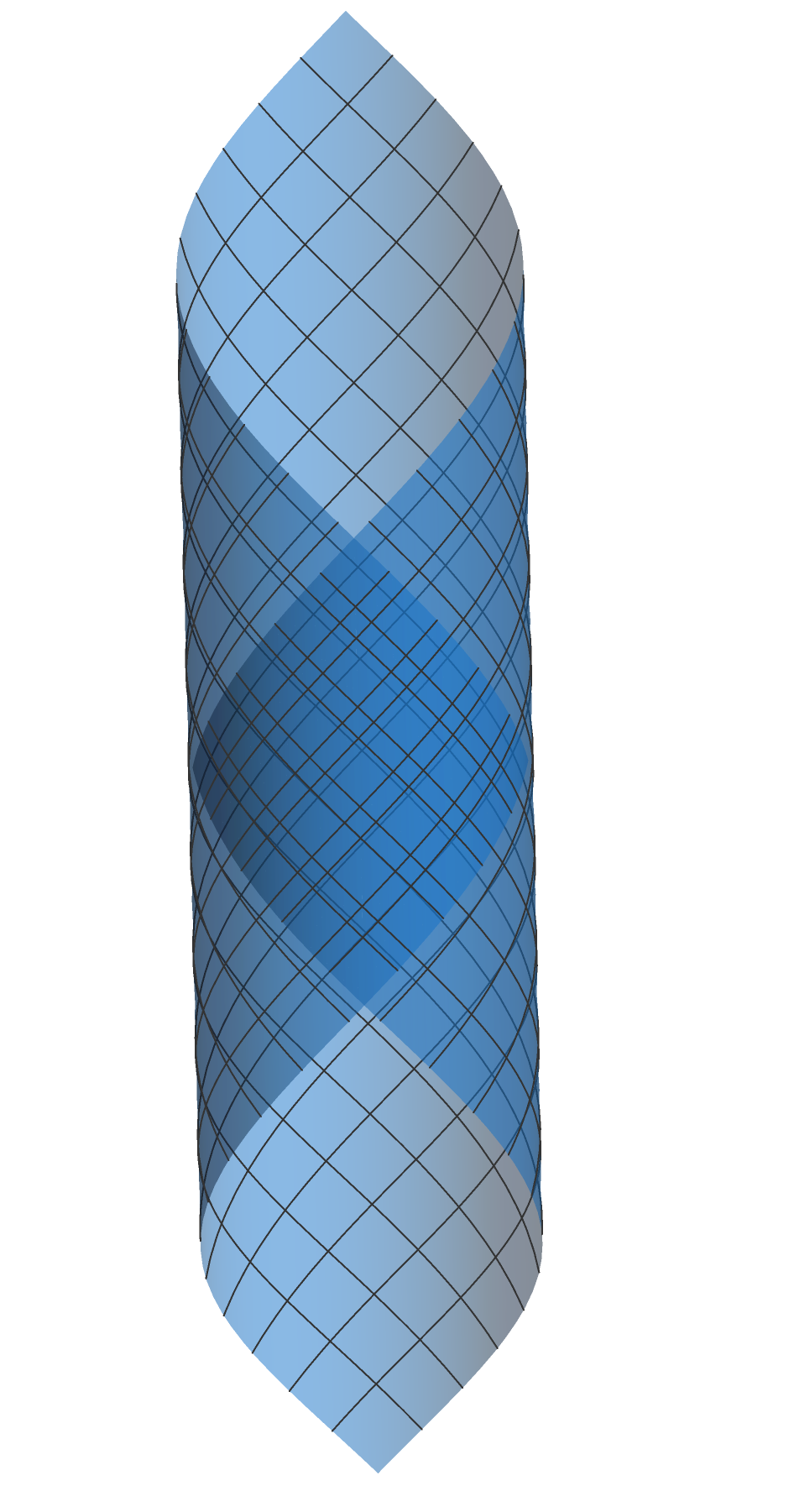}
\caption{Members of the isothermic associated family $A_{\Phi,
    r}(f^\alpha)$ of the CMC
  surface  $f^\alpha=A^N_{\alpha, \frac\pi
    2}(f)$  and their limit $f^\alpha$ for  $\alpha=\alpha_+(\frac \pi 2) $ and
  $r=\frac{1-\cos(t)}2$,  $t= \frac 12,
  \frac 14, \frac 1{8}, \frac 1{100}$, orthogonally projected into
  3--space. }\end{figure}
\begin{proof}
  We first consider the case when $\mu=1$ and $\alpha=1$ is the
  constant $d_{\mu=1}$--parallel section. Then the associated
  isothermic spectral parameter is $r(\mu) = 0$, and $f^\alpha=f$. For
  $r\in(0,1)$ write the associated CMC spectral parameter as
  $\mu_\pm = 1-2r \pm 2i \sqrt{r(1-r)} = e^{\pm it}\in S^1$. Let
  $\alpha_\pm$ be non--trivial $d_{\mu_\pm}^N$--parallel sections in
  the neighbourhood of $\mu=1$ which depend smoothly on $\mu_\pm$,
  with $\alpha_\pm(\mu_\pm =1) =1$, and
  $\beta_\pm =-N\alpha_\pm r \pm \alpha_\pm \sqrt{r(1-r)}$. Then for
  $r\in(0,1)$ the sections
\[
\phi_1 = \begin{pmatrix} \alpha_+ \\ \beta_+
\end{pmatrix}, \phi_2 =\frac 1t \begin{pmatrix} \alpha_+-\alpha_- \\
  \beta_+ -\beta_-
\end{pmatrix}
\]
are $d_r$--parallel and linearly independent over $\H$ since
$\alpha_\pm$ are linearly independent over $\H$, see Remark
\ref{rem:quaternionicindependence}.  Put $\Phi = FM$ where
\[
M= (\phi_1, \phi_2) = \begin{pmatrix} \alpha_1 & \alpha_2\\ \beta_1 &
  \beta_2
\end{pmatrix}, \quad F = \begin{pmatrix} 1 & f \\ 0&1
\end{pmatrix}\,,
\] 
and consider the associated family of isothermic surfaces
$A_{\Phi, r}(f)$ given by Theorem \ref{thm:iso_ass_fam} in the affine
coordinate as
\[
A_{\Phi,r}(f) = -\alpha_1\invers\alpha_2\,.
\]

Now, $\mu_\pm = e^{\pm it}\in S^1$ gives
$1-2r = \Re(\mu_\pm) = \cos t$ and thus
\begin{align*}
\lim_{r\to 0} A_{\Phi,r}(f)&= \lim_{t\to 0} A_{\Phi,\frac{1-\cos t}2}(f)\,.
\end{align*}
 
By assumption $\lim_{t\to 0} \alpha_1 =1$ and it remains to show that,
up to translation, $\lim_{t\to 0}\alpha_2=- f$.

Since $\lim_{t\to 0}(\alpha_+ -\alpha_-) =0$ we can apply
L'H\^opital's rule to obtain
\[
\lim_{t\to 0} \alpha_2 = \lim_{t\to 0} \frac 1t(\alpha_+ -\alpha_-) =
\frac{d}{dt} (\alpha_+ - \alpha_-)|_{t=0}\,.
\]
We conclude by applying the Sym--Bobenko formula  
\begin{equation}
\label{eq:geom sym}
\lim_{r\to 0} A_{\Phi,r}(f) = -
\frac{d}{dt} (\alpha_+ - \alpha_-)|_{t=0} = f\,.
\end{equation} 

The general case now follows by applying the same arguments to the CMC
surface $f^\alpha =A^N_{\alpha, s}(f) $ and its associated family of
isothermic surfaces $ A_{\Phi^\alpha, r}(f^\alpha)$ to obtain
\[
\lim_{r\to 0} A_{\Phi^\alpha, r}(f^\alpha)= f^\alpha\,.
\]

Since for $\lambda\in S^1$ parallel sections of $d_\lambda^N$ and
$d_{\lambda}^{N_\alpha}$ correspond via the gauge relation
(\ref{eq:assoasso}) we can write a $d_{\lambda}^{N_\alpha}$--parallel
$\Phi^\alpha$ in terms of
 \[
\alpha\invers\alpha_i^{\mu\lambda}\,, \quad i=1,2\,,
\]
where $\alpha_i^{\mu\lambda}$ are $d_{\mu\lambda}^N$--parallel
sections and $\mu\lambda\in S^1$. Therefore using the isothermic
spectral parameter
$r(\lambda\mu) = -\frac{(\lambda\mu-1)^2}{4\lambda\mu}$ we see
\[
A_{\Phi^\alpha, r(\lambda)}(f^\alpha)= - (\alpha\invers\alpha_1^{\mu\lambda})\invers
(\alpha\invers\alpha_2^{\mu\lambda})
= A_{\Phi, r(\lambda\mu)}(f)
\]
 so that
\[
f^\alpha =\lim_{r\to 0} A_{\Phi^\alpha, r} (f^\alpha)= \lim_{r\to
  r(\mu)}A_{\Phi, r}(f)\,.
\]

\end{proof}

\begin{rem}
We note that 
\[
M=M_t \begin{pmatrix} 1 & \frac 1t\\ 0 & -\frac 1t
\end{pmatrix} 
\]
for $M_t=(\phi_+, \phi_-)$ and $M =(\phi_1, \phi_2)$ so that the
surface in the associated family given by $(FM_t)\invers L$ is
M\"obius equivalent to the one given by $(F M)\invers L$. Moreover,
the equation (\ref{eq:geom sym}) shows that the Sym--Bobenko
derivative is indeed geometrically given as the limit of the
isothermic associated surfaces of $f$.
\end{rem}
`
\subsection{The $\mu$--Darboux transformation of CMC surfaces}

In \cite{cmc, simple_factor_dressing} it is shown that the harmonic
Gauss map of a CMC surface gives rise to a Darboux transformation
which preserves the harmonicity of the Gauss map, and thus the CMC
property (up to translation in $\R^4$).  We will now discuss how this
so--called \emph{$\mu$--Darboux transformation} is related to the
isothermic $\varrho$--Darboux transformation.

We first investigate which $\varrho$--Darboux transforms of a CMC
surface $f: M\to\R^3$ have constant mean curvature.  Recall that a
classical Darboux transform $\hat f = f+T: \tilde M \to\R^3$ with
$dT= -df + Tdg r T$, $r\in\R$, of a CMC surface $f: M\to\R^3$ has
constant mean curvature if and only if $|T-N|^2=1-\frac 1r$, see
\cite{darboux_isothermic}.  We generalise this to give a condition for
a general $\varrho$--Darboux transform to have constant mean
curvature.

\begin{theorem}
  Let $f: M \to\R^3$ be a CMC surface, $H=1$, with dual surface given
  by the parallel CMC surface $g=f+N$. Let
  $\phi=\begin{pmatrix}\alpha\\ \beta
\end{pmatrix}
$ be a non--trivial $d_\varrho$--parallel section,
$\varrho\in\C\setminus\{0,1\}$.  Then
$\hat f = f + \alpha\beta\invers: \tilde M \to \R^4$ is a
$\varrho$--Darboux transform of $f$.

Moreover, $\hat f$ has constant real part and its imaginary part is a
CMC surface in $\R^3$ with mean curvature $\hat H =1$ if and only if
\begin{equation}
\label{eq:cmc condition}
(T^d+N)^2=\hat
\varrho\invers-1\,, 
\end{equation}
where $T^d = \beta \varrho\invers \alpha\invers$ and  $\hat
\varrho=\alpha\varrho\alpha\invers$. 
\end{theorem}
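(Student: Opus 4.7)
The first assertion will follow immediately from the definition: since $\phi=\begin{pmatrix}\alpha\\ \beta\end{pmatrix}$ is $d_\varrho$--parallel and $T=\alpha\beta\invers$ is non--vanishing on $\tilde M$ by assumption, $\hat f=f+T$ is by definition the $\varrho$--Darboux transform associated to $\phi$.

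For the characterisation, my plan is to translate the CMC condition into a pointwise algebraic identity and then establish both directions of the equivalence. By Theorem \ref{thm: rho dt}, $\hat f$ is isothermic with dual surface $\widehat{f^d}=f^d+T^d$, and from Example \ref{ex:isothermic}(2) a surface in $\R^3$ has CMC $\hat H=1$ precisely when its parallel surface is a dual surface. Since duals are unique up to scale and translation, and consistency as functions on $\tilde M$ forces the scale factor to equal $1$ (any other scale would make the equation depend explicitly on $\hat f$), the combined condition ``constant real part and CMC in $\R^3$'' becomes
\[
\widehat{f^d}=\hat f+\hat N+c
\]
for some constant $c\in\H$ and $\hat N$ a unit imaginary quaternion perpendicular to $d\hat f$. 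Using $f^d=f+N$ and $T=\hat f-f$, this rearranges to $\hat N=T^d+N-T-c$; the central algebraic observation is $T^d+N=\widehat{f^d}-f$, which gives the stated condition the direct geometric reading $(\widehat{f^d}-f)^2=\hat\varrho\invers-1$.

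For the forward direction, I would square $T^d+N=T+\hat N+c$ and expand, using the identities $TT^d=\hat\varrho\invers$ and $T^dT=\beta\varrho\invers\beta\invers$ (both immediate from $T=\alpha\beta\invers$ and $T^d=\beta\varrho\invers\alpha\invers$), the relation $\hat N^2=-1$, the orthogonality of $\hat N$ to $d\hat f=T\,dg\,\hat\varrho T$, and the constraint $\Re c=\Re T-\Re T^d$ forced by $\Re\hat N=0$. For the backward direction, I would assume $(T^d+N)^2=\hat\varrho\invers-1$, define $\hat N:=T^d+N-T-c$ for a suitable constant $c$, and verify in turn that (i) such a $c$ exists, i.e.\ $\Re(T^d-T)$ is independent of $p\in\tilde M$; (ii) $\hat N^2=-1$; and (iii) $\hat N$ is perpendicular to $d\hat f$. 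Properties (ii) and (iii) are algebraic consequences of the assumed identity combined with $TT^d=\hat\varrho\invers$ and $T^dT=\beta\varrho\invers\beta\invers$, after which $\hat f+\hat N=\widehat{f^d}-c$ exhibits the parallel surface of $\hat f$ as a translate of the dual $\widehat{f^d}$, forcing $\hat H=1$ in $\R^3$.

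The main obstacle is step (i) in the backward direction: deducing from the pointwise identity $(T^d+N)^2=\hat\varrho\invers-1$ that $\Re(T^d-T)$ is constant on $\tilde M$. My plan is to differentiate the identity, substitute the generalised Riccati equations $dT=-df+T\,dg\,\hat\varrho T$ and $dT^d=-dg+T^d\,df\,(\beta\varrho\beta\invers)\,T^d$ from the proof of Theorem \ref{thm: rho dt}, and then separate real parts in the resulting quaternionic $1$--form equation to obtain $d\Re(T^d-T)=0$. The remaining bookkeeping---establishing the scale $\lambda=1$ in the dual equation and the orthogonality $\hat N\perp d\hat f$---is routine once these preliminaries are in place.
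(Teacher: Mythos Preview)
Your route via the dual/parallel--surface characterisation is more circuitous than the paper's and leaves real gaps. The paper instead exploits that the left normal of $\hat f$ is explicitly $\hat N=-TNT^{-1}$ (immediate from $d\hat f=T\,dg\,\hat\varrho T$ and $*dg=-N\,dg$), together with the pointwise criterion from \eqref{eq:Hdf}: the combined condition ``constant real part and $\hat H=1$'' is exactly $(d\hat N)'=-d\hat f$. Differentiating $\hat N=-TNT^{-1}$ and inserting the generalised Riccati equation \eqref{eq:gen riccati} yields
\[
(d\hat N)'=-T\,dg\,(\hat\varrho TN+N\hat\varrho T+1)\,T^{-1},
\]
so the CMC condition becomes the purely algebraic identity $1+N\hat\varrho T+\hat\varrho TN=\hat\varrho T^2$, which rewrites as \eqref{eq:cmc condition} via $T^d=(\hat\varrho T)^{-1}$. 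Both implications fall out at once, with no integration and no auxiliary constancy claim.

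Your argument, by contrast, has two genuine problems. First, the scale: duals are unique only up to homothety, and the assertion that the specific dual $\widehat{f^d}=g+T^d$ from Theorem~\ref{thm: rho dt} differs from the parallel surface $\hat f+\hat N$ by a \emph{translation only} is not established by the vague ``consistency'' remark---there is no a~priori reason the two normalisations match. Second, in the backward direction you cannot simply \emph{define} a Gauss map: the left normal of $\hat f$ is already forced to be $-TNT^{-1}$, and part of what must be shown is that $\hat f$ lies in a translate of $\R^3$, i.e.\ that left and right normals coincide. Your checks (ii)--(iii) for the candidate $\hat N=T^d+N-T-c$ do not by themselves yield this; they would have to be upgraded to showing that this candidate equals the known left normal $-TNT^{-1}$, at which point you are essentially performing the paper's direct computation anyway. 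Your step~(i), which you flag as the main obstacle, is then an artefact of the detour and simply does not arise on the direct route.
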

 
\begin{rem} Note that in contrast to the case of a real parameter this
  is not a condition on distance since in general the difference
  $T^d = \hat g - g$ between the parallel surface and its Darboux
  transform can take values in 4--space. Moreover, the right hand side
  $\hat\varrho\invers-1$ is not constant for
  $\varrho\in\C\setminus\R$.
\end{rem}

\begin{proof}
We first observe that by Theorem \ref{thm: both parallel} if for a
$d_\varrho$--parallel section $\phi = \begin{pmatrix} \alpha \\ \beta
\end{pmatrix}
$ one of the parallel sections $\alpha$ or $\beta$ vanishes at $p$
then it vanishes identically, and by \eqref{eq:iso diff} both parallel
sections $\alpha$ and $\beta$ are identical zero, contradicting the
assumption that $\phi$ is non--trivial. Hence
$T=\alpha\beta\invers: \tilde M \to \H_*$ and $\hat f = f+T$ is a
$\varrho$--Darboux transform. With $T=\alpha\beta\invers$ the
differential of $\hat f$ is given by (\ref{eq:dhatf}) as
\[
d\hat f  =  Tdg \hat\varrho T\,,
\]
and the left normal of $\hat f$ is $\hat N =-TNT\invers$.  By
(\ref{eq:Hdf}) the Darboux transform $\hat f$ has constant real part
and mean curvature $\hat H =1$ if and only $(d\hat N)' = -d\hat
f$. Therefore, we now compute
$(d\hat N)' = \frac 12(d\hat N - \hat N*d\hat N)$ for
$\hat N =- TNT\invers$.  First we have
\[
d\hat N = [dT T\invers, \hat N] - TdNT\invers\,.
\]
From the Riccati type equation (\ref{eq:gen riccati}) we obtain
\[
dTN+*dT = Tdg(\hat \varrho TN +N\hat \varrho T)\,.
\]
Therefore, with $dg =(dN)''$, we see
\begin{align*}
(d\hat N)' &= -\frac 12Tdg(\hat \varrho TN +N\hat \varrho T - N (\hat\varrho TN +
N\hat \varrho T)N +2) T\invers\\
& =- Tdg(\hat \varrho TN + N\hat \varrho T
+1)T\invers
\end{align*}
so that $(d\hat N)'= -d\hat f = -Tdg\hat \varrho T $ if and only if
\begin{equation}
\label{eq: cmc T}
1 + N \hat \varrho T + \hat \varrho T N = \hat \varrho T^2\,.
\end{equation} 
Since $(\hat\varrho T)\invers = T^d$, the latter is equivalent to
(\ref{eq:cmc condition}).
\end{proof}

Recall \cite{cmc} that for fixed $\mu\in\C_*$ a $\mu$--Darboux
transform $D^N_{\alpha, \mu}(f)$ is given by a $d^N_\mu$--parallel
section $\alpha\in\Gamma(\widetilde{\trivial{}})$: if
\[ d\alpha = -\frac 12df(N\alpha(a-1)+\alpha b)
\]
where $a=\frac{\mu+\mu\invers}2, b=i\frac{\mu\invers-\mu}2$ then
$e\alpha$ is a holomorphic section and $\varphi =e\alpha+\psi\beta$
with $\beta=\frac 12(N\alpha(a-1) + \alpha b)$ is the prolongation of
$e\alpha$. Thus,
\[
D^N_{\alpha,\mu}(f) = f + T: \tilde M \to \H
\]
is a Darboux transform of $f$, a so--called \emph{$\mu$--Darboux
  transform}, where
\[
T\invers = \frac 12(N(\hat a-1) + \hat b)
\]
and $\hat a = \alpha a\alpha\invers, \hat b = \alpha b\alpha\invers$.
Note that indeed $T\invers(p) \not=0$ for all $p$ since otherwise
$N= \hat b(1-\hat a)\invers$ at some $p$ which leads to the
contradiction $-1=N(p)^2= \frac{a+1}{1-a}$.

Since $\varphi= e\alpha + \psi \beta$ is $d_\varrho$--parallel for
$\varrho=\frac{1-a}2$ by Theorem \ref{thm: drho parallel} the
$\mu$--Darboux transformation is a special case of the
$\varrho$--Darboux transformation \cite{cmc}:

\begin{theorem}
\label{thm:rhoequalsmu}
Let $f: M \to \R^3$ be a constant mean curvature surface in $\R^3$
with mean curvature $H=1$ and dual surface $g=f+N$ where $N$ is the
Gauss map of $f$. Then any $\varrho$--Darboux transform
$\hat f: \tilde M \to\R^4$, $\varrho\in \C\setminus\{0,1\}$, is given
by
\[
\hat f = f+(\alpha_++\alpha_-)(\beta_++\beta_-)\invers
\]
where $d_{\mu_\pm}^N\alpha_\pm =0$ and
$\beta_\pm = \frac 12(N\alpha_\pm(a-1)\pm \alpha_\pm b)$ and
$\mu_\pm = a \pm i b = 1-2\varrho \pm 2 i \sqrt{\varrho(1-\varrho)}$.

Moreover, $\mu$--Darboux transforms of $f$ are exactly those
$\varrho$--Darboux transforms of $f$ which have constant mean
curvature in 3--space (possibly after translation).
\end{theorem}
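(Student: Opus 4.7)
The first statement is immediate from Theorem \ref{thm: drho parallel}: writing $\phi = \phi_+ + \phi_-$ for any $d_\varrho$--parallel section, the formula $\hat f = f + (\alpha_+ + \alpha_-)(\beta_+ + \beta_-)\invers$ is just $\hat f = f + \alpha\beta\invers$. For the second statement, the forward direction is essentially by definition: the prolongation of a $d_\mu^N$--parallel section $\alpha$ (with $\beta = \tfrac 12(N\alpha(a-1)+\alpha b)$) is, by Theorem \ref{thm: drho parallel}, a pure $\phi_+$ (or $\phi_-$), so every $\mu$--Darboux transform is a $\varrho$--Darboux transform; these are CMC up to translation by \cite{cmc}.

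For the converse, I will use the CMC criterion $(T^d+N)^2 = \hat\varrho\invers - 1$ from the preceding theorem. The key identity $\beta_\pm + N\alpha_\pm\varrho = \pm\tfrac 12\alpha_\pm b$ (which follows directly from $\varrho = \tfrac{1-a}{2}$) yields
\[
T^d + N = \tfrac 12(\alpha_+ - \alpha_-)\,b\varrho\invers\,\alpha\invers.
\]
Substituting this into the CMC condition and invoking $b^2 = 4\varrho(1-\varrho)$, whence $\varrho\invers - 1 = (b\varrho\invers)^2/4$, I conjugate by $\alpha$ to reduce the condition to the purely algebraic equation
\[
(2v-1)\,c\,(2v-1) = c, \qquad v = \alpha\invers\alpha_+,\quad c = b\varrho\invers.
\]

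The main obstacle is translating this quaternion--valued constraint on $v$ into the statement that $\phi\H$ is the line of a pure $\mu_\pm$--Darboux generator. My plan is to write $u := 2v-1 = u_0 + u_1 j$ in the decomposition $\H = \C\oplus \C j$ and use $jc = \bar c j$ to expand $ucu = c$ into its $\C$--component $u_0^2 c - |u_1|^2 \bar c = c$ and its $j$--component $2u_1 \Re(u_0 c) = 0$. The $j$--equation splits the analysis into two cases. First, if $u_1 = 0$, the scalar equation gives $u_0^2 = 1$, so $v \in \{0,1\}$, meaning $\alpha_- = 0$ or $\alpha_+ = 0$; this is precisely a pure $\mu_+$-- or $\mu_-$--Darboux. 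Second, if $u_1 \neq 0$, the condition $\Re(u_0 c) = 0$ combined with the scalar equation forces $c$ purely imaginary, $u_0 \in \R$, and $u_0^2 + |u_1|^2 = 1$.

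In this mixed case, explicit inversion of $v = (1+u)/2$ gives $v\invers = 1 - \tfrac{u_1}{1+u_0} j$ (for $u_0 \neq -1$), so $\alpha = \alpha_+ v\invers$ is a right quaternionic multiple of $\alpha_+$; hence $\phi\H = \alpha_+\H$ defines the same Darboux transform as the pure $\mu_+$--Darboux transform. The limiting case $u_0 = -1$, where $v = u_1 j / 2 \in \C j$, is handled symmetrically as the pure $\mu_-$--Darboux by expressing $\alpha$ as an $\H$--multiple of $\alpha_-$. Together with the forward direction, this completes the claimed equivalence.
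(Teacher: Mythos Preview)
Your overall route coincides with the paper's: both use the CMC criterion $(T^d+N)^2=\hat\varrho^{-1}-1$, compute $T^d+N$ in terms of $\alpha_\pm$, and reduce to an algebraic constraint. Your reduction to $ucu=c$ with $u=2v-1$, $c=b\varrho^{-1}$ is correct, and Case~1 ($u_1=0\Rightarrow u=\pm1$) matches the paper's conclusion $\alpha_+=0$ or $\alpha_-=0$. Where the paper simply passes from $X^2=(b/(a-1))^2$ to $X=\pm b/(a-1)$ ``by smoothness'', your decomposition is more honest: when $b/(a-1)\in i\R$ (equivalently $\varrho\in\R\setminus[0,1]$) the solution set of $X^2=(b/(a-1))^2$ is a whole $2$--sphere in $\Im\H$, so the paper's square-root step is not self-evident. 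Your Case~2 is exactly the analysis needed to cover this.

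However, your Case~2 argument has a genuine gap. From $\alpha=\alpha_+v^{-1}$ you conclude that $\phi\H$ gives the same Darboux transform as $\phi_+$, but the transform is $f+\alpha\beta^{-1}$: you need $\alpha\beta^{-1}=\alpha_+\beta_+^{-1}$, i.e.\ also $\beta=\beta_+v^{-1}$, which does \emph{not} follow from $\alpha=\alpha_+v^{-1}$ alone. This is fixable using the specific structure of Case~2: since $c\in i\R$ forces $\varrho\in\R$, one has $a-1\in\R$ and $b\in i\R$; then $v^{-1}-1=-\tfrac{u_1}{1+u_0}j\in\C j$ commutes with $a-1$ and anticommutes with $b$, whence
\[
\beta_-=\tfrac12\bigl(N\alpha_+(v^{-1}-1)(a-1)-\alpha_+(v^{-1}-1)b\bigr)
=\tfrac12\bigl(N\alpha_+(a-1)+\alpha_+ b\bigr)(v^{-1}-1)=\beta_+(v^{-1}-1),
\]
so indeed $\beta=\beta_+v^{-1}$ and $\hat f=f+\alpha_+\beta_+^{-1}$. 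You should include this step. Finally, your ``limiting case $u_0=-1$'' is vacuous: the constraint $u_0^2+|u_1|^2=1$ with $u_0=-1$ forces $u_1=0$, contradicting the hypothesis of Case~2.
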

\begin{proof}
  The first statement follows directly from Theorem \ref{thm: drho
    parallel}. Now assume that $\hat f: \tilde M \to\R^4$ is a
  $\varrho$--Darboux transform with constant real part and
  $\Im(\hat f)$ has constant mean curvature. Then
  $\hat f = f+\alpha\beta\invers$ with
  $\alpha=\alpha_+ +\alpha_-, \beta=\beta_++\beta_-$ and $\alpha$ is
  nowhere vanishing.

  Using $a^2+b^2=(1-2\varrho)^2 + 4\varrho(1-\varrho) =1$ we have
  $\varrho\invers-1 = \frac{b^2}{(a-1)^2}$ so that
\[
\hat \varrho\invers -1 = (\alpha_++\alpha_-)\frac{b^2}{(a-1)^2}
(\alpha_++\alpha_-)\invers\,.
\]
Therefore, the condition $(T^d+N)^2= \hat\varrho\invers-1$ for
$\hat f$ to have constant mean curvature (\ref{eq:cmc condition})
gives
\[
(\alpha_++\alpha_-)\invers(T^d+N)^2 (\alpha_++\alpha_-)=
\frac{b^2}{(a-1)^2}\,.
\]
Since $\frac{b^2}{(a-1)^2} =\varrho\invers-1 \not=0$ and $(\alpha_++\alpha_-)\invers(T^d+N)^2
(\alpha_++\alpha_-)$ is smooth, we obtain
\begin{equation}
\label{eq:alphas}
(\alpha_++\alpha_-)\invers(T^d+N) (\alpha_++\alpha_-)=
\pm\frac{b}{a-1}\,.
\end{equation}
On the other hand,  we have 
\[
\beta = \beta_++\beta_-= \frac 12(N\alpha(a-1)+(\alpha_+-\alpha_-)b)
\]
so that
\[
T^d +N=\beta\varrho\invers\alpha\invers + N =
 (\alpha_--\alpha_+)\frac{b}{a-1}(\alpha_++\alpha_-)\invers\,.
\]
Comparing to the previous equation (\ref{eq:alphas}) we see that
$\alpha_+=0$ or $\alpha_-=0$.  In this case,
$\hat f = f+ \alpha_\pm \beta_\pm\invers$ is a $\mu$--Darboux
transform.

Conversely, see \cite{cmc}, let $f^\mu=f+\alpha\beta\invers$ be a
$\mu$--Darboux transform where $d_\mu^N\alpha=0$ and
$\beta=\frac 12(N\alpha(a-1)+\alpha b)$ as before,
$a=\frac{\mu+\mu\invers}2, b=i\frac{\mu\invers-\mu}2$.  Let
$\varrho=\frac{1-a}2$ then
\begin{equation}
\label{eq:td}
T^d = \beta \varrho\invers \alpha\invers = - (N + \alpha \frac{b}{a-1}
\alpha\invers)
\end{equation}
and the CMC condition (\ref{eq:cmc condition}) follows with
$a^2+b^2=1$. Thus, every $\mu$--Darboux transform has constant mean
curvature.
\end{proof}

\begin{figure}[H]
  \includegraphics[height=4cm]{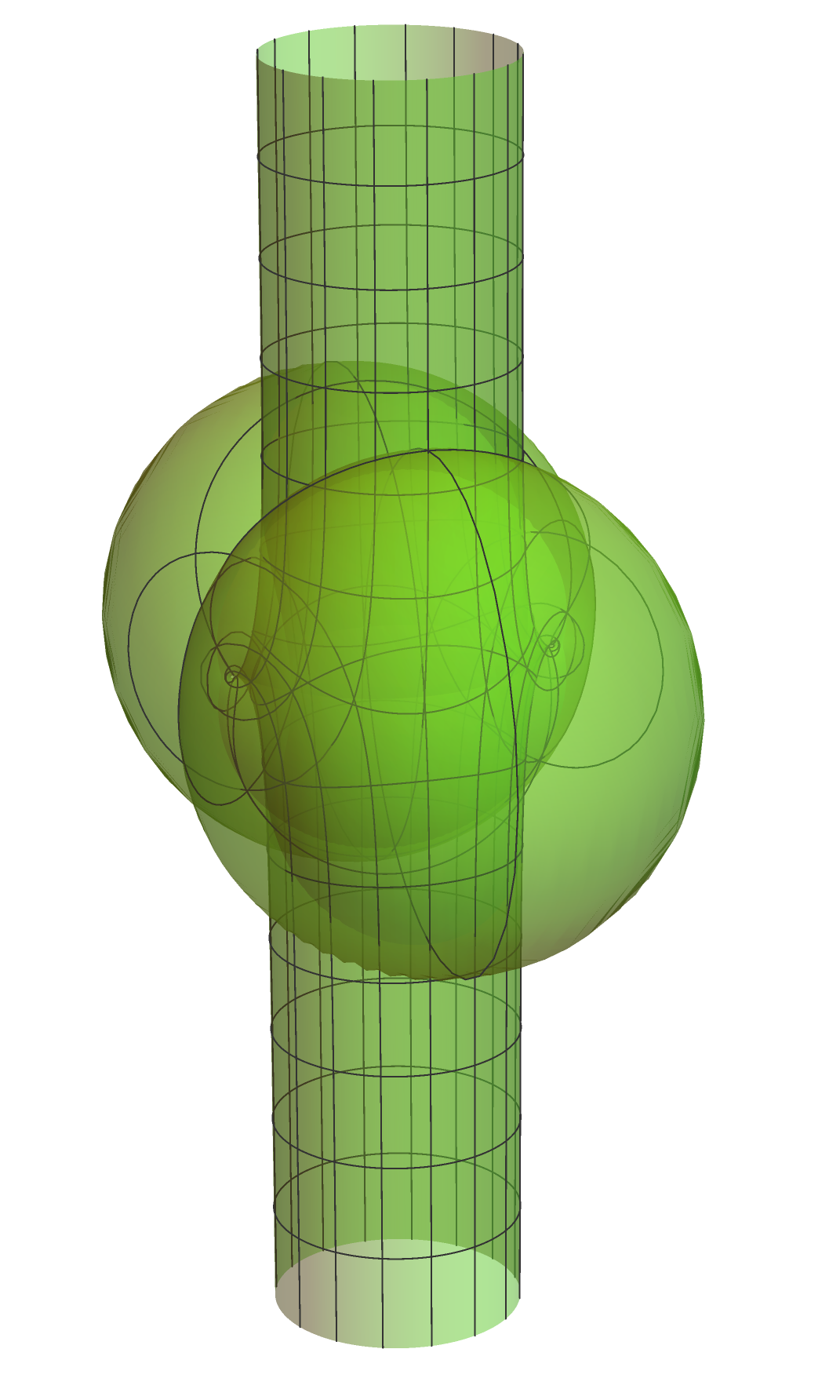}
\includegraphics[height=4cm]{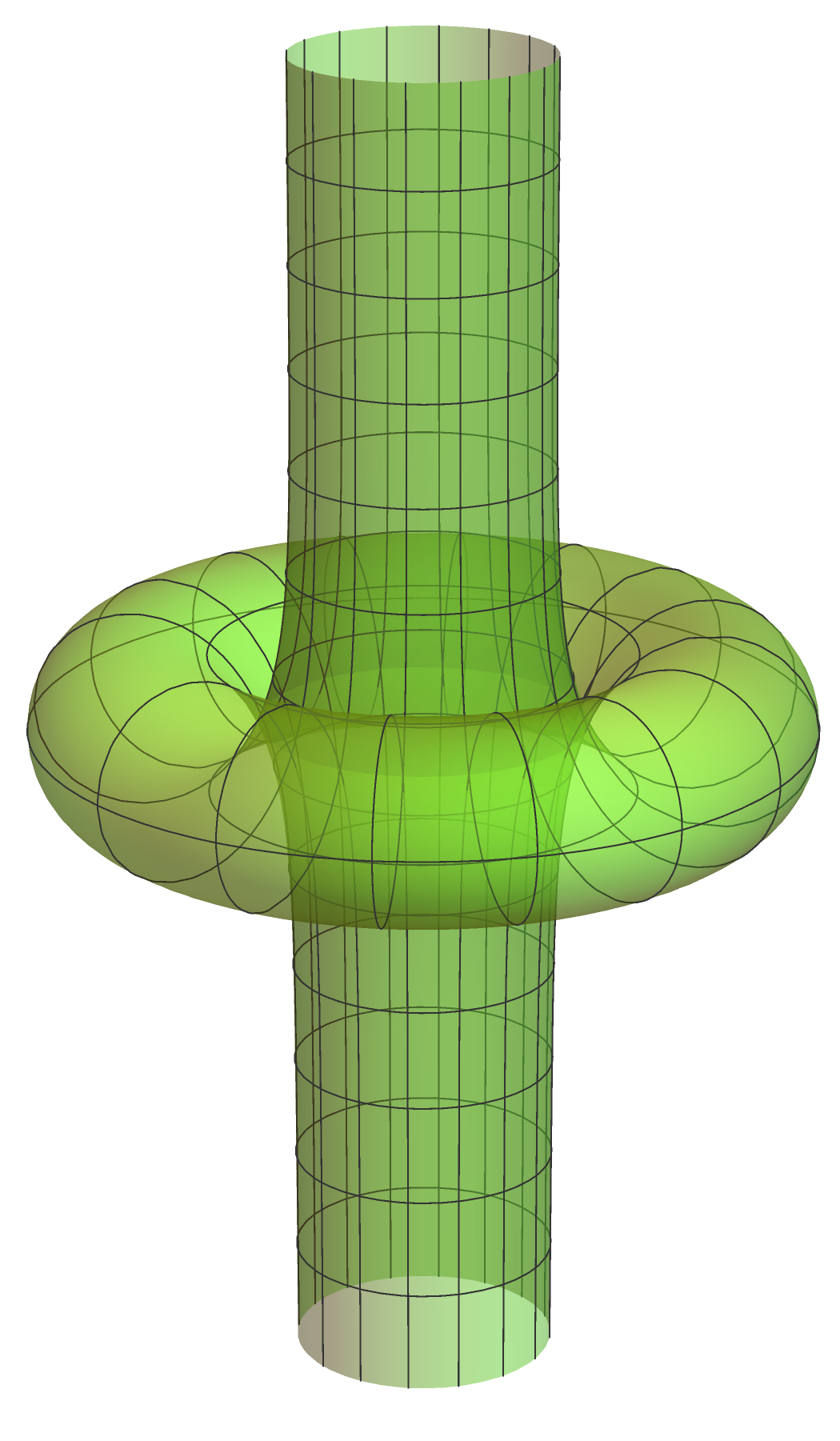}
\includegraphics[height=4cm]{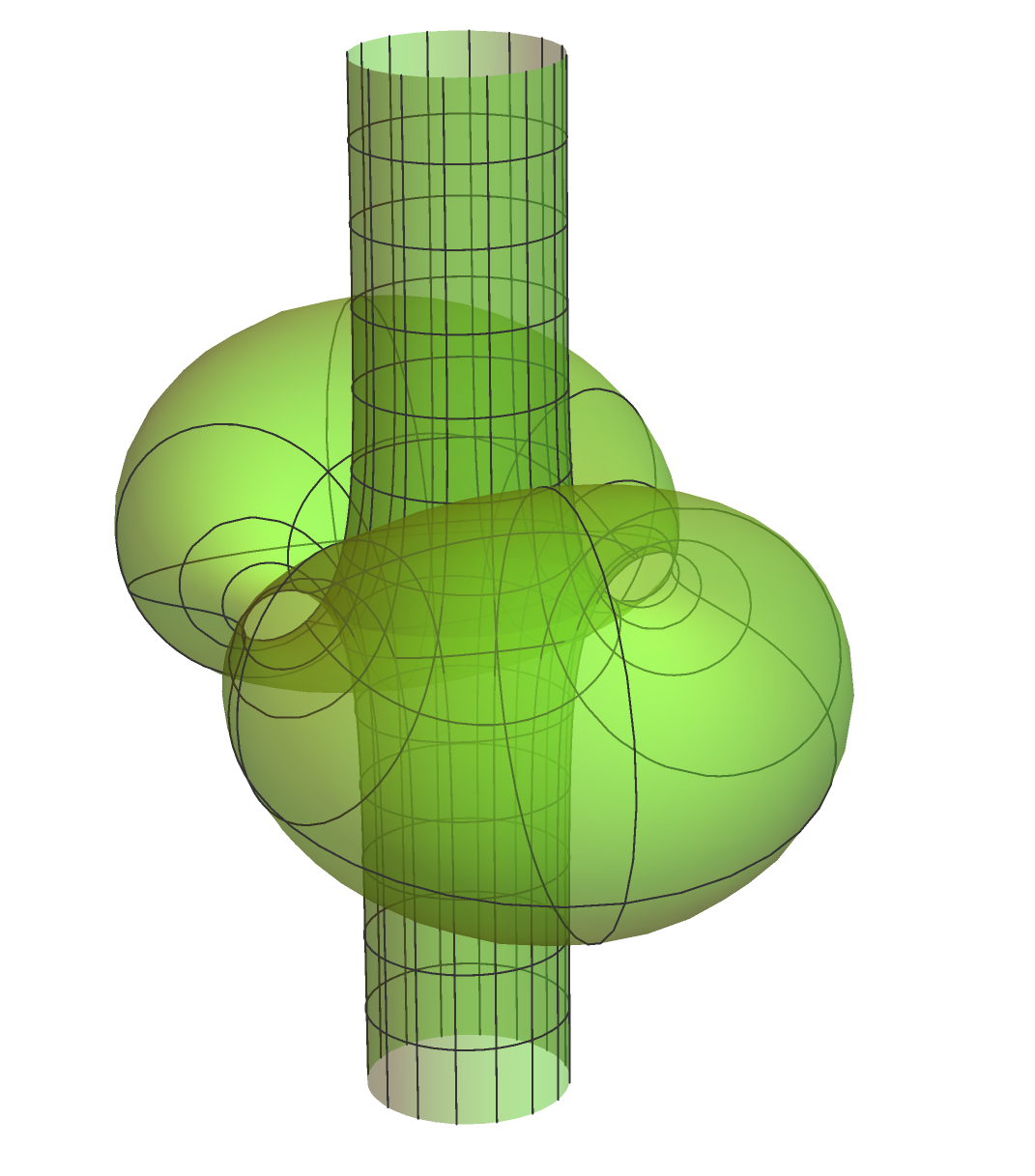}
\includegraphics[height=4cm]{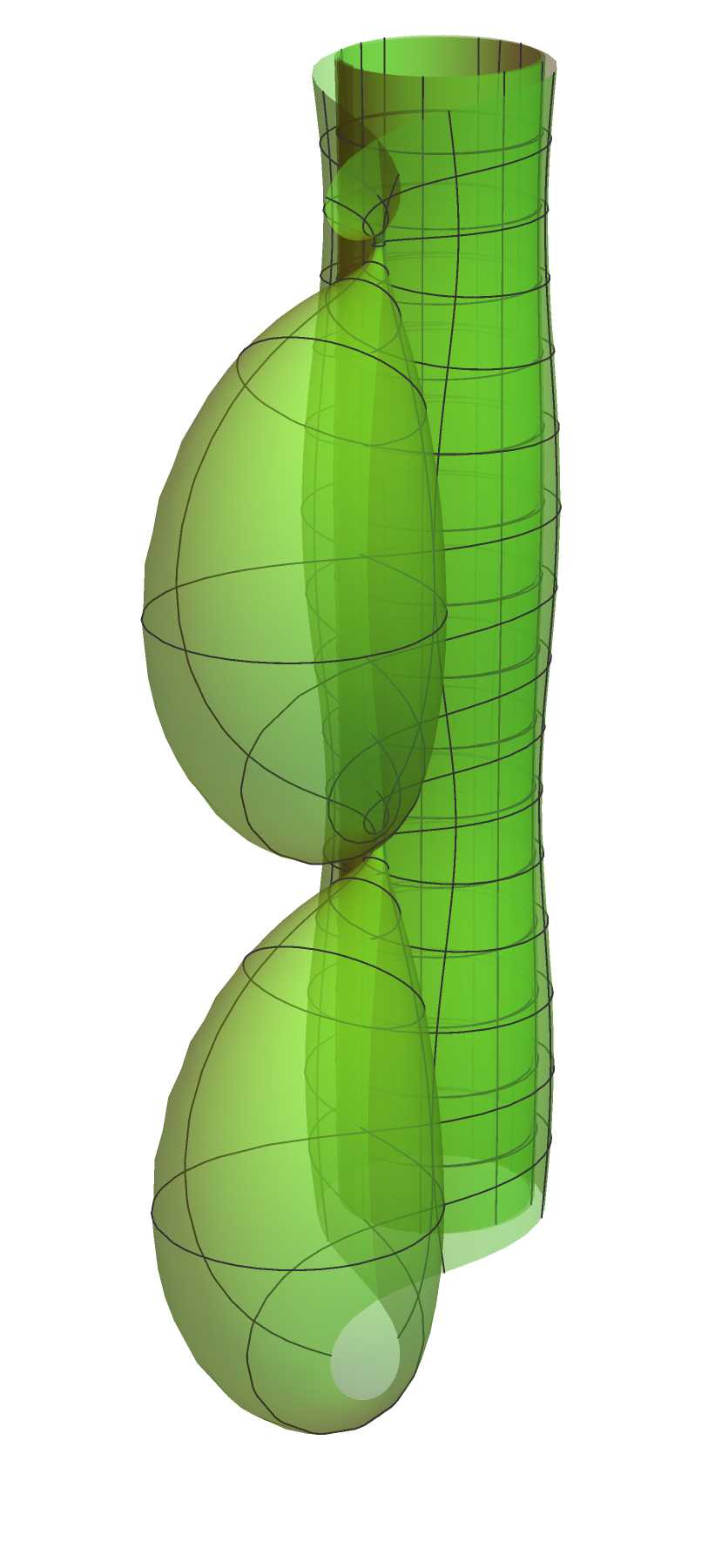}
\includegraphics[height=4cm]{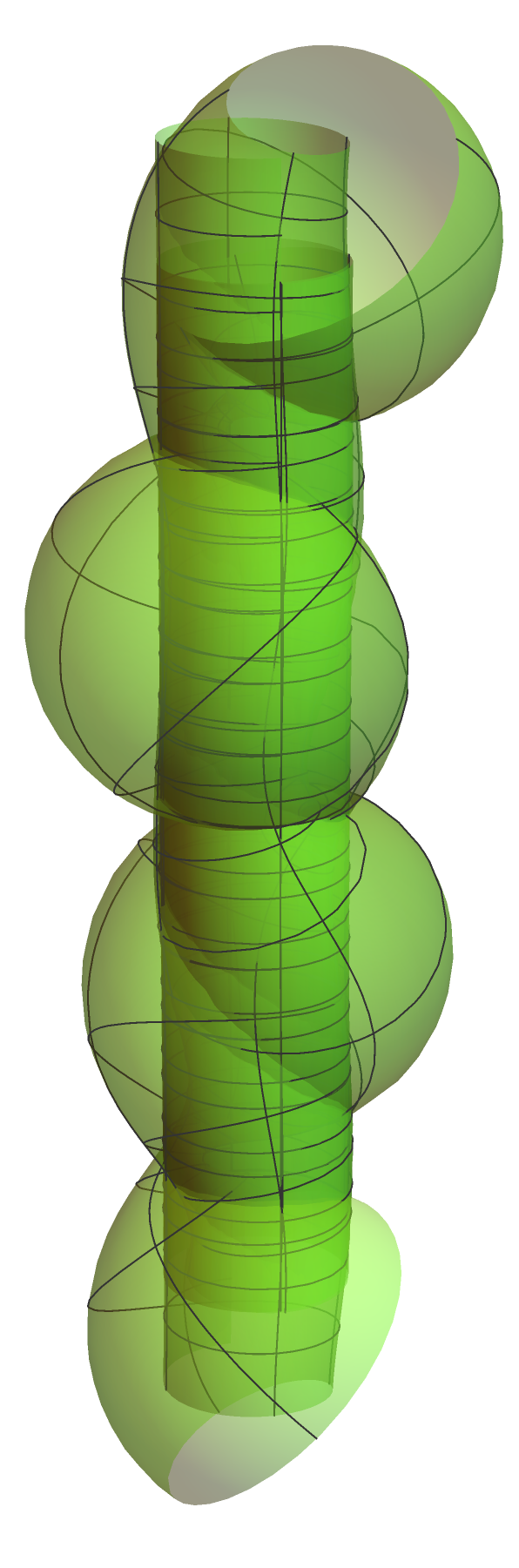}
\includegraphics[height=4cm]{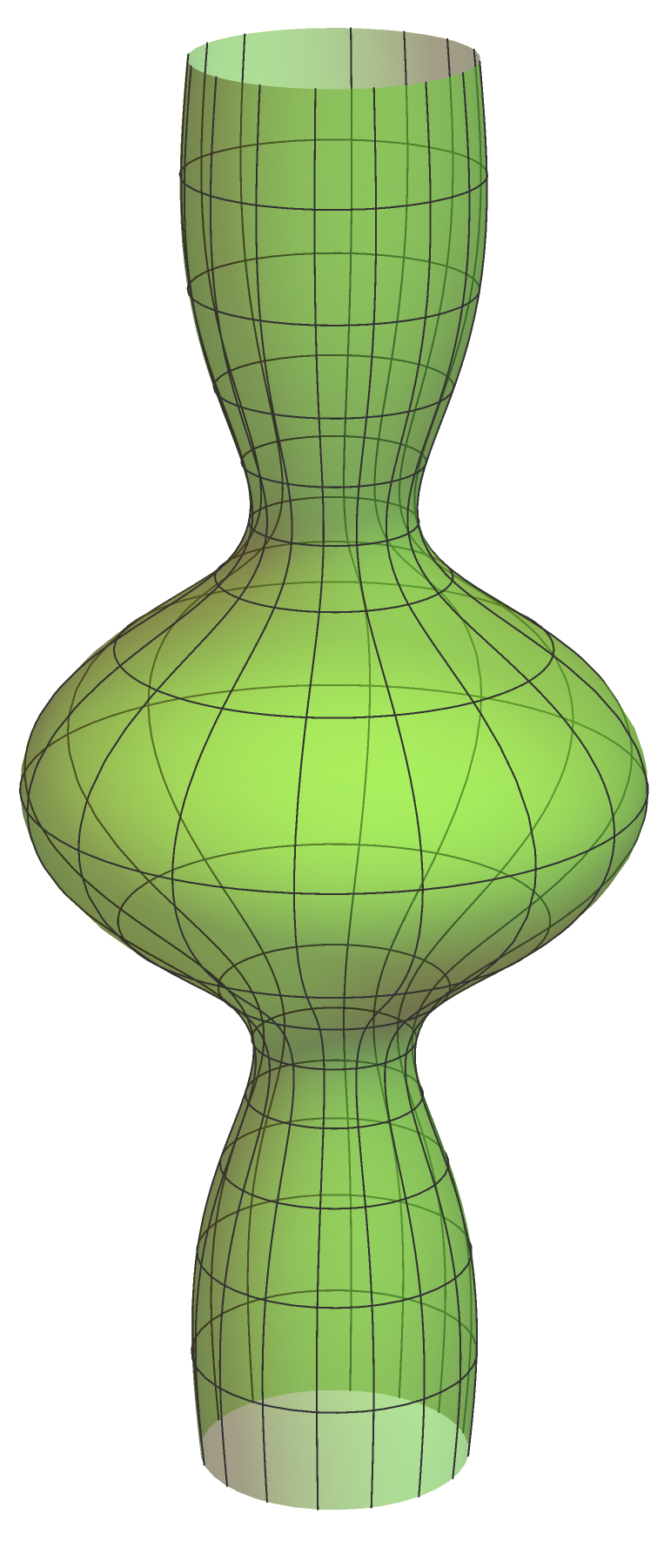}

\caption{The first CMC bubbleton is a $\mu$--Darboux transform of a
  cylinder at the resonance point $\mu= 7-4\sqrt 3$ (which gives
  $\varrho=\frac{1-a}2=-3$). The first surface of revolution is a
  $\varrho$--Darboux transform at $\varrho=-3$ which is not CMC. The
  next surface is an isothermic bubbleton, a $\varrho$--Darboux
  transform at the resonance point $\varrho=-3$ whereas the next one
  is a $\varrho$--Darboux transform at $\varrho=1$, which is given by
  $d_{\varrho=1}$--parallel sections which are not obtained from
  $d_{\mu_\pm}$--parallel sections only (here $\mu_\pm = -1$). The next surface
  is a $\mu$--Darboux transform at $\mu = -(1+2i) + 2 \sqrt{i-1}$
  (that is $\varrho=1+i$), that is a CMC surface.  The last surface of
  revolution is a $\varrho$--Darboux transform at $\varrho =1+i$ with
  non--constant real part, orthogonally projected into 3--space.}
\end{figure}

\begin{rem}
  \label{rem: parallel is Darboux}
  We note that for $\mu =-1$ for any $d_{\mu}^N$--parallel section
  $\alpha$ we have $\beta =-N\alpha$ so that the corresponding
  $\mu$--Darboux transforms are the parallel CMC surface
  $f^\mu = f + \alpha\beta\invers = f+N =g$, see
  \cite{darboux_isothermic}.
\end{rem}

We can now express the $\mu$--Darboux transform of the parallel
surface in terms of the CMC surface $f$, the spectral parameter and
parallel sections of the associated family of $f$:

\begin{cor} 
\label{cor:parallel mu dt}
Let $f: M \to\R^3$ be a CMC surface with dual surface given by the
parallel CMC surface $g= f+N$.

  If $d_\mu^N\alpha=0$, $\mu\in\C\setminus\{0,1\}$, such that
  $f^\mu= f+\alpha\beta\invers$ is a $\mu$--Darboux transform of $f$
  given by $\alpha$ and
  \[
    \beta = \frac 12(N\alpha(a-1)+ \alpha b)\,,
  \]
  that is $f^\mu=D^N_{\alpha,\mu}(f)$, then
  \[
g^\mu= g + \beta\varrho\invers\alpha\invers = f -\alpha\frac{b}{a-1} \alpha\invers
\] is   a $\mu$--Darboux
  transform of $g$ with  parameter
  $\mu$ and $d_\mu^{-N}$--parallel section $\beta$, that is,
  \[
    g^\mu = D^{-N}_{\beta,\mu}(g)\,.
  \]
  Here, $a= \frac{\mu+\mu\invers}2, b= i\frac{\mu\invers-\mu}2$ and
  $\varrho=\frac{1-a}2$ is the isothermic spectral parameter given by
  $\mu$.

  Moreover, $g^\mu$ is the parallel CMC surface of the $\mu$--Darboux
  transform $f^\mu$ of $f$, that is,
  \[
    g^\mu = f^\mu + N^\mu
\]
where $N^\mu$ is the Gauss map of $f^\mu$.
    
\end{cor}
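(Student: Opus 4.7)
The plan is to apply Theorem~\ref{thm: both parallel} with the roles of $f$ and $g$ reversed, write down the resulting $\mu$-Darboux transform of $g$ explicitly, and then recognise it as the parallel CMC surface of $f^\mu$.

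Since $g = f+N$ is itself a CMC surface with mean curvature $1$ and Gauss map $-N$, its associated family of flat connections is $d_\mu^{-N}$. Theorem~\ref{thm: both parallel} immediately gives that $\beta = \tfrac{1}{2}(N\alpha(a-1) + \alpha b)$ is $d_\mu^{-N}$-parallel, so it determines a $\mu$-Darboux transform $D^{-N}_{\beta,\mu}(g) = g + \beta\gamma\invers$, where $\gamma = \tfrac{1}{2}(-N\beta(a-1) + \beta b)$ is the companion of $\beta$ in the $(g,-N)$-framework. Applying the converse half of Theorem~\ref{thm: both parallel} (swap $f \leftrightarrow g$) yields $\gamma = \alpha(1-a)/2 = \alpha\varrho$, so $g^\mu = g + \beta\varrho\invers\alpha\invers$.

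Next I would simplify $\beta\varrho\invers\alpha\invers$ directly. The key observation is that $(a-1)\varrho\invers = -2 \in \R$ commutes through $\alpha$; substituting the formula for $\beta$ gives $\beta\varrho\invers\alpha\invers = -N - \alpha\tfrac{b}{a-1}\alpha\invers$, which is essentially \eqref{eq:td}. Adding $g = f + N$ cancels the $-N$ term, producing the second formula $g^\mu = f - \alpha\tfrac{b}{a-1}\alpha\invers$.

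For the identification $g^\mu = f^\mu + N^\mu$, Theorem~\ref{thm:rhoequalsmu} guarantees that $f^\mu$ is CMC in $\R^3$ up to translation, so $N^\mu$ equals the left normal of $f^\mu$. Applying $*$ to $df^\mu = T\,dg\,\hat\varrho\,T$ with $T = \alpha\beta\invers$ and using $*dg = -N\,dg$ gives $N^\mu = -TNT\invers$. The desired identity $f + T - TNT\invers = f - \alpha\tfrac{b}{a-1}\alpha\invers$, after conjugation by $\alpha$, reduces to $\beta\invers(\alpha - N\beta) = -\tfrac{b}{a-1}$. Expanding $N\beta$ with $N^2 = -1$ turns this into $\tfrac{a+1}{2}\alpha - \tfrac{b}{2}N\alpha = -\beta\tfrac{b}{a-1}$, which holds precisely because $a^2 + b^2 = 1$. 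The only delicate point I anticipate is the derivation of $N^\mu = -TNT\invers$ in the generalized setting where $\hat\varrho = \alpha\varrho\alpha\invers$ is quaternion-valued rather than a real scalar; once this left normal is in hand, the remaining identities collapse under the unit-modulus constraint $a^2+b^2=1$.
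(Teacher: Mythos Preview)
Your proof is correct and follows essentially the same route as the paper's. The only cosmetic difference is in the last step: to verify $T(1-NT^{-1}) = \alpha\tfrac{b}{1-a}\alpha^{-1}$ the paper recycles the CMC identity \eqref{eq: cmc T} in the form $\hat\varrho T(1-NT^{-1}) = T^{-1} + N\hat\varrho = \tfrac12\alpha b\alpha^{-1}$, whereas you expand $\beta^{-1}(\alpha - N\beta)$ directly using $a^2+b^2=1$; both arrive at the same place. Your closing worry about deriving $N^\mu = -TNT^{-1}$ when $\hat\varrho$ is quaternion-valued is unfounded: the left normal is read off from $*d\hat f = T(*dg)\hat\varrho T = (-TNT^{-1})\,d\hat f$, which needs nothing beyond $*dg = -N\,dg$, and then the CMC property of $f^\mu$ (Theorem~\ref{thm:rhoequalsmu}) identifies this left normal with the Gauss map.
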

\begin{proof}
  The Gauss map of $g$ is $N_g=-N$.  Since $\alpha$ is
  $d_\mu^N$--parallel if and only if $\alpha_g=\beta$ is
  $d^{-N}_\mu$--parallel and
  $d^{-N}_\mu \beta = d\beta+ dg \alpha \varrho$ by Theorem \ref{thm:
    both parallel}, we see that
  \[ \beta_g = \alpha\varrho
  \]
  and hence
\[
g^\mu
= g+ \alpha_g\beta_g\invers = g+\beta\varrho\invers \alpha\invers
\]
is a $\mu$--Darboux transform of $g$.  Moreover, (\ref{eq:td}) now
allows to simplify
\[
g^\mu = f - \alpha\frac{b}{a-1}\alpha\invers\,.
\]
Since $f^\mu= f+ T$ is CMC by \eqref{eq: cmc T} we see that
$T=\alpha\beta\invers$ satisfies
\[
  \hat \varrho T(1-NT\invers) = T\invers + N\hat \varrho = \frac 12
  \alpha b\alpha\invers 
\]
so that the parallel CMC surface of $f^\mu$ is given by
\[
  f^\mu + N^\mu = f + T(1-NT\invers) = f+\alpha\frac{b}{1-a}\alpha\invers =g^\mu
\]
where we used that $N^\mu =- TN T\invers$ by \eqref{eq:dhatf}.

\end{proof}

We also obtain that Bianchi permutability preserves the CMC condition:
\begin{theorem}
  \label{thm: bianchi cmc}
  Let $f: M\to\R^3$ be a CMC surface and
  $\alpha_l\in\Gamma(\ttrivial {})$ parallel sections with respect to
  the flat connections $d_{\mu_l}^N$ where
  $\mu_l\in\C\setminus\{0, 1\}$, $l=1,2$. Let
  $f_l = f+ \alpha_l\beta_l\invers$ be the CMC Darboux transforms
  given by $\alpha_l$ and
  \begin{equation}
    \label{eq:betai}
    \beta_l=\frac 12(N\alpha_l(a_l-1) + \alpha_l b_l)
  \end{equation}
  where
  $a_l=\frac{\mu_l +\mu_l\invers}2,
  b_l=i\frac{\mu_l\invers-\mu_l}2$. Denote by
  $\varrho_l = \frac{1-a_l}2$ the associated isothermic spectral
  parameter.

  Then the common $\varrho$--Darboux transform $\hat f$ of $f_1$ and
  $f_2$ which is given by the two corresponding
  $d_{\varrho_l}$--parallel sections
  $\phi_l=\begin{pmatrix} \alpha_l \\ \beta_l
  \end{pmatrix}
  $ via Bianchi permutability is a CMC surface, and hence a common
  $\mu$--Darboux transform of $f_1$ and $f_2$.
\end{theorem}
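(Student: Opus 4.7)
The plan is to combine Theorem \ref{thm: gen bianchi} (Bianchi permutability for the $\varrho$--Darboux transformation) with the CMC characterisation in Theorem \ref{thm:rhoequalsmu}. By Theorem \ref{thm: gen bianchi}, the common Darboux transform $\hat f$ is the $\varrho_2$--Darboux transform of $f_1$ given by the parallel section $\tilde\varphi = \varphi_2 - \varphi_1\chi$ with $\chi = \varrho_1\invers\alpha_1\invers\alpha_2\varrho_2$. Written in the frame adapted to the dual surface $g_1 = f_1 + N^{\mu_1}$ of $f_1$, its components are
\[
\tilde\alpha = \alpha_2 - \alpha_1\beta_1\invers\beta_2, \qquad \tilde\beta = \beta_2 - \beta_1\chi,
\]
so that $\hat f = f_1 + \tilde\alpha\tilde\beta\invers$ is the $\varrho_2$--Darboux transform of $f_1$ associated to $\tilde\phi = (\tilde\alpha,\tilde\beta)^\top$.

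By Theorem \ref{thm:rhoequalsmu}, it suffices to verify that $\hat f$ has constant mean curvature as a surface in 3--space (up to translation), since then $\hat f$ is necessarily a $\mu$--Darboux transform of $f_1$ with CMC spectral parameter $\mu_2$ or $\mu_2\invers$ (the two roots of $\mu+\mu\invers = 2-4\varrho_2$). The cleanest route is to show that $\tilde\alpha$ is $d^{N^{\mu_1}}_{\mu_2}$--parallel, i.e.
\[
d\tilde\alpha = -\tfrac 12\, df_1\bigl(N^{\mu_1}\tilde\alpha(a_2-1) + \tilde\alpha b_2\bigr).
\]
Given this, Theorem \ref{thm: drho parallel} applied to $f_1$ exhibits $\tilde\phi$ as arising from a single summand in the $d_{\varrho_2}$--parallel decomposition for $f_1$, which by the proof of Theorem \ref{thm:rhoequalsmu} is exactly the $\mu$--Darboux characterisation within the $\varrho_2$--Darboux transforms of $f_1$. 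The symmetric version of Theorem \ref{thm: gen bianchi} then yields that $\hat f$ is also a $\mu$--Darboux transform of $f_2$.

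To carry out the verification, I would compute $d\tilde\alpha$ using $d\alpha_l = -df\beta_l$ and $d\beta_l = -dg\alpha_l\varrho_l$, which after cancellation of the terms involving $df$ gives
\[
d\tilde\alpha = T_1\, dg\,\bigl(\alpha_2\varrho_2 - \alpha_1\varrho_1\beta_1\invers\beta_2\bigr),
\]
and simplify the required right-hand side using $df_1 = T_1\, dg\,\hat\varrho_1 T_1$ from \eqref{eq:dhatf}, $N^{\mu_1} = -T_1 N T_1\invers$, and $T_1\invers = \tfrac 12\bigl(N(\hat a_1 - 1) + \hat b_1\bigr)$ with $\hat a_1 = \alpha_1 a_1 \alpha_1\invers$, $\hat b_1 = \alpha_1 b_1 \alpha_1\invers$. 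Substituting \eqref{eq:betai} for $\beta_1, \beta_2$ and using $a_l^2 + b_l^2 = 1$ reduces both sides to quaternion-valued expressions in $\alpha_1, \alpha_2$ and $N$, which match after grouping.

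The main obstacle is the quaternionic non-commutativity: the complex scalars $a_l, b_l, \varrho_l$ commute among themselves but not with $N, \alpha_1, \alpha_2$, so conjugations by the $\alpha_l$ must be tracked carefully and terms grouped in a specific order. A fallback option if this first-order computation becomes unwieldy is to verify the quadratic CMC condition \eqref{eq:cmc condition}, namely $(\tilde T^d + N^{\mu_1})^2 = \hat\varrho_2\invers - 1$ with $\tilde T^d = \tilde\beta\varrho_2\invers\tilde\alpha\invers$ and $\hat\varrho_2 = \tilde\alpha\varrho_2\tilde\alpha\invers$, directly by the same substitutions, which reduces the claim to matching quaternion-valued polynomials.
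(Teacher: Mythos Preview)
Your proposal is correct and follows the same line as the paper: establish that $\tilde\alpha$ is $d_{\mu_2}^{N_1}$--parallel (with $N_1=N^{\mu_1}=-T_1NT_1\invers$), whence $\hat f$ is a $\mu_2$--Darboux transform of $f_1$ and therefore CMC. The paper streamlines the computation by observing that $d\tilde\alpha=-df_1\tilde\beta$ is already part of the conclusion of Theorem~\ref{thm: gen bianchi}, so no differentiation is needed at all; the claim reduces to the purely algebraic identity
\[
\tilde\beta=\tfrac12\bigl(N_1\tilde\alpha(a_2-1)+\tilde\alpha b_2\bigr),
\]
which the paper verifies via the substitution $T_1(1-NT_1\invers)=\alpha_1\frac{b_1}{1-a_1}\alpha_1\invers$ together with \eqref{eq:betai}. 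Your route of computing $d\tilde\alpha$ directly arrives at the same place (your displayed expression for $d\tilde\alpha$ is exactly $-df_1\tilde\beta$ once you unwind $df_1=T_1\,dg\,\alpha_1\varrho_1\beta_1\invers$), so the only difference is that you rederive what Bianchi permutability already hands you.
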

\begin{proof}
  By Bianchi permutability, Theorem \ref{thm: gen bianchi}, we have
  that $\hat f = f_1 + \alpha\beta\invers$ is given by
 \[
   \alpha=\alpha_2-\alpha_1\beta_1\invers \beta_2, \qquad
   \beta=\beta_2-\beta_1\varrho_1\invers\alpha_1\invers
   \alpha_2\varrho_2\,.
 \]
 In particular, this gives
 \begin{equation}
   \label{eq:drhopar}
   d\alpha = -df_1\beta\,.
 \end{equation}
 Since $f_1=f+ T_1$, $T_1= \alpha_1\beta_1\invers$, is CMC (possibly
 after translation) its Gauss normal $N_1=- T_1 N T_1\invers$ is
 harmonic and the connections in its associated family are flat.  We
 show that $\alpha$ is $d_{\mu_2}^{N_1}$--parallel, that is,
 \[ d\alpha = -df_1 \frac 12(N_1\alpha(a_2-1) + \alpha b_2)\,.
   \]
   Put differently, we need to show by (\ref{eq:drhopar}) that
   \[
     \beta =\frac 12(N_1\alpha(a_2-1)+\alpha b_2)\,.
   \]
   We easily compute with (\ref{eq:betai})  that
   \[
     \beta = \beta_2-\beta_1\varrho_1\invers\alpha_1\invers
   \alpha_2\varrho_2 = 
   \frac 12\left(\alpha_2b_2 - \alpha_1\frac{b_1}{a_1-1}\alpha_1\invers
   \alpha_2(a_2-1)\right)\,.
 \]
 On the other hand, using
 $N\beta_2(a_2-1)-\beta_2b_2= \alpha_2(a_2-1)$ and
 $N_1 =-T_1NT_1\invers$ we see
 \[
   \frac 12(N_1\alpha(a_2-1)+\alpha b_2) = \frac 12\left(T_1(1- NT_1\invers)
   \alpha_2(a_2-1) + \alpha_2b_2\right)\
\]
and as before
\[
  T_1(1- NT_1\invers)
  = \alpha_1 \frac{b_1}{1-a_1}\alpha_1\invers\,.
\]
Combining the previous equations, we obtain the claim since
\begin{align*}
  \beta & =
   \frac 12\left(\alpha_2b_2 - \alpha_1\frac{b_1}{a_1-1}\alpha_1\invers
   \alpha_2(a_2-1)\right) = \frac 12\left(T_1(1- NT_1\invers)
   \alpha_2(a_2-1) + \alpha_2b_2\right)\\ & = \frac
 12(N_1\alpha(a_2-1)+\alpha b_2)\,.
\end{align*}
\end{proof}

\begin{figure}[H]
  \includegraphics[height=5cm]{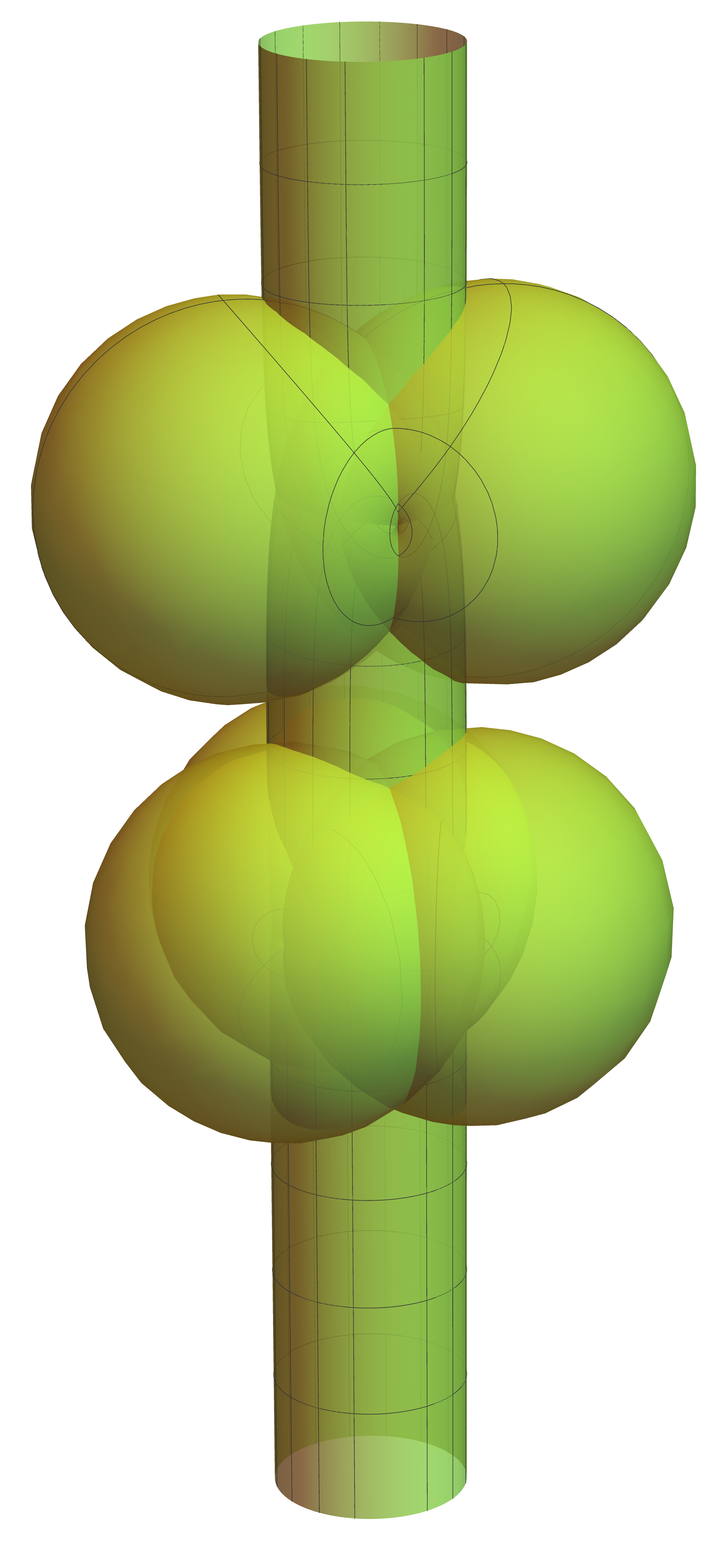}\qquad
  \includegraphics[height=5cm]{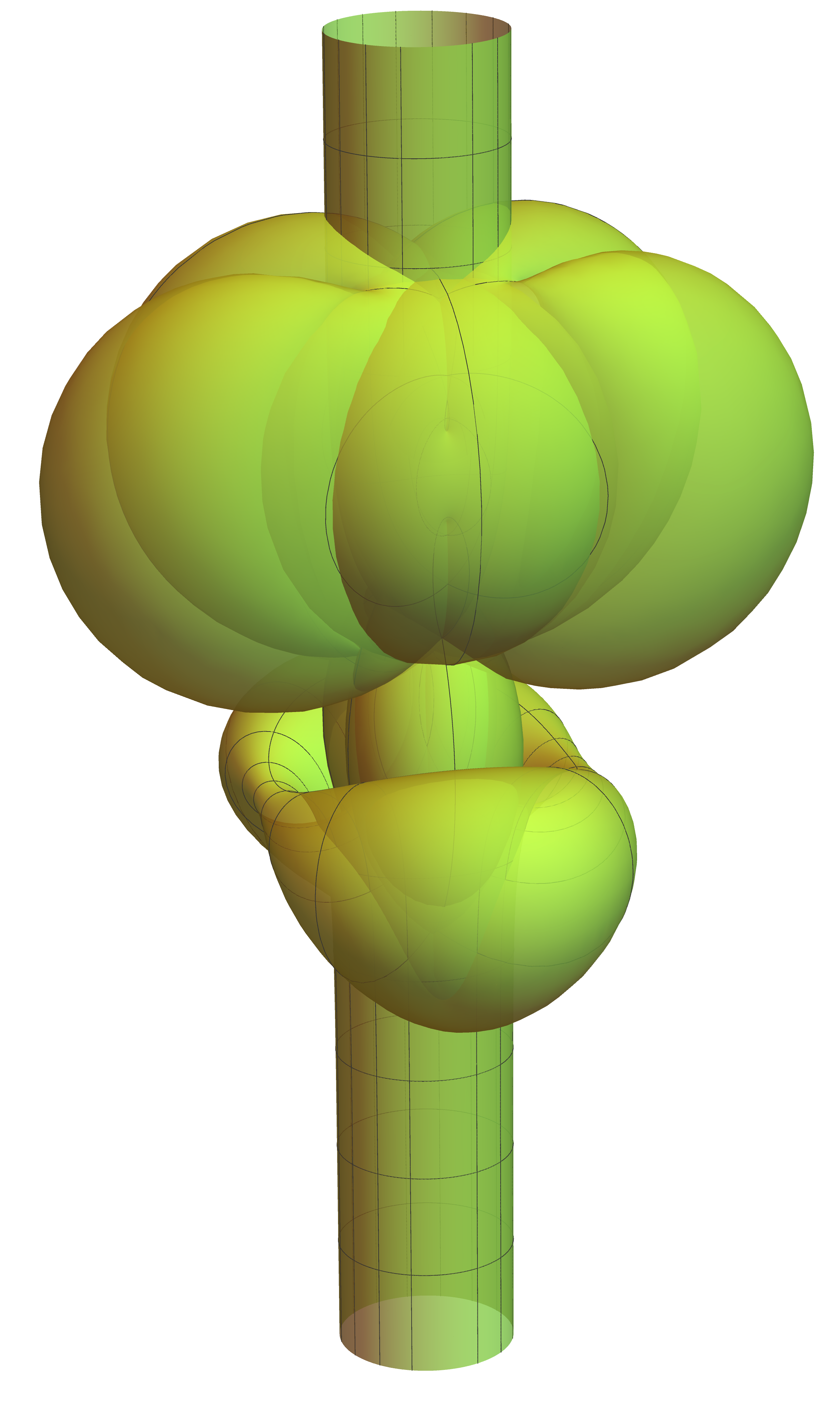}
  \caption{A CMC surface obtained by Bianchi permutability as common
    Darboux transform of two CMC bubbletons, which are $\mu$--Darboux
    transforms of the cylinder at the resonance points
    $\mu_1= 7-4 \sqrt{3}$ and $\mu_2 = 17-12 \sqrt{2}$ respectively,
    and an isothermic surface obtained by Bianchi permutability as a
    common Darboux transform of a CMC and an isothermic bubbleton,
    which are $\varrho$--Darboux transforms at the corresponding
    isothermic resonance points $\varrho_1=-3$ and $\varrho_2=-8$. }
\end{figure}

\subsection{Simple factor dressing of CMC surfaces}

In \cite{simple_factor_dressing} the simple factor dressing for CMC
surface was defined by its family of connections
$\check d_\lambda^N = r_\lambda\cdot d_\lambda^N$ where the gauge
matrix is given by
\[
  r_\lambda = r_\lambda^E = \pi_E \gamma_\lambda + \pi_{Ej}
\]
and $E$ is a $d_\mu$--stable bundle, $\pi_E, \pi_{Ej}$ the projections
along the splitting $\underline{\tilde \C}^2 = E\oplus Ej$ and
$\gamma_\lambda=
\frac{1-\bar\mu\invers}{1-\mu}\frac{\lambda-\mu}{\lambda
  -\bar\mu\invers}$.  Moreover, it was shown that if $d_\mu^N\alpha=0$
and $E=\alpha\C$, then the simple factor dressing
$r^N_{E,\mu}(f) = \check f$ of a CMC surface is a CMC surface in
3--space with harmonic Gauss map
\[
  \check N= TNT\invers
\]
where $T=\alpha \beta\invers$,
$\beta = \frac 12(N\alpha(a-1)+\alpha b)$ and
$a=\frac{\mu+\mu\invers}2, b= i\frac{\mu\invers-\mu}2$.

\begin{theorem}
  \label{thm:cmc simple factor}
  The simple factor dressing $\check f = r^N_{E,\mu}(f) $ of a CMC
  surface $f$ given by $d_\mu^N$--parallel $\alpha$, $E =\alpha\C$, is
  a 2--step Darboux transform of $f$ up to possible translation.  More
  precisely, the simple factor dressing is given by
  \[
    \check f = f - \alpha \frac{b}{a-1} \alpha\invers
  \]
  and $\check f$ is a common Darboux transform of the $\mu$--Darboux
  transform $ f^\mu = f + \alpha\beta\invers$ of $f$ given by $\alpha$
  and the parallel CMC surface $g=f+N$ which is a $\mu$--Darboux
  transform at $\mu_0=-1$.
 \end{theorem}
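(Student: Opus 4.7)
The plan is to identify the simple factor dressing $\check f$ with the surface $g^\mu$ constructed in Corollary \ref{cor:parallel mu dt}, after which both assertions of the theorem follow from that corollary combined with Remark \ref{rem: parallel is Darboux}.

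First I would establish the explicit formula $\check f = f - \alpha\frac{b}{a-1}\alpha^{-1}$ by comparing Gauss maps. As recalled above from \cite{simple_factor_dressing}, the simple factor dressing $\check f = r^N_{E,\mu}(f)$ is CMC with $\check N = TNT^{-1}$, where $T = \alpha\beta^{-1}$ and $\beta = \frac{1}{2}(N\alpha(a-1) + \alpha b)$. On the other hand, the surface $g^\mu = f - \alpha\frac{b}{a-1}\alpha^{-1}$ from Corollary \ref{cor:parallel mu dt} is identified there as the parallel CMC surface $f^\mu + N^\mu$ of the $\mu$-Darboux transform $f^\mu = f + T$, and so has Gauss map $-N^\mu$. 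By the general formula for the left normal of a Darboux transform stated in the preliminaries, $N^\mu = -TNT^{-1}$, hence $-N^\mu = TNT^{-1} = \check N$. Since $\check f$ and $g^\mu$ are both CMC with $H = 1$ and share the same Gauss map, and $(dN^\mu)' = -d\check f$ determines the differential from the Gauss map, $\check f$ and $g^\mu$ agree up to translation, giving the asserted formula.

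For the common Darboux transform description, Corollary \ref{cor:parallel mu dt} gives two interpretations of $g^\mu$ at once: first, as a $\mu$-Darboux transform of $g = f+N$ via the $d_\mu^{-N}$-parallel section $\beta$; and second, as the parallel CMC surface $f^\mu + N^\mu$ of the $\mu$-Darboux transform $f^\mu$ of $f$. By Remark \ref{rem: parallel is Darboux} applied to the CMC surface $f^\mu$, the parallel CMC surface $f^\mu + N^\mu$ is a $\mu_0 = -1$ Darboux transform of $f^\mu$. Hence $\check f = g^\mu$ is simultaneously a Darboux transform of $g$ (at parameter $\mu$) and of $f^\mu$ (at parameter $-1$), realising it as a common Darboux transform. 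Since $g$ is itself the $\mu_0 = -1$ Darboux transform of $f$ by Remark \ref{rem: parallel is Darboux}, and $f^\mu$ is the $\mu$-Darboux transform of $f$ by construction, Bianchi permutability for the $\mu$-Darboux transformation (Theorem \ref{thm: bianchi cmc}) applied at the pair of spectral parameters $(\mu, -1)$ exhibits $\check f$ as a 2-step Darboux transform of $f$.

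The main delicate point is precisely the translation ambiguity in the identification $\check f = g^\mu$: the simple factor dressing is defined abstractly through its gauge family $\check d^N_\lambda = r_\lambda \cdot d^N_\lambda$ on $\underline{\tilde\C}^2$, and extracting a CMC surface from such a family involves a Sym-Bobenko constant of integration. The Gauss map comparison pins the surface down up to this translation, which is exactly the freedom appearing in the statement; beyond this, the argument is purely formal, invoking Corollary \ref{cor:parallel mu dt} and Remark \ref{rem: parallel is Darboux}.
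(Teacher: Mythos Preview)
Your argument is correct and a bit more economical than the paper's, though both end with the same Gauss-map matching step. The paper proceeds by directly applying the Bianchi permutability formula of Theorem~\ref{thm: gen bianchi} to the pair $(f^\mu, g)$ (with $\varrho_0=1$ corresponding to $\mu_0=-1$), computes the resulting $\tilde\alpha,\tilde\beta$ explicitly, and simplifies to obtain $\hat g = f - \alpha\frac{b}{a-1}\alpha^{-1}$; only then does it compute $d\hat g$ and match the right (hence left) normal with $\check N = TNT^{-1}$. You instead invoke Corollary~\ref{cor:parallel mu dt}, which already packages the formula $g^\mu = f - \alpha\frac{b}{a-1}\alpha^{-1}$ together with the two interpretations ``$\mu$--Darboux transform of $g$'' and ``parallel surface $f^\mu+N^\mu$ of $f^\mu$''; Remark~\ref{rem: parallel is Darboux} applied to $f^\mu$ then gives the Darboux-of-$f^\mu$ interpretation for free. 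This avoids redoing the Bianchi computation, at the cost of relying on Corollary~\ref{cor:parallel mu dt} as a black box. The paper's route, by contrast, makes the Bianchi structure completely explicit and does not depend on that corollary.

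Two small points. First, the equation $(dN^\mu)' = -d\check f$ is not quite what you mean: the Gauss map of $g^\mu$ is $-N^\mu=\check N$, so the correct identity is $(d\check N)' = -d\check f = -dg^\mu$; the conclusion is unaffected. Second, your closing appeal to Theorem~\ref{thm: bianchi cmc} is unnecessary and slightly misplaced: that theorem \emph{constructs} a particular common transform, and you have not checked that your $g^\mu$ is that specific one. But you do not need it --- having exhibited $\check f$ as a Darboux transform of $g$ (itself a Darboux transform of $f$) already makes it a 2--step Darboux transform, which is all the statement asks.
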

 \begin{proof} Recall that by Remark \ref{rem: parallel is Darboux}
   any non--trivial parallel section $\alpha_0$ of $d_{\mu_0}^N$ gives
   with $\beta_0 =-N\alpha_0$ the $\mu$--Darboux transform
   $f^{\mu_0} = f+ N =g$.  By Bianchi permutability Theorem \ref{thm:
     gen bianchi} and Theorem \ref{thm: bianchi cmc} the common
   $\varrho$--Darboux transform of $\hat f$ and $g$ is the CMC surface
  \[
   \hat g =  g +
    \tilde \alpha\tilde\beta\invers
  \]
  where
  \[\tilde \alpha = \alpha - \alpha_0\beta_0\invers \beta = \alpha - N
    \beta \,, \qquad
    \tilde \beta = \beta - \beta_0  \varrho_0\invers\alpha_0
    \invers\alpha\frac{1-a}2 = \beta + N\alpha \frac{1-a}2
  \]
  and $\varrho_0 = \frac{1-a_0}2= 1$.  
  Using $\beta = \frac 12(N\alpha(a-1)+ \alpha b)$ and $a^2+ b^2=1$, we obtain
  \[
    \tilde \alpha = \frac 12(\alpha(a+1)- N\alpha b)\,, \qquad \tilde
    \beta = \frac 12 \alpha b
  \]
  so that, again with $a^2+b^2=1$, 
  \[
  \hat g = f - \alpha \frac{b}{a-1} \alpha\invers\,.
  \]
  To show that $\check f$ and the 2--step Darboux transform $\hat g$
  coincide up to translation it is enough to show that $\hat g$ has
  the same Gauss map $\check N = TNT\invers$ as $\check f$.  We
  compute
  \[
    d\hat g = df + [df\beta\alpha\invers, \alpha\frac{b}{a-1}
    \alpha\invers] = T\invers
    \hat \varrho\invers df
    T\invers
  \]
  where we used
  $T\invers \hat \varrho\invers = - N - \alpha \frac{b}{a-1}
  \alpha\invers$.  This shows that the right normal of $\hat g$ is
  $\hat R = T N T\invers$, and since the $\mu$--Darboux transform
  $\hat g$ of $g$ is a CMC surface in 3--space up to translation,
  i.e., $\hat R = \hat N$, this shows the claim.
   
\end{proof}

\section{Constrained Willmore surfaces}
\label{sec:cw}

In \cite{bohle, willmore_harmonic} a family of flat connections
$d_\lambda^S$ was defined for constrained Willmore surfaces, that is,
critical points of the Willmore energy with preserved conformal
structure. In particular, parallel sections give again $\mu$--Darboux
transforms which are constrained Willmore. On the other hand,
\cite{burstall_quintino} defined a simple factor dressing for
constrained Willmore surfaces which is given by the choice of a
spectral parameter and a complex 2--dimensional bundle which is
$d_\mu^S$--stable. In general, it is unknown whether these transforms
are linked. Here we will show that in case of a CMC surface, a special
choice of $d_\mu^S$--stable bundle corresponds to the simple factor
dressing of the CMC surface.

\subsection{The associated family of a constrained Willmore surface}
Recall \cite[Section 7.2]{coimbra} that for a conformal immersion
$f: M \to\R^3$ with constant mean curvature $H =1$ the conformal Gauss
map of $f$ is the complex structure
\[
S  =F\begin{pmatrix} N &0\\ 1 &-N 
\end{pmatrix}
F\invers, \quad F= \begin{pmatrix} 1 &f \\ 0&1
\end{pmatrix}
\]
with Hopf field
\[
*A=\frac 14(S*dS- dS) = \frac 14F \begin{pmatrix} 0 &0\\ *dg& 2dg
\end{pmatrix}F\invers\,,
\]
where $g= f+ N$ and thus $dN+ N*dN = 2 (dN)'' = 2dg$. 
Putting
\[
\eta= -\frac 14 F\begin{pmatrix} 0 &0\\ dg &0
\end{pmatrix}F\invers\,,
\]
we have $*\eta = S\eta = \eta S$ and $\eta$ takes values in the line
bundle $L$ of $f$.  In particular, $f$ is constrained Willmore,
\cite{constrainedWillmore}: the 1--form $\hat A = A + \eta$ satisfies
$d*\hat A = 0$ since
\[
*\hat A = \frac 12 \begin{pmatrix} 0 & f\\ 0&1
\end{pmatrix}
dg
\]
and $df\wedge dg=0$.  As before, this allows to introduce a spectral
parameter and a $\C_*$--family of flat connections on $\C^4$ where the
complex endomorphism $I$, which acts by right multiplication by $i$,
gives the complex structure on $(\H^2, I)= \C^4$:

\begin{prop}[see \cite{willmore_harmonic, bohle}]
  Let $f: M \to\R^3$ be CMC with Gauss map $N$ and parallel CMC
  surface $g= f+N$, and let $N_g=-N$ be the Gauss map of $g$.  Then
  the family
\[
d_\lambda^{S} = d + (\lambda-1)\hat A\oz + (\lambda\invers-1)\hat A\zo = d+ \frac 12\begin{pmatrix} 0 & f \\ 0 &1
\end{pmatrix} dg(N_g(a-1)+ b)
\]
is flat for all $\lambda\in\C_*$.
\end{prop}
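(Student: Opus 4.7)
My plan is to prove flatness by exhibiting a gauge that puts $d_\lambda^S$ in block upper-triangular form, whose flatness then follows from two already-established facts: the flatness of the CMC associated family of the parallel surface $g$, and the Christoffel duality $df \wedge dg = 0$.

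First, I would verify that the two displayed expressions for $d_\lambda^S$ actually coincide. From the formulas for $A$ and $\eta$ recalled just before the statement, a short calculation yields
\[
*\hat A = *A + *\eta = \tfrac12 F\begin{pmatrix} 0 & 0 \\ 0 & dg\end{pmatrix} F^{-1} = \tfrac12\begin{pmatrix} 0 & f \\ 0 & 1\end{pmatrix}dg,
\]
and since $** = -\Id$ on $1$-forms, $\hat A = -\tfrac12\begin{pmatrix} 0 & f \\ 0 & 1\end{pmatrix}*dg$. Using $*dg = N_g dg$ together with the anticommutation $dg\,N_g = -N_g\,dg$ (both consequences of $g:M\to\R^3$ being conformal), and the identities $\lambda+\lambda^{-1}=2a$ and $i(\lambda^{-1}-\lambda) = 2b$, one expands the $I$-type decomposition $(\lambda-1)\hat A^{(1,0)} + (\lambda^{-1}-1)\hat A^{(0,1)}$ and obtains the stated explicit form. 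This calculation parallels the derivation of \eqref{eq: dalpha} for $d_\lambda^N$.

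Second, I would gauge $d_\lambda^S$ by $F=\begin{pmatrix}1 & f\\ 0 & 1\end{pmatrix}$. Direct matrix multiplication gives $F^{-1} \begin{pmatrix} 0 & f \\ 0 & 1\end{pmatrix}F = \begin{pmatrix} 0 & 0 \\ 0 & 1\end{pmatrix}$ and $F^{-1}dF = \begin{pmatrix} 0 & df \\ 0 & 0\end{pmatrix}$, so the gauge transform
\[
F^{-1}\cdot d_\lambda^S = d + \begin{pmatrix} 0 & df \\ 0 & \tfrac12 dg(N_g(a-1)+b)\end{pmatrix}
\]
is block upper-triangular. Since gauge transformations preserve flatness, it suffices to check flatness of this simpler connection.

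Third, the flatness of the block upper-triangular connection decomposes into three independent conditions. The $(1,1)$ block reduces to $d^2=0$, which is trivial. The $(2,2)$ block is the connection form of the CMC associated family $d_\lambda^{N_g}$ of the parallel CMC surface $g$; its flatness is exactly \eqref{eq: dalpha} applied to $g$ (which has Gauss map $N_g$, mean curvature $H_g = 1$ and parallel CMC surface $f$), and holds by the Ruh--Vilms theorem. The off-diagonal $(1,2)$ compatibility reads
\[
d(df) + df\wedge \tfrac12 dg(N_g(a-1)+b) = \tfrac12(df\wedge dg)(N_g(a-1)+b) = 0
\]
by the Christoffel duality $df\wedge dg = 0$ (cf.\ Example~\ref{ex:isothermic}).

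The main obstacle is the bookkeeping in the first part: one must carefully track whether complex scalars act on the left or right and how the quaternion left-multiplication by $N_g$ interacts with the $I$-type decomposition. Once the block upper-triangular form is reached, flatness is immediate from two independent facts already in hand, and there is no need to separately invoke the constrained Willmore equation $d*\hat A = 0$.
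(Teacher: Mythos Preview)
The paper does not supply its own proof of this proposition; it is stated with the citation ``see \cite{willmore_harmonic, bohle}'' and the text moves directly to consequences. So there is nothing to compare against, and your task reduces to giving a correct self-contained argument.

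Your argument is correct. The verification of the explicit form is the computation the paper implicitly relies on (and your use of $*dg=-dg N_g$ to pull $N_g$ to the right is the key step). The gauge by $F$ is well chosen: since $F^{-1}\begin{pmatrix}0&f\\0&1\end{pmatrix}=\begin{pmatrix}0&0\\0&1\end{pmatrix}$ already kills the first row, the right factor $F$ has no effect and the conjugation goes through even though quaternionic entries do not commute. After gauging, the $(2,2)$ block is precisely $d_\lambda^{N_g}$ in the sense of \eqref{eq: dalpha} (applied to the CMC surface $g$ with Gauss map $N_g$), hence flat; and the $(1,2)$ condition is $df\wedge dg=0$. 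One small point worth making explicit in your write-up: the expression $dg(N_g(a-1)+b)$ in the $(2,2)$ block must be read as the operator $\beta\mapsto dg(N_g\beta(a-1)+\beta b)$, exactly as in \eqref{eq: dalpha} and \eqref{eq: ds paralle}, not as left multiplication by a fixed quaternion (since $a-1,b\in\C$ need not commute with $\beta$). With that interpretation your curvature computation $(df\wedge\omega_g)\beta=\tfrac12(df\wedge dg)(N_g\beta(a-1)+\beta b)=0$ is clean. This is arguably more transparent than invoking the constrained Willmore structural equation directly, since it exhibits $d_\lambda^S$ as built from $d_\lambda^{N_g}$ and the Christoffel duality.
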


In the case when $\mu=1$ or $dg=0$ we have $d_\mu^S = d$ and all constant
sections are parallel.

In the case when $\mu\not=1$ or $f$ is not a round sphere, i.e., $dg$
is not vanishing identically, we see from the explicit expression of
$d^{S}_\mu$ that a section
$\varphi\in\Gamma( \underline{\tilde\C}^4)$,
$\varphi = e\nu+\psi \beta$, is $d^{S}_\mu$--parallel if and only if
\begin{equation}
\label{eq: ds paralle}
d\nu = -df \beta\,, \quad d\beta = -\frac
12dg(N_g\beta(a-1)+\beta b)\,.  
\end{equation}

We now see that all parallel sections of $d_\mu^S$ are given
algebraically by parallel sections of the associated family $d_\mu^N$
of the harmonic Gauss map $N$ of $f$:

\begin{theorem}
\label{thm: CW DT}
Let $f: M \to\R^3$ be CMC with Gauss map $N$ and not the round
sphere. Denote by $d^{S}_\lambda$ the family of flat connections given
by the conformal Gauss map $S$ of $f$. Then a section $\varphi $ is
$d_\mu^S$--parallel, $\mu\in\C\setminus\{0, 1\}$, if and only if
\[
  \varphi = e(\alpha + n) + \psi \beta
\]
with $n\in\H$, $d_\mu^N$--parallel $\alpha$ and
$\beta = \frac 12(N\alpha (a-1) + \alpha b)$ where
$a=\frac{\mu+\mu\invers}2$ and $ b= i\frac{\mu\invers-\mu}2$.

Moreover, all $d_\mu^N$--parallel sections are obtained algebraically
from $d^S_\mu$--parallel sections $\varphi = e\nu + \psi \alpha$ by
$\alpha = N\beta - \beta \frac{b}{a-1}$.
\end{theorem}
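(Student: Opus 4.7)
The plan is to reduce the claim to Theorem \ref{thm: both parallel}, which already gives the bijection between $d_\mu^N$--parallel sections for $f$ and $d_\mu^{N_g}$--parallel sections for its parallel CMC surface $g = f+N$. First I would unpack equation \eqref{eq: ds paralle}: a section $\varphi = e\nu + \psi\beta$ of $\underline{\tilde\C}^4$ is $d_\mu^S$--parallel exactly when $d\nu = -df\,\beta$ and $d\beta = -\tfrac{1}{2}dg(N_g\beta(a-1)+\beta b)$. The second of these is precisely the condition that $\beta$ be $d_\mu^{N_g}$--parallel, so the lower component $\beta$ carries all the nontrivial information while the upper component $\nu$ is determined up to an additive constant by $d\nu = -df\,\beta$.

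For the forward direction, given a $d_\mu^N$--parallel section $\alpha$ I would set $\beta = \tfrac{1}{2}(N\alpha(a-1)+\alpha b)$. Theorem \ref{thm: both parallel} then immediately gives that $\beta$ is $d_\mu^{N_g}$--parallel, i.e.\ $\beta$ satisfies the second equation of \eqref{eq: ds paralle}. Since $d_\mu^N\alpha = 0$ unpacks to $d\alpha = -df\,\beta$, taking $\nu = \alpha + n$ for any constant $n\in\H$ yields $d\nu = -df\,\beta$, so $\varphi = e(\alpha + n) + \psi\beta$ is $d_\mu^S$--parallel.

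For the converse, I would start from a $d_\mu^S$--parallel section $\varphi = e\nu + \psi\beta$. Its lower component $\beta$ is then $d_\mu^{N_g}$--parallel, and applying Theorem \ref{thm: both parallel} with the roles of $f$ and $g$ swapped (using $N_g = -N$ and the fact that $g$'s parallel surface is again $f$) produces a $d_\mu^N$--parallel section of the form $\tfrac{1}{2}(N_g\beta(a-1)+\beta b) = \tfrac{1}{2}(-N\beta(a-1)+\beta b)$. A short computation using $a^2+b^2=1$ shows this equals $\alpha\cdot\tfrac{1-a}{2}$ with $\alpha := N\beta - \beta\tfrac{b}{a-1}$; since $\mu\neq 1$ forces $a\neq 1$, right--multiplication by $\tfrac{2}{1-a}$ shows $\alpha$ itself is $d_\mu^N$--parallel, and one checks directly that $\beta = \tfrac{1}{2}(N\alpha(a-1)+\alpha b)$. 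Finally $d\alpha = -df\,\beta = d\nu$ shows that $n := \nu - \alpha$ is a constant in $\H$, giving the desired decomposition $\nu = \alpha + n$.

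The only point requiring genuine care is the algebraic inversion of the assignment $\alpha\mapsto\tfrac{1}{2}(N\alpha(a-1)+\alpha b)$, namely verifying via $a^2+b^2=1$ that $N\beta - \beta\tfrac{b}{a-1}$ recovers its preimage. Beyond this bookkeeping, there is no substantive analytic obstacle: flatness of $d_\mu^S$ and $d_\mu^N$ is given, and all the structural content is packaged in Theorem \ref{thm: both parallel}.
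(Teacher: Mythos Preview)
Your proposal is correct and follows essentially the same approach as the paper: both directions reduce to Theorem~\ref{thm: both parallel} together with the algebraic inversion $\alpha = N\beta - \beta\tfrac{b}{a-1}$ of the map $\alpha\mapsto\tfrac12(N\alpha(a-1)+\alpha b)$, verified via $a^2+b^2=1$. The only cosmetic difference is that in the converse you invoke Theorem~\ref{thm: both parallel} with the roles of $f$ and $g$ swapped, whereas the paper defines $\alpha$ directly and checks $d\alpha=-df\beta$ by hand; these are the same computation organised slightly differently.
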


\begin{proof}
  Let first $\alpha$ be $d_\mu^N$--parallel, then
  $d\alpha = -df \beta$ with
  $\beta = \frac 12(N\alpha (a-1) + \alpha b)$.  By Theorem \ref{thm:
    both parallel} we have
\[
  d\beta = -dg(-N\beta(a-1) + \beta b)\,.
\]
This shows by (\ref{eq: ds paralle}) that
$\varphi = e\alpha + \psi \beta$ is $d_\mu^S$--parallel. Moreover, any
constant section $en$ is clearly $d_\mu^S$--parallel.

Conversely, assume that $\varphi = e \nu + \psi \beta$ is
$d_\mu^S$--parallel. Putting $\alpha = N\beta - \beta \frac{b}{a-1}$
we see with $a^2+b^2=1$ and (\ref{eq: ds paralle}) that
\[
  d \alpha  = -df \beta\,,
\]
that is, $\nu = \alpha + n$ with constant $n\in\H$. Since
\[
  \frac 12(N\alpha(a-1)+\alpha b) = \beta
\]
we see that $\alpha$ is $d_\mu^N$--parallel, and compairing the
complex dimensions of the corresponding space of parallel sections, we
obtain the claim.
\end{proof}

We note that the parallel section of the CMC integrable system which
appears naturally here by (\ref{eq: ds paralle}) is the
$d_\mu^{-N}$--parallel section $\beta$: the $d_\mu^N$--parallel
section is then given by the correspondence of parallel sections of
$f$ and its parallel CMC surface $g=f+N$. This observation might be
relevant to understand the link between the $\mu$--Darboux
transformation and simple factor dressing in the case of general
constrained Wilmore surfaces.

In \cite{burstall_quintino} the associated family of constrained
Willmore surfaces with spectral parameter $\mu\in S^1$ has been
discussed. In the case when $f: M \to\R^3$ is a CMC surface, for
$\mu\in S^1$ the $d^S_\mu$--parallel sections
\[
  e, \varphi = e\alpha + \psi \beta\,,
\]
where $\alpha$ is $d_\mu$--parallel and
$\beta = \frac 12(N\alpha (a-1) + \alpha b)$, form a quaternionic
basis of $d_\mu^S$--parallel sections since in this case $a, b\in\R$
and $d_\mu^S$ is quaternionic.  By \cite{burstall_quintino} the
associated family $A^S_{\Phi, \mu}(f)$ of constrained Willmore
surfaces is given, up to M\"obius transforms, by its associated line
bundle
\[
  L^\Phi =\Phi \invers L
\]
where $\Phi = (e, \varphi)\in \Gamma(\End_\H(\ttrivial 2))$.
Therefore, as in the case of isothermic surfaces, we obtain for
$\mu\not=0, 1$, with
\[
  M = \begin{pmatrix} 1 & \alpha \\ 0 & \beta
  \end{pmatrix}
\]
and
\[
  L^\Phi = M\invers \begin{pmatrix} 0 \\ 1
  \end{pmatrix}
  \H = \begin{pmatrix} - \alpha \beta \invers \\ \beta \invers 
  \end{pmatrix} \H
\]
the affine coordinate of the associated family by
\[
  A^S_{\Phi, \mu}(f)  = -\alpha\,,
\]
where $\alpha$ is $d_\mu^N$--parallel and $\mu\in S^1$.

 \begin{figure}[H]
  \includegraphics[height=4cm]{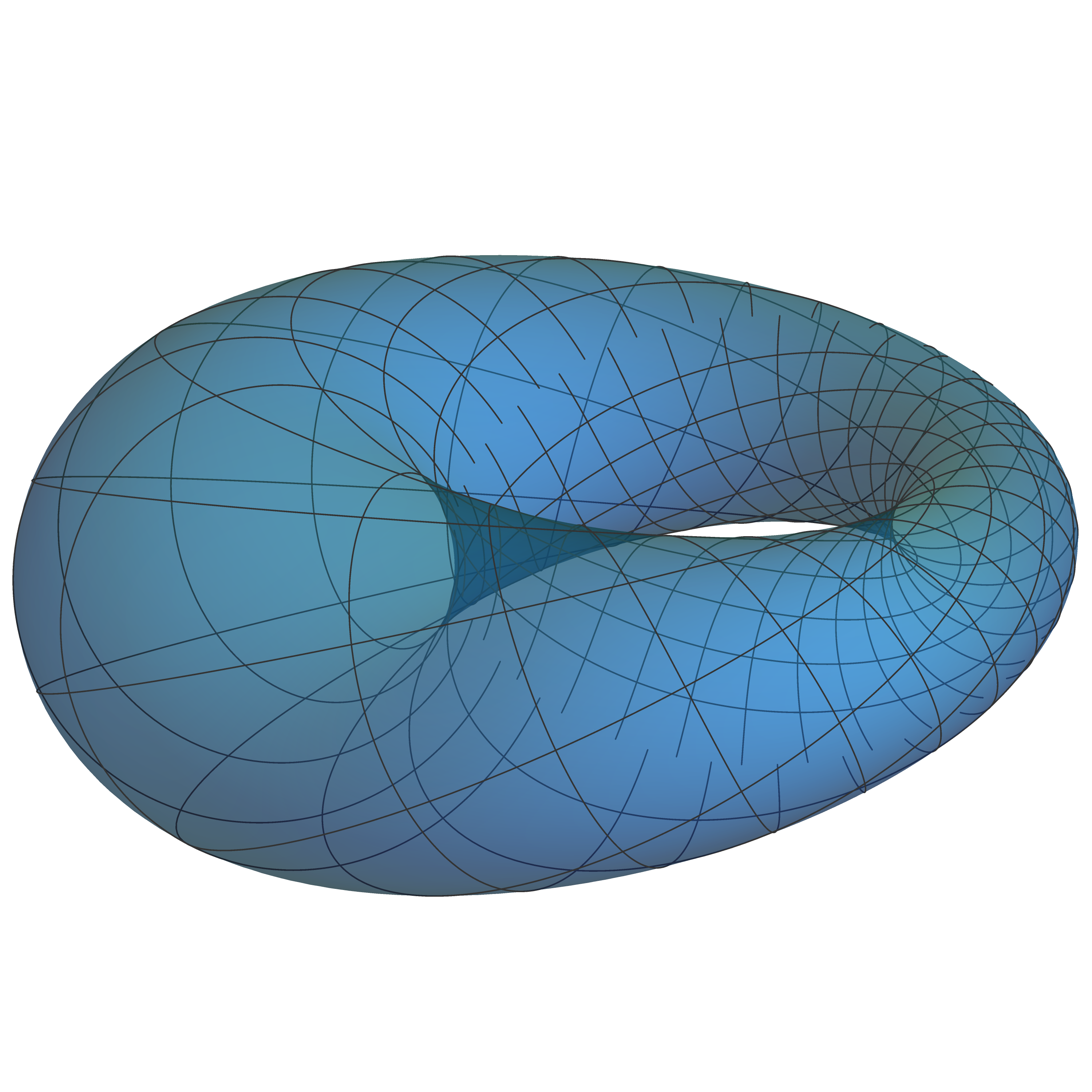}
\includegraphics[height=4cm]{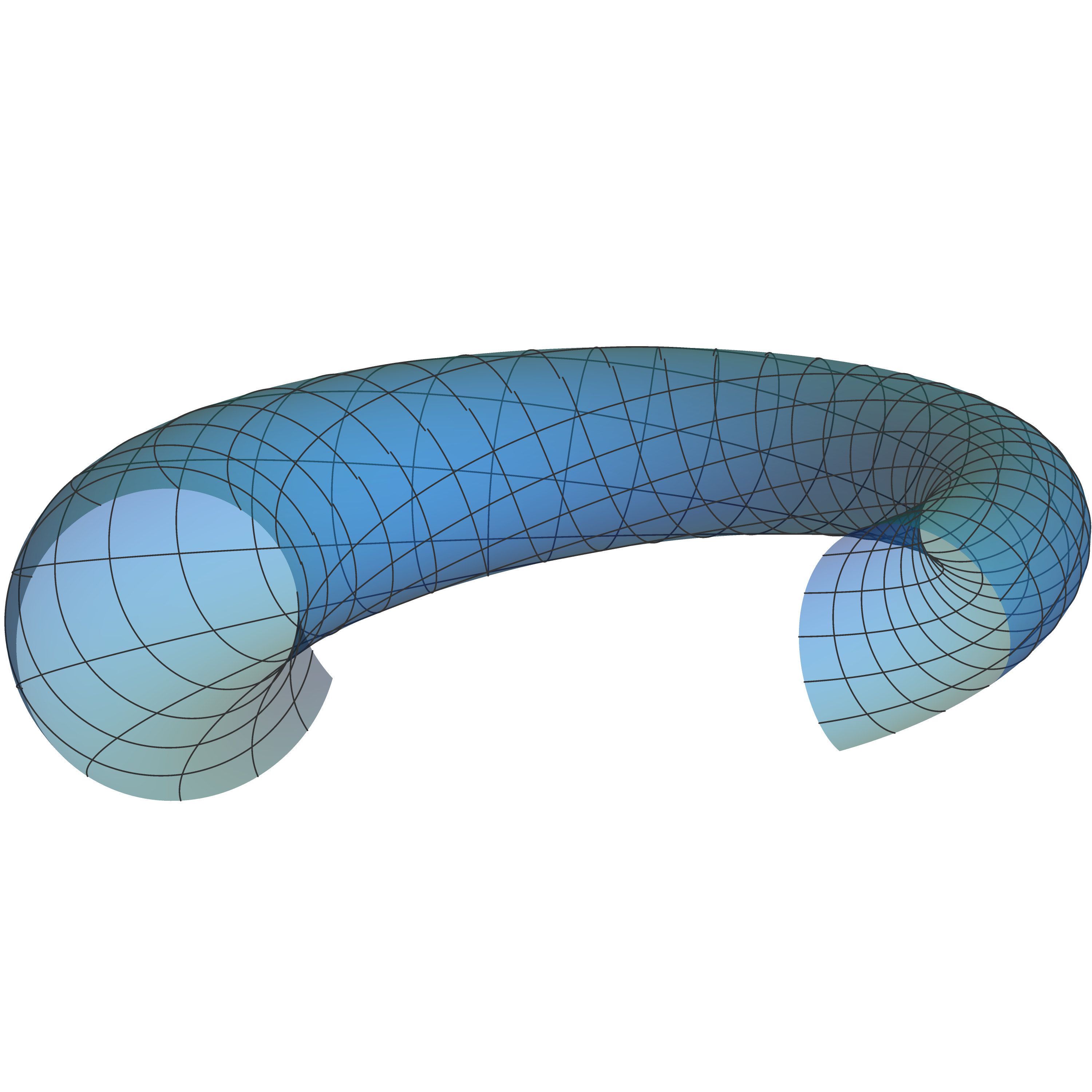}
\includegraphics[height=4cm]{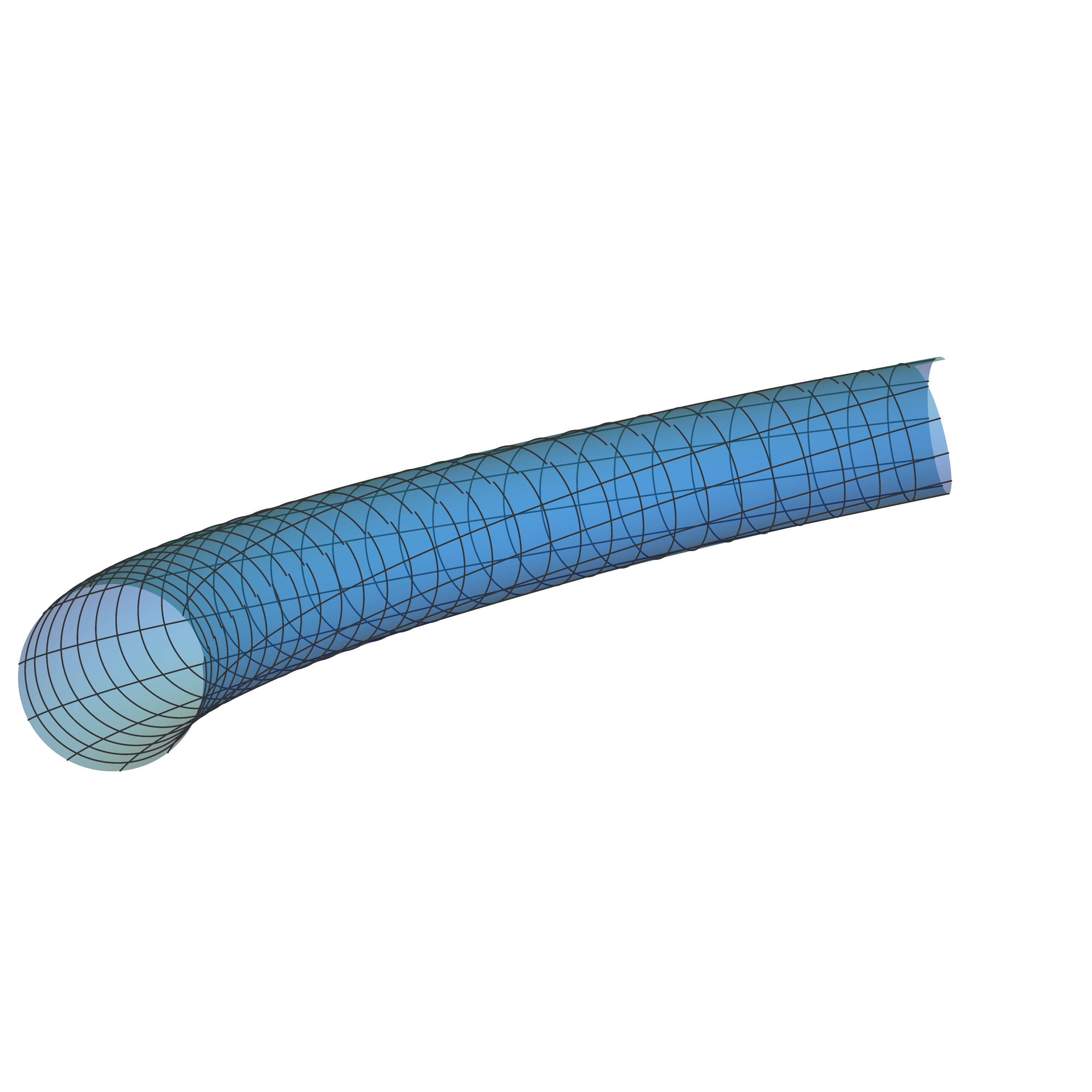}
\caption{Constrained Willmore surfaces in $S^3$ in the associated family of the
  cylinder. The parametrisation is only periodic in the
  $x$--direction, with the period depending on the spectral parameter
  $\mu=e^{it}\in S^1$, $t= \frac\pi 2, \frac\pi 3,\frac \pi {10}$.
}
\end{figure}

Since $d\alpha = -\frac 12df(N(a-1)+b)\alpha$ we see that the left
normal $N^\Phi=N$ of $A^S_{\Phi,\mu}(f)$ is the Gauss map $N$ of $f$
and the right normal is $R^\Phi= \alpha\invers N\alpha$:

\begin{theorem} The constrained Willmore associated family
  $f^\Phi=A^S_{\Phi, \mu}(f)$ of a CMC surface $f: M \to \R^3$ with
  $\mu\in S^1$ and Gauss map $N$ is, up to M\"obius transform, a
  family of CMC surfaces in $S^3(\mathfrak r)$ with mean curvature
  \[
    H^\Phi_{S^3(\mathfrak r)} = \frac 1{\mathfrak r} \left(\frac{b}{a-1}\right)
  \]
  where $\mathfrak r = |\alpha|$ and $a= \frac{\mu+\mu\invers}2, b =
  i\frac{\mu\invers-\mu}2$ for $\mu\in S^1$.
\end{theorem}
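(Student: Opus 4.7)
The plan is to mimic closely the Lawson-correspondence proof given earlier in this section, applied now to the affine coordinate $f^\Phi = -\alpha$ of the constrained Willmore associated family. The key observation is that for $\mu\in S^1$ we have $a,b\in\R$, so $a$ and $b$ commute with quaternions, and one may write $\beta = \tfrac12((a-1)N+b)\alpha$.

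First, I would compute the normals of $f^\Phi$. Since $d\alpha = -df\beta$, we get $df^\Phi = df\,\beta$. Then $*df^\Phi = *df\,\beta = Ndf\,\beta = Ndf^\Phi$, so the left normal is $N^\Phi = N$; using $*df = -dfR = -dfN$ gives $*df^\Phi = -dfN\beta = -df^\Phi\beta^{-1}N\beta$, so the right normal is $R^\Phi = \beta^{-1}N\beta$. Next, I would show that $f^\Phi$ lies on a 3-sphere centred at the origin by verifying $N^\Phi f^\Phi = f^\Phi R^\Phi$, i.e.\ $f^\Phi\in\perp_{f^\Phi}$ via (\ref{eq:tangent space}). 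The key calculation is $\alpha\beta^{-1}N\beta = N\alpha$: using $a^2+b^2=1$ one has $((a-1)N+b)(-(a-1)N+b) = 2(1-a)$, hence
\[
\beta^{-1} = \alpha^{-1}\Bigl(N + \tfrac{b}{1-a}\Bigr),
\]
and expanding $\alpha\beta^{-1}N\beta = (N + \tfrac{b}{1-a})N\cdot\tfrac12((a-1)N+b)\alpha$ and simplifying (again using $a^2+b^2=1$ and $N^2=-1$) gives $N\alpha$.

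Third, I would show $\mathfrak r := |\alpha|$ is constant. From $d\alpha\bar\alpha = -\tfrac12 df((a-1)N+b)|\alpha|^2$ and the fact that $\Re(df)=0$ (since $f\colon M\to\R^3$) and $\Re(dfN)=0$ (since $N$ is normal to $df$), one gets $\Re(d\alpha\bar\alpha) = 0$, so $|\alpha|^2$ is constant. Hence $f^\Phi$ lies on $S^3(\mathfrak r)$ with $\mathfrak r = |\alpha|$.

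Finally, I would compute the mean curvature using the formula $H_{S^3(\mathfrak r)} = \tfrac{1}{\mathfrak r}\Re(fH)$ of \cite{tori_revolution}, analogously to the Lawson proof. From (\ref{eq:Hdf}) applied to $f$ (with $H=1$) one has $(dN)' = -df$; applied to $f^\Phi$ it gives $-df^\Phi H^\Phi = (dN^\Phi)' = (dN)' = -df$, whence $\beta H^\Phi = 1$ and
\[
H^\Phi = \beta^{-1} = \alpha^{-1}\Bigl(N + \tfrac{b}{1-a}\Bigr).
\]
Therefore $f^\Phi H^\Phi = -\alpha\cdot\alpha^{-1}(N+\tfrac{b}{1-a}) = -N + \tfrac{b}{a-1}$, and taking real parts (using $\Re(N)=0$) yields $\Re(f^\Phi H^\Phi) = \tfrac{b}{a-1}$, which gives the stated mean curvature. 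The phrase \emph{up to M\"obius transformation} is needed because $\Phi = (e,\varphi)$ is only defined up to a constant element of $\Gl(2,\H)$, and this ambiguity corresponds to M\"obius transformations of the line bundle $L^\Phi$. The only mildly delicate step is the verification that $\alpha\beta^{-1}N\beta = N\alpha$, which demands careful bookkeeping of quaternionic non-commutativity, but reduces algebraically to $a^2+b^2=1$; everything else is a direct transcription of the Lawson-correspondence argument.
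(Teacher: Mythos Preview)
Your proposal is correct and follows essentially the same route as the paper's proof. The only cosmetic difference is that the paper identifies the right normal directly as $R^\Phi=\alpha^{-1}N\alpha$ (since $N$ commutes with $(a-1)N+b$), which makes the verification $N^\Phi f^\Phi=f^\Phi R^\Phi$ immediate and spares your detour through the explicit formula for $\beta^{-1}$; your added step showing $|\alpha|$ is constant is a useful clarification that the paper leaves implicit in the normal-bundle argument.
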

\begin{proof} As in the isothermic case we have
  $N^\Phi f^\Phi = f^\Phi R^\Phi$ so that $f^\Phi$ takes values in the
  3--sphere of radius $\mathfrak r = |\alpha|$.  Now
  $df^\Phi H^\Phi =-(dN^\Phi)'= -(dN)' = df$ gives
  \[
    H^\Phi = \alpha\invers(N + \frac{b}{1-a})
  \]
  and thus
  \[
      H^\Phi_{S^3(\mathfrak r)} = \frac 1{\mathfrak r}  \Re(f^\Phi
      H^\Phi) = \frac 1{\mathfrak r} \left(\frac{b}{a-1}\right)\,.
    \]
  \end{proof}

Similar to the isothermic case, we can link the constrained Willmore
associated family to the CMC associated family:
\begin{theorem}
  \label{thm: limit2}
  Let $f: M \to \R^3$ be a CMC surface and $d_\lambda^S$ its
  associated family of flat connections. Denote by
  $f^\alpha = A^N_{\alpha, s}(f)$ the CMC surface in the associated
  family given by a $d_\mu^N$--parallel section $\alpha$ with
  $\mu=e^{is}\in S^1$ and by $ A^{S^\alpha}_{\Phi, \lambda}(f^\alpha)$
  the associated family of constrained Willmore surfaces of $f^\alpha$
  for $\lambda\in S^1$.  Then up to M\"obius transformation
\[
  \lim_{\lambda\to 1}  A^{S^\alpha}_{\Phi, \lambda}(f^\alpha) = A^N_{\alpha, s}(f)\,.
  \]
\end{theorem}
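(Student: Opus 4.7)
The plan is to mirror the proof of Theorem \ref{thm: limit1}: use the correspondence established in Theorem \ref{thm: CW DT} together with the gauge relation \eqref{eq:assoasso} to express the $d_\lambda^{S^\alpha}$--parallel sections of $f^\alpha$ in terms of $d_{\lambda\mu}^N$--parallel sections of $f$, regularise the degeneracy of the natural basis at $\lambda=1$ by a $\Gl(2,\H)$--change of basis (which only M\"obius transforms the resulting surface), and then identify the limit via the Sym--Bobenko formula.

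First I would choose a smooth family $\alpha(\theta)$ of $d_{e^{i\theta}}^N$--parallel sections near $\theta=s$ with $\alpha(s)=\alpha$, and set $\tilde\alpha_\lambda = \alpha\invers\alpha(s+t)$ for $\lambda=e^{it}$; by \eqref{eq:assoasso} this section is $d_\lambda^{N^\alpha}$--parallel. By Theorem \ref{thm: CW DT}, the pair
\[
(e,\, \varphi_\lambda), \qquad \varphi_\lambda = e\,\tilde\alpha_\lambda + \psi^\alpha\,\tilde\beta_\lambda, \quad \tilde\beta_\lambda = \tfrac12 (N^\alpha \tilde\alpha_\lambda(a_\lambda-1) + \tilde\alpha_\lambda\,b_\lambda),
\]
with $\psi^\alpha = \bigl(\begin{smallmatrix} f^\alpha \\ 1 \end{smallmatrix}\bigr)$, is a smooth quaternionic basis of $d_\lambda^{S^\alpha}$--parallel sections, which by the prescription of Section \ref{sec:cw} yields $A^{S^\alpha}_{\Phi,\lambda}(f^\alpha) = -\tilde\alpha_\lambda$. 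This basis degenerates at $\lambda=1$, where $a_\lambda=1$, $b_\lambda=0$, $\tilde\alpha_1 =1$ and hence $\varphi_1=e$. I would therefore replace it by the $\Gl(2,\H)$--equivalent basis $\Phi_\lambda=(e,\, t\invers(\varphi_\lambda-e))$; a constant (in $p$) change of basis by an element of $\Gl(2,\H)$ only acts by a M\"obius transformation on the resulting surface in $\HP^1$. Redoing the matrix inversion that produced $-\alpha$ in the non--degenerate case then gives the affine coordinate
\[
A^{S^\alpha}_{\Phi,\lambda}(f^\alpha) = -\frac{\tilde\alpha_\lambda - 1}{t}.
\]

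L'H\^opital's rule together with the Sym--Bobenko formula of Theorem \ref{thm:sym-bob} close the argument:
\[
\lim_{\lambda\to 1} A^{S^\alpha}_{\Phi,\lambda}(f^\alpha) \;=\; -\alpha\invers\,\frac{d\alpha(\theta)}{d\theta}\Big|_{\theta=s} \;=\; \tfrac12\, A^N_{\alpha,s}(f),
\]
and the overall factor $\tfrac12$ is absorbed into the allowed M\"obius (homothety) transformation $x\mapsto 2x$. The main obstacle is purely bookkeeping: checking that the $\lambda$--dependent change of basis really implements a M\"obius transformation on $\HP^1$ at each $\lambda$ (which it does because it lies fibrewise in $\Gl(2,\H)$), and tracking why the factor $\tfrac12$ appears here but not in Theorem \ref{thm: limit1} --- namely because the constrained Willmore associated family is parametrised by a single spectral parameter $\lambda$, whereas the isothermic associated family couples the two conjugate CMC spectral parameters $\mu_\pm$, whose combined derivatives reproduce the full Sym--Bobenko formula without such a factor.
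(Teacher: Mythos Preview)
Your proposal is correct and follows essentially the same approach as the paper: express $d_\lambda^{S^\alpha}$--parallel sections via Theorem~\ref{thm: CW DT} and the gauge relation~\eqref{eq:assoasso}, regularise the degenerate basis at $\lambda=1$ by a $\Gl(2,\H)$ change, and apply the Sym--Bobenko formula in the limit. The only cosmetic difference is that the paper rescales by $\tfrac{2}{t}$ rather than $\tfrac{1}{t}$, so its limit is exactly $f^\alpha$ without the leftover factor $\tfrac12$ that you absorb into the M\"obius freedom.
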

\begin{figure}[H]
    \includegraphics[width=2cm]{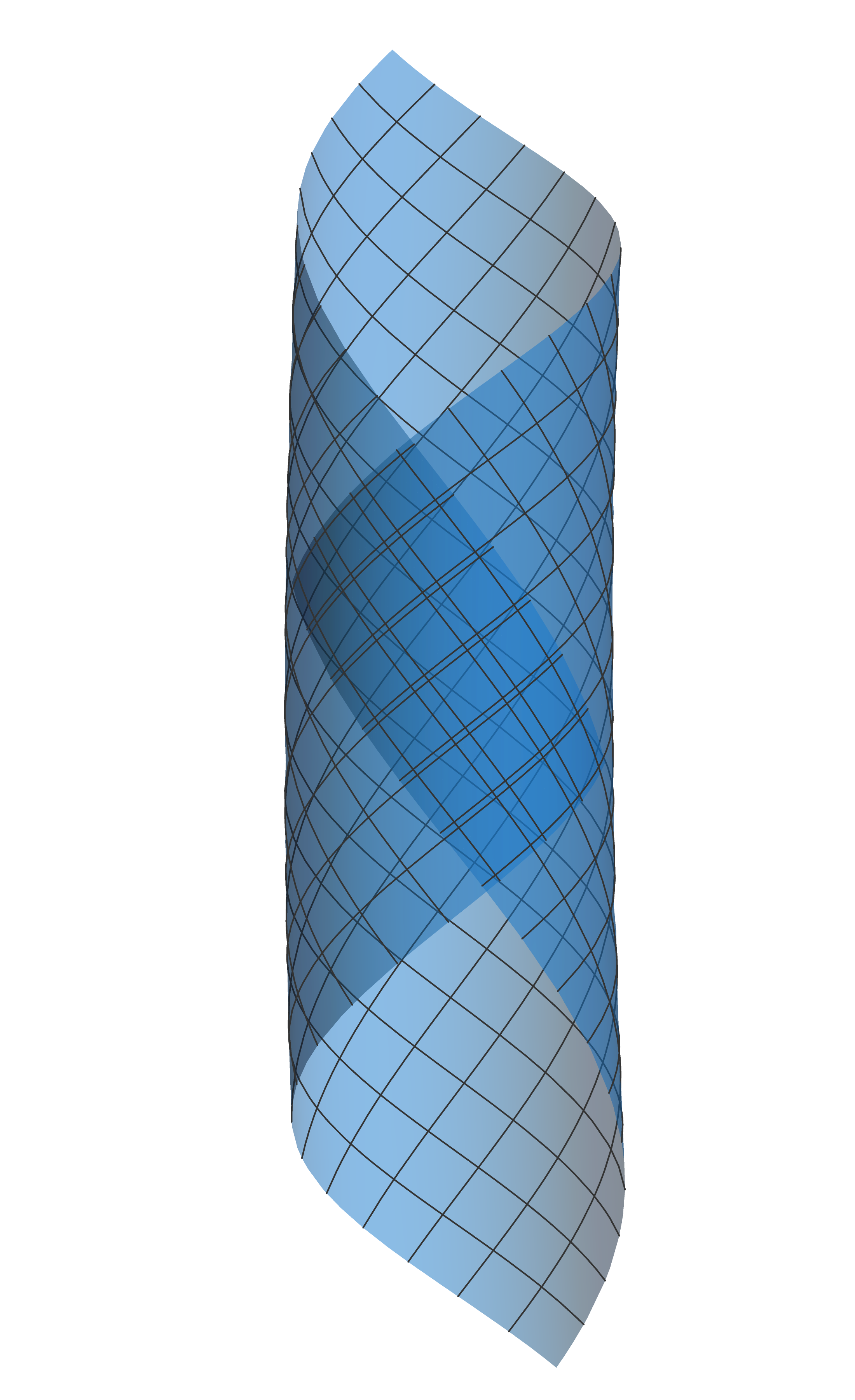}
    \includegraphics[width=2cm]{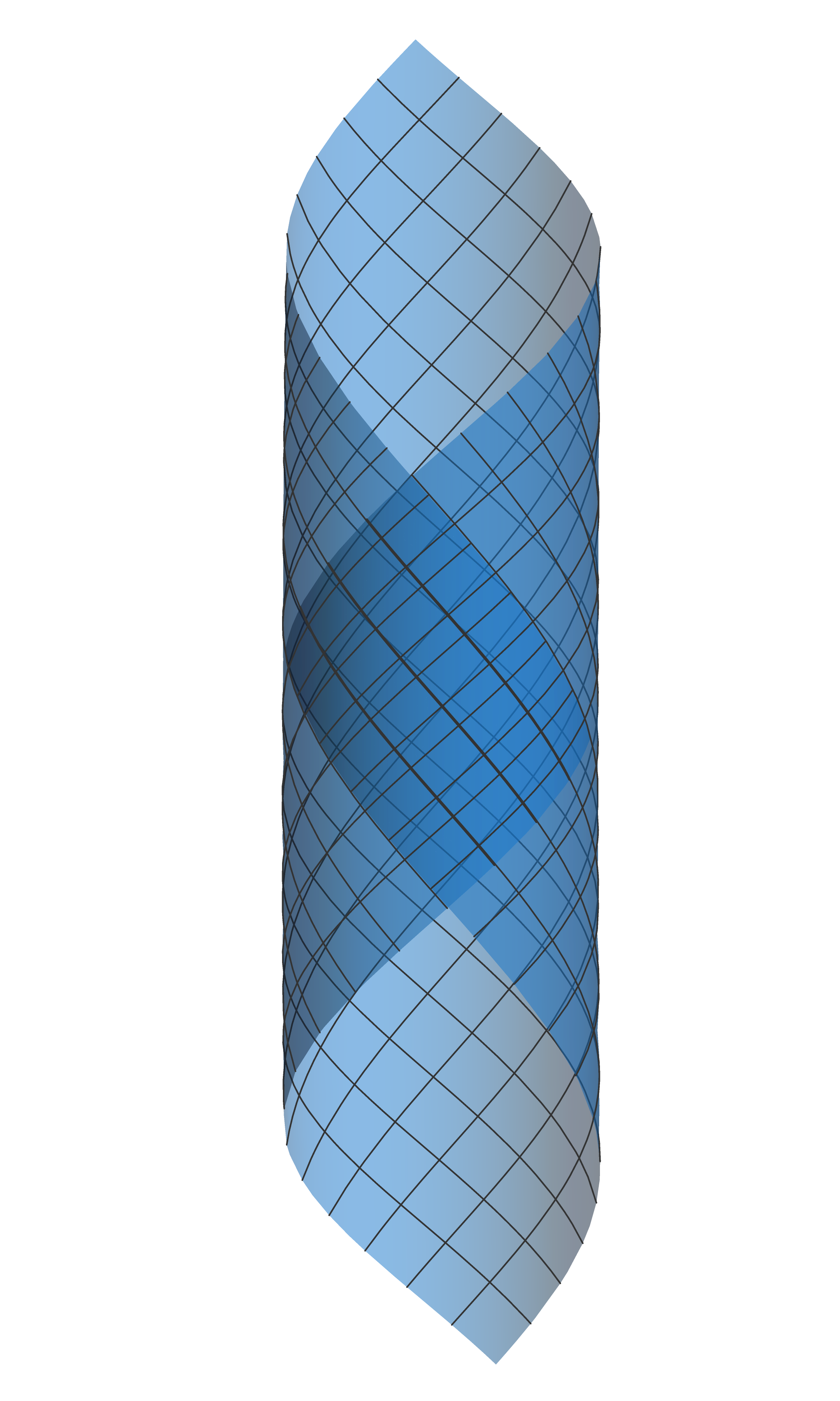}
    \includegraphics[width=2cm]{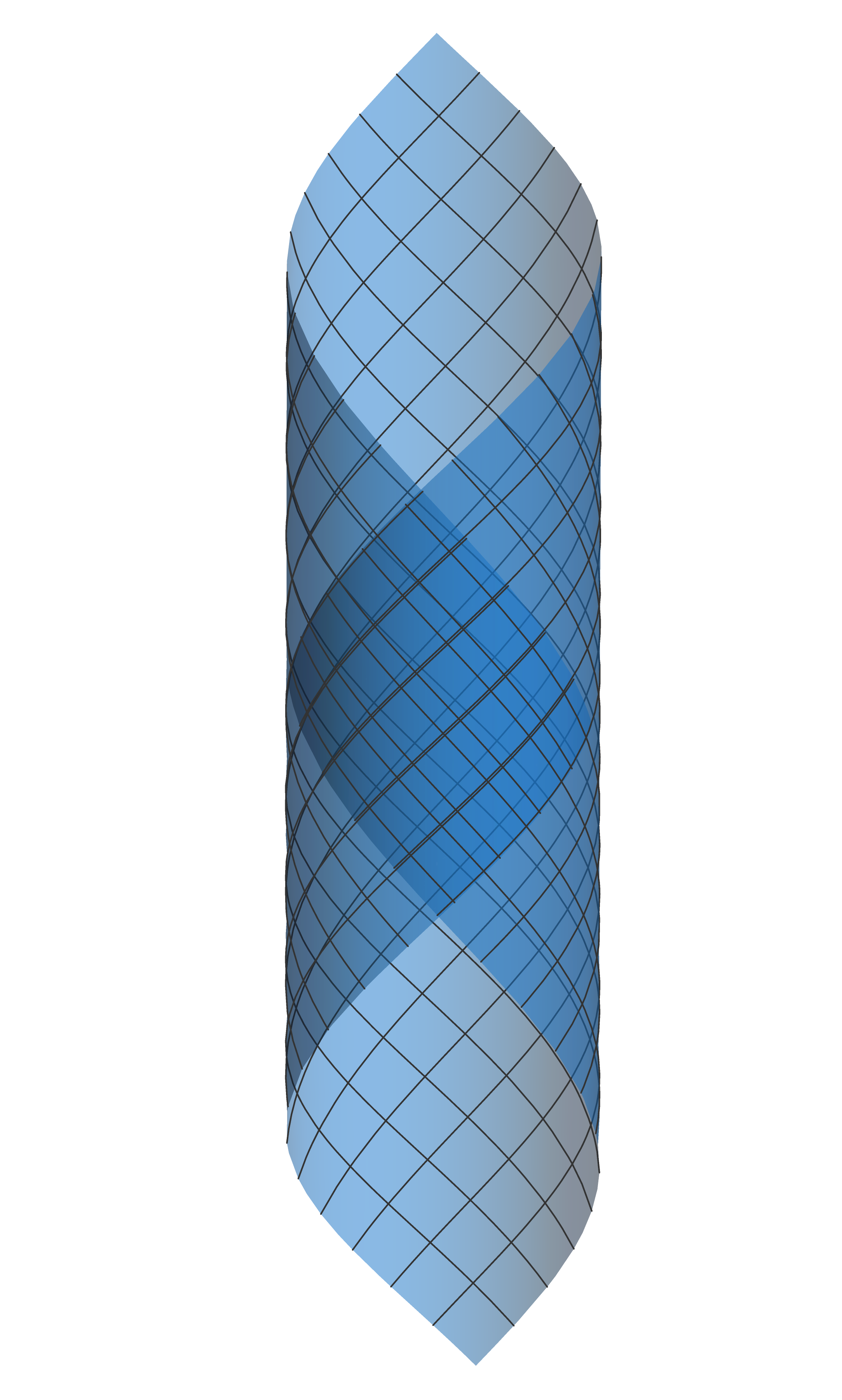}
    \includegraphics[width=2cm]{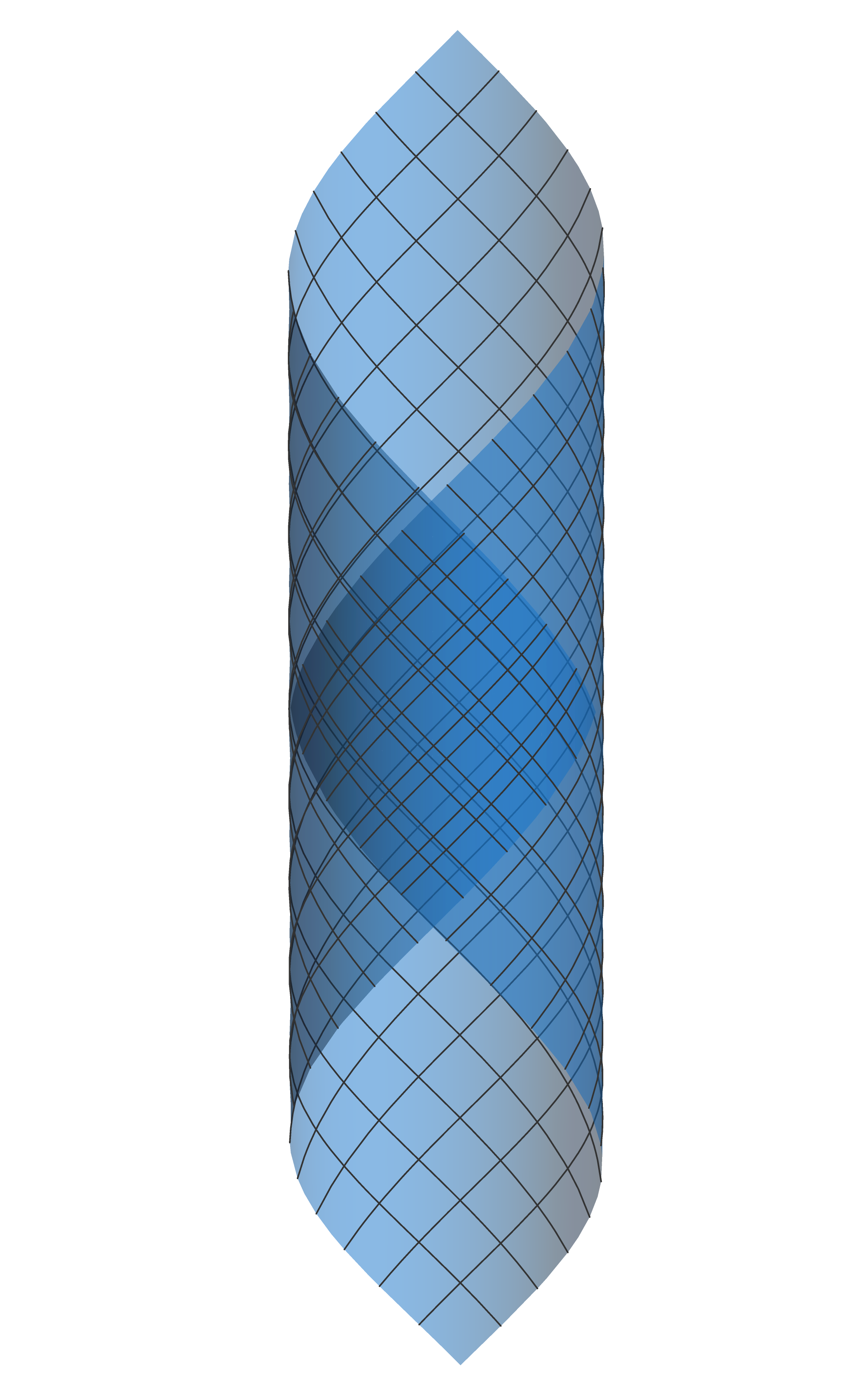}
      \includegraphics[width=2cm]{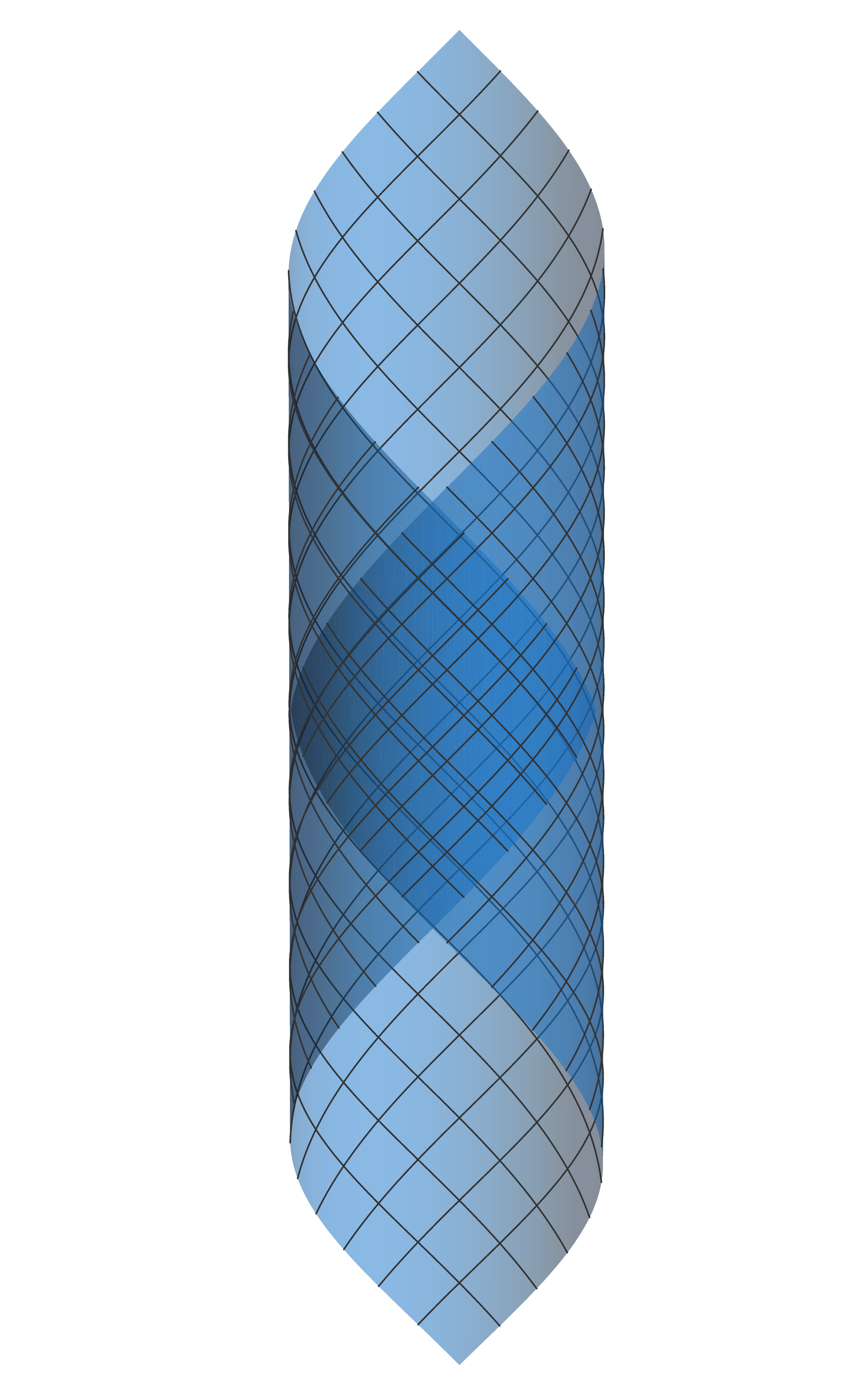}
\caption{Members of the constrained Willmore associated family $A_{\Phi,
    \lambda}^{S^\alpha}(f^\alpha)$ of the CMC surface $f^\alpha=A^N_{\alpha, \frac\pi
    2}(f)$ in
  the 
 associate family of the cylinder   and their limit $f^\alpha$ for  $\alpha=\alpha_+(\frac \pi 2) $ and
  $\lambda=e^{it}$,  $t= \frac 12,
  \frac 14, \frac 1{8}, \frac 1{100}$, orthogonally projected into
  3--space. }\end{figure}

\begin{proof}
  Let $\mu=e^{is}\in S^1$ and $\alpha$ a non--trivial
  $d_\mu^N$--parallel section, and consider the associated family
  $d_\lambda^{N^\alpha}$ of the harmonic Gauss map
  $N^\alpha=\alpha\invers N \alpha$ of the CMC surface
  $f^\alpha = A^N_{\alpha, \mu}(f) $ given by $\alpha$ via the
  Sym--Bobenko formula, see Theorem \ref{thm:sym-bob}, as
  $f^\alpha =-2 \alpha\invers \frac{d}{dt} \alpha|_{t=s}$.

  Let $\alpha_{\lambda\mu}$ be $d_{\lambda\mu}^N$--parallel, smooth in
  $\lambda$, with $\alpha_{\lambda\mu = \mu}=\alpha$. By
  (\ref{eq:assoasso}) we know that
   \[
     \alpha^{N_\alpha}_\lambda=\alpha\invers
     \alpha_{\lambda\mu}
   \]
   is a parallel section of $d_\lambda^{N^\alpha}$.
   
   Putting
   $\beta^{N^\alpha}_\lambda =\frac
   12(N^\alpha\alpha^{N^\alpha}_\lambda(a-1) +
   \alpha^{N^\alpha}_\lambda b)$ and
   $\psi^\alpha= \begin{pmatrix} f^\alpha \\ 1
  \end{pmatrix}
  $ we see that
  $$\varphi_\lambda= e\alpha^{N^\alpha}_\lambda +
  \psi^\alpha\beta^{N^\alpha}_\lambda$$ is
  $d_\lambda^{S^\alpha}$--parallel, where $S^\alpha$ is the conformal
  Gauss map of the CMC surface $f^\alpha$. In particular, up to
  M\"obius transformation, the associated family
  $f^{\Phi}= A^{S^\alpha}_{\Phi, \lambda}(f^\alpha)$ of the
  constrained Willmore surface $f^\alpha$ is given by
  $\Phi\invers L^\alpha$ where $L^\alpha=\psi^\alpha \H$ is the line
  bundle of $f^\alpha$ and
 \[
   \Phi = (e,\frac 2t(\varphi_\lambda - e))\,,
 \]
 with $\lambda=e^{i t}$.  Then as before we can compute the affine
 coordinate to be
 \[ f^{\Phi} = \frac 2t(1-\alpha^{N^\alpha}_{e^{it}})= \frac 2 t
   (1-\alpha\invers \alpha_{e^{i(t+s)}})\,.
  \]
  Taking the limit for $\lambda\mu$ to $\mu$, that is, $t$ to $0$, by
  the Sym--Bobenko formula \ref{thm:sym-bob} we obtain the result.
\end{proof}

\subsection{$\mu$--Darboux transforms of a constrained Willmore surface}

In \cite{bohle} it was shown that Darboux transforms of constrained
Willmore surfaces are constrained Willmore. We will here use the
explicit parallel sections given by the harmonic Gauss map of a CMC
surface to give $\mu$--Darboux transforms explicitly.

For this, let $\varphi = e\nu + \psi \beta$ be
$d_\mu^S$--parallel. Then $d\nu = -df \beta$ shows as before that
$e\nu$ is a holomorphic section, and $\varphi$ is its
prolongation. Thus, the surface $D^S_{\varphi, \mu}(f)$ given by
$\varphi\H$ is a Darboux transform of $f$.

In the case when $\nu = \alpha= N\beta -\beta \frac{b}{a-1}$ we see that
$\varphi = e\alpha + \psi \beta$ is the $\mu$--Darboux transform given
by the $d_\mu^N$--parallel section $\alpha$.  In general, we have:

\begin{theorem}
  \label{thm:muDTsame}
  Let $f: M \to\R^3$ be a CMC surface and
  $D_{\varphi, \mu}^S(f) = f + \nu\beta\invers: \tilde M \to \H$ be
  the Darboux transform given by the $d_\lambda^S$--parallel section
  $\varphi =e \nu + \psi \beta$ where $d_\lambda^S$ is the associated
  family of flat connections given by the conformal Gauss map $S$ of
  $f$.

  Then $\hat f =D_{\varphi, \mu}^S(f)$ is constrained Willmore with
  harmonic right normal. Moreover, $\hat f$ is a CMC surface in
  3--space (up to translation) if and only if
  $\hat f = D^N_{\alpha, \mu}(f)$ is, up to translation, a
  $\mu$--Darboux transform of $f$.
  \end{theorem}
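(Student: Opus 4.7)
The plan is to apply Theorem \ref{thm: CW DT} to decompose any $d_\mu^S$--parallel section as $\varphi = e(\alpha+n) + \psi\beta$ with $n\in\H$ constant, $\alpha$ a $d_\mu^N$--parallel section and $\beta = \tfrac12(N\alpha(a-1)+\alpha b)$, and then reduce both assertions to information about the $\mu$--Darboux transform $f^\mu = D^N_{\alpha,\mu}(f) = f+T_0$, with $T_0 = \alpha\beta\invers$, already studied in Theorem \ref{thm:rhoequalsmu}. Writing $T = \nu\beta\invers = (\alpha+n)\beta\invers$, the Darboux transform is then $\hat f = f + T$.

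For the first statement, using $d\nu = -df\beta$ together with the parallel equation $d\beta = -\tfrac12 dg(-N\beta(a-1)+\beta b)$ and the identity $\alpha = N\beta - \beta b/(a-1)$ from Theorem \ref{thm: CW DT}, one computes
\[
  d\hat f \;=\; -T(d\beta)\beta\invers \;=\; -\tfrac{a-1}{2}\,T\,dg\,T_0\,.
\]
Combining with $*dg = dg\,N = -N\,dg$ and solving $*d\hat f = \hat N\,d\hat f = -d\hat f\,\hat R$ yields $\hat N = -TNT\invers$ and, crucially, $\hat R = -T_0\invers N\,T_0$ independently of $n$. By Theorem \ref{thm:rhoequalsmu} the $\mu$--Darboux transform $f^\mu$ is CMC in $\R^3$ up to translation, so $T_0^2$ commutes with $N$ and $\hat R = -T_0 N T_0\invers$ coincides with the harmonic Gauss map of $f^\mu$; in particular $\hat R$ is harmonic. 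Constrained Willmoreness of $\hat f$ then follows from \cite{bohle}, since $\varphi$ is parallel with respect to the associated family $d_\lambda^S$ of the constrained Willmore surface $f$.

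For the CMC characterisation, direction ($\Leftarrow$) is immediate from Theorem \ref{thm:rhoequalsmu}. For ($\Rightarrow$), a conformal surface in $\R^4$ lies in an affine $3$--plane (up to translation) iff its left and right normals coincide, so the assumption forces $\hat N = \hat R$, i.e., $T_0 T$ commutes pointwise with $N$. Expanding $T_0 T = T_0^2 + T_0 n\beta\invers$ and using that $T_0^2$ already commutes with $N$, the requirement reduces to $[T_0 n\beta\invers,\,N]=0$; equivalently, $\Re(T) = \Re(T_0) + \Re(n\beta\invers)$ must be constant on $\tilde M$, and since $\Re(T_0)$ is constant by the $\mu$--Darboux property of $f^\mu$ this forces $\Re(n\beta\invers) = \langle n,\beta\rangle/|\beta|^2$ to be constant. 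The main obstacle is then to extract from this scalar identity the fact that $\hat f$ is, up to translation, a $\mu$--Darboux transform of $f$: in the generic situation where $\beta(\tilde M)\subset\H$ is not contained in any affine hypersurface, the balance between a linear and a quadratic expression in $\beta$ forces $n = 0$ and hence $\hat f = D^N_{\alpha,\mu}(f)$; in the exceptional configurations where $\beta$ does lie in such a hypersurface, one constructs explicitly a new $d_\mu^N$--parallel section $\alpha'$ and a constant $c\in\H$ with $\hat f - c = D^N_{\alpha',\mu}(f)$, using our standing assumption that $f$ is not a round sphere to rule out the remaining degenerate cases.
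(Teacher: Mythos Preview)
Your overall strategy matches the paper's: decompose the $d_\mu^S$--parallel section via Theorem~\ref{thm: CW DT} and relate $\hat f$ to the $\mu$--Darboux transform $f^\mu = f + T_0$. However there are two issues, one minor and one genuine.

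\textbf{Minor computational error.} For $\mu\notin\R\cup S^1$ the number $\varrho=\tfrac{1-a}{2}\in\C$ acts by \emph{right} multiplication on $\H$ and does not commute. Your expression $d\hat f = -\tfrac{a-1}{2}\,T\,dg\,T_0$ and the consequence $\hat R = -T_0^{-1}NT_0$ are therefore only valid when $\varrho\in\R$. The paper keeps $\varrho$ in place, obtaining $d\hat f = T\,dg\,\alpha\varrho\beta^{-1}$ and hence $\hat R = -T_g N T_g^{-1}$ with $T_g=\beta\varrho^{-1}\alpha^{-1}$; this $T_g$ is recognised (Corollary~\ref{cor:parallel mu dt}) as the data of the $\mu$--Darboux transform $g^\mu$ of the parallel surface, so $\hat R$ is the Gauss map of its parallel surface $f^\mu$ and therefore harmonic. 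Your conclusion survives, but the intermediate identity does not.

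\textbf{Genuine gap in the $(\Rightarrow)$ direction.} You use only the hypothesis that $\hat f$ lies in an affine $3$--plane (equivalently $\hat N=\hat R$, equivalently $\Re(n\beta^{-1})$ constant) and then attempt to force $n=0$ by a case analysis on whether $\beta(\tilde M)$ is contained in an affine hypersurface. This is not a proof: neither the ``generic'' balancing argument nor the ``exceptional'' construction of a new $\alpha'$ is carried out, and there is no reason the condition $\langle n,\beta\rangle = c|\beta|^2$ alone should be enough. The paper bypasses this entirely by using the CMC hypothesis itself: once $\hat N=\hat R=N^\mu$, both $\hat f$ and $f^\mu$ are CMC in $\R^3$ with $H=1$ and the \emph{same} Gauss map, so the mean--curvature equation \eqref{eq:Hdf} gives
\[
  d\hat f \;=\; -(d\hat N)' \;=\; -(dN^\mu)' \;=\; df^\mu,
\]
hence $\hat f = f^\mu + \text{const}$. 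This is one line and makes the hypersurface dichotomy unnecessary. Your argument discards exactly the piece of the hypothesis that closes the proof.
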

  \begin{proof}
       
    Since $\varphi = e\nu + \psi \beta$ is $d_\mu^S$--parallel we can
    write $\nu = \alpha + n$ with $n\in\H$ and
    $\alpha =N\beta-\beta\frac{b}{a-1}$ where
    $a= \frac{\mu+\mu\invers}2$, $b= i \frac{\mu\invers-\mu}2$ and
    $\beta$ is a $d_\mu^{-N}$--parallel section.  We compute
    \[
      d\hat f = df + d\nu\beta\invers - \nu\beta\invers d\beta
      \beta\invers = \nu\beta\invers dg \alpha \varrho \beta\invers
    \]
    with $\varrho = \frac{1-a}2$. By Corollary \ref{cor:parallel mu
      dt} the $\mu$--Darboux transform $g^\mu = g + T_g$,
    $T_g = \beta\varrho\invers \alpha\invers$, of the parallel CMC
    surface $g=f+N$ has Gauss map
    \[
      N^\mu_g = -T_g N_g T_g\invers= - T_gNT_g\invers\,.
    \]
    Hence the right normal
    \[
      \hat R = - T_g N T_g\invers
    \]
    of $\hat f$ is the negative of the Gauss map $N^\mu_g$ of $g^\mu$.
   
    Put differently, $\hat R$ is the Gauss map of the parallel CMC
    surface $f^\mu= f + \alpha\beta\invers$ of $g^\mu$. Therefore,
    $\hat R$ is harmonic, and $\hat f$ is constrained Willmore, see
    \cite{hsl, bohle}.

    For $\hat f$ to be CMC in 3--space (up to translation) the left
    and right normal of $\hat f$ have to coincide. In this case,
    $\hat f$ and $f^\mu$ have the same Gauss map, and $\hat f$ is a
    $\mu$--Darboux transform of $f$ up to translation.
  \end{proof}

  \begin{figure}[H]
    \includegraphics[height=5cm]{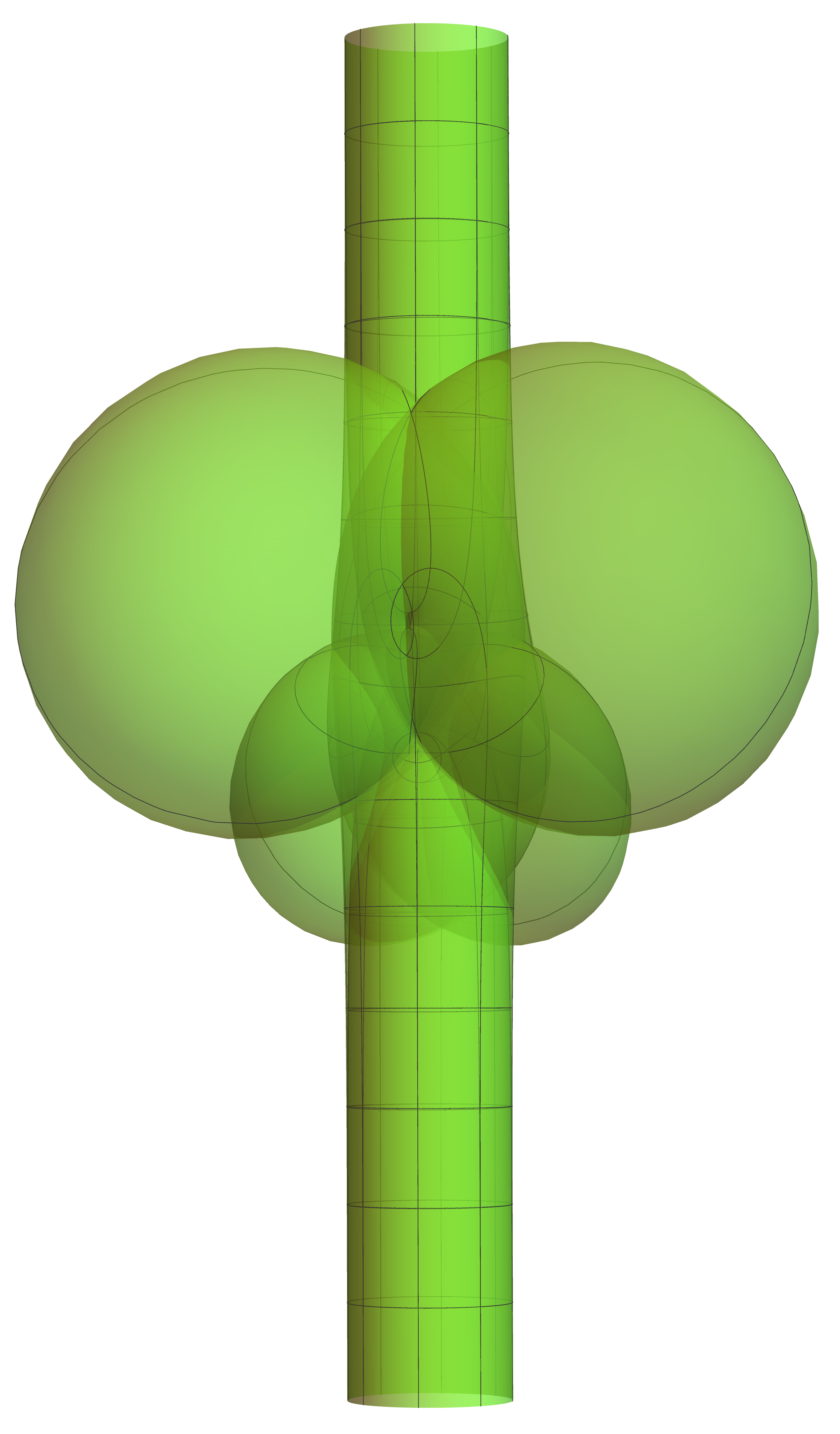} \qquad
     \includegraphics[height=5cm]{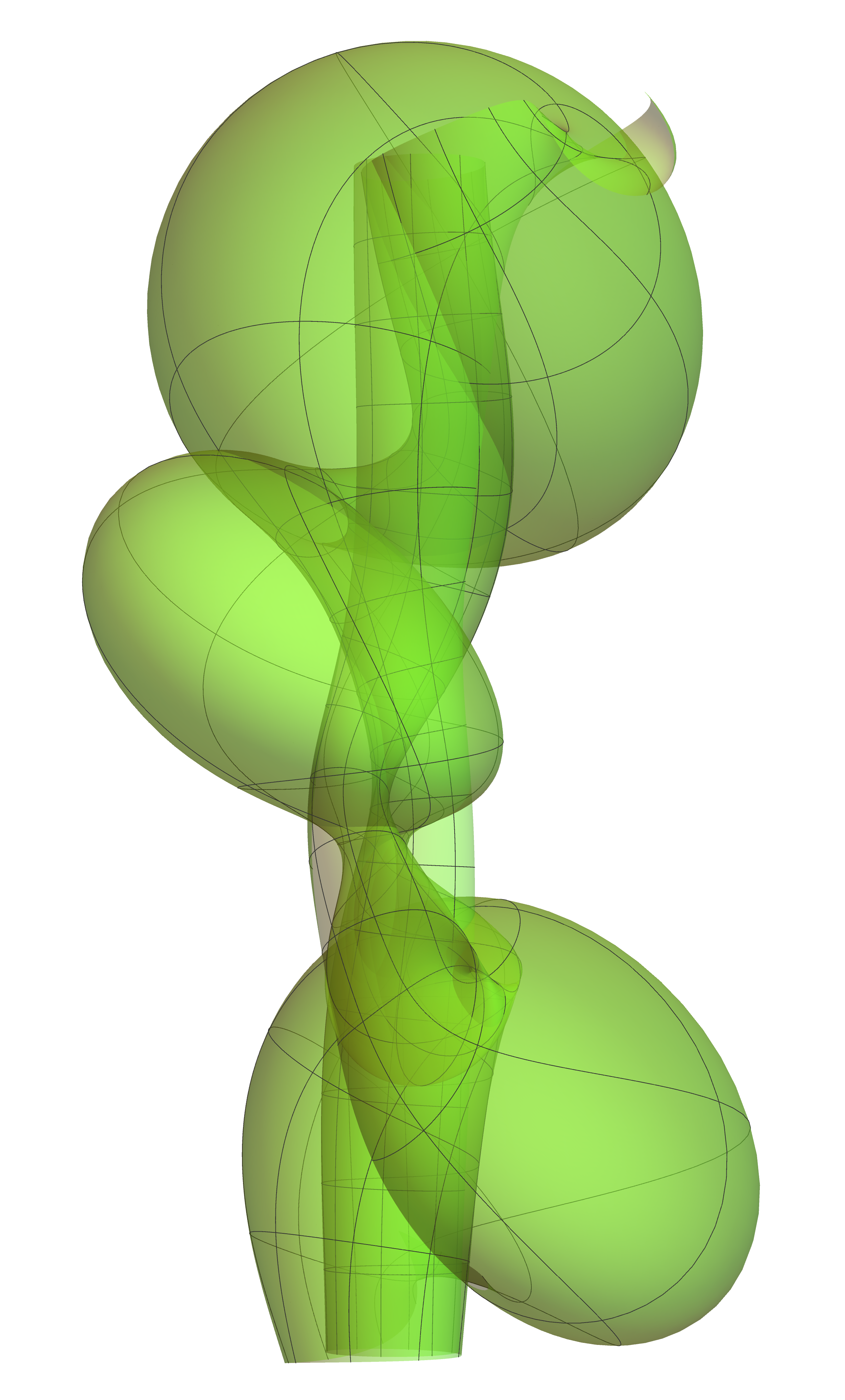}
     \caption{Constrained Willmore surfaces in $S^4$ obtained as a
       Darboux transform of the cylinder, periodic on the
       double--cover for $\mu =7-4 \sqrt{3}$ and $n=1- 4k$, and
       non--periodic for $\mu= (-1-2 i)+2 \sqrt{-1+i}$ and $n=1-4k$,
       orthogonally projected to $\R^3$.  }
\end{figure}

\subsection{Simple factor dressing of the conformal Gauss map} 

In \cite{burstall_quintino} a simple factor dressing of constrained
Willmore surfaces is introduced: in the case when the constrained
Willmore surface is in the 4--sphere, similar to the isothermic
2--step simple factor dressing, a $d_\mu^S$--stable bundle $W$ gives a
new constrained Willmore surface. It is also shown there that the
simple factor dressing is a transformation defined in
\cite{willmore_harmonic}: the simple factor dressing of the conformal
Gauss map is, \cite{willmore_harmonic, burstall_quintino}, the
conformal immersion $\check f = r_{W,\mu}^S(f)$ given by the line
bundle
$\check L = \frac12(S + \Phi\frac{b}{a-1}\Phi\invers) L \subset
\ttrivial 2$ over the universal cover $\tilde M$ where
$\Phi = (\varphi_1, \varphi_2)$ and $\{\varphi_1, \varphi_2\}$ is a
$d_\mu^S$--parallel basis of the complex bundle $W$. In the case when
$f: M \to\R^3$ is a CMC surface and
$\varphi_1= en, \varphi_2 = e\alpha + \psi \beta$ with
$n\in\H_*, \beta=\frac 12(N\alpha(a-1) + \alpha b)$ and
$d_\mu^N\alpha=0$ we can link this simple factor dressing to the
simple factor dressing of the harmonic Gauss map $N$. In this case, we
have
\[
\Phi = F \begin{pmatrix} n & \alpha\\ 0 &\beta
\end{pmatrix}\,, 
\]
and since $S \psi = -\psi N$ and
$(\varphi_2-e\alpha) =\psi\beta\in\Gamma(\tilde L)$ we see as, in the
minimal case, \cite{sfd_lopez_ros}, that
\[
\check\varphi := (S+\Phi 
\frac{b}{a-1}\Phi\invers)(\psi \beta) = -\psi\alpha + e(\alpha\frac{b}{a-1} - n \frac{b}{a-1}
n\invers\alpha) \in\Gamma(\check L)
\]
where we use that by assumption $\alpha = N\beta-\beta\frac
b{a-1}$. Thus
\[
\check f = f -\alpha\frac{b}{a-1}\alpha\invers
\]
up to constant. Therefore, by Theorem \ref{thm:cmc simple factor} the
simple factor dressing $\check f =r^S_{W,\mu}(f)$ is the CMC surface
given by the simple factor dressing $\check f =r_{E,\mu}^N(f)$ of the
harmonic Gauss map $N$ of $f$ with $d_\mu^N$--stable bundle
$E = \alpha\C$. Moreover, the theorem also shows that
$\check f = r_{W,\mu}^S(f)$ is a $\mu$--Darboux transform of the
parallel CMC surface $g=f+N$ of $f$.

We summarise:

\begin{theorem} 
\label{thm:sfdsame}
Let $f: M \to\R^3$ be a CMC surface and $d_\lambda^{S}$ its associated
family given by the conformal Gauss map $S$. Let
$\check f = r^S_{W, \mu}(f)$ be the simple factor dressing given by
the complex rank 2 bundle $W\subset \ttrivial 2$ which is spanned by
the $d_\mu^S$--parallel sections
$en, \varphi = e\alpha+\psi\beta\in\Gamma(\ttrivial 2)$,
$n\in\H_*$. Then $\check f$ is, up to translation, the simple factor
dressing
  \[
    r^S_{W, \mu}(f)=r^N_{E, \mu}(f)
  \]
  where $E$ is the $d_\mu^N$--stable bundle given by the
  $d_\mu^N$--parallel section $\alpha$.

  Moreover, the simple factor
  dressing  $\check f $ is a
  $\mu$--Darboux transform of the parallel surface $g=f+N$,
  \[
    \check f =
    D^{-N}_{\beta, \mu}(g)\,,
  \]
  
  or, equivalently, the 2--fold Darboux transform given by Bianchi
  permutability from the Darboux transforms
  $f^\mu = D^N_{\alpha, \mu}(f)= f + \alpha\beta\invers $ and
  $g= f+N$.
  
\end{theorem}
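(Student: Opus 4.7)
The strategy is to reduce the theorem to the already--established Theorem~\ref{thm:cmc simple factor} and Corollary~\ref{cor:parallel mu dt} by computing an explicit generating section of the line bundle $\check L$ defining $\check f = r^S_{W,\mu}(f)$. By Theorem~\ref{thm: CW DT}, the assumed generators of $W$ take the form $\varphi_1 = en$ and $\varphi_2 = e\alpha + \psi\beta$ with $d_\mu^N\alpha = 0$ and $\beta = \tfrac{1}{2}(N\alpha(a-1) + \alpha b)$, which writes $\Phi = (\varphi_1,\varphi_2)$ in the block form
\[
\Phi = F\begin{pmatrix} n & \alpha \\ 0 & \beta \end{pmatrix}\,, \qquad F = \begin{pmatrix} 1 & f \\ 0 & 1 \end{pmatrix}\,,
\]
whose quaternionic inverse is easily written down in closed form.

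The next step is to evaluate the generating section $\check\varphi := (S + \Phi\tfrac{b}{a-1}\Phi\invers)(\psi\beta)$ of $\check L$, where the complex scalar $\tfrac{b}{a-1}$ is interpreted as acting by left multiplication on the coordinate vector produced by $\Phi\invers$. Using $S\psi = -\psi N$ together with the identity $\alpha = N\beta - \beta\tfrac{b}{a-1}$ from Theorem~\ref{thm: CW DT}, the first summand becomes $S(\psi\beta) = -\psi\alpha - \psi\beta\tfrac{b}{a-1}$. For the second summand, the block formula gives $\Phi\invers(\psi\beta) = (-n\invers\alpha,\, 1)^{\mathsf T}$; left--multiplying each entry by $\tfrac{b}{a-1}$ and reapplying $\Phi$ yields $\psi\beta\tfrac{b}{a-1} + e\bigl(\alpha\tfrac{b}{a-1} - n\tfrac{b}{a-1}n\invers\alpha\bigr)$. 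The $\psi\beta\tfrac{b}{a-1}$ contributions cancel, so
\[
\check\varphi = -\psi\alpha + e\bigl(\alpha\tfrac{b}{a-1} - n\tfrac{b}{a-1}n\invers\alpha\bigr)\,,
\]
and the affine coordinate of $\check L = \check\varphi\H$ is
\[
\check f = f - \alpha\tfrac{b}{a-1}\alpha\invers + n\tfrac{b}{a-1}n\invers\,,
\]
which agrees with the CMC simple factor dressing formula of Theorem~\ref{thm:cmc simple factor} up to the constant translation $n\tfrac{b}{a-1}n\invers$.

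With this identification in hand, Theorem~\ref{thm:cmc simple factor} directly gives the equality $\check f = r^N_{E,\mu}(f)$ for $E = \alpha\C$ and exhibits $\check f$ as the common Darboux transform of $f^\mu = D^N_{\alpha,\mu}(f)$ and $g = f+N$ produced by Bianchi permutability. Corollary~\ref{cor:parallel mu dt} then identifies $f - \alpha\tfrac{b}{a-1}\alpha\invers$ with the $\mu$--Darboux transform $g^\mu = D^{-N}_{\beta,\mu}(g)$ of the parallel CMC surface, completing the last claim. The main obstacle is the middle computation: complex scalars do not commute with the quaternionic entries of $\Phi$, so the operator $\Phi\tfrac{b}{a-1}\Phi\invers$ must be handled carefully as a sandwich over the coordinate vector rather than as a naive scalar conjugation, and the cancellation of the $\psi\beta\tfrac{b}{a-1}$ contributions only occurs once the CMC identity $\alpha = N\beta - \beta\tfrac{b}{a-1}$ is invoked at precisely the right moment to rewrite $S(\psi\beta)$.
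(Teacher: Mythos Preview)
Your proposal is correct and follows essentially the same approach as the paper: the paper likewise computes $\check\varphi=(S+\Phi\tfrac{b}{a-1}\Phi\invers)(\psi\beta)$ using $S\psi=-\psi N$ and the identity $\alpha=N\beta-\beta\tfrac{b}{a-1}$, obtains the same expression $\check\varphi=-\psi\alpha+e\bigl(\alpha\tfrac{b}{a-1}-n\tfrac{b}{a-1}n\invers\alpha\bigr)$, and then appeals to Theorem~\ref{thm:cmc simple factor}. Your write-up is in fact slightly more explicit than the paper's---you spell out $\Phi\invers(\psi\beta)$, the cancellation of the $\psi\beta\tfrac{b}{a-1}$ terms, and the translation constant $n\tfrac{b}{a-1}n\invers$, and you add the reference to Corollary~\ref{cor:parallel mu dt} for the identification $\check f=D^{-N}_{\beta,\mu}(g)$---but the argument is the same.
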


\section{Conclusion}

In this paper we investigated the integrable systems of a CMC surface
$f: M \to\R^3$ which are given by the harmonic Gauss map $N$, by the
isothermicity of $f$ and by the conformal Gauss map $S$ in terms of
their associated families of flat connections $d_\lambda^N$,
$d_\varrho$ (and $\d_\varrho= F\cdot d_\varrho$), and $d_\lambda^S$
respectively. We showed that the parallel sections of one family of
connections give, together with the Gauss map $N$ of $f$, all parallel
sections of the other two via algebraic operations. Here there is a
2:1 correspondence of the CMC and Constrained Willmore spectral
parameter $\mu$ and the isothermic spectral parameter $\rho$ via
$\mu_\pm = 1-2\rho \pm 2 i \sqrt{\rho(1-\rho)}$ and
$\rho = -\frac{(\mu-1)^2}{4\mu}$ respectively.  This provides a
unified view on all three integrable systems of a CMC surface.

In particular, we demonstrated the correspondence between the various
associated families, Darboux transforms and simple factor dressings.

We consider CMC surfaces with $H=1$, and exclude the case of a round
sphere. Recalling that we denote by $g=f+N$ the parallel CMC surface
with Gauss map $N_g=-N$, and for $\mu\in\C_*, \rho\in \C, r\in\R_*$,
\begin{enumerate}
  \item[] a $d_\mu^N$--parallel section by $\alpha$, \\
  \item[] a $d_{\mu}^{N_g}$--parallel section by $\beta$, \\
  \item[] a $d_{\mu}^S$--parallel section by $\varphi$, \\
  \item[] a $d_\rho$--parallel section by $\phi$,\\
  \item[] a $\d_r$--parallel endomorphism by $\Phi$,\\
  \item[] a $\d_\rho$--stable  or $d_\mu$--stable complex line bundle by $E$,\\
  \item[] a $d_\mu^S$--stable complex rank 2 bundle by $W$\,,
  \end{enumerate}
  
  we summarise our results:

\begin{theorem}
  Let $f: M \to \R^3$ be a CMC surface with $H=1$ and $f$ not a round
  sphere. Then a surface $f^\alpha=A_{\alpha, s}^N(f)$,
  $\mu=e^{is}\in S^1$, in the CMC associated family of $f$ is given,
  up to M\"obius transformation, as the limit for $r$ to
  $r(\mu) = \frac 12(1-\cos s)$ of surfaces $A_{\Phi,r}(f)$ in the
  isothermic associated family of $f$, $r\in(0,1)$.

  Furthermore, $f^\alpha$ is also the limit for $\lambda$ to $\mu$ of
  the associated family $A^{S^\alpha}_{\Phi, \lambda}(f^\alpha)$ of
  $f^\alpha$, viewed as constrained Willmore surface, given by a
  $d_\lambda^{S^\alpha}$--parallel section $\Phi$:
\[
  A_{\alpha, s}^N(f) = \lim_{r\to r(\mu)} A_{\Phi,r}(f) = \lim_{\lambda\to
    \mu} A^{S^\alpha}_{\Phi, \lambda}(f^\alpha)\,.
  \]
   
  The $\mu$--Darboux transforms $D_{\alpha,\mu}^N(f)$ of $f$,
  $\mu\in\C\setminus\{0,\pm 1\}$, are exactly the isothermic
  $\varrho$--Darboux transforms $D_{\phi,\varrho}(f)$ of $f$,
  $\varrho\in\C\setminus\{0,1\}$, which have constant mean curvature
  (and hence also the isothermic simple factor dressings
  $r_{E,\varrho}(f)$ which have constant mean curvature). They also
  are given by the constrained Willmore Darboux transforms
  $D^S_{\varphi,\mu}(f)$ which have constant mean curvature, that is,
\begin{align*}
  \{ D_{\alpha,\mu}^N(f)\mid \mu\in\C\setminus\{0,\pm
  1\}\}&=\{D_{\phi,\varrho}(f) \mid D_{\phi,\varrho}(f)  \text{ has
          CMC}, \varrho\in\C\setminus\{0,1\}\} \\
&=  \{r_{E, \varrho}(f)\mid r_{E,\varrho}(f)  \text{ has
                                                  CMC},  \varrho\in\C\setminus\{0,1\}\}\\
       &=  \{D^S_{\varphi,\mu}(f) \mid D^S_{\varphi,\mu}(f)  \text{ has
          CMC}, \mu \in\C\setminus\{0,\pm 1\}\}\,.
\end{align*}

In contrast, the simple factor dressing $r^N_{E,\mu}(f)$,
$\mu\in \C\setminus\{0, \pm 1\}$, is a special case of the constrained
Willmore simple factor dressing $r^S_{W, \mu}(f)$ with
$\infty\oplus E= W$. Here $E=\varphi\C$ and
$\varphi = e\alpha+\psi\beta$ is given by a non--trivial
$d_\mu^N$--parallel section $\alpha$. These simple factor dressings
coincide with the 2--step Darboux transform of $f$ given by the
parallel CMC surface $g=D^N_{\alpha_0, \mu =-1}(f)$ and the
$\mu$--Darboux transform $D^N_{\alpha, \mu}(f)$. Finally, the simple
factor dressing $r^N_{E,\mu}(f)$ also is given by the $\mu$--Darboux
transform $D^{-N}_{\beta,\mu}(g)$ of the parallel surface:
\begin{align*}
  \{ r_{E,\mu}^N(f) \mid \mu\in \C\setminus\{0, \pm 1\}\}&=
                                                           \{D^{-N}_{\beta,\mu}(g)  \mid \mu \in \C\setminus\{0, \pm 1\} \} \\
                                                         &
                                                           = \{ r^S_{W, \mu}(f)
  \mid  W= \infty \oplus E, \mu\in \C\setminus\{0, \pm 1\}\}\,.
\end{align*}

\end{theorem}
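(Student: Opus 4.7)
The plan is to read this theorem as a collation of results already established across Sections 3, 4 and 5; every clause restates an identification proved earlier, and the work amounts to matching the spectral parameters correctly and citing the appropriate statement. I would split the proof into three blocks corresponding to the three displayed claims.

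For the first block (limit identifications), the isothermic limit is precisely Theorem \ref{thm: limit1}: for $\mu=e^{is}\in S^1$ the associated isothermic parameter is $r(\mu) = -\frac{(\mu-1)^2}{4\mu} = \frac{1-\cos s}{2}$, and the limit formula $\lim_{r\to r(\mu)} A_{\Phi,r}(f) = A^N_{\alpha, s}(f)$ holds up to M\"obius transformation. The constrained Willmore limit is Theorem \ref{thm: limit2} applied with the same $\alpha$, yielding $\lim_{\lambda\to\mu} A^{S^\alpha}_{\Phi,\lambda}(f^\alpha) = f^\alpha = A^N_{\alpha,s}(f)$.

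For the second block (the four-way equality among CMC Darboux transforms), Theorem \ref{thm:rhoequalsmu} directly gives the identification $\{D^N_{\alpha,\mu}(f)\} = \{D_{\phi,\varrho}(f) : D_{\phi,\varrho}(f)\text{ has CMC}\}$, using the 2:1 correspondence $\mu_\pm = 1-2\varrho\pm 2i\sqrt{\varrho(1-\varrho)}$ from Theorem \ref{thm: drho parallel}. To include isothermic simple factor dressings, I would invoke Theorem \ref{thm:sfd}, which shows that $r_{E,\varrho}(f) = D_{\varphi,\varrho}(f)$ whenever $E = \varphi\C$ is a $\d_\varrho$-parallel complex line bundle transverse to $Ej$ and $\tilde L$; since every $\varrho$-Darboux transform is realised this way and conversely, this yields the second equality. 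The final equality with CMC constrained Willmore Darboux transforms $D^S_{\varphi,\mu}(f)$ is Theorem \ref{thm:muDTsame}, which characterises CMC-in-$\R^3$ among these Darboux transforms in terms of equality of left and right normal.

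For the third block (identifications of the CMC simple factor dressing), Theorem \ref{thm:cmc simple factor} gives the explicit formula $r^N_{E,\mu}(f) = f - \alpha\tfrac{b}{a-1}\alpha\invers$, while Corollary \ref{cor:parallel mu dt} shows $D^{-N}_{\beta,\mu}(g) = g + \beta\varrho\invers\alpha\invers = f - \alpha\tfrac{b}{a-1}\alpha\invers$; comparing the two gives $r^N_{E,\mu}(f) = D^{-N}_{\beta,\mu}(g)$. The identification $r^S_{W,\mu}(f) = r^N_{E,\mu}(f)$ for $W = \infty \oplus E$ is Theorem \ref{thm:sfdsame}. Putting these together establishes the remaining chain of equalities.

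Since every ingredient has already been proved, there is no genuinely new obstacle; the one point requiring care is the consistent translation between the CMC/constrained Willmore parameter $\mu$ and the isothermic parameter $\varrho$ (and the associated sign choice of $b = \pm 2\sqrt{\varrho(1-\varrho)}$), which I would handle once at the outset by fixing $a = \tfrac{\mu+\mu\invers}{2}$, $b = i\tfrac{\mu\invers-\mu}{2}$, $\varrho = \tfrac{1-a}{2}$ and then using this normalisation uniformly throughout the three blocks.
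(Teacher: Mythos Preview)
Your proposal is correct and matches the paper's own treatment: this theorem appears in the Conclusion section as a summary statement with no separate proof, and each clause is indeed a restatement of the earlier results you cite (Theorems \ref{thm: limit1}, \ref{thm: limit2}, \ref{thm:rhoequalsmu}, \ref{thm:sfd}, \ref{thm:muDTsame}, \ref{thm:cmc simple factor}, \ref{thm:sfdsame} and Corollary \ref{cor:parallel mu dt}). The only minor point to watch is that Theorem \ref{thm: limit2} is stated with $\lambda\to 1$ for the associated family of $f^\alpha$, whereas the summary theorem writes $\lambda\to\mu$; this is just the reparametrisation via the gauge relation \eqref{eq:assoasso}, and you have correctly identified the relevant result.
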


\bibliographystyle{amsplain}
\bibliography{doc}

\providecommand{\bysame}{\leavevmode\hbox to3em{\hrulefill}\thinspace}
\providecommand{\MR}{\relax\ifhmode\unskip\space\fi MR }
% \MRhref is called by the amsart/book/proc definition of \MR.
\providecommand{\MRhref}[2]{%
  \href{http://www.ams.org/mathscinet-getitem?mr=#1}{#2}
}
\providecommand{\href}[2]{#2}
\begin{thebibliography}{10}

\bibitem{bianchi_ricerche_1905}
L.~Bianchi, \emph{Ricerche sulle superficie isoterme e sulla deformazione delle
  quadriche}, Ann. Mat. Pura Appl. \textbf{11} (1905), no.~1, 93--157.

\bibitem{sym_bob}
A.~Bobenko, \emph{Constant mean curvature surfaces and integrable equations},
  Russ. Math. Surv. 40 (1991), 1--45.

\bibitem{discretelawson}
A.~Bobenko and P.~Romon, \emph{Discrete {CMC} surfaces in $\mathbb{R}^3$ and
  discrete minimal surfaces in ${S}^3$: a discrete {L}awson correspondence},
  Journal of Integrable Systems, Volume 2, Issue 1 (2017), 1--18.

\bibitem{Bob}
A.~I. Bobenko, \emph{{A}ll constant mean curvature tori in ${R}^3$, ${S}^3$,
  ${H}^3$ in terms of theta-functions}, Math.\ Ann. \textbf{290} (1991),
  209--245.

\bibitem{bohle-diss}
C.~Bohle, \emph{M\"obius invariant flows of tori in ${S}^4$}, Ph.D. thesis,
  Technische Universit\"at Berlin, 2003.

\bibitem{bohle}
\bysame, \emph{Constrained {W}illmore tori in the 4-sphere}, J. Differential
  Geom., Vol. 86, No. 1 (2010), 71--132.

\bibitem{conformal_tori}
C.~Bohle, K.~Leschke, F.~Pedit, and U.~Pinkall, \emph{Conformal maps from a
  2-torus to the 4-sphere}, J. Reine Angew. Math. \textbf{671} (2012), 1--30.

\bibitem{constrainedWillmore}
C.~Bohle, G.~Peters, and U.~Pinkall, \emph{Constrained {W}illmore surfaces},
  Calc. Var. PDE, Vol. 32, No. 2 (2008), 263--277.

\bibitem{brakke}
K.~Brakke, \emph{The surface evolver}, Experimental Mathematics 1, 2 (1992).

\bibitem{bryant}
R.~L. Bryant, \emph{A duality theorem for {W}illmore surfaces}, J. Diff.\ Geom.
  \textbf{20} (1984), 23--53.

\bibitem{fran_epos}
F.~Burstall, \emph{Isothermic surfaces: conformal geometry, {{Clifford}}
  algebras and integrable systems}, Integrable systems, geometry, and topology
  (Chuu-Lian Terng, ed.), {{AMS}}/{{IP Stud}}. {{Adv}}. {{Math}}., vol.~36,
  {Amer. Math. Soc.}, {Providence, RI}, 2006, pp.~1--82.

\bibitem{burstall_conformal_2010}
F.~Burstall and D.~Calderbank, \emph{Conformal submanifold geometry
  {{I}}-{{III}}}, arXiv:1006.5700 [math] (2010).

\bibitem{burstall_isothermic_2011}
F.~Burstall, N.~Donaldson, F.~Pedit, and U.~Pinkall, \emph{Isothermic
  submanifolds of symmetric ${R}$-spaces}, J. Reine Angew. Math. \textbf{660}
  (2011), 191--243.

\bibitem{simple_factor_dressing}
F.~Burstall, J.~Dorfmeister, K.~Leschke, and A.~Quintino, \emph{Darboux
  transforms and simple factor dressing of constant mean curvature surfaces},
  Manuscripta Math., Vol 140, Issue 1 (2013), 213--236.

\bibitem{coimbra}
F.~Burstall, D.~Ferus, K.~Leschke, F.~Pedit, and U.~Pinkall, \emph{Conformal
  geometry of surfaces in ${S}^4$ and quaternions}, Lecture Notes in
  Mathematics, vol. 1772, Springer-Verlag, Berlin, 2002.

\bibitem{bjpp}
F.~Burstall, U.~{Hertrich-Jeromin}, F.~Pedit, and U.~Pinkall, \emph{Curved
  flats and isothermic surfaces}, Math. Z. \textbf{225} (1997), no.~2,
  199--209.

\bibitem{discretecmc}
F.~Burstall, U.~Hertrich-Jeromin, W.~Rossman, and S.~Santos, \emph{Discrete
  surfaces of constant mean curvature}, RIMS 1880 (2014), 133--179.

\bibitem{burstall_quintino}
F.~Burstall and A~Quintino, \emph{Dressing transformations of constrained
  {W}illmore surfaces}, Comm. Anal. Geom. 22 (2014), 469--518.

\bibitem{calapso}
P.~Calapso, \emph{Sulla superficie a linee di curvtura isotherme}, Rend. Circ.
  Mat. Palermo, Vol. 17 (1903), 275--286.

\bibitem{cmc}
E.~Carberry, K.~Leschke, and F.~Pedit, \emph{Darboux transforms and spectral
  curves of constant mean curvature surfaces revisited}, Annals of Global
  Analysis and Geometry, Vol. 43, Issue 4 (2013), 299--329.

\bibitem{sym-darboux}
J.~Cho, K.~Leschke, and Y.~Ogata, \emph{Generalised {B}ianchi permutability for
  isothermic surfaces}, Ann. Global Anal. Geom. 61 (2022), 799--829.

\bibitem{christoffel}
E.~Christoffel, \emph{Ueber einige allgemeine {E}igenschaften der
  {M}inimumsflaechen}, Crelle's J. 67 (1867), 218--228.

\bibitem{cieslinski1995isothermic}
J.~Cie\'sli\'nski, P.~Goldstein, and A.~Sym, \emph{Isothermic surfaces in
  $\mathbf{{E}}^3$ as soliton surfaces}, Phys. Lett. A \textbf{205} (1995),
  no.~1, 37–43.

\bibitem{darboux}
G.~Darboux, \emph{Sur les surfaces isothermiques}, C. R. Acad. Sci. Paris
  \textbf{128} (1899), 1299--1305.

\bibitem{boomeron}
A.~Degasperis, C.~Rogers, and W.~Schief, \emph{Isothermic surfaces generated
  via {Bä}cklund and {M}outard transformations: Boomeron and zoomeron
  connections}, Stud. Appl. Math., Vol. 109 (2002), 39--65.

\bibitem{dorfmeister_kilian}
J.~Dorfmeister and M.~Kilian, \emph{Dressing preserving the fundamental group},
  Diff. Geom. Appl. 23 (2005), 176--–204.

\bibitem{finiteelement-cmc}
G.~Dziuk and J.E. Hutchinson, \emph{Finite element approximations to surfaces
  of prescribed variable mean curvature.}, Numerische Mathematik 102 (2006),
  611–648.

\bibitem{EKT}
N.M. Ercolani, H.~Kn{\"o}rrer, and E.~Trubowitz, \emph{Hyperelliptic curves
  that generate constant mean curvature tori in $\mathbb{R}^3$}, Integrable
  Systems (Luminy 1991), Progr. Math., vol. 115, 1993, pp.~81--114.

\bibitem{klassiker}
D.~Ferus, K.~Leschke, F.~Pedit, and U.~Pinkall, \emph{Quaternionic holomorphic
  geometry: {P}l\"ucker formula, {D}irac eigenvalue estimates and energy
  estimates of harmonic $2$-tori}, Invent. Math. \textbf{146} (2001), no.~3,
  507--593.

\bibitem{ferus_curved_1996}
Dirk Ferus and Franz Pedit, \emph{Curved flats in symmetric spaces},
  Manuscripta Math. \textbf{91} (1996), no.~4, 445--454.

\bibitem{gks3}
K.~Grosse-Brauckmann, R.~Kusner, and J.~M. Sullivan, \emph{Triunduloids:
  Embedded constant mean curvature surfaces with three ends and genus zero},
  arXiv math.DG/0102183, 2001.

\bibitem{hertrich_musso_nicolodi}
U.~Hertrich-Jeromin, E.~Musso, and L.~Nicolodi, \emph{Moebius geometry of
  surfaces of constant mean curvature 1 in hyperbolic space}, Annals of Global
  Analysis and Geometry 19 (2001), 185--–205.

\bibitem{darboux_isothermic}
U.~{Hertrich-Jeromin} and F.~Pedit, \emph{Remarks on the {{Darboux}} transform
  of isothermic surfaces}, Doc. Math. \textbf{2} (1997), 313--333.

\bibitem{hertrich-jeromin_supplement_1997}
Udo {Hertrich-Jeromin}, \emph{Supplement on curved flats in the space of point
  pairs and isothermic surfaces: a quaternionic calculus}, Doc. Math.
  \textbf{2} (1997), 335--350.

\bibitem{udo_habil}
\bysame, \emph{Introduction to {{M{\"o}bius}} differential geometry}, London
  {{Mathematical Society Lecture Note Series}}, vol. 300, {Cambridge University
  Press}, {Cambridge}, 2003.

\bibitem{hertrich-jeromin_mobius_2001}
Udo {Hertrich-Jeromin}, Emilio Musso, and Lorenzo Nicolodi, \emph{M{\"o}bius
  geometry of surfaces of constant mean curvature 1 in hyperbolic space}, Ann.
  Global Anal. Geom. \textbf{19} (2001), no.~2, 185--205.

\bibitem{hitchin-harmonic}
N.~Hitchin, \emph{Harmonic maps from a $2$-torus to the $3$-sphere}, J.
  Differential Geom., Vol. 31, No. 3 (1990), 627--710.

\bibitem{Jaggy}
C.~Jaggy, \emph{On the classification of constant mean curvature tori in
  ${R}^3$}, CMH \textbf{69} (1994), no.~4, 640--658.

\bibitem{KamPedPin}
George Kamberov, Franz Pedit, and Ulrich Pinkall, \emph{Bonnet pairs and
  isothermic surfaces}, Duke Math. J. \textbf{92} (1998), no.~3, 637--644.

\bibitem{kap2}
N.~Kapouleas, \emph{Compact constant mean curvature surfaces in {E}uclidean
  three space}, J. Diff. Geom. \textbf{33} (1991), 683--715.

\bibitem{lawson}
H.~Blaine Lawson, \emph{Complete minimal surfaces in {$S^3$}}, Ann. of Math.,
  vol. 92, no. 3 (1970), 335--–374.

\bibitem{tori_revolution}
K.~Leschke, \emph{Families of conformal tori of revolution in the 3-sphere},
  OCAMI Studies Vol. 3 (2009), 91--103.

\bibitem{willmore_harmonic}
\bysame, \emph{Harmonic map methods for {W}illmore surfaces}, Contemporary
  Mathematics 542, AMS (2011), 204--212.

\bibitem{nara}
K.~Leschke and K.~Moriya, \emph{Applications of {Q}uaternionic {H}olomorphic
  {G}eometry to minimal surfaces}, Complex Manifolds. Volume 3, Issue 1 (2016),
  282--300.

\bibitem{sfd_lopez_ros}
\bysame, \emph{Simple factor dressing and the {L}opez-{R}os deformation of
  minimal surfaces}, Math. Z. 291, Vol. 3 (2019), 1015--1058.

\bibitem{hsl}
K.~Leschke and P.~Romon, \emph{Spectral curve of {H}amiltonian stationary
  tori}, Calc. Var. PDE, Vol. 38, Issue 1 (2010), 45--74.

\bibitem{mp}
R.~Mazzeo and F.~Pacard, \emph{Constant mean curvature surfaces with {D}elaunay
  ends}, Comm. Anal. Geom., Vol. 9, No. 1 (2001), 169--237.

\bibitem{zakharov_mikhailov}
A.~V. Mikhailov and V.~E Zakharov, \emph{Relativistically invariant
  two-dimensional models in field theory integrable by the inverse scattering
  problem method}, {JETP} {L}etters 47 (1978), 1017--1027.

\bibitem{icm}
F.~Pedit and U.~Pinkall, \emph{Quaternionic analysis on {R}iemann surfaces and
  differential geometry}, Doc. Math. J. DMV, Extra Volume ICM, Vol. II (1998),
  389--400.

\bibitem{PS}
U.~Pinkall and I.~Sterling, \emph{On the classification of constant mean
  curvature tori}, Ann.\ of Math. \textbf{130} (1989), 407--451.

\bibitem{pohlmeyer}
K.~Pohlmeyer, \emph{Integrable {H}amiltonian systems and interactions through
  quadratic constraints}, Commun. Math. Phys. 46, No. 3 (1976), 207--221.

\bibitem{discretecmc-index}
K.~Polthier and W.~Rossman, \emph{Discrete constant mean curvature surfaces and
  their index.}, J. reine angew. Math., 549 (2002), 47–77.

\bibitem{richter}
J.~Richter, \emph{Conformal maps of a riemann surface into the space of
  quaternions}, Ph.D. thesis, TU Berlin, 1997.

\bibitem{ruh_vilms}
E.~Ruh and J.~Vilms, \emph{The tension field of the {G}auss map.}, Trans. Am.
  Math. Soc. \textbf{149} (1970), 569--573.

\bibitem{zakharov_shabat}
A.~Shabat and V.~Zakharov, \emph{Integration of nonlinear equations of
  mathematical physics by the method of inverse scattering. {II}}, Functional
  Analysis and Its Applications 13 (3) (1979), 166--174.

\bibitem{sym_soliton_1985}
A.~Sym, \emph{Soliton surfaces and their applications (soliton geometry from
  spectral problems)}, Geometric aspects of the {{Einstein}} equations and
  integrable systems ({{Scheveningen}}, 1984) (R.~Martini, ed.), Lecture
  {{Notes}} in {{Phys}}., vol. 239, {Springer}, {Berlin}, 1985, pp.~154--231.

\bibitem{terng_uhlenbeck}
C.~Terng and K.~Uhlenbeck, \emph{B{\"a}cklund transformations and loop group
  actions}, Comm. Pure and Appl. Math LIII (2000), 1--–75.

\bibitem{uhlenbeck}
K.~Uhlenbeck, \emph{{H}armonic maps into {L}ie groups (classical solutions of
  the chiral model)}, J. Diff.\ Geom., Vol. 30 (1989), 1--50.

\bibitem{Wente}
H.~C. Wente, \emph{{C}ounterexample to a conjecture of {H}.~{H}opf}, Pac.\ J.
  Math. \textbf{121} (1986), 193--243.

\end{thebibliography}

\end{document}